\newtheorem{defin}{Definition}[section]
\newtheorem{prop}{Proposition}[section]
\newtheorem{thm}{Theorem}[section]
\newtheorem{lemma}{Lemma}[section]
\newtheorem{rem}{Remark}[section]
\newcommand{\E}{\mathbb{E}}
\newcommand{\R}{\mathbb{R}}
\newcommand{\Var}{\mathrm{Var}}
\newcommand{\cum}{\mathrm{cum}}
\newcommand{\I}[1]{\mathbb{I}_{\{ #1 \}}}
\newcommand{\Cov}{\mathrm{Cov}}
\newcommand{\Op}{O_{\mathbb{P}}}
\newcommand{\p}{\mathbb{P}}
\newcommand{\pr}{^\prime}
\newcommand{\bv}{\Big\vert}
\newcommand{\tnull}{t_0}
\newcommand{\T}{n}
\newcommand{\Ntheta}{\mathcal{N}_{\tnull,T}}
\newcommand{\gn}{\varphi_\T}
\newcommand{\unu}{\vartheta}
\newcommand{\omjn}{\omega_{j,\T}}
\newcommand{\eps}{\varepsilon}
\newcommand{\Z}{\mathbb{Z}}
\newcommand{\weak}{\stackrel{\mathcal{D}}{\longrightarrow}}
\newcommand{\Ind}[1]{\mathbb{I}_{\{#1\}}}
\begin{document}
\def\spacingset#1{\renewcommand{\baselinestretch}%
{#1}\small\normalsize} \spacingset{1}

\bibpunct[, ]{(}{)}{;}{a}{,}{, }

{\title{\sc $\,$\vspace{-33mm} \\ Quantile Spectral Analysis \\ for Locally Stationary Time Series}}

\author{
Stefan {\sc Birr}$^{\rm a *}$,
Stanislav {\sc Volgushev}$^{\rm b *}$,
Tobias {\sc Kley}$^{\rm c *}$, \\
Holger  {\sc Dette}$^{\rm a}$\thanks{ Supported by the Sonderforschungsbereich ``Statistical modelling of nonlinear dynamic processes" (SFB~823, Teilprojekt A1, C1) of the Deutsche Forschungsgemeinschaft.},
and Marc {\sc Hallin}$^{\rm d}$\thanks{Acad\' emie Royale de Belgique,  CentER (Tilburg University), and ECORE.  Supported by     the IAP research network grant~P7/06 of the Belgian government (Belgian Science
Policy) and a Cr\' edit aux Chercheurs of the  Fonds de la Recherche Scientifique-FNRS.}
\vspace{3mm} \\
$^{\rm a}$Ruhr-Universit\" at Bochum \vspace{1mm}\\
$^{\rm b}$Cornell University Ithaca \vspace{1mm}\\
$^{\rm c}$London School of Economics and Political Science
  \\
$^{\rm d}$ECARES,  Universit\' e Libre de Bruxelles 
}
\date{}
\maketitle
\vspace{-5mm}
\begin{abstract} 
Classical spectral methods are subject to two fundamental limitations:  they 
only can account for covariance-related serial dependencies, and they require 
second-order stationarity.  Much attention has been devoted lately to 
quantile-based spectral methods that go beyond covariance-based serial 
dependence features. At the same time, covariance-based methods relaxing stationarity  into  much 
weaker {\it local stationarity} conditions have been developed for a variety of 
time-series models. Here, we are combining those two approaches by proposing 
quantile-based spectral methods for locally stationary processes. We therefore 
introduce a time-varying version of the copula spectra that have 
been  recently proposed in the literature, along with a suitable local lag-window estimator.
We propose a new definition of local {\it strict} stationarity that allows  us to 
handle completely general non-linear processes without any moment assumptions,
thus accommodating our quantile-based concepts and methods. We establish a central limit 
theorem for the new estimators, and illustrate the power of the proposed 
 methodology by means of a simulation study. Moreover, in  two empirical studies
(namely of the Standard \& Poor's 500 series and a temperature dataset recorded in Hohenpeissenberg)
we demonstrate that the new approach detects   important variations 
in  serial dependence structures both across time and across quantiles. Such variations remain completely undetected, and are 
actually undetectable, via classical covariance-based spectral methods.  
\end{abstract}

\noindent AMS 1980 subject classification :  62M15, 62G35.

\noindent Key words and phrases : Copulas, Nonstationarity, Ranks, Periodogram, Laplace spectrum. 

\spacingset{1.4}

\section{Introduction}\label{Secintro}
\def\theequation{1.\arabic{equation}}
\setcounter{equation}{0}

For more than a century, spectral methods have been among the favorite tools of time-series analysis. The concept of  {\it periodogram} was proposed and discussed as early as~1898 by Schuster, who coined the term in a study   (\cite{schuster1898}) of meteorological series.
 The modern mathematical foundations of the approach were laid between 1930 and 1950 by such big names as Wiener, Cram\' er, Kolmogorov, Bartlett, and Tukey. The main reason for the unwavering success  of spectral methods is that  they are entirely {\it model-free}, hence fully nonparametric; as such, they can be considered a precursor to the subsequent development of nonparametric techniques in the area and, despite their age,  they  still are part of the leading group of methods in the field.

The classical spectral approach to time series analysis, however, remains deeply marked by two major restrictions:

\begin{description}
\item[(i)] as a second-order theory, it is essentially  limited to  modeling  first- and second-order dynamics: being entirely covariance-based, it  cannot accommodate heavy tails and infinite variances, and cannot account for any dynamics in conditional skewness, kurtosis, or tail behavior;
\item[(ii)] the assumption of second-order stationarity is pervasive: except for processes that, possibly after some adequate transformation such as differencing or cointegration, are second-order stationary,  observations exhibiting time-varying  distributional features are ruled out.

\end{description}

The first of these two limitations recently has attracted much attention, and new quantile-related  spectral analysis tools have been proposed, which  do not require second-order moments, and are able to capture serial features that cannot be accounted for by the classical second-order approach. Pioneering contributions in that direction are \cite{hong1999}  and  \cite{li2008}, who coined the names of {\it Laplace spectrum} and {\it Laplace periodogram}. The Laplace spectrum concept was further studied by  \cite{hagemann2011}, and extended into {\it cross-spectrum} and spectral {\it kernel} concepts by  \cite{dhkv2014}, who also introduced {\it copula-based} versions of the same. Those copula spectral quantities are indexed by couples $(\tau_1,\tau_2)$ of quantile levels, and their  collections (for~$(\tau_1,\tau_2)\in [0,1]^2$) account for any features of the joint distributions  of  pairs $(X_t,  X_{t-k})$ in a strictly stationary process~$\{X_t\}$, without requiring any distributional assumptions such as 
the existence of finite moments.

%
That thread of literature also includes \cite{li2012,li2014}, \cite{kvdh2014}, and \cite{leerao2012}. Somewhat  different approaches were taken by \cite{hong2000}, \cite{dmz2013}, and several others; in the time domain, \cite{linwha2007}, \cite{davmik2009}, and \cite{hlow2014} introduced the related concepts of {\it quantilograms} and {\it extremograms}. Strict or second-order stationarity, however, are essential in all those contributions.

The  pictures in Figure~\ref{SPFig}  show that the copula-based  spectral methods developed in \cite{dhkv2014}
(where we refer to for details)  indeed successfully account for serial features that remain out of reach in the traditional approach. The series considered in Figure~\ref{SPFig} is the classical S\&P500 index series, with $T=12092$ observations  from 1962 through~2013; more precisely, that series contains the differences of logarithms of daily opening and closing prices for about 51 years. That series is generally accepted to be   white noise, yielding  perfectly flat   periodograms. When rank-based copula periodograms are substituted for the classical ones, however, the picture looks quite different. Three rank-based copula periodograms  are shown in Figure~\ref{SPFig}, for the quantile levels $0.1$, $0.5$ and $0.9$, respectively. The central one, corresponding to the central part of the marginal distribution, is compatible with the assumption of white noise. But the more extreme ones (associated with  the quantile levels~$0.1$  and~$0.9$) yield a peak at the origin, pointing at a strong dependence in the tails which is definitely not present in the median part of the (marginal) distribution.

\begin{figure}
    \begin{center}
     \includegraphics[width=\linewidth]{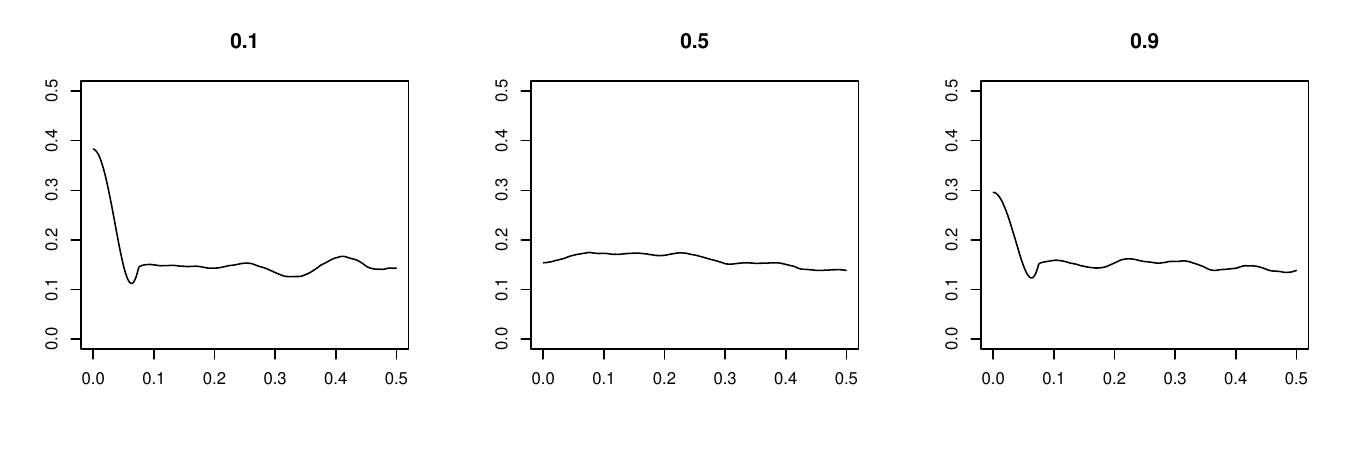}
    \end{center}
    \vspace{-10mm}
    \caption{ \small S\&P500, 1962-2013: the smoothed rank-based copula periodograms  for $\tau_1 =\tau_2 
    = 0.1$, $0.5$ and $0.9$, respectively. All curves are plotted against $\omega / 2\pi$.\vspace{-3mm}
    \label{SPFig}}
\end{figure}

%

Now, all periodograms  in Figure~\ref{SPFig} were computed from the complete series (51 years, $1\leq t\leq 12092$),  under the presumption of   stationarity (more precisely, stationarity  in distribution,  for all $k$, of the couples $(X_t, X_{t-k})$). Is that assumption likely to hold true?
The wavelet-based test proposed by \cite{Nason2013} reveals significant changes in the behavior of that time series, with most significant changes taking place around 1975, 1997 and during the period 2007-2013. Moreover, 
two rank-based copula periodograms for $\tau_1 = \tau_2 = 0.1$ computed before and after  the year $ 2007$ are shown in Figure \ref{Fig2007}. 
We observe differences in the height of the peak at the origin  before and after the year $2007$. 
These finding raise questions about  the second limitation of traditional spectral methods,  (second-order) stationarity. It has  motivated the development of a rich strand of literature, mainly along four (largely overlapping) lines:

\begin{description}
\item[(a)] {\it models with time-dependent parameters}: inherently parametric,  those models  are mimicking the traditional ones, but with parameters varying over time---see  \citet{subba1970} for a prototypical contribution, \cite{azrmel2006} for an in-depth study of the time-varying ARMA case; \vspace{-2mm}
\item[(b)] the {\it evolutionary spectral methods}, initiated by \cite{priestley1965}, where the process under study admits a spectral representation with {\it time-varying transfer function}---a second-order characterization, thus;   \vspace{-2mm}
\item[(c)] {\it piecewise stationary processes}, in relation with change-point analysis: see, e.g.,  \cite{dlry2005};\vspace{-2mm}
\item[(d)] the {\it locally stationary process} approach initiated by \cite{dahlhaus1997},  based on the assumption that, over a short period of time (that is, locally in time), the process under study behaves approximately as  a stationary one;  related concepts have been developed recently  
 by \cite{dahlrao2006}, 
   \cite{zhouwu2009a, zhouwu2009b}, \cite{roueff} and \cite{vogt2012}; wavelet-based versions also have been considered, as in \cite{nason2000}. %
%
We refer to Dahlhaus~(2012)  for a  survey of this approach.

\end{description}

\begin{figure} 
\centering{\includegraphics[width= 0.8\linewidth]{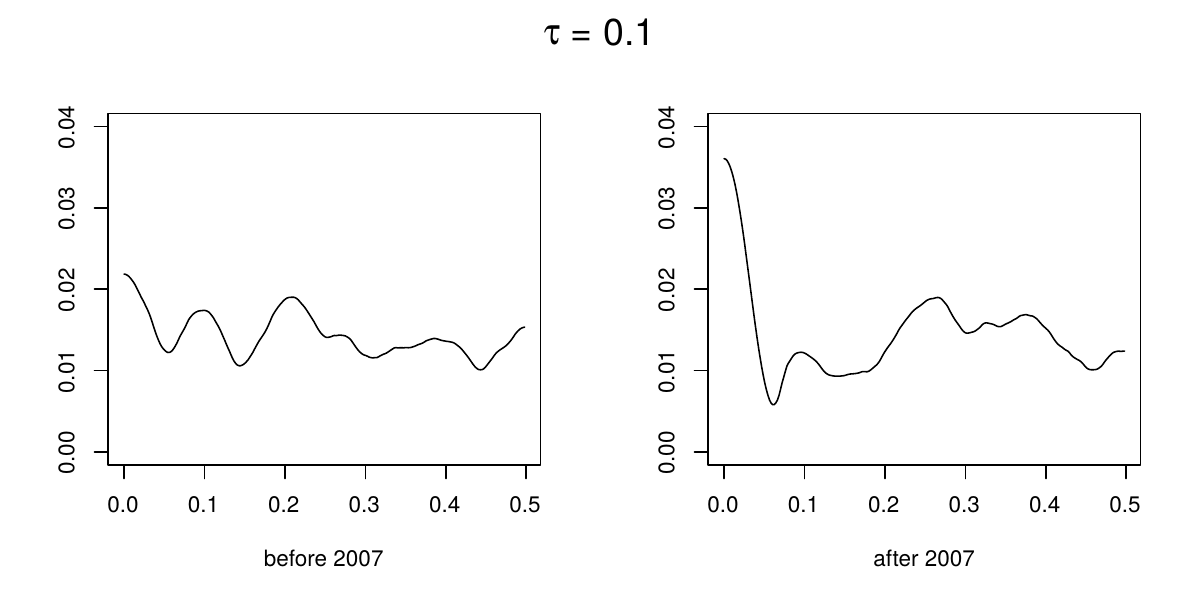}}
\caption{\small  S\&P500: the smoothed rank-based copula periodograms  for $\tau_1 =\tau_2 = 0.1$ calculated for 512 days before and 512 day after 01.01.2007. Both curves are plotted against $\omega / 2 \pi.$}
\label{Fig2007}
\end{figure}


Those four approaches, as already mentioned, are not without overlaps: the original concept by Dahlhaus~(1997)  is based on time-varying (second-order) spectral representations, turned into  time-domain linear MA($\infty$) ones by \cite{dahlpol2006};  \cite{dahlrao2006}  and \cite{fssr2008} deal with locally stationary ARCH models; although much more general,  \cite{zhouwu2009a, zhouwu2009b} also assume a form of time-varying nonlinear MA$(\infty)$ representation, and hence also resort to~(a).  Most references  require moment assumptions, either by nature (being based on a spectral representation), or by the nature of the stationary approximation they are considering.
%

In this paper, we  address the two limitations (i) and (ii) of traditional spectral analysis simultaneously by developing a locally stationary version
of the quantile-related spectral analysis proposed in \cite{dhkv2014}.  At the same time we provide a thorough theoretical underpinning for the proposed approach.
While  adopting the locally stationary ideas of (d), however, we turn them into a fully non-parametric 
and moment-free approach, adapted to the nature of quantile- and copula-based spectral concepts (see \cite{harvey2010} for a related, time-domain, attempt).   The definitions of local stationarity existing in the literature indeed are not general enough to accommodate quantile spectra,  and we therefore  formulate  a new concept of {\it local strict  stationarity}. Contrary to \cite{dahlpol2006} and \cite{zhouwu2009a,zhouwu2009b},   who deal  with time-varying (linear or nonlinear) moving averages, to \cite{dahlhaus1997}, which is based on time-varying second-order spectra,   to \cite{vogt2012}, where the approximation is in terms of random variables and requires finite moments of order $\rho >0$,  our approximation  is directly based on joint distributions, and does not involve any moments  nor specific data-generating processes. 
Its very general nature allows us to 
  extend to the quantile context  the definitions of a local spectrum, and to establish a central limit theorem for our local lag-window estimators. 
 
The \textit{time-varying 
copula spectrum} and its estimators are introduced in Section~\ref{sec:strict}  and
Section~\ref{sec:3}, respectively. In Section~\ref{sec:prac}, we illustrate the application of the new methodology by means of a small simulation study and two real-life examples,
while  the  theoretical properties of time-varying copula spectra and a corresponding lag-window estimator are investigated in Section~\ref{theory}. In particular, a central limit theorem for our local lag-window estimator is established. The proofs and additional information concerning the simulation studies and the datasets analyzed in Section \ref{SecSandP} and \ref{SecHPB} are deferred to an online supplement.

\section{Local strict stationarity and local copula spectra}\label{sec:strict}
\def\theequation{2.\arabic{equation}}
\setcounter{equation}{0}

 \subsection{Locally strictly stationary processes}

 Consider a series $(X_{1},\ldots ,X_{T})$ of length $T$ as being part of a triangular array $(X_{t,T},\ 1\leq t\leq T)$, $T\in\mathbb{N}$, of   finite-length 
  realizations   of nonstationary  processes \mbox{$\{X_{t,T},\ t\in\mathbb{Z}\}$}, $T\in\mathbb{N}$.
The intuitive idea behind all definitions  of local stationarity  consists in   the assumption  that those processes  have an approximately stationary behavior over a short period of time.  More formally,   one usually  assume the existence of a collection, indexed by~$\vartheta\in(0,1)$,  of stationary processes $\{X^\vartheta_{t},\ t\in~\mathbb{Z}\}$ such that  the nonstationary process~$\{X_{t,T},\ t\in\mathbb{Z}\}$ can be approximated (in a suitable way),  in the vicinity of time~$t $,  by the stationary process~$\{X^\vartheta_{t},\ t\in~\! \mathbb{Z}\}$ associated with~$\vartheta= t/T$.\vspace{1mm}

The exact nature of this approximation has to be adapted to the specific problem under study. If the objective is a locally stationary extension of classical spectral analysis, only the autocovariances Cov$(X_{t,T},X_{s,T})$ have to be approximated. In the quantile-related context considered here, the joint distributions of $X_{t,T}$ and $X_{s,T}$ are the feature of interest, and    traditional autocovariances are to be  replaced with autocovariances of indicators, of the form Cov$(\I{X_{t,T}\leq q_{t,T}(\tau_1)},\I{X_{s,T}\leq q_{s,T}(\tau_2)})$, where~$q_{t,T}(\tau_1)$ and $q_{s,T}(\tau_2)$ stand for the $\tau_1$-quantile of $X_{t,T}$ and the $\tau_2$-quantile of $X_{s,T}$, respectively, with  $\tau_1,\tau_2 \in (0,1)$; see \cite{li2008,li2012}, \cite{hagemann2011}, or \cite{dhkv2014}. Such  covariances only depend on the bivariate copulas of  $X_{t,T}$ and $X_{s,T}$.

In a strictly stationary context, this leads to the so-called {\it Laplace spectrum}, first considered by \cite{li2008}  for a   strictly stationary process $\{Y_t , t \in \mathbb{Z}\}$ with marginal   median zero. That spectrum is defined as
\[
 \mathcal{C}_{0.5,0.5}  (\omega) := \frac{1}{2\pi}\sum_{k \in \mathbb{Z}} e^{-i\omega k}\text{Cov}(\I{Y_0\leq 0},\I{Y_{-k}\leq 0}),\qquad\omega\in(-\pi,\pi ].
\]
Li's  concept was extended  by  \cite{hagemann2011},   \cite{li2012} and   \cite{dhkv2014}   to general quantile levels. The most general version, which also takes into account {\it cross}-covariances of indicators, was introduced by \cite{dhkv2014}. Denoting by $q $ the marginal quantile function of~$Y_t$, they define the {\it copula spectral density kernel}   as
\[
\mathcal{C}_{\tau_1,\tau_2}(\omega) := \frac{1}{2\pi}\sum_{k\in \mathbb{Z}} e^{-i\omega k}\text{Cov}(\I{Y_0\leq q(\tau_1)},\I{Y_{-k}\leq q(\tau_2)}) ,\quad \tau_1,\tau_2 \in (0,1),\ \omega\in(-\pi,\pi ].
\]

Those definitions all  heavily rely on the strict stationarity of the underlying time series; without this assumption, actually, they do not make much sense. 
 It seems natural, thus, to look for some adequate notion of local stationarity that can be employed to characterize the notion of a local copula-based spectrum.
However, the definitions of local stationarity previously considered in the literature are  placing  unnecessarily strong restrictions on the classes of processes that can be considered. In particular, \cite{dahlpol2006}, \cite{dahlrao2006} and \cite{vogt2012} 
  rely on moment assumptions that are neither desirable nor natural in a quantile context, and  are  not required  for the definition of copula spectra. We therefore  introduce a new concept of \textit{local strict stationarity} which completely avoids moment assumptions. 
 That  concept is not totally unrelated to the existing ones, though, and we also show that, under adequate conditions, processes that fit into the framework of \cite{dahlrao2006} or \cite{dahlpol2006} are locally strictly stationary in the new sense; see Section~\ref{sec:strictstatmodels} for details. Similar results certainly also could be obtained for the  \cite{zhouwu2009a, zhouwu2009b}  concept, but they are less obvious and,  in order to not overload the paper, we do not pursue into that direction. 


The copula spectral density kernels of a stationary process $\{Y_{t}\}$  are defined in terms of its bivariate marginal distribution functions.
Therefore, it is  natural to use  bivariate marginal distribution functions when evaluating, in the definition of local stationarity,  the distance between the non-stationary process $\{X_{t,T}\}$ 
and its stationary approximation $\{X^\unu_{t} \}$.

\begin{defin}   \label{def:strstat}
A triangular array $\{ (X_{t,T})_{t \in \mathbb{Z}} \}_{T \in \mathbb{N}}$ of processes is called {\em locally strictly stationary (of order two)} if there exists a constant $L>0$ and, for every~$\unu\in (0,1)$,  a strictly stationary process $\{X^\unu_{t} , t \in \mathbb{Z}\}$ such that, for every $1\leq r,s \leq T,$
 \begin{equation}\label{eq1}
    \big\Vert F_{r,s;T}(\cdot,\cdot) - G^{\unu}_{r-s}(\cdot,\cdot)\big\Vert_{\infty} \leq L \Big(\max(|r/T-\unu|,|s/T-\unu|) + {1}/{T}\Big),
\end{equation}
where $\Vert  \cdot  \Vert_\infty$ stands for the supremum norm, while $F_{r,s;T}(\cdot,\cdot)$ and $G^{\unu}_{k}(\cdot,\cdot)$ denote the joint distribution functions of $(X_{r,T}, X_{s,T})$ and $(X^\unu_{0},X^\unu_{-k})$, respectively.
\end{defin}
\noindent Here, {\textquoteleft of order two\textquoteright}  refers to the fact  that (\ref{eq1}) is based on bivariate distributions only. Letting $y$ tend to infinity in $F_{r,s;T}(x,y)$ and $G^{\unu}_{k}(x,y)$, we get an analogous condition for the marginal distributions~$F_{t;T}$ and~$G^\unu$ of $X_{t,T}$ and $X^{\unu}_{0}$, namely
\begin{equation}\label{eq2}
\big\|F_{t;T}(\cdot)-G^\unu(\cdot)\big\|_\infty \leq L \big\vert t/T - \unu\big\vert +  L/{T}.
\end{equation}

Intuitively, (\ref{eq1}) and (\ref{eq2}) imply  that the univariate and bivariate distribution functions~$F_{t;T}$ and $F_{r,s;T}$ of the process~$\{X_{t,T}\}$ are allowed to change smoothly over time. A crucial advantage of this definition is its nonparametric nature, as 
  it does not depend on any specific data-generating mechanism. 

Whenever the data-generating process can be described in terms of a parametric model, strict stationarity in the sense of Definition \ref{def:strstat} holds if the underlying parameters change smoothly over time. Familiar  examples   include MA$(\infty)$, ARCH$(\infty)$ and GARCH$(p,q)$ models with time-varying coefficients. Sufficient conditions  for   local strict stationarity of  those models  are discussed in Section~\ref{sec:strictstatmodels}, where we also provide explicit forms of the strictly stationary approximating processes.  
 

\subsection{Local copula spectral density kernels}
Turning to the definition of  a localized version of copula spectral density kernels,  first consider the copula cross-covariance kernels associated with the strictly stationary process~$\{X^\unu_{t} , t \in~\mathbb{Z}\}$, $\unu \in (0,1)$. The  {\it lag-$h$-copula cross-covariance kernel}  of  %
 $\{X^\unu_{t}\}$, as defined in \cite{dhkv2014},   is
\begin{equation*}
 \gamma^\unu_h(\tau_1,\tau_2) := \Cov(\I{X^\unu_{t}{} \leq q^\unu(\tau_1)},\I{X^\unu_{t-h}{}\leq q^\unu(\tau_2)}) ,\quad \tau_1,\tau_2 \in (0,1) ,
\end{equation*}
where $q^\unu(\tau)$ denotes $X^\unu_{t}$'s marginal quantile of order $\tau$.


 The cross-covariances involved in the above definition always exist, and their collection  (for~$\tau_1,\tau_2 \in (0,1)$ and given lag $h$) provides a canonical characterization of the joint copula of $(X^\unu_{t}{}, X^\unu_{t-h}{})$, hence, an approximate (in the sense of (\ref{eq1})) description of the joint copula of all couples of the form~$(X_{t,T}, X_{t-h,T})$. Therefore we also call~$\gamma^\unu_h(\tau_1,\tau_2)$  the {\it time-varying lag-$h$-copula cross-covariance kernel} of~$\{X_{t,T}\}.$ If we assume that,  for all~$\tau_1,\tau_2 \in (0,1)$,  the lag-$h$-covariance kernels $ \gamma^\unu_h(\tau_1,\tau_2)$ are  absolutely summable, we moreover can define the {\it local} or {\it time-varying copula spectral density kernel} of~$\{X_{t,T}\}$ as
\begin{eqnarray} \label{def:tvf}
 \mathfrak{f}^\unu(\omega,\tau_1,\tau_2) := \frac{1}{2 \pi} \sum_{h = - \infty}^{\infty} \gamma^\unu_h(\tau_1,\tau_2) e^{-ih\omega},\ \ \tau_1,\tau_2 \in (0,1), \ \ \omega\in(-\pi,\pi ].
\end{eqnarray}
The time-varying cross-covariance kernel then admits the representation
\begin{equation} \nonumber 
\gamma^\unu_h(\tau_1,\tau_2)
 = \int_{- \pi}^{\pi} e^{ih\omega}
\mathfrak{f}^\unu(\omega,\tau_1,\tau_2 )  d\omega , \qquad\omega\in(-\pi,\pi ] ,\quad \tau_1,\tau_2 \in (0,1) .
\end{equation}

\noindent {Comparing those representations with the local spectral densities of Dahlhaus~(1997), we see that the autocovariances of the approximating processes there are replaced by   copulas. This indicates that the local spectral density kernels~(\ref{def:tvf}) can be viewed as a completely non-parametric generalization of classical $L^2$-based tools. In particular, those kernels can capture  pairwise serial dependencies of arbitrary forms. For more detailed comparisons, we refer   to \cite{dhkv2014} and \cite{kvdh2014}.} The usefulness of the concepts discussed here for data analysis is provided, via simulation and the analysis of two real datasets, the classial S$\&$P 500 and a meteorological one,  in Section~\ref{sec:prac}.

\section{Estimation of local copula spectra}\label{sec:3}
\def\theequation{3.\arabic{equation}}
\setcounter{equation}{0}

Given observations $X_{1,T},\dots,X_{T,T}$, the classical approach to the estimation of the time-varying spectral density of a locally stationary time series consists in  considering a subset of~$\T$ data points centered around a time point $\tnull.$ 

To formalize ideas, let $m_T$ be a sequence of positive integers diverging to infinity as~$T \to~\!\infty$, and   define the discrete neighborhood
$\mathcal{N}_{\tnull,T} := \{ 1 \leq t \leq T : |t_0-t|< m_T \},$
with  cardinality~$\T = \T(m_T,T)$. 
 Denoting by $\omjn = 2\pi j/\T, 1 \leq j \leq \lfloor \frac{\T+1}{2}\rfloor$ the positive Fourier frequencies, let $\varphi_n: \omega\mapsto \varphi_n(\omega):= \omega_{j,n}$ be the piecewise constant function  mapping $\omega\in  (0,\pi)$ to the closest Fourier frequency, i.e. to  the frequency $\omega_{j,n}$  such that $\omega\in  (\omega_{j,n}-\frac{2\pi}{n},\, \omega_{j,n}+\frac{2\pi}{n}]$. Defining  
\[ T(k) :=\{t \in \Ntheta: t+k \in \Ntheta\},\quad 
\tilde F_{t_0;T}(x) := \frac{1}{2 T^{4/5}}\sum_{|t-t_0|\leq T^{4/5}} \I{X_{t,T} \leq x},  
\]
and $ \hat q_{t_0,T}(\tau) := \tilde F_{t_0;T}^{-1}(\tau)$, consider the 
 {\it local lag-window estimator} (at the Fourier frequen-\linebreak cies $\omjn = 2\pi j/\T$)
\begin{eqnarray} \label{esti}
\hat{\mathfrak{f}}_{t_0,T}(\omjn,\tau_1,\tau_2) &:=& \frac{1}{2\pi}\sum_{|k|\leq n-1} K(k/B_n) e^{-i \omjn k}\nonumber \\
&&\quad \times\  \frac{1}{n} \sum_{t \in T(k)} \Big(\I{X_{t,T} \leq \hat q_{t_0,T}(\tau_1)} - \tau_1 \Big)\Big( \I{X_{t+k;T} \leq \hat q_{t_0,T}(\tau_2)} - \tau_2 \Big),
\end{eqnarray}
  where $B_n \rightarrow \infty$ as $\T \rightarrow \infty$ and~$K: \R \to \R$ is 
 continuous in $x=0$ and satisfies~$K(0) = 1$ and  $\lim_{|x| \to \infty} K(x) = 0$. In order to extend this estimator $\hat{f}_{\tnull,T}(\cdot,\tau_1,\tau_2)$ to the interval~$(0,\pi),$ let~$
\hat{f}_{\tnull,T}(\omega,\tau_1,\tau_2) := \hat{f}_{\tnull,T}(\gn(\omega),\tau_1,\tau_2).
$
In Section \ref{sec:asyth} below, we prove that, under mild conditions on the bandwidth parameters and the underlying time series, the local lag-window estimator is  consistent for  
the copula spectral density $\mathfrak{f}^\unu(\omega,\tau_1,\tau_2)$ and asymptotically normally distributed.  This is a novel result even in the stationary case, as \cite{kvdh2014} consider an estimator based on smoothed periodograms instead. 

Before we address  the asymptotic theory for  the new estimators, we illustrate   their properties  and advantages  by means of a brief simulation study
and a detailed analysis  of two real-life datasets.

\section{Simulations and an empirical study}\label{sec:prac}
\def\theequation{4.\arabic{equation}}
\setcounter{equation}{0}

One important practical aspect of the estimation of a quantile spectral density is the choice of a local window length $n$ and a smoothing parameter $B_n$. In Section~\ref{sec:asyth} and Theorem~\ref{thm:asymain}, we derive the asymptotic distribution of the estimator, which allows to derive an expression for the smoothing parameters that minimizes the asymptotic mean squared error (see Remark~\ref{rem:bw} for additional details). Those expressions, of course, cannot be readily used in practice, since they depend on the actual  time-varying copula spectral densities and their derivatives, which are unknown. Estimating such derivatives is even more difficult than estimating the original spectral density, and a plug-in approach to bandwidth selection therefore seems difficult to implement. For the estimation of local $L^2$-spectra, an interesting alternative has been proposed by \cite{craomb2002}. Unfortunately, that approach relies on wavelets instead of local windows for   localization in time; whether  it can be implemented here is not clear. For the implementation of our methodology, we propose to study  different local window lengths and bandwidth parameters and in the simulation study we illustrate  the performance of the estimators for different window lengths.    

\subsection{Heatmaps: calibrating the color scale} \label{Seccalibr}

Plots of  time-varying spectral densities and their estimators are provided in the form of  {\it time-frequency heatmaps}.  The vertical axis in all those plots represents frequencies ($\omega/2\pi$, ranging from 0 to 0.5), the horizontal axis the span of time $1,\ldots , T$ over which the time-varying spectral quantities are estimated. All $3\times 3$ figures in this section show the real and imaginary parts for different combinations of quantile orders, organized as shown in Table~\ref{tableOrga}.
\begin{table}
\begin{center}
 \begin{tabular}{c|c|c} 
$\mathfrak{\hat f}_{t_0,T}({\omega,\alpha, \alpha})$ & ${\Im\mathfrak{\hat f}_{t_0,T}({\omega,\beta, \alpha})}$ & ${\Im \mathfrak{\hat f}_{t_0,T}({\omega,\gamma, \alpha})}$
\\[10pt]
\hline
${\Re\mathfrak{\hat f}_{t_0,T}({\omega,\beta, \alpha})}$ & ${\mathfrak{\hat f}_{t_0,T}({\omega,\beta, \beta})}$ & ${\Im\mathfrak{\hat f}_{t_0,T}({\omega,\gamma, \beta})}$
\\[10pt]
\hline
${\Re\mathfrak{\hat f}_{t_0,T}({\omega,\gamma, \alpha})}$ & ${\Re\mathfrak{\hat f}_{t_0,T}({\omega,\gamma,\beta})} $& ${\mathfrak{\hat f}_{t_0,T}({\omega,\gamma, \gamma})}$\\[10pt]
\end{tabular}
\caption{\small Patterns for the $3\times 3$ time-frequency heatmaps in Figures 5-8 and 11; throughout, we use~$\alpha = 0.1$, $\beta = 0.5$ and $\gamma = 0.9,$ with $t_0 \in {\cal T}_0 \subset \{1,\dots,T\}$ and $\omega \in (0,\pi)$. For example,  the top-right corner, in all those figures,  displays a time-frequency plot of the imaginary parts of the collection  $\big(\mathfrak{\hat f}_{t_0,T}(\omega,0.9,0.1)\big)_{t_0 \in {\cal T}_0,\omega \in \Omega}$.}\label{tableOrga}\vspace{-7mm}
\end{center}
\end{table}
The spectral values themselves  (for $\tau_1 = \tau_2 =\tau$), or  their real and imaginary parts (for~$\tau_1\neq\tau_2$) are represented via a continuous $(\tau_1,\tau_2)$-dependent color code, ranging from   cyan and  light blue (for {\it small} values)  to dark blue, yellow, orange, and red (for {\it large} values). 
As   explained below, this color code also has an interpretation in terms of significance of certain $p$-values. This latter interpretation requires a preliminary calibration  step, though. Indeed, being {\textquoteleft small\textquoteright},  for a $(\tau_1=\tau_2=\tau)-$periodogram value (which by nature is nonnegative real) does not  have the same meaning as  being {\textquoteleft small\textquoteright}  for  the imaginary or the real part of some $(\tau_1\pr,\tau_2\pr)-$cross-periodogram (for which negative values are possible):  in order to make inter-frequency comparisons possible, a meaningful color code therefore has to be $(\tau_1,\tau_2)$-specific. For this purpose we  introduce a distribution-free simulation-based calibration that fully exploits the properties of copula-based quantities.

To explain the idea behind this calibration  step, consider plotting, for some sub\-set~${\cal T}_0\times \Omega$  (with ${\cal T}_0 \subset \{1,...,T\}$ and~$ \Omega \subset (0,\pi)$), a collection  $\big(\Re \mathfrak{\hat f}_{t_0,T}(\omega,\tau_1,\tau_2)\big)_{t_0 \in {\cal T}_0,\omega \in \Omega}$ of  the real parts {(the imaginary parts are dealt with  exactly the same way)} of estimators    computed from the  realization~$X_1,....,X_T$ of some time series of interest. Assume that a bandwidth $B_n$ and a window length $n$ are  used for the estimation. A color is then assigned to each  value of~$\Re \mathfrak{\hat f}_{t_0,T}(\omega,\tau_1,\tau_2)$ along the following steps:

\begin{description}
\item[(i)] simulate $M = 10^4$ independent realizations $(U_{1,m},\ldots ,U_{n,m})$, $m=1,...,M$ of an i.i.d.\ sequence of random variables of length $n$ (uniform over $[0,1]$, for instance -- but, our method being  distribution-free, this is not required);
\item[(ii)] for each of those $M$ realizations, compute the estimator $\mathfrak{\hat f}_{t_0,T}^{U,m}(\omega,\tau_1,\tau_2)$ of the local spectral density based on the same bandwidth $B_n$; 
note that the number~$n$ of observations in each replication 
  equals the window length used for our original collection;
\item[(iii)] define,  for each $m=1,...,M=10^4$,  the quantities
$ Q_{\max}^m(\tau_1,\tau_2) := \max_{\omega} \Re \mathfrak{\hat f}_{t_0,T}^{U,m}(\omega,\tau_1,\tau_2)$ {and} 
$ Q_{\min}^m(\tau_1,\tau_2) := \min_{\omega} \Re \mathfrak{\hat f}_{t_0,T}(\omega,\tau_1,\tau_2)$; 
 obtain the empirical $99.5\%$ quan\-tile $q_{\max}(\tau_1,\tau_2)$ of $(Q_{\max}^{m}(\tau_1,\tau_2))_{m=1,...,M}$,  and the empirical $0.5\%$ quantile $q_{\min}(\tau_1,\tau_2)$ \linebreak of $(Q_{\min}^m(\tau_1,\tau_2))_{m=1,...,M}$, respectively.
\end{description}


The color palette then  is  set  as follows: all points $(t_0 ,\omega )  \in {\cal T}_0\times \Omega$ with $ \Re \mathfrak{\hat f}_{t_0,T}(\omega,\tau_1,\tau_2)$ value in~$[q_{\min}(\tau_1,\tau_2),q_{\max}(\tau_1,\tau_2)]$ receive dark blue color. Next, letting \vspace{-2mm}
\begin{align*}
v_{\min}(\tau_1,\tau_2) &:= \min(\min_{t_0,\omega}\Re \mathfrak{\hat f}_{t_0,T}(\omega,\tau_1,\tau_2),q_{\min}(\tau_1,\tau_2) - (q_{\max}(\tau_1,\tau_2)-q_{\min}(\tau_1,\tau_2))),\\
v_{\max}(\tau_1,\tau_2) &:= \max(\max_{t_0,\omega}\Re  \mathfrak{\hat f}_{t_0,T}(\omega,\tau_1,\tau_2),q_{\max}(\tau_1,\tau_2) + (q_{\max}(\tau_1,\tau_2)-q_{\min}(\tau_1,\tau_2))), 
\end{align*}\vspace{-9mm}

\noindent all points $(t_0 ,\omega ) $ for which $ \Re \mathfrak{\hat f}_{t_0,T}(\omega,\tau_1,\tau_2)$ lies in  the interval $[v_{\min}(\tau_1,\tau_2),q_{\min}(\tau_1,\tau_2)]$ receive a color ranging, according to a linear scale,   from cyan to light  and  dark blue, while  the colors for the interval~$[q_{\max}(\tau_1,\tau_2),v_{\max}(\tau_1,\tau_2)]$ similarly  range  from dark blue  to yellow and  red.

%

%
%

All our time-frequency heat diagrams thus have the following interpretation. For each given choice of $(\tau_1,\tau_2)$ and a timepoint~$t_0$, the probability, under the hypothesis of (strong) white noise, that the real (resp., the imaginary) part at time~$t=t_0$ of the smoothed $(\tau_1,\tau_2)$-time-varying periodogram lies entirely in the dark blue area is approximately~$0.01.$ Hence, the presence of light blue, cyan or orange-red zones in a diagram indicates a significant (at probability level $1\%$) deviation from white noise behavior. The location of those zones moreover tells us where in the spectrum, and when in the period of observation, those significant deviations take place, along with an   evaluation of their magnitude. The correspondence between the actual size of the estimate and the colors used is provided by the color scale on the right-hand side of each diagram. Note that here and in the sequel, we use the terminology 'white noise' to denote i.i.d.\ (and not just uncorrelated)  variables. 

This calibration method yields a universal distribution-free and model-free color scaling   which also  provides (as far as  dark blue regions are concerned)  a hypothesis testing  interpretation of the results. The same color code is used for the empirical analyses in Sections~\ref{SecSandP} and~\ref{SecHPB}, as well as for the simulations in Section~\ref{SecSimu}. 
Currently, an R-package containing the codes  used here   is in preparation (a preliminary version is available 
 upon request).

\subsection{Simulations}\label{SecSimu} 
This section provides a numerical illustration  of the performance of the new estimators of the time-varying copula spectral densities in two time-varying models that have been considered elsewhere in the literature. For both models, six time-frequency heat plots, labeled (a)-(f), of time-varying copula spectral densities are provided, for each  combination of the quantile levels $0.1$, $0.5$, and $0.9$, using the color code described in Section~\ref{Seccalibr}:
\smallskip

\begin{description}
\item[(a)] the actual time-varying copula spectral densities  and
\item[(b)-(f)] the local lag-window estimators of the copula spectral densities for different window length $n.$
\end{description}

Currently, we do not have simple closed-form expressions for the actual spectra, and we doubt such expressions are possible (but for the theoretical definition~(\ref{def:tvf})). This is in contrast with classical $L^2$ spectral analysis where, at least for linear processes, explicit representation  for  the spectra are readily available. Such  lack of simple analytic expressions is   not   surprising since, even for linear processes, the impact of  the linear representation coefficients on  joint distributions (as opposed to covariances) is bound to be quite complicated, and  crucially depends on   innovation densities. From a practical point of view, this is not a major drawback, though, as for any given linear representation very good approximations of the copula spectra can be obtained within a few minutes via  simulations. The actual copula spectral densities in (a) were obtained by simulating, for each~$t_0$ in~$ {\cal T}_0$, $R=1000$ independent replications, all of  length~$2^{11}$,    of the strictly stationary approximation~$(X^{t_0/T}_t)_{t=1,...,2^{11}}$, computing the corresponding lag-window estimators~$\hat{ \mathfrak{f}}^r_{t_0,T}(\omega,\tau_1,\tau_2)$,  say, for~$r=1,...,R$, and averaging them   (over $r=1,...,R$)  for each fixed $(t_0 ,\omega ) \in {\cal T}_0\times \Omega$.   

The estimators in (b)-(f) are computed from one realization, of length $T=2^{13}$, of the (nonstationary) process under consideration with a bandwidth $B_n = 10$ and local window lengths $n = 128, 256, 512, 1024, 2048$. Additional examples with time series of length $T = 2^{11}, 2^{12}$ are available in Section \ref{app:shortts} of  the online appendix. Our findings indicate that, for   shorter time-series lengths, 
   estimating `fastly changing'  dependence structures may become difficult. If the changes are very smooth, as in the QAR example of Section~\ref{sec:p4}, the results for short time series are still reasonable.  
For $K$,  we used the Parzen window 
\[K(u) = ( 1-6u^2+6|u|^3 )\I{ |u| \leq 0.5} +    2(1-|u|)^3 \I{ 0.5 \leq |u| \leq 1}.\] In each case, the sets~${\cal T}_0$ and $\Omega$ were chosen as~${\cal T}_0 := \{32k|k= \lceil n/2 \rceil ,\dots,\lfloor T-n/2 \rfloor\}$ and~$\Omega := \{2\pi j/n|j=1,...,(n-2)/2\}.$


%
%
%


\subsubsection{Cauchy tvAR(2) } \label{sec:p2}
In Figure~\ref{Figb}, we display heatmapss for a time-varying AR(2) process with equation\vspace{-1mm}
\begin{equation} \label{extvAR}
X_{t,T} = 1.8\cos(1.5-\cos(2\pi t/T))X_{t-1,T} - 0.81X_{t-2,T} + Z_t\vspace{-1mm}
\end{equation}
and i.i.d.~noise~$Z_t$ with Cauchy distribution. Its strictly stationary approximation at $t_0=\vartheta T$, for $0\leq \vartheta \leq 1$,~is\vspace{-2mm}
\begin{equation}\label{exAR}
X_t^{\vartheta} = 1.8\cos(1.5-\cos(2\pi \vartheta))X_{t-1}^{\vartheta} - 0.81X_{t-2}^{\vartheta} + \zeta_t\vspace{-1mm}
\end{equation}
where the $\zeta_t$'s are  i.i.d.~noise~ with the same Cauchy density as the $Z_t$'s.

The form of the equation is taken from \cite{dahlhaus2012}, where we  replaced the Gaussian innovations with  Cauchy ones, thus violating  the moment assumptions of classical spectral analysis. The resulting  process exhibits a time-varying periodicity which is clearly visible in the heat diagrams associated with the real parts of  its time-varying copula spectral densities, displayed in the lower triangular parts  of Figures~\ref{Figb}(a)-(f).
The imaginary parts of the spectra are shown in the upper triangular parts of  the same figures; note that, due to time-irreversibility (see Hallin et al. 1988), those imaginary parts exhibit significant yellow regions in  the actual spectral density (a). The peaks are, however, very narrow,   thus quite  difficult to estimate, and essentially disappear in the estimated versions (b)-(f).  The proposed lag-window estimator nevertheless is able to recover the structure of the spectral densities over a broad range of window lengths. 

Also note the significant  peak around zero appearing in the diagrams associated with extreme quantiles ($\tau_1 , \tau_2 = 0.1$ and $0.9$),  indicating persistence in tail events---a phenomenon that totally escapes  traditional analyses. 
The change over time is rather fast and therefore the influence of the window length on the estimator is clearly visible. A very short window length, like $n=128$ in $(b),$ makes it very difficult to reconstruct the copula spectral densities for the extreme quantiles $(\tau = (0.1,0.1) \text{ or } \tau = (0.9,0.9) )$ still the periodic peak remains quite significant, while a much  larger one ($n=2048$ in (f)) 
 one.      
 leads to a loss of details. 
The estimators remain stable, though,  
over a broad range of window lengths ($n=256 - 1024$).

\subsubsection{tvQAR(1)} \label{sec:p4} Figure~\ref{Figd} shows the same heat diagrams for the QAR(1) (Quantile Autoregression) model of order one \vspace{-2mm}
\[
X_{t,T} = [(1.9U_t - 0.95)(t/T) + (-1.9U_t + 0.95)(1-(t/T))]X_{t-1,T} + (U_t-1/2),\vspace{-2mm}
\]
where the $U_t$'s are    i.i.d.\  uniform over $[0,1]$ (see \cite{QAR}).  The corresponding strictly stationary approximation at $t_0=\vartheta T$, $0\leq \vartheta \leq 1$,  is \vspace{-1mm}
\begin{equation}
\label{locQAR}
X_t^{\vartheta}= [(1.9V_t - 0.95)\vartheta + (-1.9V_t + 0.95)(1-\vartheta)]X_{t-1}^{\vartheta} + (V_t-1/2)\vspace{-1mm}
\end{equation}
where the $V_t$'s are i.i.d.~uniform over $[0,1]$. The gradient of the   coefficient function in (\ref{locQAR})  changes slowly from $1.9\, U_t - 0.95$ to~$-1.9\, U_t + 0.95$, so that the spectral densities associated with the lower quantiles for small values of~$t_0/T$ are the same as those associated with the upper quantiles for $1-t_0/T$,  and vice versa.

This behavior, which cannot be detected via  classical spectral methods, is quite visible here. 
 Comparing the plots for $\tau = (0.5,0.1)$ and $\tau = (0.9,0.5)$, we see that the real parts reflect the behavior of the time-varying coefficient functions;   mirroring one of them at a vertical axis in $\vartheta = 0.5$ yields the other one. On the other hand, the imaginary parts are time-varying but stable over different quantile combinations.  

Comparing Figure~4 with Figure~3 reveals  completely different reactions to variations of  window lengths. 
   The tvAR$(2)$ case in Figure~3 indeed consists in a strong signal   rapidly  changing over time, whereas the signal in the tvQAR(1)  of Figure~4 is rather weak and therefore harder to detect, with, however, a much smoother evolution in time. As a consequence, a larger window length yields better results in the estimation of the time varying spectral densities. 
 The best results are obtained for  $n=2048$, 
and the estimator displays most of the details found in the actual  spectral density.

\begin{figure}[H]
\begin{minipage}[b]{.48\textwidth}
  \includegraphics[width = 72mm, height = 41mm]{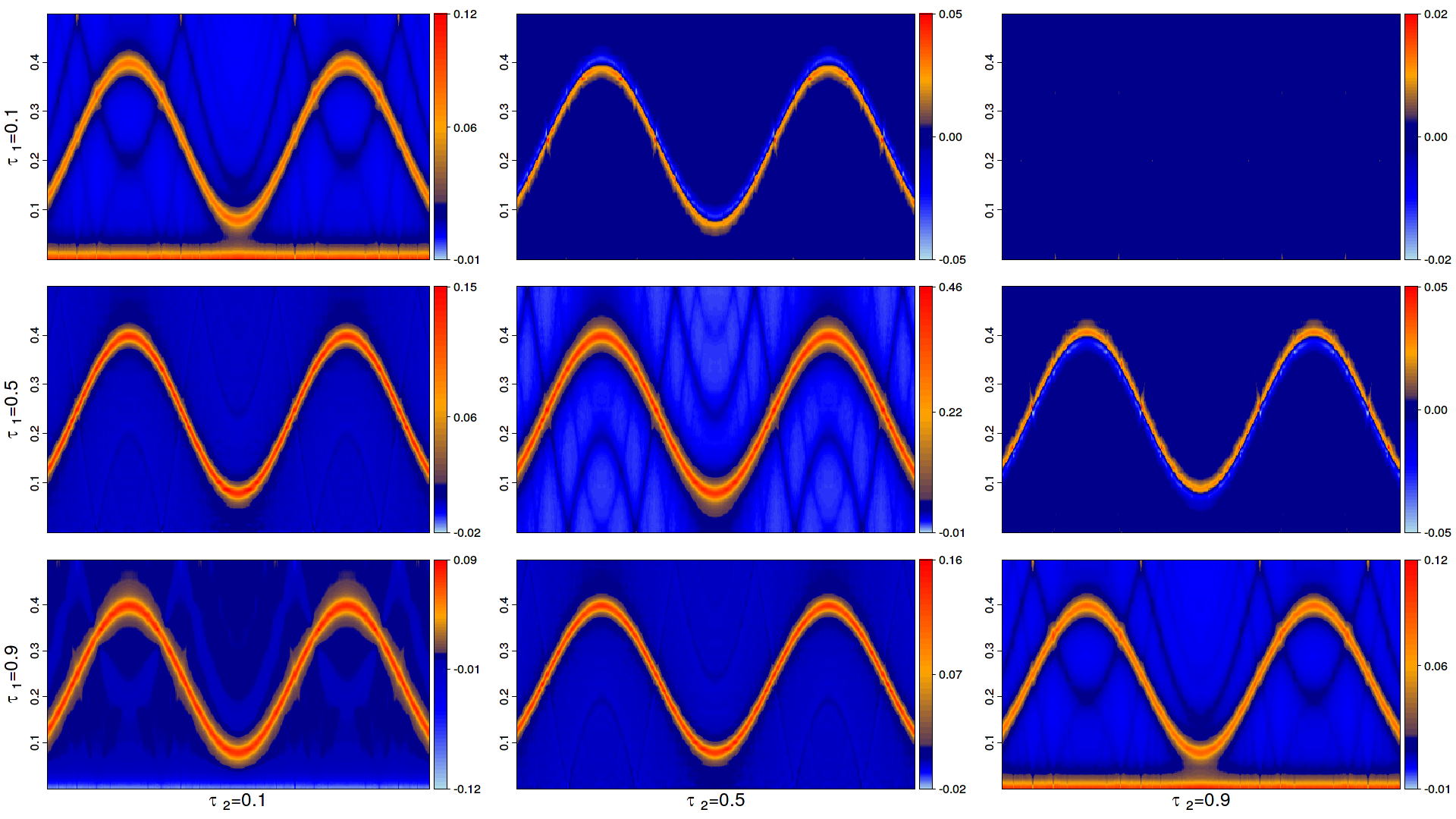}
\subcaption{\footnotesize Actual  copula spectral densities (simulated)}
\end{minipage}
\begin{minipage}[b]{.48\textwidth}
  \includegraphics[width = 72mm, height = 41mm]{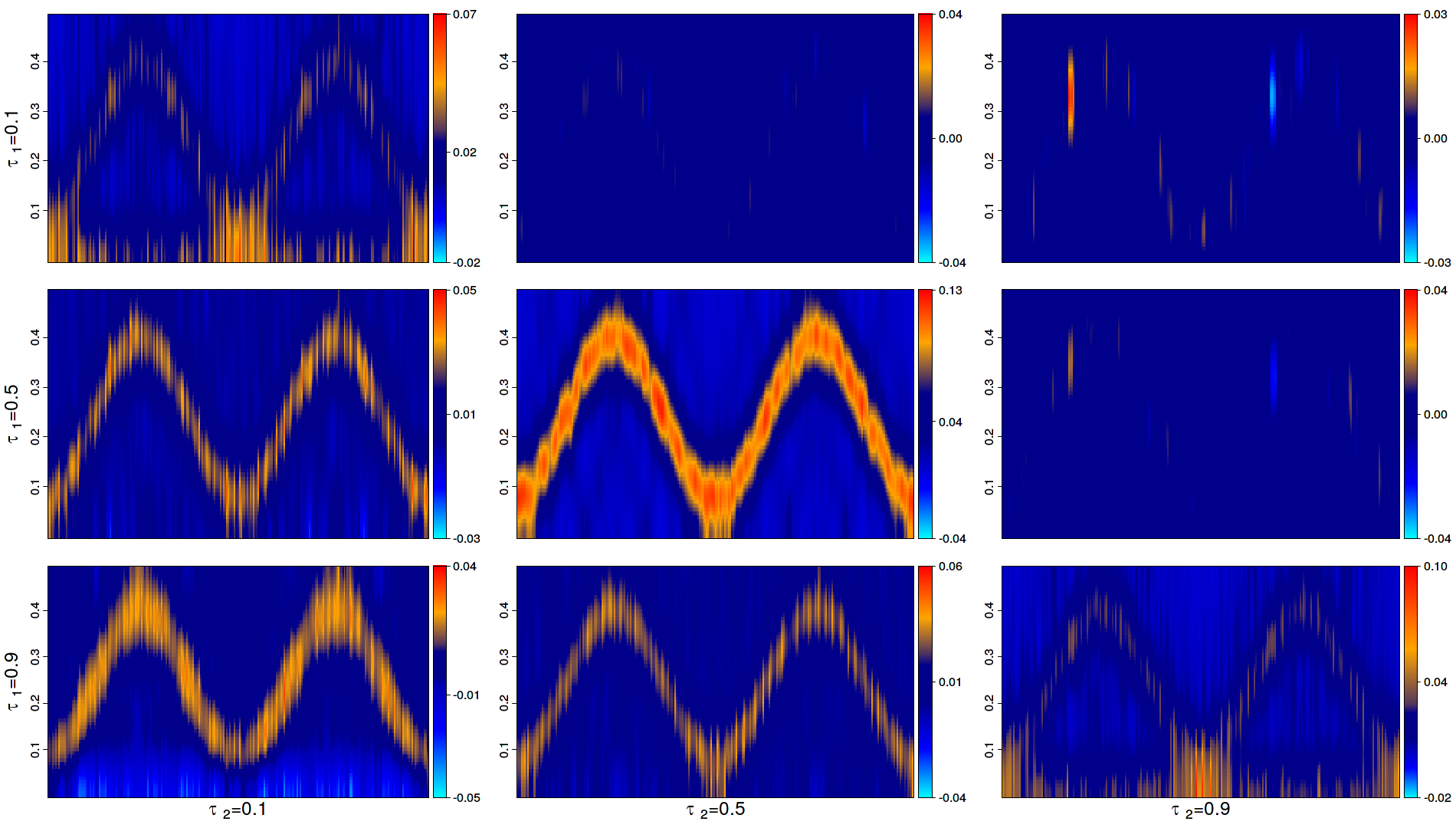}
\subcaption{\footnotesize Estimated copula spectral densities, $n = 128$}
\end{minipage}
\begin{minipage}[b]{.48\textwidth}
  \includegraphics[width = 72mm, height = 41mm]{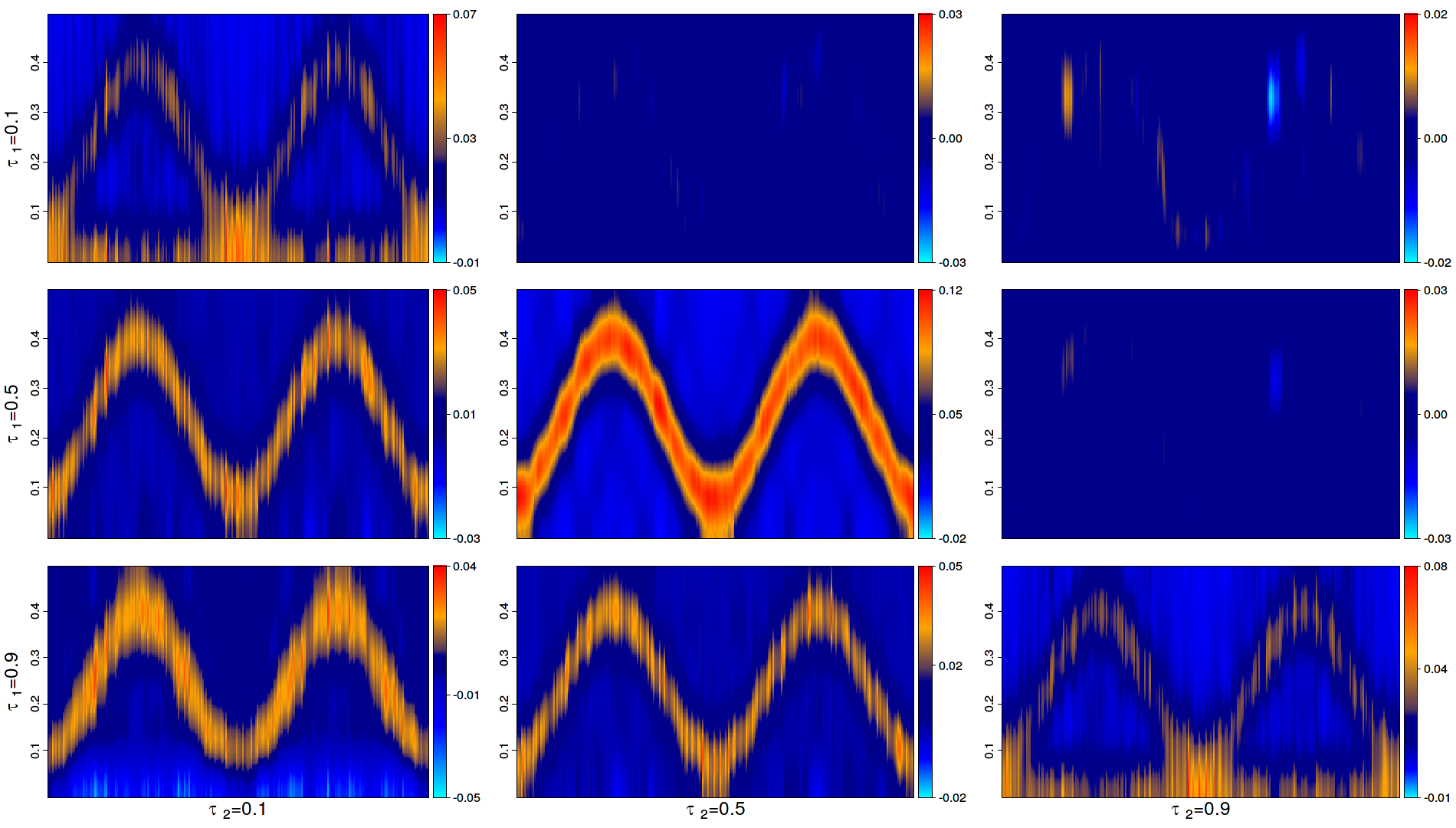}
\subcaption{\footnotesize Estimated copula spectral densities, $n = 256$}
\end{minipage}
\begin{minipage}[b]{.48\textwidth}
  \includegraphics[width = 72mm, height = 41mm]{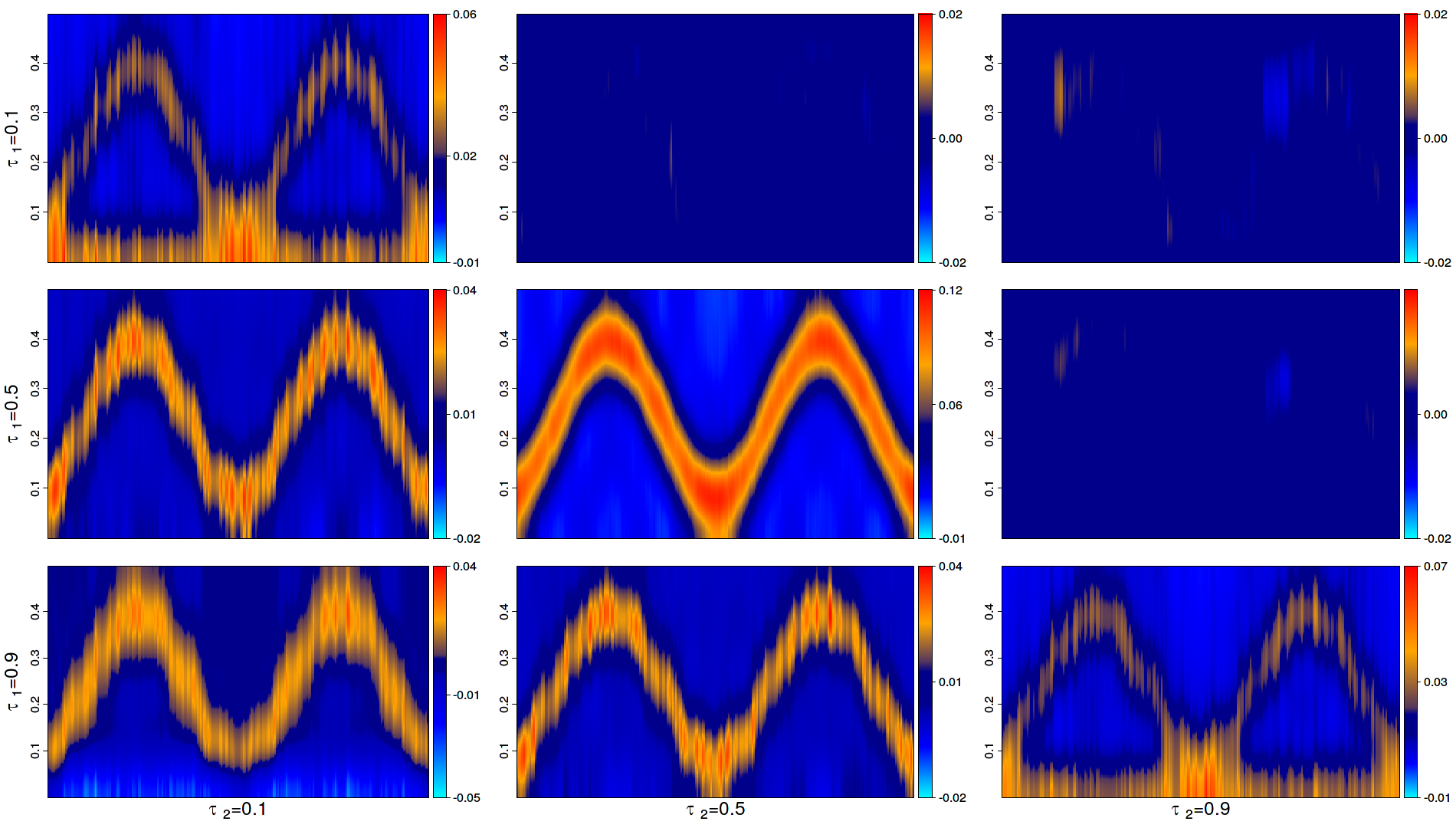}
\subcaption{\footnotesize Estimated copula spectral densities, $n = 512$}
\end{minipage}
\begin{minipage}[b]{.48\textwidth}
  \includegraphics[width = 72mm, height = 41mm]{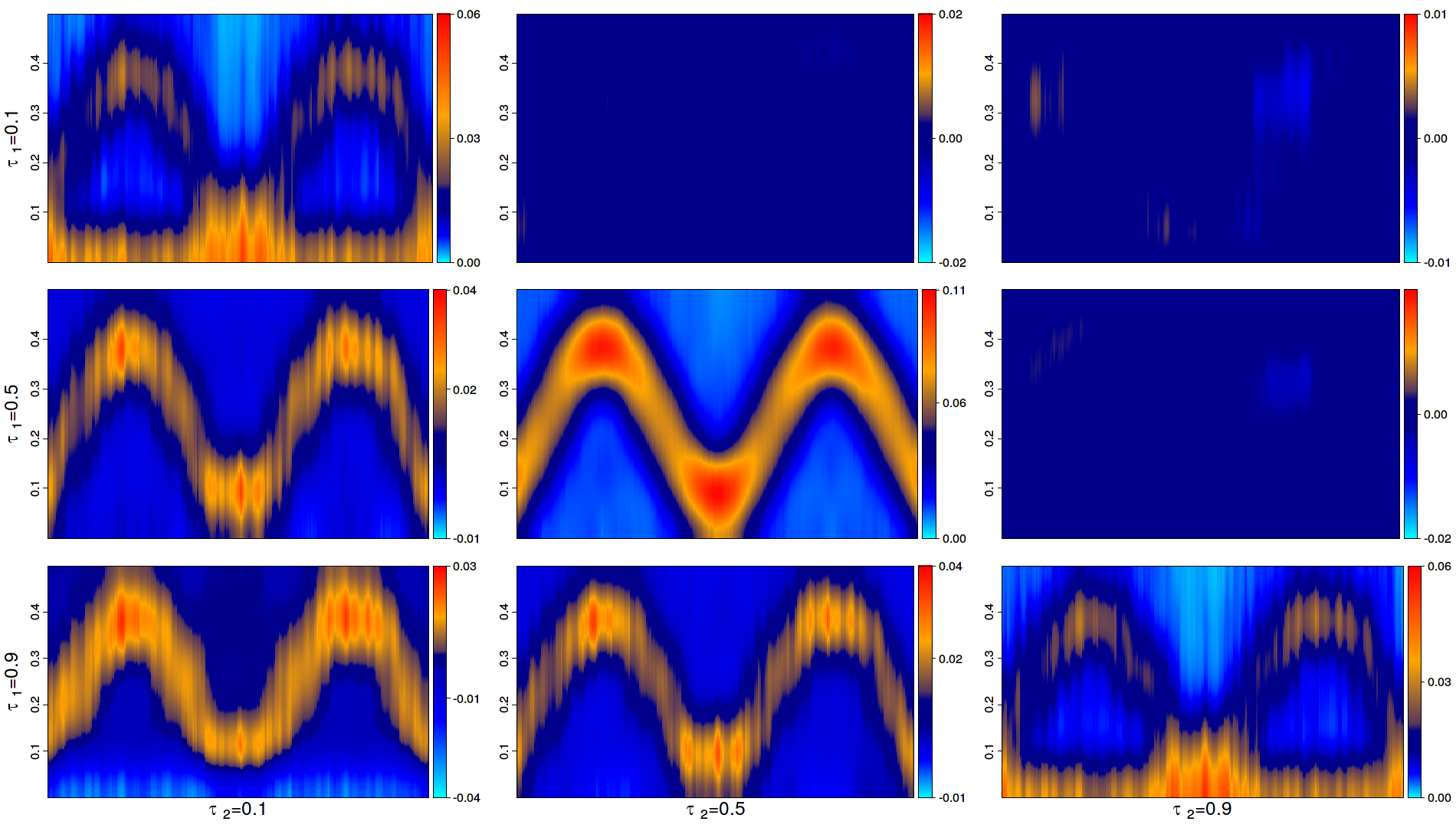}
\subcaption{\footnotesize Estimated copula spectral densities, $n = 1024$}
\end{minipage}\hspace{0.5cm}
\begin{minipage}[b]{.48\textwidth}
  \includegraphics[width = 72mm, height = 41mm]{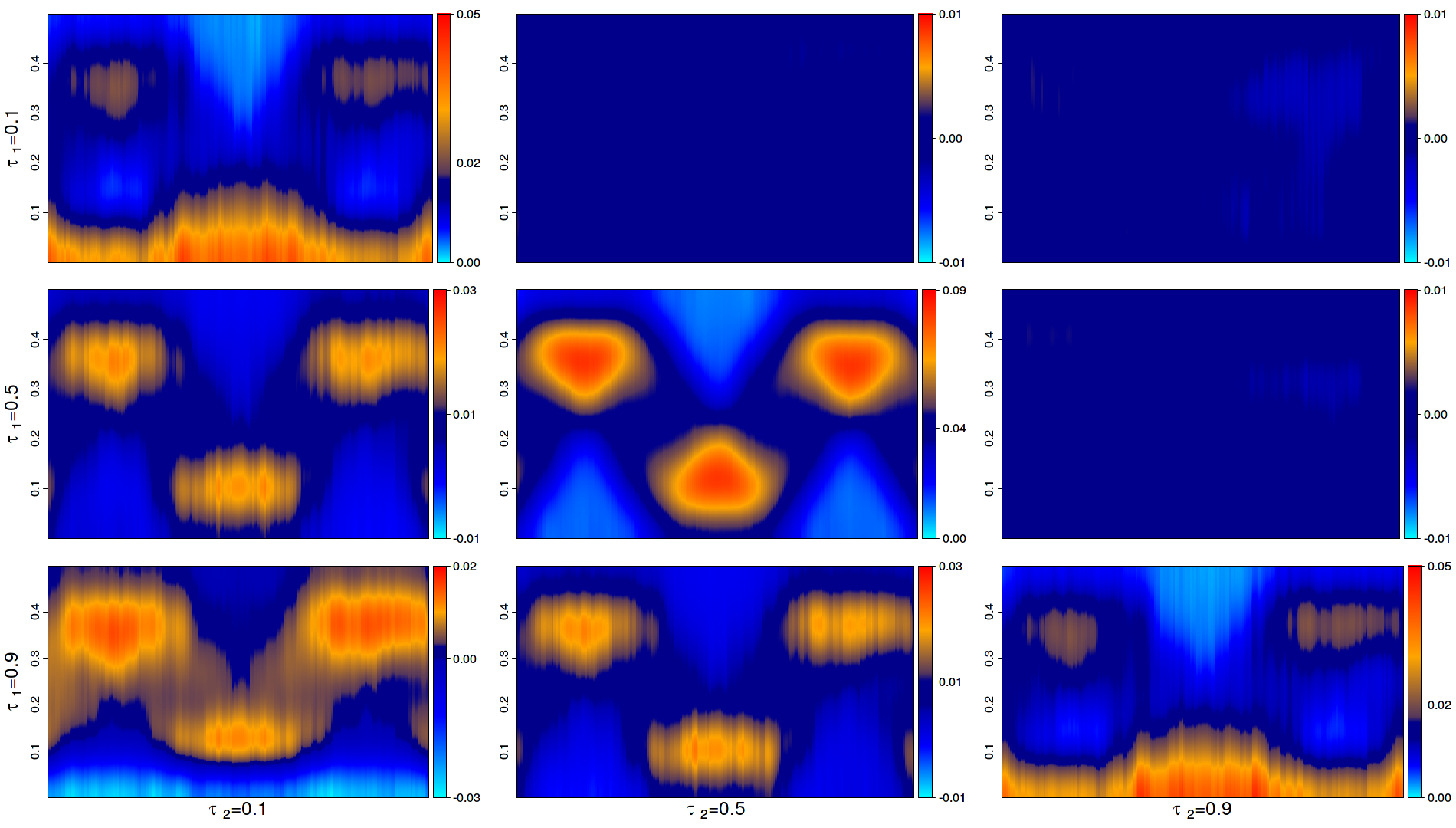}
\subcaption{\footnotesize Estimated copula spectral densities, $n = 2048$}
\end{minipage}

\caption{\small Heatmaps of the Cauchy time-varying AR(2) process described in Section \ref{sec:p2} and the corresponding estimators, 
for  various window lenghts. 
}\label{Figb}
\end{figure}
\newpage
\begin{figure}[H]
\begin{minipage}[b]{.48\textwidth}
  \includegraphics[width = 72mm, height = 41mm]{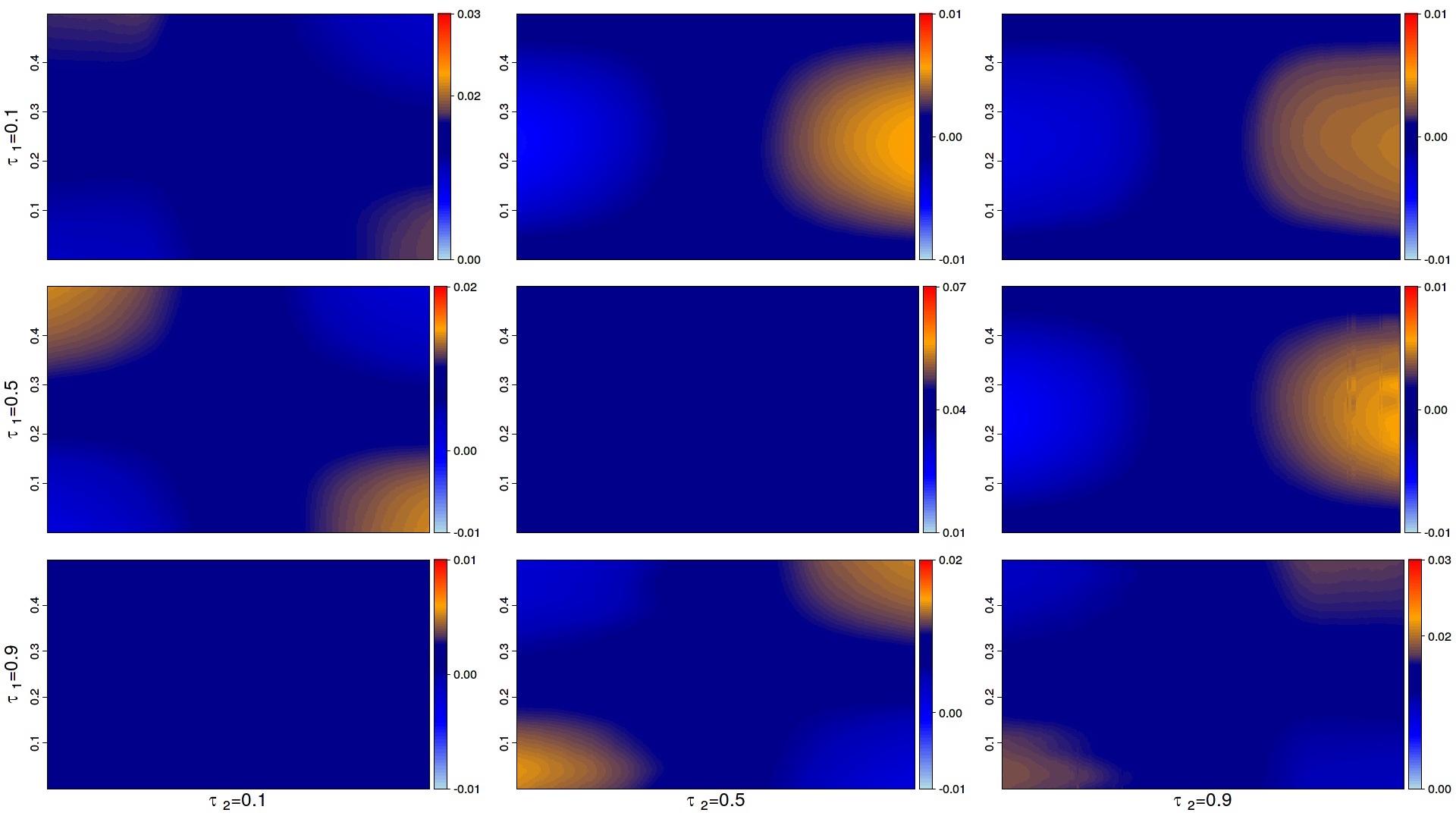}
\subcaption{\footnotesize Actual  copula spectral densities (simulated)}
\end{minipage}
\begin{minipage}[b]{.48\textwidth}
  \includegraphics[width = 72mm, height = 41mm]{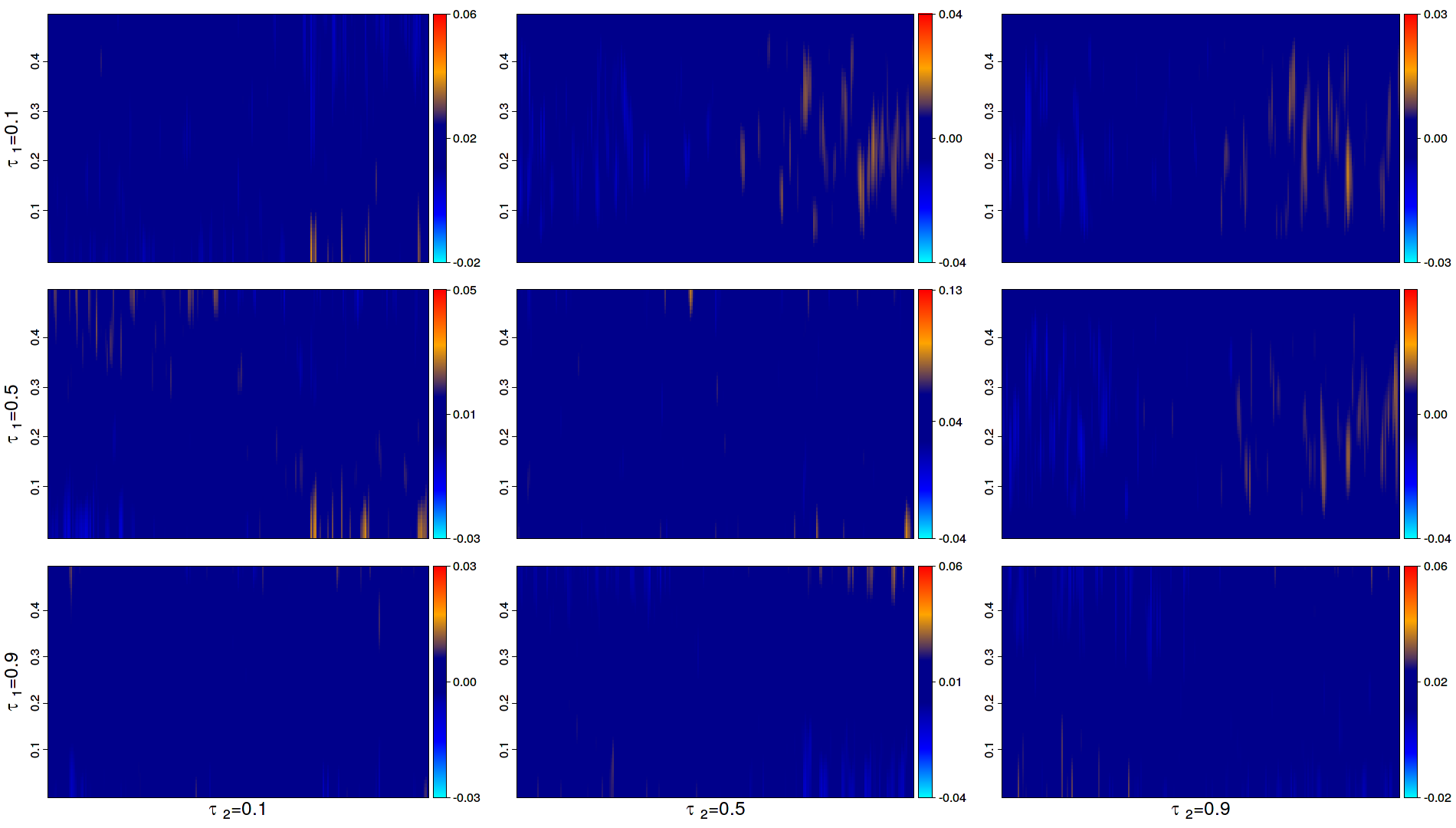}
\subcaption{\footnotesize Estimated copula spectral densities, $n = 128$}
\end{minipage}
\begin{minipage}[b]{.48\textwidth}
  \includegraphics[width = 72mm, height = 41mm]{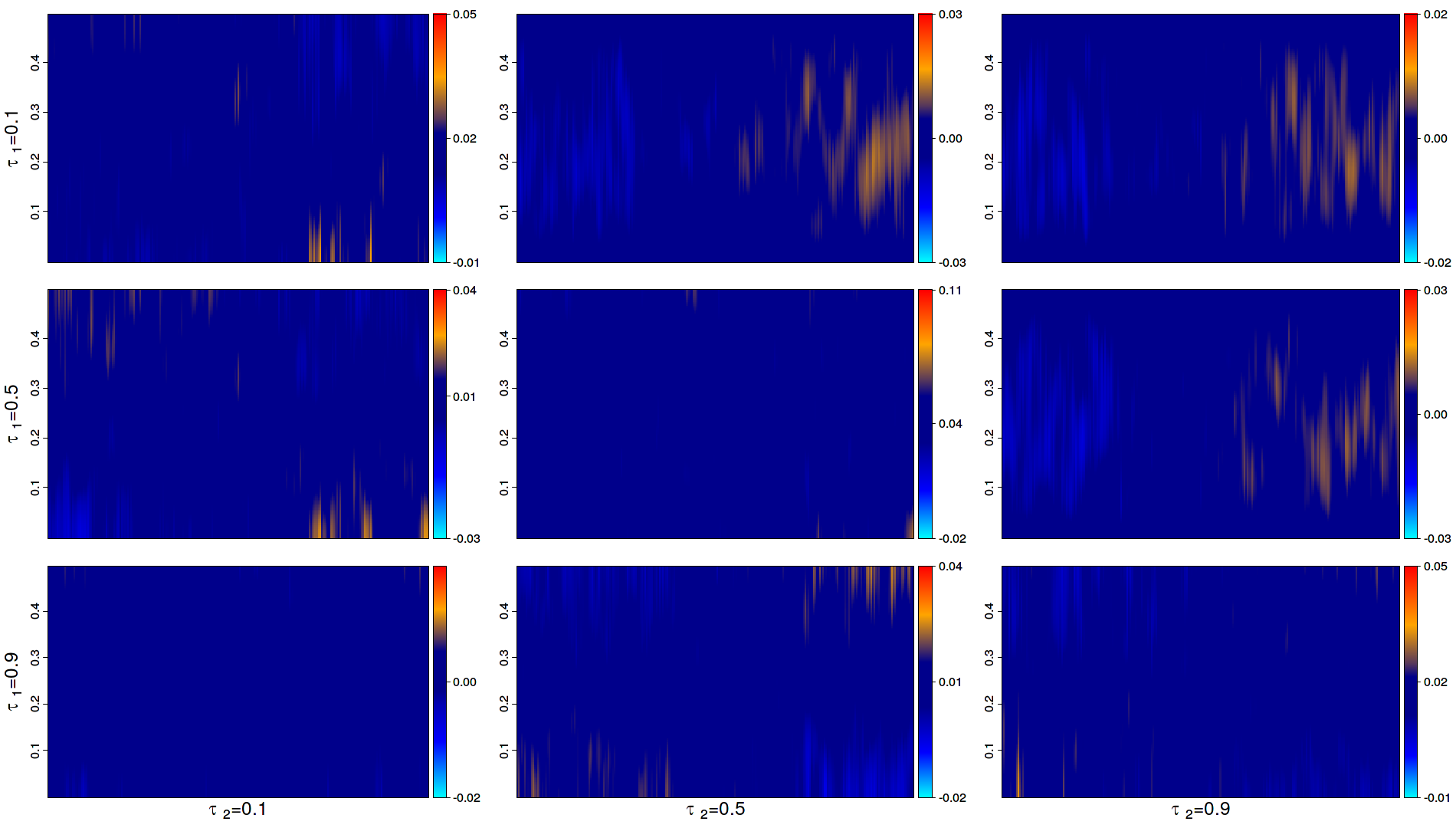}
\subcaption{\footnotesize Estimated copula spectral densities, $n = 256$}
\end{minipage}
\begin{minipage}[b]{.48\textwidth}
  \includegraphics[width = 72mm, height = 41mm]{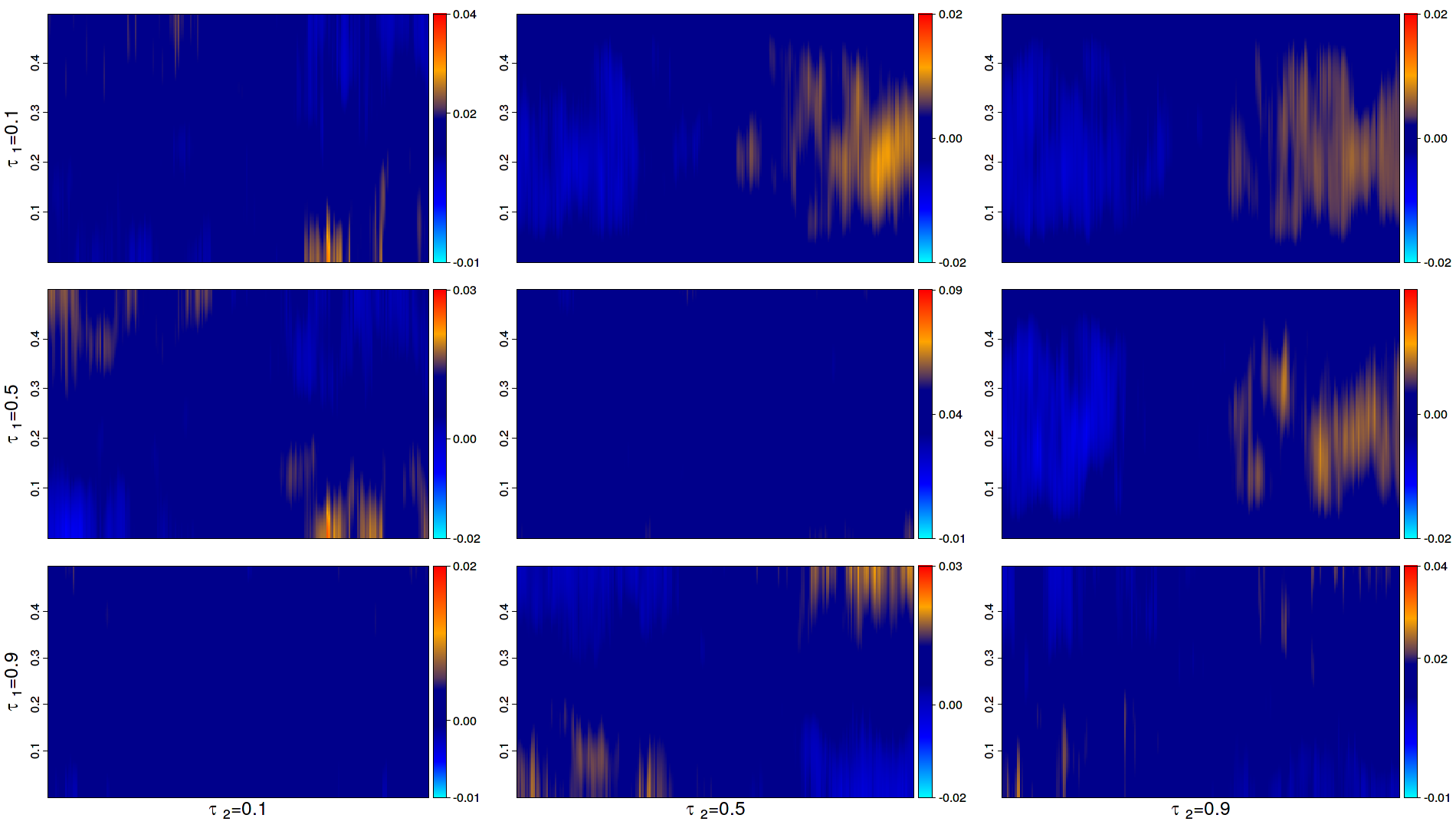}
\subcaption{\footnotesize Estimated copula spectral densities, $n = 512$}
\end{minipage}
\begin{minipage}[b]{.48\textwidth}
  \includegraphics[width = 72mm, height = 41mm]{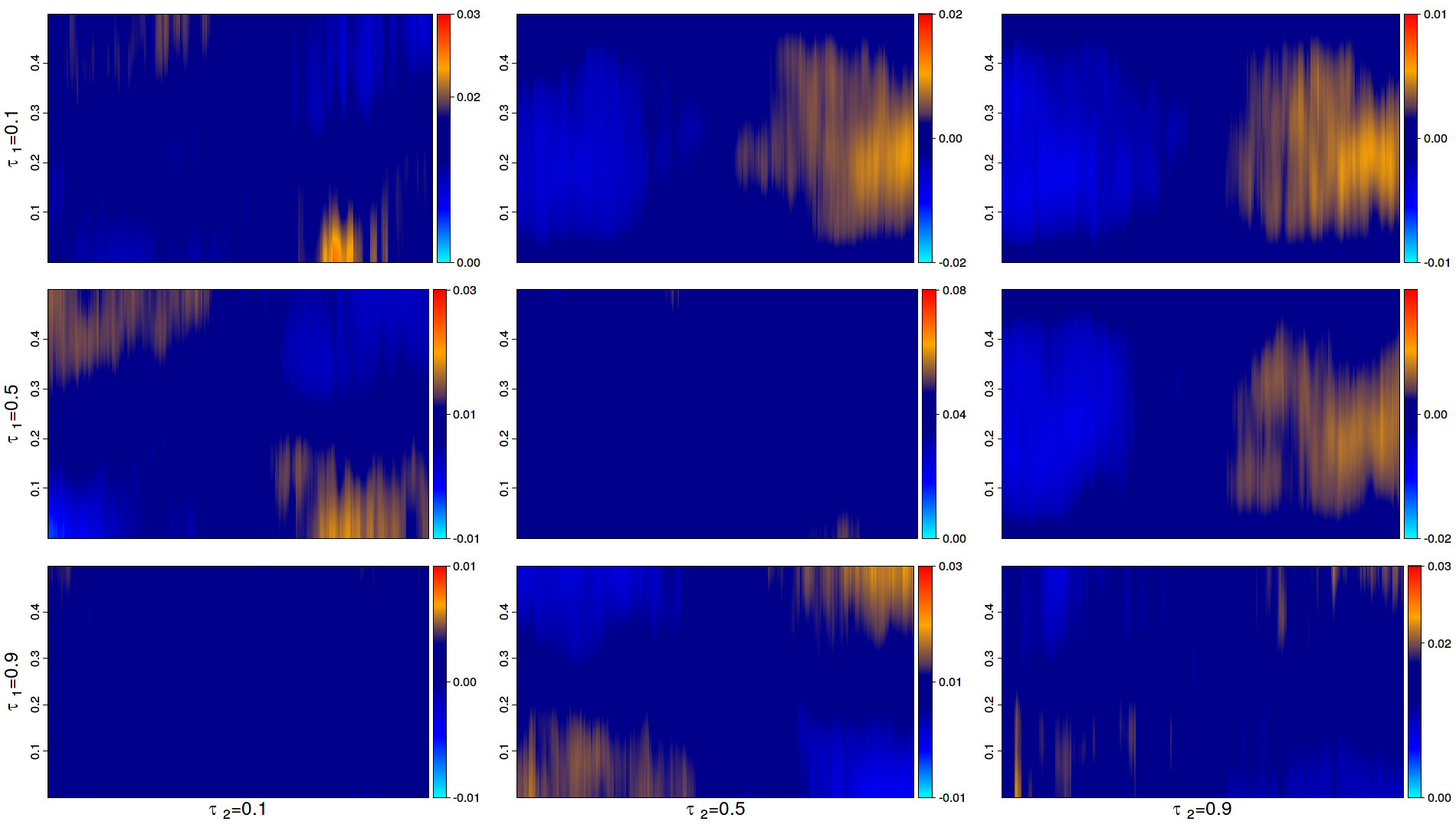}
\subcaption{\footnotesize Estimated copula spectral densities, $n = 1024$}
\end{minipage}
\hspace{5mm}
\begin{minipage}[b]{.48\textwidth}
  \includegraphics[width = 72mm, height = 41mm]{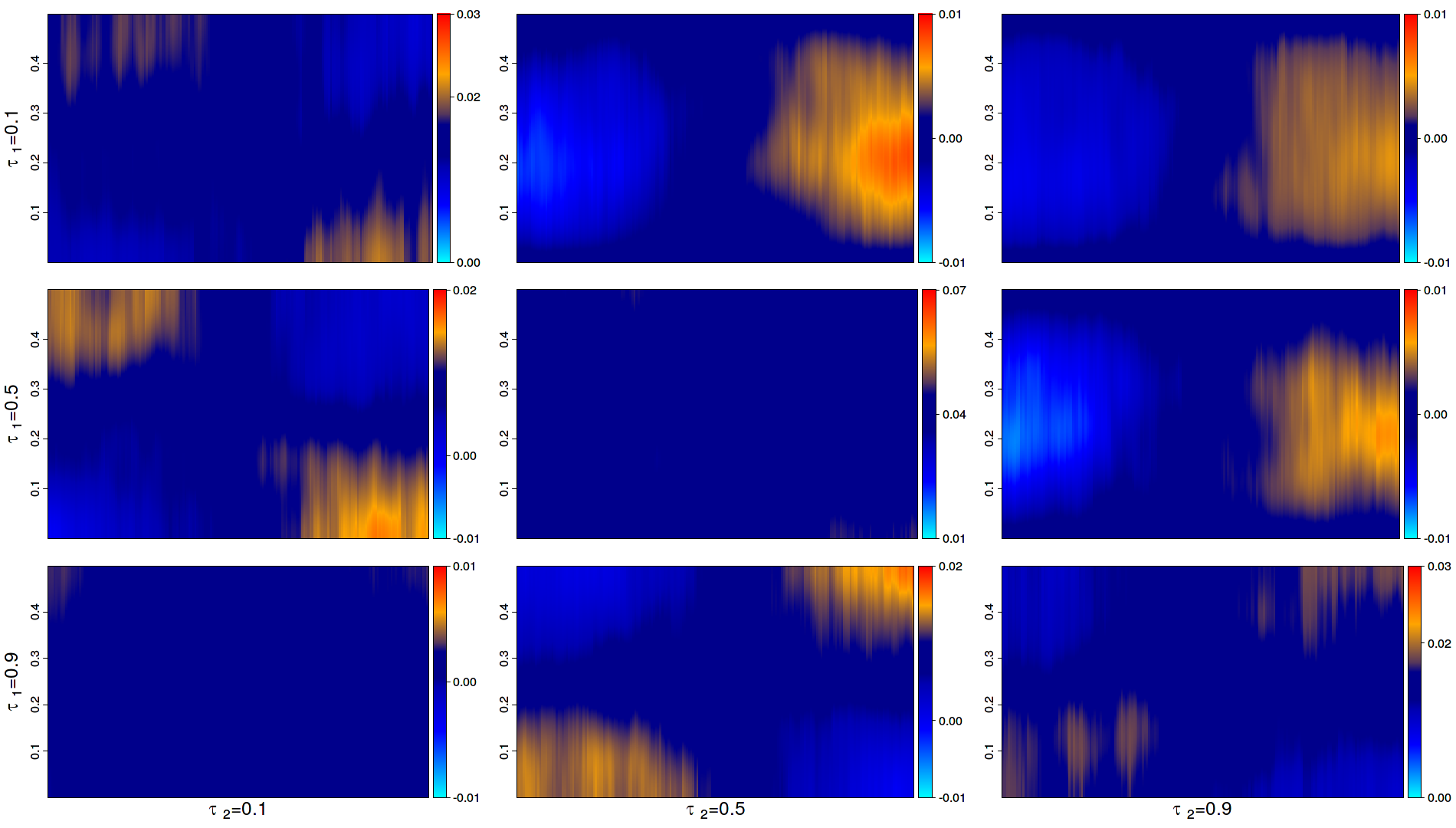}
\subcaption{\footnotesize Estimated copula spectral densities, $n = 2048$}
\end{minipage}
\caption{\small Heatmaps of the time-varying QAR(1) process described in Section \ref{sec:p4} and the corresponding estimators, 
for  various  window lenghts.
\vspace{-5mm}
}\label{Figd}
\end{figure}


\subsection{Standard \& Poor's 500 }\label{SecSandP}

\begin{figure}[t]
\centering
   \includegraphics[width = \textwidth]{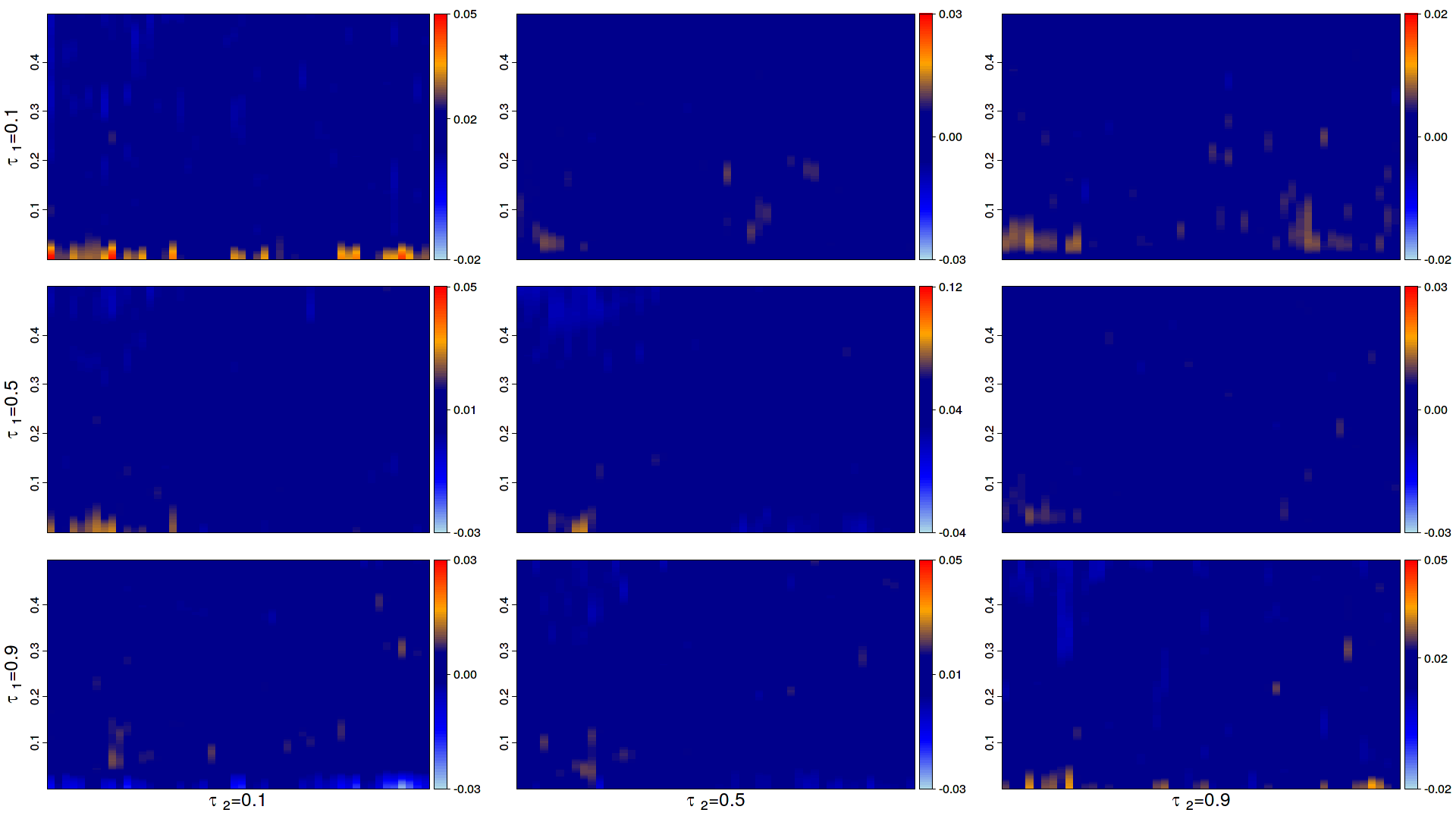}
\caption{\small Time-frequency heatmaps of the quantile lag-window estimator for the log-returns from the S\&P500 between 1962-2013 for quantile levels $0.1,0.5$ and $0.9$. The vertical axis represents  frequencies ($0<\omega/2\pi <0.5$) and the horizontal axis is time ($1\leq t\leq 12992$). The plots are organized  as explained in Section \ref{Seccalibr}; the color code is provided along the right-hand side of each  figure.
\vspace{-3mm} }
\label{figSPgen}
\end{figure}

   \begin{figure}[htbp]
\centering
  \includegraphics[width = 0.49\textwidth]{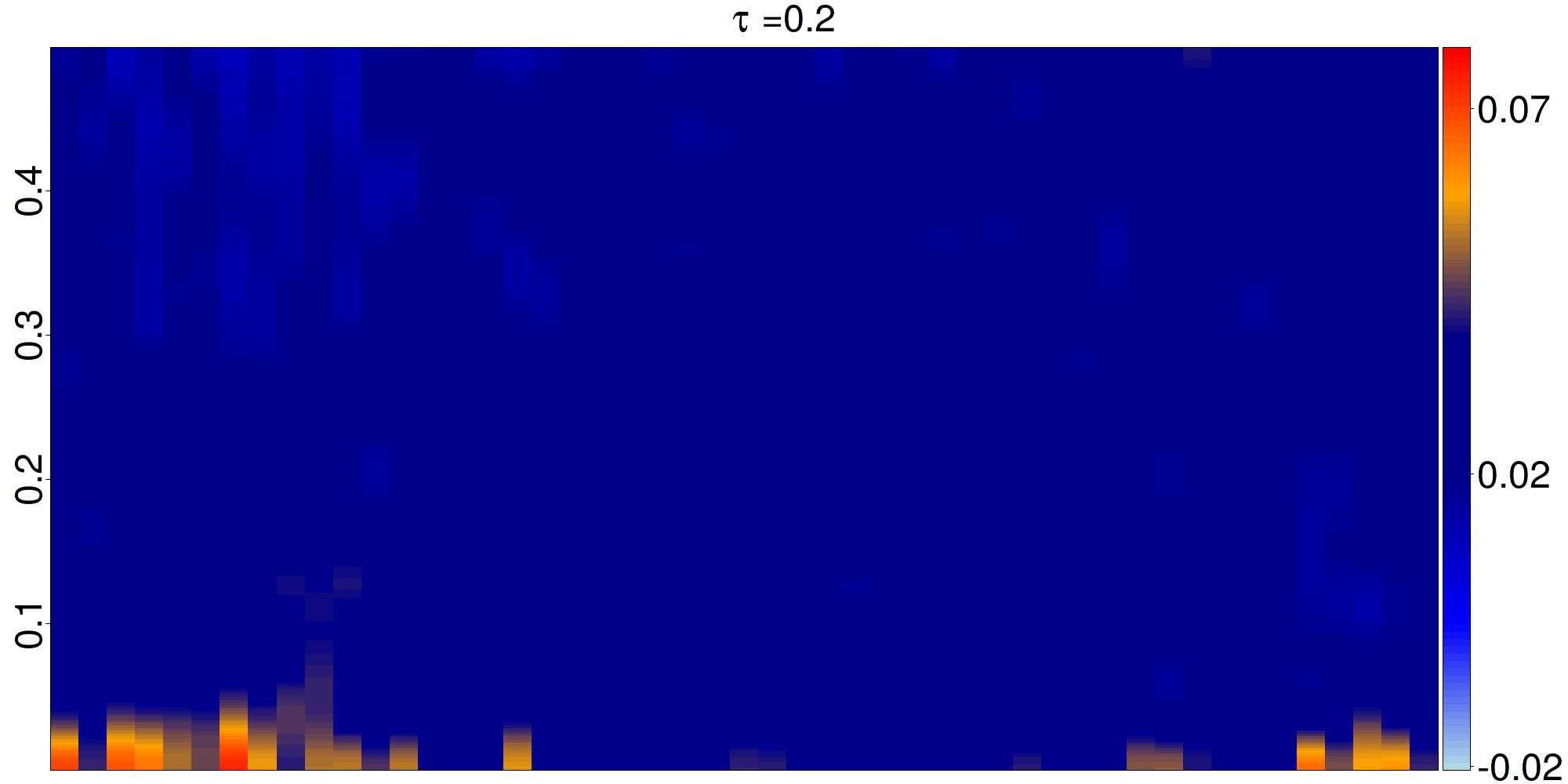}
  \includegraphics[width = 0.49\textwidth]{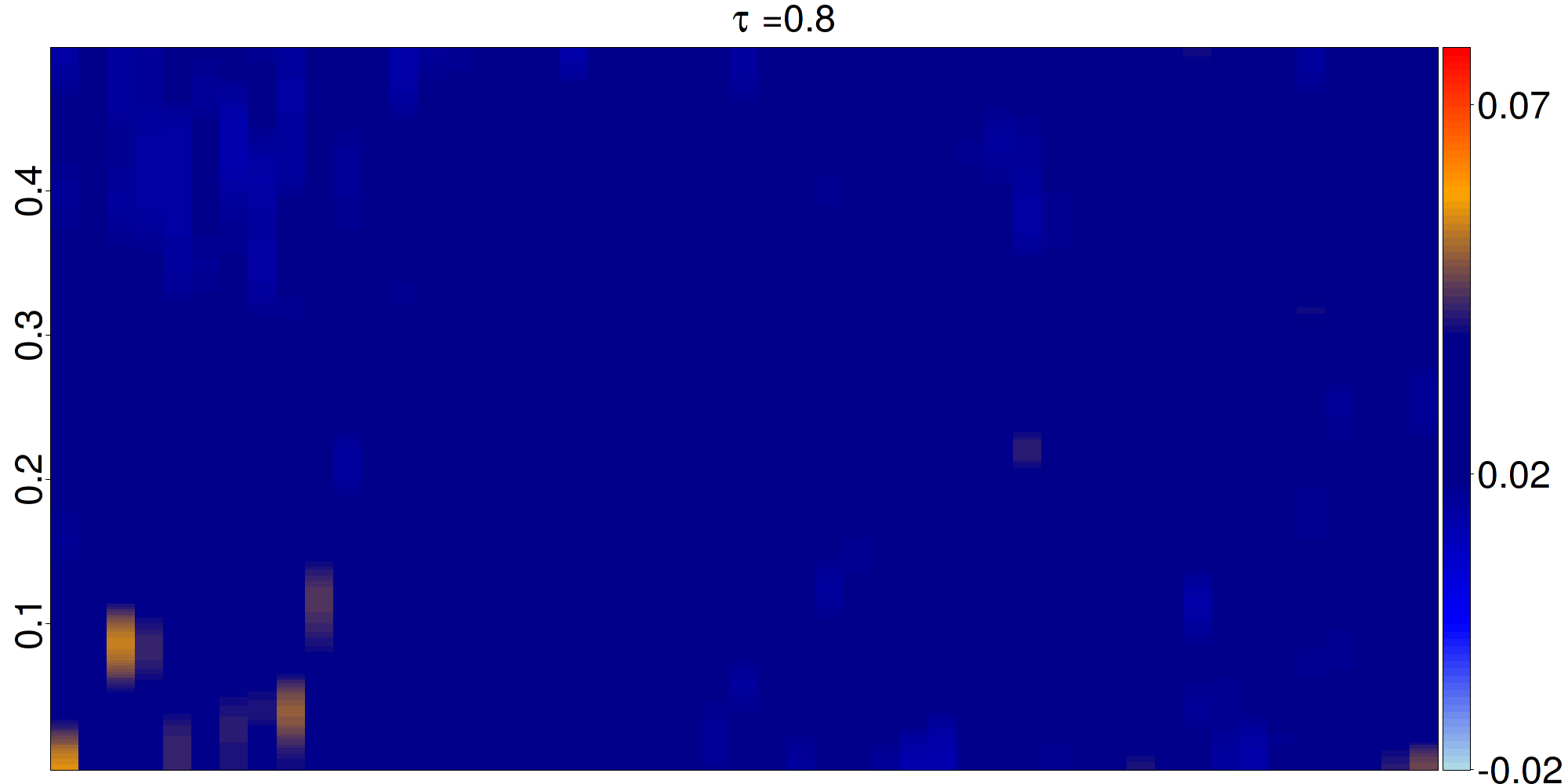}
\caption{\small Time-frequency heatmaps of the quantile lag-window estimators for $\tau_1 = \tau_2  \in \{0.2,0.8\}$ (no imaginary parts, thus).  The vertical axis represents the frequencies   ($0<\omega/2\pi <0.5$) and the horizontal axis is time ($1\leq t\leq 12992$); for each value of $t$, a periodogram is plotted against frequencies via the color code provided along the right-hand side of each  figure.
\vspace{-3mm}}
\label{SPFig4}
\end{figure}

\begin{figure}[t]
    \centering
   \includegraphics[width = 1\textwidth]{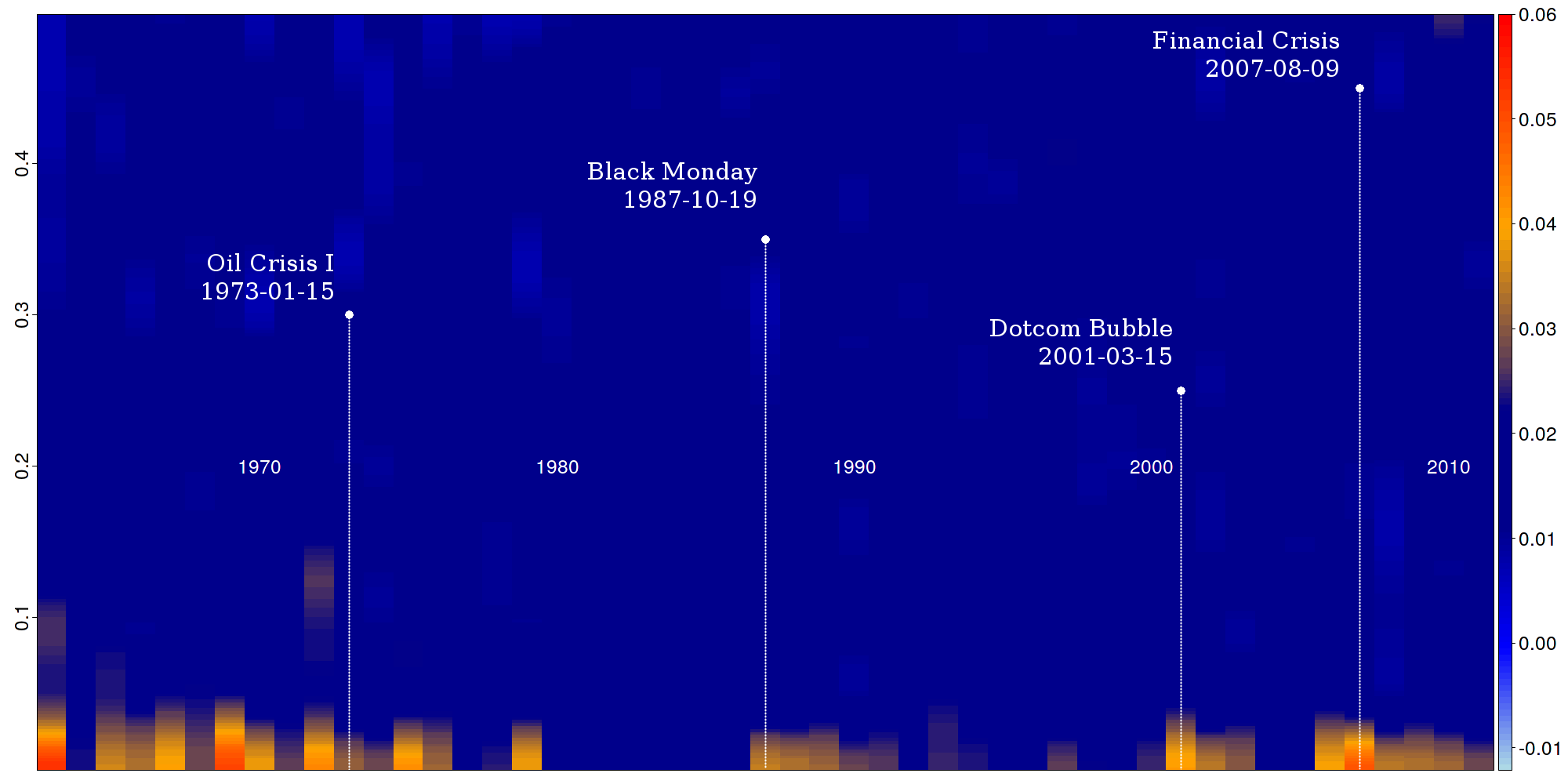}
    \caption{{\small The  $\tau_1 = \tau_2 = 0.1$ local lag-window estimator of Figure~\ref{figSPgen}; vertical dashed lines indicate historical financial crises, namely the Oil Crisis of 1973, the Black Monday (19.10.1987) which took place during the Savings and Loan Crisis in the USA, the bursting of the dot-com bubble in 2001 (followed by the early 2000s recession), and the 2007-2012 financial crisis. }}
\label{fig2}
    \end{figure}\vspace{-1mm}

We now turn back to the S\&P500 index series already considered in the introduction, with~$T=12992$ daily observations from 1962 through~2013 (differences of the logarithms of daily opening and closing prices for about 52 years). We applied the same estimation method as above, 
  with the same window function as described in Section~\ref{SecSimu},  
 with bandwidth~$B_n = 25$,
 window length~$n=512$,  and the sets \mbox{${\cal T}_0 = \{256 + 256j|0\leq j \leq 49\}$} and~$\Omega = \{2\pi j/256|j=0,...,255\}.$ 
 Calibration   was performed as explained in Section~\ref{Seccalibr}. The resulting heatmaps  are shown in the heatmaps of Figure~\ref{figSPgen}.

Whereas the central heatmaps ($\tau_1=\tau_2=0.5$)   are pretty flat (uniform dark blue) with the exception of some deviations from white noise behavior limited to the early seventies, the more extreme ones ($\tau_1=\tau_2=0.1$ and~$0.9$) suggest an alternance of high low-frequency spectral densities  (yellow and red) and perfectly  {\textquoteleft flat\textquoteright} (dark blue) periods.  A closer analysis   reveals that  those periods of strong dependence in the tails typically correspond to well-identified crises and booms (see below for details). Another interesting observation is the marked asymmetry between the time-varying spectra associated with the left ($\tau = 0.1$) and right ($\tau = 0.9$) tails, which can be interpreted in terms of prospect theory (see \cite{kahneman1979}). That asymmetry is  confirmed by comparing the estimated spectra for~$\tau = 0.2$ and~$\tau = 0.8$ (see Figure~\ref{SPFig4}); again, it cannot be detected by covariance-based methods. 
Inspection of local stationary copula-based spectra thus suggests that the 
 S\&P500 series is not as close to white noise as claimed. However, it takes a combination of quantile-related and local stationarity tools to bring some evidence for that fact.

 We now take  a closer look at deviations from the white noise behavior (dark blue) which  are particularly  visible in the diagrams associated with tail quantile levels. Concentrating on the $\tau_1=\tau_2 =0.1$ case, closer inspection of the diagram reveals a relation between low-frequency spectral peaks and financial crisis events: in Figure \ref{fig2}, vertical white lines are identifying the Oil Crisis of 1973, the Black Monday (19.10.1987) which took place during the Savings and Loan Crisis in the USA, the bursting of the dot-com bubble in 2001 (followed by the early 2000s recession), and the  2007-2012 financial crisis. Those episodes clearly are matching most of the low-frequency peaks quite well, indicating that crises lead to, or consist of,  strong changes in the dependence structure of low returns. 

This apparent impact of crises on copula spectra is confirmed when focusing the analysis on the corresponding periods. In Figure~\ref{SingleCrisis}, we provide plots of the local lag-window estimators for~$\tau_1 = \tau_2 = 0.1$ before and after two of those four crises, the~{2001 bursting of the dot-com bubble} and the 2007 financial crisis. More precisely, for each of them, we calculated local lag-window estimations using only observations before the critical  date,  and compared them to estimations using only observations taken after it. {None of the pre-crisis curves indicates  any significant deviation from  white noise, whereas both post-crisis ones do.} The interpretation is that crises, locally  but quite suddenly, produce  changes in the dependencies between low returns. Those changes happen \textsl{after} the crisis onset, and   thus do not help  predict ing them; as shown by Figure~\ref{fig2}, they fade away  more slowly than they appeared. As for the atypical spectra in the late sixties, they are probably due to the fact  that the market, at that time, was much smaller, and less efficient, than nowadays.  In addition, some of the peaks of  low-returns spectral densities  at low frequencies are not systematically associated with any well-identified crisis. This indicates that, apart from crises, other events can also influence the dependence structure of low returns. Peaks in quantile spectral densities at low frequencies, indeed,  also can be caused by  time-varying variances. This fact was first observed by \cite{li2014},  who suggested that this  phenomenon could   explain some features of  the    S\&P500 spectra. A closer look, however,   reveals that it cannot account for  all dependence   in that dataset: see   Section \ref{sec:addsp500} of  the online appendix.

\begin{figure}[t]
 \begin{minipage}[b]{.48\textwidth}
  \includegraphics[width = 70mm]{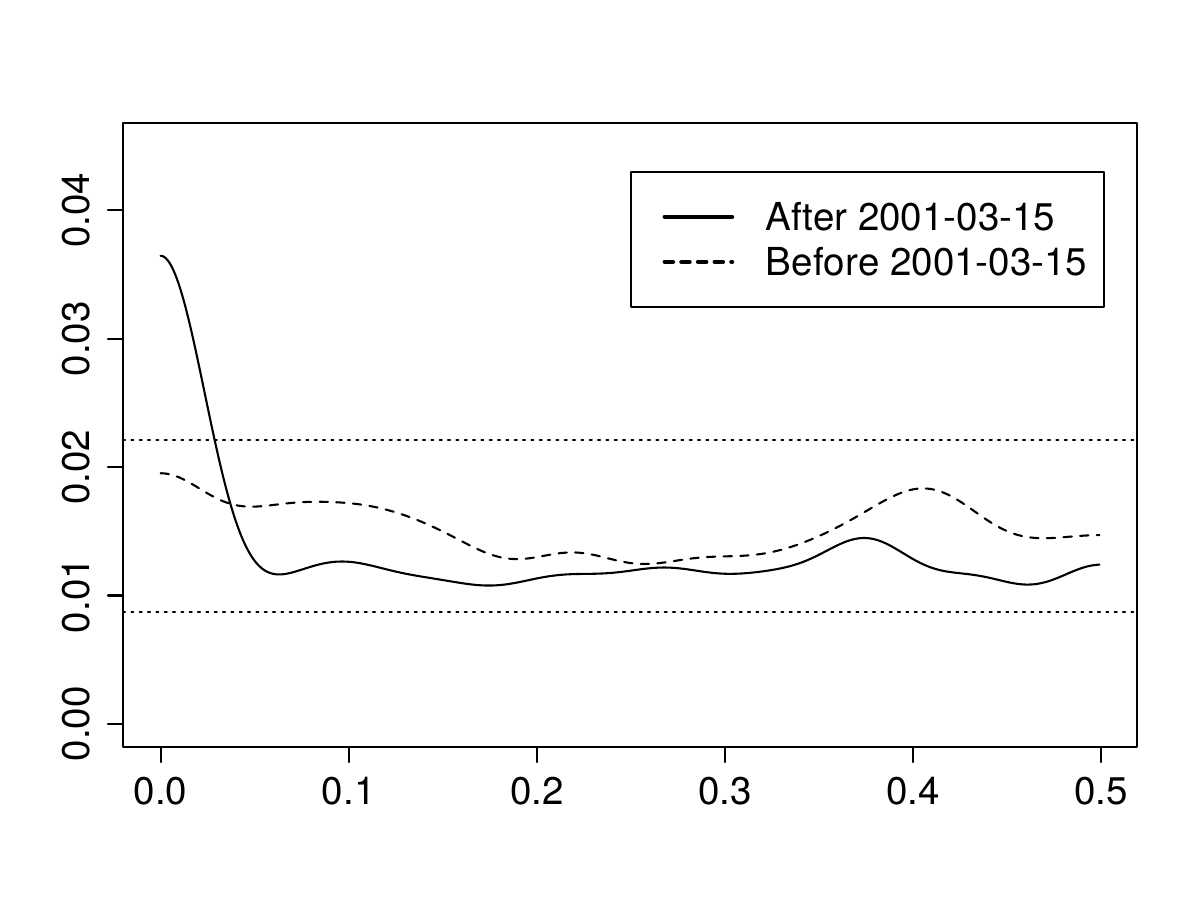}\vspace{-3mm}
  \end{minipage}
   \quad
  \begin{minipage}[b]{.48\textwidth}
  \includegraphics[width = 70mm]{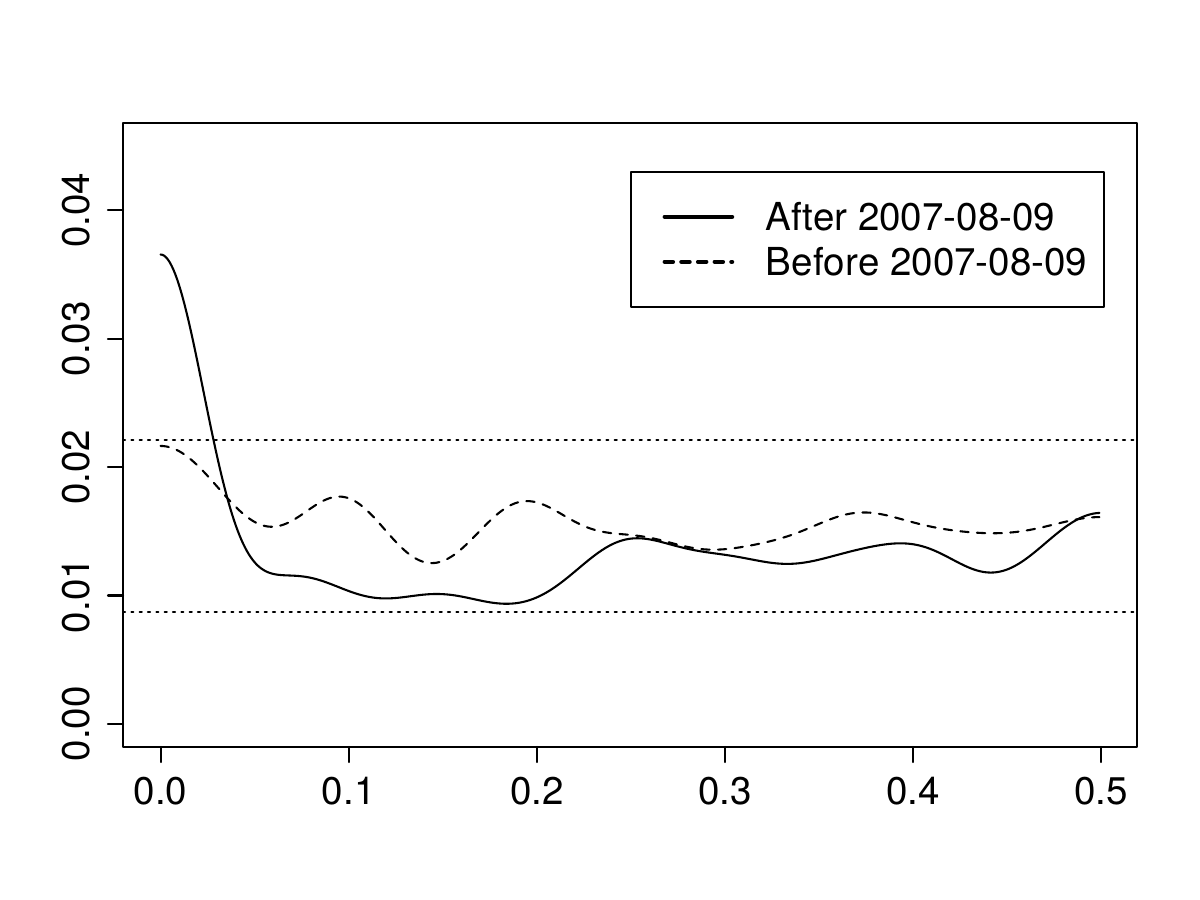}\vspace{-5mm}
  \end{minipage}
  
    \caption{\small Single local lag-window estimators calculated before (dashed) and after (solid) the bursting of the dot-com bubble in 2001 (left) and the beginning of the financial crisis in 2007 (right);   the dotted horizontal  lines represent the values of $q_{\min}$ and $q_{\max}$ from Section~$\ref{Seccalibr} (iii);$ smoothing and bandwidth choices as in Figure~\ref{figSPgen}.}
    \label{SingleCrisis}
\end{figure}

\subsection{Daily temperatures in Hohenpeissenberg} \label{SecHPB}

\begin{figure}[t]
\centering
   \includegraphics[width = 0.45\textwidth]{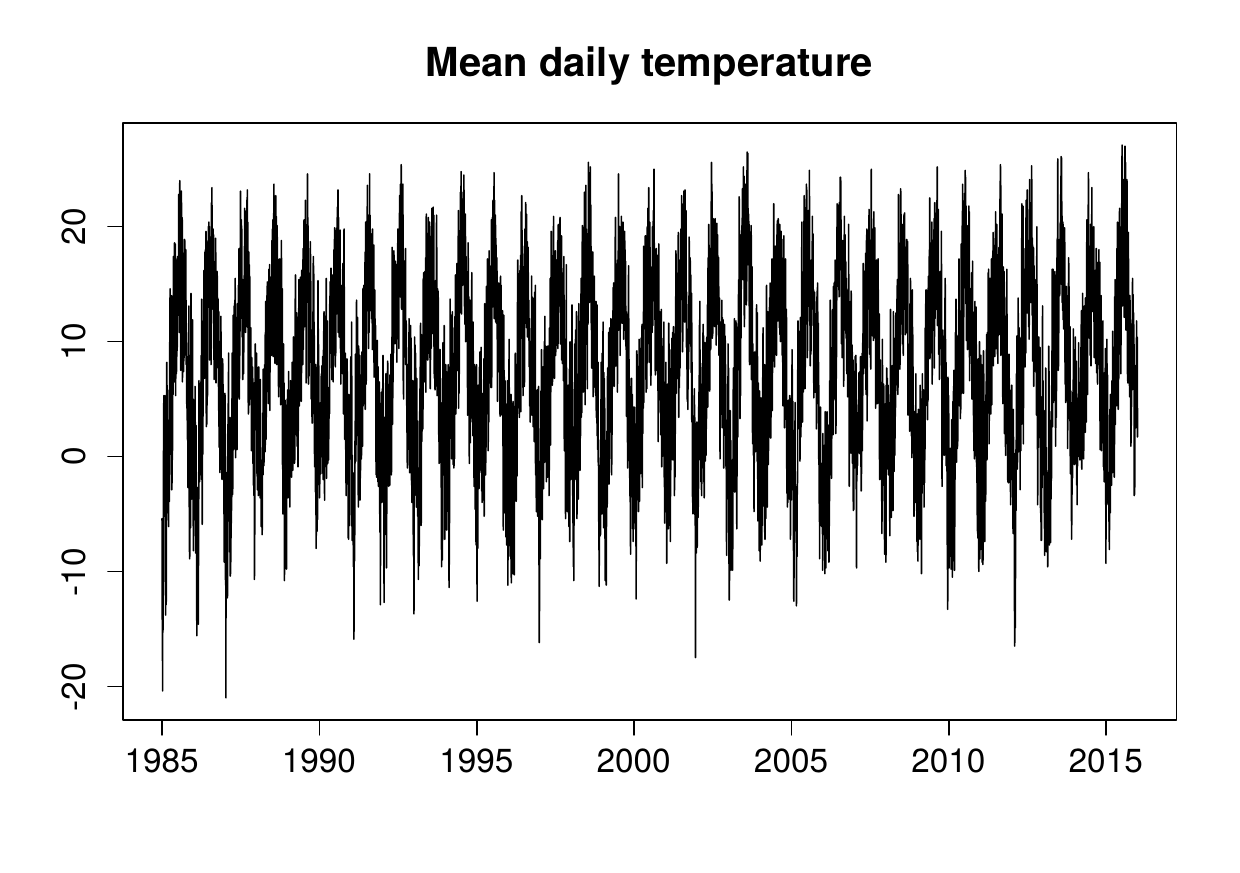}
   \includegraphics[width = 0.45\textwidth]{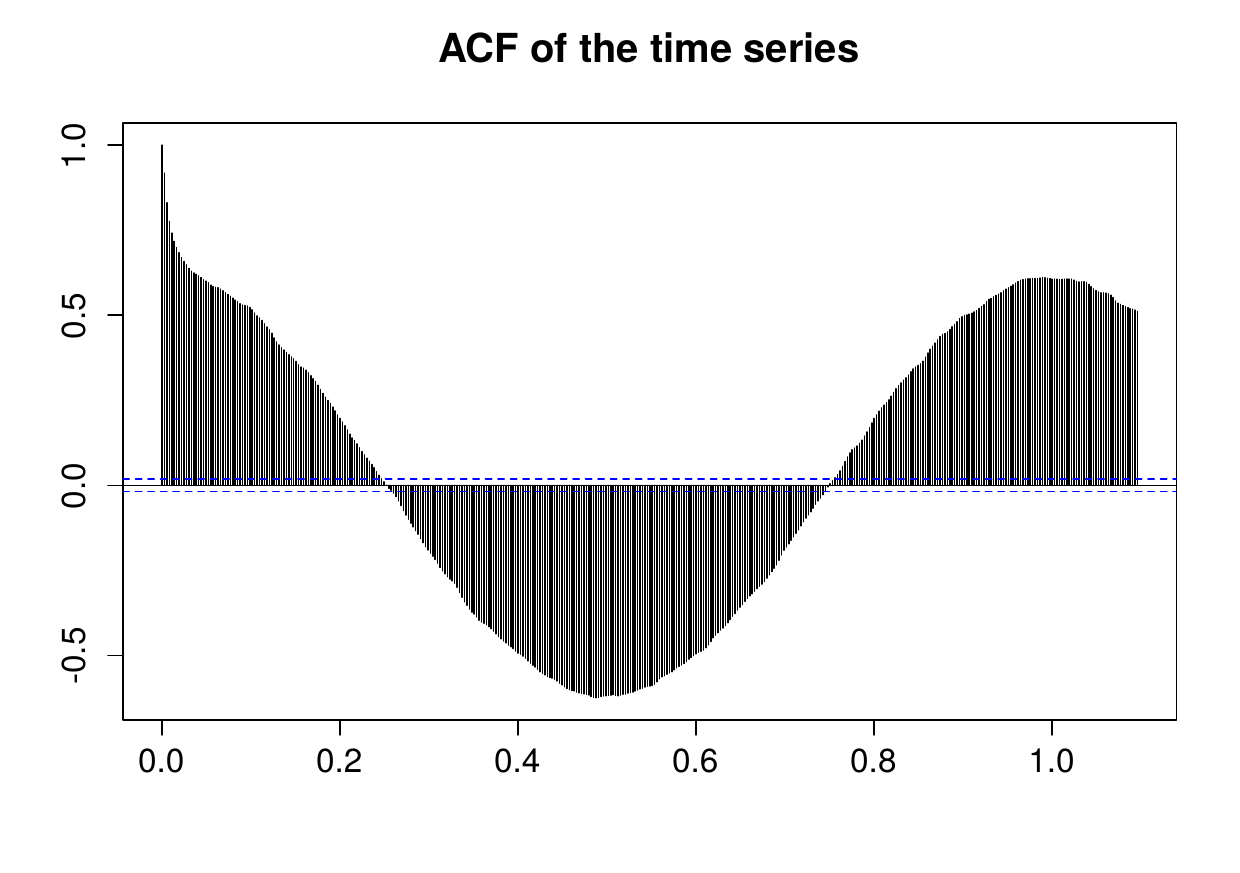}
   \includegraphics[width = 0.45\textwidth]{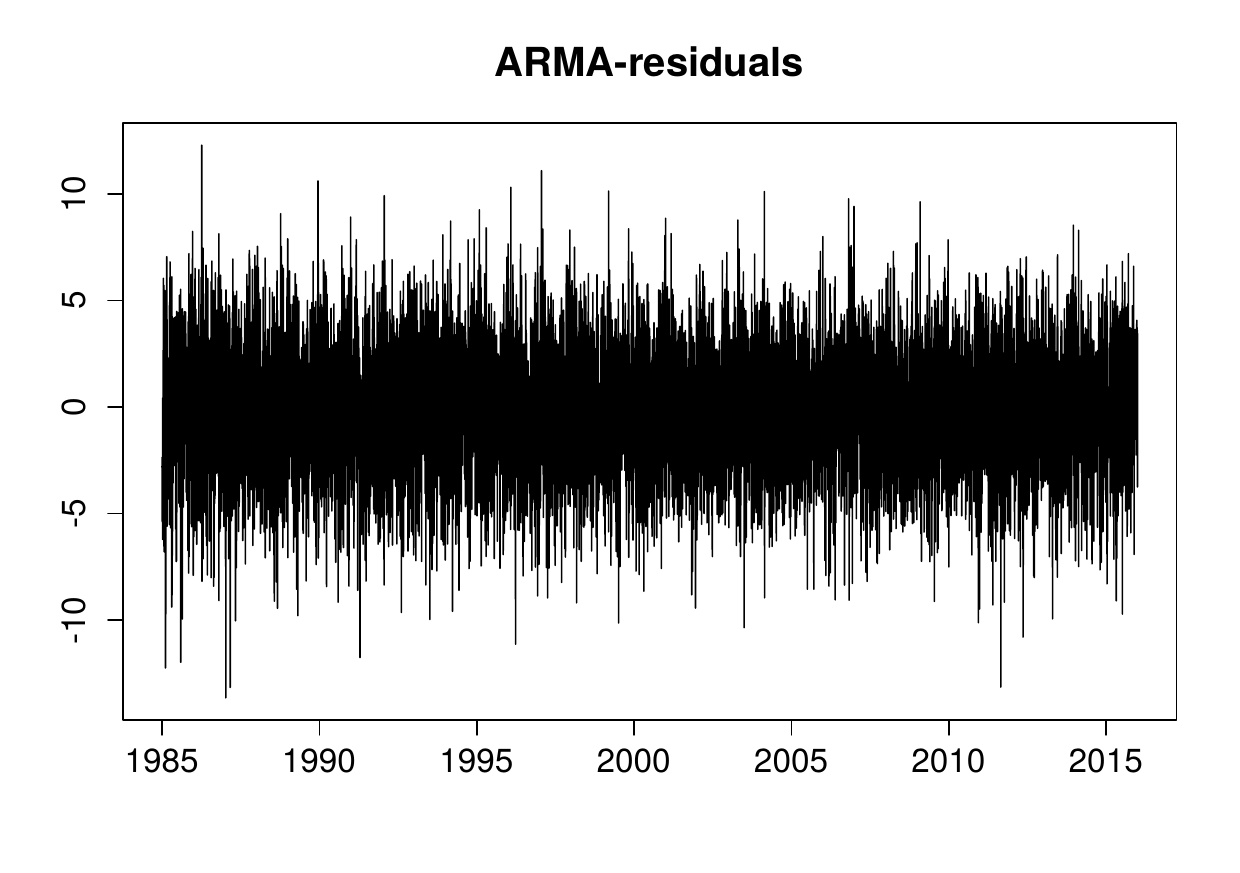}
   \includegraphics[width = 0.45\textwidth]{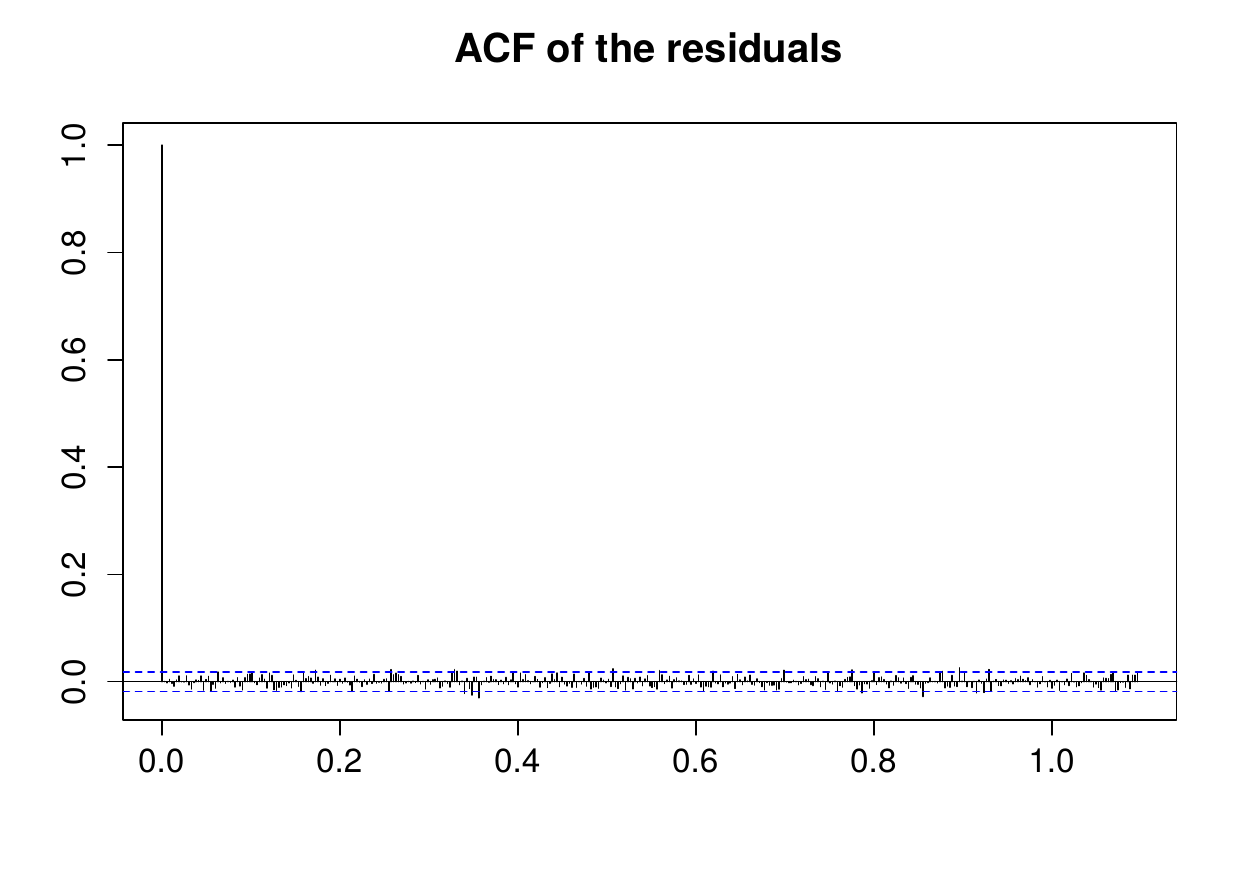}
\caption{\small First row displays $T=11315$ observation of daily mean temperature between 1985 and~2015 at the Hohenpeissenberg Meteorological Observatory and its autocorrelation function;    the second row shows the residuals of the ARMA$(3,1)$ fit and its autocorrelation function.  }
\label{FigHPB}
\end{figure}
As mentioned in Section \ref{Seccalibr}, the new methodology  admits a hypothesis testing interpretation---the null hypothesis being that of strong white noise. In our second dataset, we analyze the residuals of an ARMA$(p,q)$   fit to a seasonally adjusted time-series of air temperatures recorded at the meteorological station in Hohenpeissenberg (Germany). More precisely,   $T=11315$ observations of daily mean temperatures were recorded  between 1985 and 2015; they are displayed in the upper part of Figure \ref{FigHPB}. To remove the clearly visible seasonality , we first fit a trigonometric regression model of the form
\[ y = c + \alpha x + \sum_{k=1}^4 \beta_k \sin(2\pi k/365) + \gamma_k \cos(2\pi k/365), \]
where the linear part is used to remove any possible trend. An ARMA(p,q) model with $p=3$ and $q = 1$  (determined via $AIC$ and an 
inspection of residual autocorrelations, see \cite{campbell2011weather} for a similar approach) then was estimated from the residuals. The  residuals resulting from that second fit and their autocorrelations are shown in the lower part of 
 Figure \ref{FigHPB}. From a $L_2$ perspective, this  successfully captures the bivariate behavior of the dataset. It its therefore not surprising that 
 the (global) classical spectral density  of those residuals does not show any significant structure (see Figure \ref{FigL2SD}).
\begin{figure}[t]
\centering
   \includegraphics[width = 0.45\textwidth]{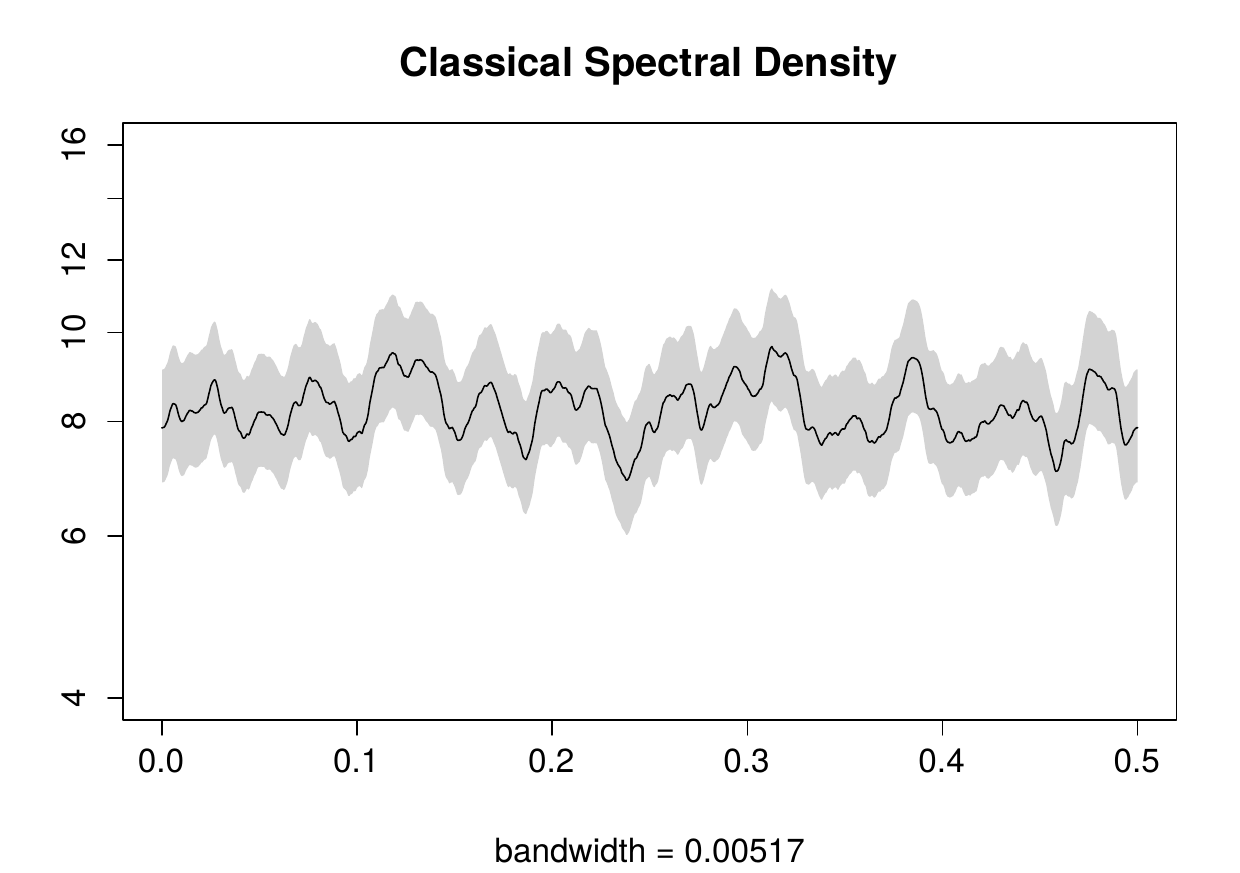}
\caption{\small Classical spectral density of the ARMA-residuals with pointwise calculated $0.95$ confidence interval .}  
\label{FigL2SD}
\end{figure}

The quantile spectral analysis of the same dataset leads to a much different conclusion. Estimating the quantile spectral densities of the same residuals with $B_n = 10,$  $n=2048,$ \linebreak  $\mathcal{T}_0 = \{1024+32j|j=0,\dots,290\}$, and $\Omega = \{2\pi j/1024|j=0,\dots,1023\}$, we obtain the  heat maps  shown in Figure~\ref{FigHPBheats}. The central heat map ($\tau_1=\tau_2 = 0.5$ deviates) indicates clear deviations from strong white noise. Starting around 1995, a significant peak around frequency zero appears. The peak reaches its maximum in 2003 and declines slowly afterwards but does not vanish. It is interesting to note that this peak is most prominent during the 2003 heat wave in Europe, which could indicate a connection with long-term  climatic fluctuations. Other significant effects, although not as dramatic, are also visible in the heat maps involving more extreme quantiles $\tau_1,\tau_2 \in \{0.1,0.9\}$. One interesting observation is the strong asymmetry between the spectra associated with low and high temperatures (quantiles).

These results suggest that an ARMA model is far from fully capturing the distributional features of the data---though it does capture its $L^2$ dynamics. The analysis performed here reveals clear deviations 
  from white noise, which again  cannot be detected by  classical spectral analysis. It  also clearly shows an evolution through time of the  dependence structure of daily temperatures.  Such findings are not entirely new, and ARMA-GARCH models have been proposed for similar datasets: see \cite{campbell2011weather}. It should be emphasized, however, that the residual spectra   we observe in this dataset do not correspond to typical  GARCH spectra, which suggests that it might be worthwhile to investigate the validity of such parametric models in greater detail.  

%
%
%

\begin{figure}[t]
\centering
   \includegraphics[width = \textwidth]{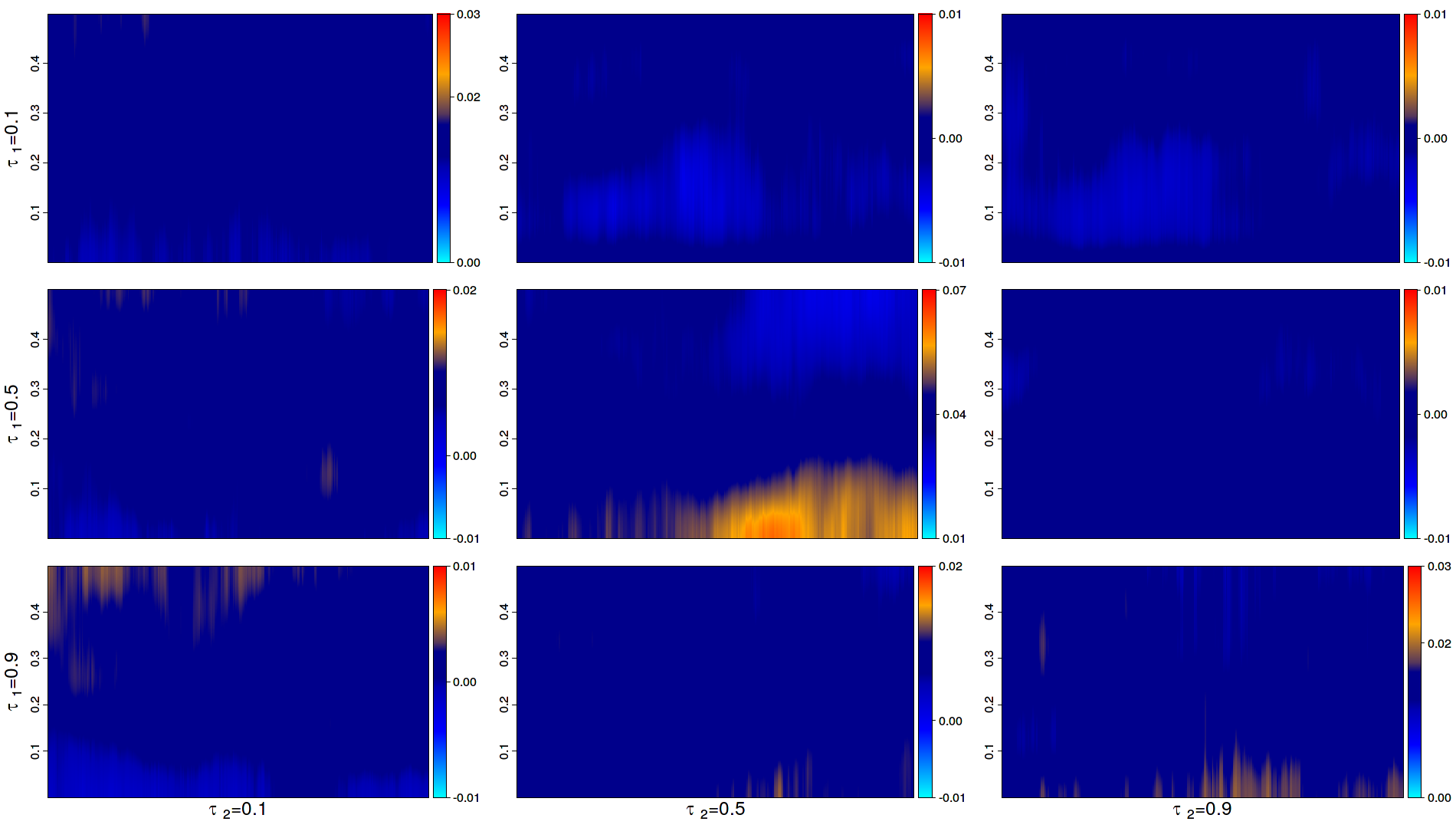}
\caption{\small Time-frequency heatmaps of the quantile lag-window estimator for the ARMA-residuals of the daily mean temperature between 1985 and 2015 at the Hohenpeissenberg Meteorological Observatory  for quantile levels $0.1,0.5$ and $0.9$. The vertical axis represents  frequencies  ($\omega/2\pi$, ranging from 0 to 0.5),  and the horizontal axis is time ($1\leq t\leq 11315$). The plots are organized  as explained in Section \ref{Seccalibr}; the color code is provided along the right-hand side of each  figure.
\vspace{-3mm} }
\label{FigHPBheats}
\end{figure}


\section{Theoretical results} 
\label{theory}
\def\theequation{5.\arabic{equation}}
\setcounter{equation}{0}

\subsection{Examples of locally strictly stationary models} \label{sec:strictstatmodels} In contrast with the many definitions of local stationarity considered in the literature, which are based on evolving covariance structures, and classical spectra, or time-varying parametric models, local strict stationarity is a purely distributional concept. In this section, we show how, under additional constraints, those other concepts eventually fall under the umbrella of our definition. 

\paragraph{5.1.1 tvMA$(\infty)$ processes.}
\cite{dahlpol2006} define a  tvMA$(\infty)$ process as admitting a representation of the form 
\begin{equation}
X_{t,T} = \mu(t/T) + \sum_{j=-\infty}^{\infty} a_{t,T}(j) \xi_{t-j}, \label{def:MA}
\end{equation}
where $\xi_t$ is i.i.d.\ white noise. This definition cover a wide range of popular linear time-varying models, such as the  tvARMA$(p,q)$ 
ones. 

Consider the following assumptions.
\begin{description}
 \item[(MA1)] 
 There exist functions $ a(\cdot,j)$ and  $\mu(\cdot): (0,1) \rightarrow \mathbb{R}$ with\vspace{-2mm}
 \end{description}
\begin{align*} \label{div}
&\sup_{t,T} | a_{t,T}(j) - a(\frac{t}{T},j)| \leq \frac{K}{T l(j)},\quad \sup_{u \in (0,1)} \left| \frac{\partial a(u,j)}{\partial u} \right| \leq \frac{K}{l(j)}, \text{ and } \sup_{u \in (0,1)} \left| \frac{\partial \mu(u)}{\partial u} \right| \leq K 
\end{align*}
\begin{enumerate}
\item[] where $K$ is a finite constant (not depending on $j$) and $\sum_j  1/l(j) < \infty.$ Furthermore, 
$\sup_{u \in (0,1)} \sum_{j=-\infty}^{\infty} |a(u,j)| < \infty$ and $\inf_{u \in (0,1)} |a(u,0)| > \rho > 0. $
\end{enumerate}
\begin{description}
\item[(MA2)] 
The random variables $\xi_t$ have bounded density function $f_\xi$ and finite  expectation, and, for some constant $C_f>0$, $f_\xi$ is such that  
$
\sup_{x \in \mathbb{R}} |xf_\xi(x)| \leq C_f.
$
\end{description}

We then have the following result. 
\begin{lemma} \label{lem:ma}
 If Assumptions (MA1) and (MA2) hold, the tvMA$(\infty)$ process defined in~$(\ref{def:MA})$ is locally strictly stationary in the sense of Definition \ref{def:strstat}, with stationary approximation  
$$X^\vartheta_{t} =  \mu(\vartheta) + \sum_{j=-\infty}^{\infty} a(\vartheta,j) \zeta_{t-j}\vspace{-3mm}
$$
where the $\zeta_t$'s are i.i.d.~copies of the $Z_t$'s. 
\end{lemma}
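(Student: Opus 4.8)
The plan is to establish the two defining inequalities of Definition \ref{def:strstat} by controlling the supremum distance between the bivariate (and, as a consequence, univariate) distribution functions of the triangular-array process $X_{t,T}$ and those of the stationary approximation $X_t^\vartheta = \mu(\vartheta) + \sum_j a(\vartheta,j)\xi_{t-j}$. The central observation is that both $X_{r,T}$ and $X_r^\vartheta$ are linear functionals of the \emph{same} innovation sequence $\{\xi_t\}$, differing only in their deterministic coefficient sequences $\{a_{r,T}(j)\}$ versus $\{a(\vartheta,j)\}$ and their means $\mu(r/T)$ versus $\mu(\vartheta)$. Hence the difference $X_{r,T}-X_r^\vartheta$ is itself a linear combination of the $\xi$'s whose coefficients are small by Assumption (MA1). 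The strategy is to translate a uniform bound on this coupling difference into a bound on the sup-norm distance between distribution functions, using (MA2) to guarantee that the densities are well-behaved enough that a small perturbation of the argument produces only a proportionally small change in the joint cdf.

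First I would bound the coefficient discrepancies. For the univariate case, write $X_{r,T}-X_r^\vartheta = (\mu(r/T)-\mu(\vartheta)) + \sum_j (a_{r,T}(j)-a(\vartheta,j))\xi_{r-j}$, and split $a_{r,T}(j)-a(\vartheta,j) = (a_{r,T}(j)-a(r/T,j)) + (a(r/T,j)-a(\vartheta,j))$. The first bracket is $O(1/(T\,l(j)))$ uniformly by the first part of (MA1); the second is controlled by the mean value theorem together with the derivative bound $|\partial a(u,j)/\partial u|\leq K/l(j)$, yielding a bound proportional to $|r/T-\vartheta|/l(j)$. Since $\sum_j 1/l(j)<\infty$, summing over $j$ gives a deterministic $\ell^1$-type bound on the coefficient differences of order $|r/T-\vartheta| + 1/T$, and the mean term contributes $K|r/T-\vartheta|$ by the bound on $\mu'$. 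This is exactly the right-hand-side shape appearing in \eqref{eq1} and \eqref{eq2}.

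Next I would convert this coefficient-level control into a cdf-level control. The cleanest route is to condition on all innovations except one (or a judiciously chosen finite subset), so that the conditional law of $X_{r,T}$ is a location shift of a scaled $\xi$ whose density is bounded via (MA2); the smoothing supplied by $f_\xi$ lets one write the difference of conditional cdfs as an integral of the density over an interval whose length is the coupling gap, giving a bound of the form $C_f$ times the coefficient discrepancy. For the \emph{bivariate} joint distributions $F_{r,s;T}$ versus $G^\vartheta_{r-s}$ one argues analogously on the pair $(X_{r,T},X_{s,T})$ against $(X_r^\vartheta, X_s^\vartheta)$, using that $G^\vartheta_{r-s}$ is indeed the law of $(X_0^\vartheta, X_{-(r-s)}^\vartheta)$ by stationarity of the approximating process, and replacing $\max(|r/T-\vartheta|,|s/T-\vartheta|)$ for the two separate deviations. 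Taking the supremum over the arguments $(x,y)$ and collecting constants into a single $L$ then delivers \eqref{eq1}, with \eqref{eq2} following by letting the second argument tend to infinity as already noted after Definition \ref{def:strstat}.

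\textbf{The main obstacle} I expect is the passage from a small coupling in the random variables to a uniform sup-norm bound on the distribution functions, since a naive bound $|F_{r,s;T}(x,y)-G^\vartheta_{r-s}(x,y)| \leq \p(|X_{r,T}-X_r^\vartheta|>\eta) + \text{(smoothing term)}$ forces a trade-off in the free parameter $\eta$ that does not obviously close to the desired \emph{linear} rate in $|r/T-\vartheta|+1/T$. The delicate point is that one cannot assume finite variance (the whole appeal of Definition \ref{def:strstat} is its moment-freeness), so a Chebyshev-type tail bound on $X_{r,T}-X_r^\vartheta$ is unavailable. This is precisely where (MA2)---the boundedness of $f_\xi$ together with $\sup_x|x f_\xi(x)|\leq C_f$, the latter controlling the tails of the single innovation being integrated out---must be used to absorb the perturbation directly at the density level rather than through a moment-based concentration argument, and the condition $\inf_u|a(u,0)|>\rho>0$ guarantees that there is always a genuinely non-degenerate innovation direction to smooth along. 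Getting these pieces to combine into the clean linear rate, uniformly over all $x,y$ and all $1\leq r,s\leq T$, is the technical heart of the argument.
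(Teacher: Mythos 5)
Your proposal is correct and follows essentially the same route as the paper's proof: you bound the coefficient discrepancies via (MA1), condition on all innovations but one so that the bounded density $f_\xi$ and the bound $\sup_x|xf_\xi(x)|\leq C_f$ from (MA2) convert the coupling gap into a linear cdf-level bound (the paper does this with a two-dimensional mean value theorem applied to $F_\xi\big(\frac{x-v}{u}\big)$), and you reduce the bivariate case to univariate expected indicator differences. The only point worth noting is that your worry about moment-free tail control is resolved exactly as you anticipate, and the final step $\E|S_{t,T}-S_t^\vartheta|=O(|t/T-\vartheta|+1/T)$ does use the finite first moment of $\xi_t$ guaranteed by (MA2).
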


\paragraph{5.1.2  tvARCH$(\infty)$processes.}
\cite{dahlrao2006} define  a tvARCH$(\infty)$ process  by\vspace{-1mm}
\begin{equation} \label{def:tvARCH}
 X_{t,T} = \sigma_{t,T}Z_{t},\qquad \text{where} \qquad \sigma_{t,T}^2 = 
a_0(t/T) + \sum_{j=1}^{\infty}a_j(t/T)X^2_{t-j,T},\vspace{-1mm}
\end{equation}
where the $Z_t$'s are i.i.d.\ random variables with $\E(Z_t) = 0, \Var(Z_t) = 1,$ and density $f.$  They  show  that $X^2_{t,T}$, if not   $X_{t,T}$ itself, has an almost surely well-defined and unique expression  in the set of all causal solutions of~(\ref{def:tvARCH}) if the following assumption holds.
\begin{description}
 \item [(ARCH1)]  
 The coefficients $a_j$ in (\ref{def:tvARCH}) are non-negative and $\inf_{u \in (0,1)} a_0(u) > \rho$ for some constant~$\rho>0.$  There exist  constants $Q < \infty$, $M < \infty$, and \vspace{-1mm}$ 0<\nu<1$, and a positive sequence $l(j)$, $j\in\mathbb{N}$, such that $\sum_{j=1}^\infty  {j}/{l(j)} < \infty,$ and
\[\sup_{u \in (0,1)} a_j(u) < {Q}/{l(j)},\quad Q \sum_{j=1}^\infty {1}/{l(j)} \leq (1-\nu) \text{ and } |a_j(u) - a_j(v)| < M  {|u-v|}/l(j).\]\vspace{-1mm}
\end{description}\vspace{-4mm}

 In general, equation $(\ref{def:tvARCH})$ has no unique solution, as  the sign of a solution $X_{t,T}$ associated with the almost surely well-defined (under assumption $(ARCH1)$) $X^2_{t,T}$  can be randomly either positive or negative. 
%
To avoid this non-unicity problem, we require~$\sigma_{t,T}$ in~(\ref{def:tvARCH})  to be positive. More precisely, we impose the following condition. 
\begin{description}
 \item [(ARCH2)]
For some constant $C_f<\infty$, 
 $\sup_{x \in \mathbb{R}} |xf(x)| < C_f $, and $\sigma_{t,T} = \sqrt{\sigma_{t,T}^2}.$
\end{description}
We then have the following result. 
\begin{lemma} \label{lem:arch}
  If Assumptions (ARCH1) and (ARCH2) hold,   a process $X_{t,T}$ satisfying equation $(\ref{def:tvARCH})$ is locally strictly stationary in the sense of Definition \ref{def:strstat}, with stationary approximation  \vspace{-1mm}
 $$X^\vartheta_{t} = \sigma^\vartheta_{t}\zeta_{t}\qquad\text{with}\qquad (\sigma^\vartheta_{t})^2 = 
a_0(\vartheta) + \sum_{j=1}^{\infty}a_j(\vartheta)(X^\vartheta_{t-j})^2 \vspace{-3mm}$$
where the $\zeta_t$'s are i.i.d.~copies of the $Z_t$'s. \end{lemma}

\paragraph{5.1.3   tvGARCH$(p,q)$ processes.}
\cite{SubbaRaoGARCH} similary defines  a tvGARCH$(p,q)$ process as
\begin{equation} \label{def:tvGARCH}
 X_{t,T} = \sigma_{t,T}Z_{t},\quad \text{with} \quad \sigma_{t,T}^2 = 
a_0(t/T) + \sum_{j=1}^{p}a_j(t/T)X^2_{t-j,T} + \sum_{i=1}^{q} b_j(t/T)\sigma^2_{t-j,T},
\end{equation}
where $Z_t$ are i.i.d.\ random variables with $\E(Z_t) = 0$,  $\Var(Z_t) = 1$, and density $f.$
Parallel to (ARCH1) and (ARCH2), consider the following assumptions. 

\begin{description}
\item[(GARCH1)] 
  The coefficient functions $a_j(u)$, $j=0,\ldots , p$ and $b_j(u)$, $j=1,\ldots , q$, are positive, Lipschitz-continuous,  and satisfy, for some $0 < \mu < 1$ and $\rho >0$, \vspace{-2mm}
  \end{description}
\begin{equation} \label{as:KoeffGARCH}
\sup_{u \in (0,1)} \Big[\sum_{j=1}^{p}a_j(u) + \sum_{i=1}^{q} b_j(u) \Big]< 1 - \mu,
\quad\text{and}\quad \inf_{u \in (0,1)} a_0(u) > \rho.
\end{equation}
\begin{description}
\item[(GARCH2)] 
For some constant $C_f<\infty$, $
\sup_{x \in \mathbb{R}} |xf(x)| \leq C_f$, and $ \sigma_{t,T} = \sqrt{\sigma_{t,T}^2}$.
\end{description}

The following then holds true.
\begin{lemma} \label{lem:garch}
  If Assumptions (GARCH1) and (GARCH2) hold,  a process $X_{t,T}$ satisfying equation $(\ref{def:tvGARCH})$ is locally strictly stationary in the sense of Definition \ref{def:strstat}, with stationary approximation \vspace{-3mm}
$$X^\vartheta_t = \sigma^\vartheta_{t}\zeta_{t}, \quad\text{ where }\quad (\sigma^\vartheta_{t})^2 = 
a_0(\vartheta) + \sum_{j=1}^{p}a_j(\vartheta)(X^\vartheta_{t-j})^2 + \sum_{i=1}^{q} b_j(\vartheta)(\sigma^\vartheta_{t-j})^2 \vspace{-3mm}$$
 and   the $\zeta_t$'s are i.i.d.~copies of the $Z_t$'s. 
\end{lemma}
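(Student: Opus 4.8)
The plan is to follow the route already used for the tvARCH$(\infty)$ case (Lemma \ref{lem:arch}), to which the present statement is closely parallel: couple the observed process and its stationary approximation through a common innovation sequence $\{Z_t\}$, control the difference of the conditional variances in $L^1$, and convert this into a bound on the Kolmogorov distance between the corresponding (joint) distribution functions by means of the bounded-density condition (GARCH2). One could instead try to invert the $\sigma^2$-recursion to rewrite \eqref{def:tvGARCH} as a tvARCH$(\infty)$ process and invoke Lemma \ref{lem:arch} directly, but in the time-varying case this inversion produces coefficients depending on several time points rather than on the single argument $t/T$ required by \eqref{def:tvARCH}, so the direct coupling argument is cleaner. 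Before any estimate I would record that, under (GARCH1), the uniform contraction $\sup_{u}\big[\sum_{j=1}^p a_j(u)+\sum_{i=1}^q b_i(u)\big]\le 1-\mu<1$ guarantees, for each $\vartheta$ and for the triangular array, a unique causal solution of \eqref{def:tvGARCH} and of the approximation $X^\vartheta_t=\sigma^\vartheta_tZ_t$ (the tvGARCH existence result of \cite{SubbaRaoGARCH}). The same contraction together with $\Var(Z_t)=1$ yields $\sup_{t,T}\E X_{t,T}^2<\infty$ and $\sup_t\E(X^\vartheta_t)^2<\infty$, which are the only moments the argument uses and which are produced automatically rather than assumed.

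The core step is a recursive $L^1$ bound on $D_t:=\sigma_{t,T}^2-(\sigma^\vartheta_t)^2$. Subtracting the two defining recursions and splitting each term as
\[
a_j(t/T)X_{t-j,T}^2-a_j(\vartheta)(X^\vartheta_{t-j})^2
=\big(a_j(t/T)-a_j(\vartheta)\big)X_{t-j,T}^2+a_j(\vartheta)\big(X_{t-j,T}^2-(X^\vartheta_{t-j})^2\big),
\]
and similarly for the $b_i$-terms, I would bound the first pieces by Lipschitz continuity of the coefficient functions ($|a_j(t/T)-a_j(\vartheta)|\le C|t/T-\vartheta|$, multiplied by the uniformly bounded $\E X_{t-j,T}^2$) and rewrite the second pieces through the identity $X_{t-j,T}^2-(X^\vartheta_{t-j})^2=Z_{t-j}^2D_{t-j}$. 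Taking expectations and using $\E Z_{t-j}^2=1$ together with the independence of $Z_{t-j}$ from $\sigma_{t-j,T}$ and $\sigma^\vartheta_{t-j}$ gives
\[
\E|D_t|\le C\,|t/T-\vartheta|+\sum_{j=1}^p a_j(\vartheta)\,\E|D_{t-j}|+\sum_{i=1}^q b_i(\vartheta)\,\E|D_{t-i}|,
\]
an inhomogeneous recursion whose homogeneous part contracts with factor at most $1-\mu$.

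Resolving this recursion is where I expect the main difficulty. The subtlety is that the lagged terms carry the drift $|(t-k)/T-\vartheta|$, which differs from $|t/T-\vartheta|$ by $O(k/T)$, so a crude supremum bound inflates the inhomogeneous term by a factor that may exceed $\mu^{-1}$ and destroy the contraction. The way around this is to iterate the recursion and track the influence of the past explicitly: a coefficient-difference contribution originating $k$ steps back is of order $k/T$ but is weighted by the $k$-step contraction, and $\sum_{k\ge 0}(1-\mu)^k (k/T)=O(1/T)$ converges; together with the $|t/T-\vartheta|$ drift this yields $\E|D_t|\le L'\big(|t/T-\vartheta|+1/T\big)$. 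Equivalently, one may first compare $X_{t,T}$ with the process frozen at $u=t/T$ to obtain the $O(1/T)$ term and then compare the two stationary approximations at $t/T$ and $\vartheta$ to obtain the $O(|t/T-\vartheta|)$ term from the smooth dependence of the stationary solution on its parameter.

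Finally I would pass from the variance difference to the distributional bound of Definition \ref{def:strstat}. Conditionally on the past, $\sigma_{r,T}$ and $\sigma^\vartheta_r$ are known and, since $\sigma^2\ge a_0>\rho$, bounded below by $\sqrt\rho$; writing the conditional marginal laws as $F(\cdot/\sigma_{r,T})$ and $F(\cdot/\sigma^\vartheta_r)$ and applying the mean value theorem with $\sup_x|xf(x)|\le C_f$ bounds $\sup_x|F(x/\sigma_{r,T})-F(x/\sigma^\vartheta_r)|$ by a constant multiple of $|\sigma_{r,T}-\sigma^\vartheta_r|\le|D_r|/(2\sqrt\rho)$. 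Taking expectations and inserting the $L^1$ bound on $D_r$ yields the marginal estimate \eqref{eq2}. For the bivariate statement \eqref{eq1} I would use stationarity of $\{X^\vartheta_t\}$ to replace $G^\vartheta_{r-s}$ by the law of $(X^\vartheta_r,X^\vartheta_s)$, then bound the difference of the two joint distribution functions by conditioning on the common past and controlling each coordinate separately through the same density argument, so that the total error is at most a constant times $\E|D_r|+\E|D_s|\le L\big(\max(|r/T-\vartheta|,|s/T-\vartheta|)+1/T\big)$. This establishes local strict stationarity of order two.
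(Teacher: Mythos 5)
Your proposal is correct and follows essentially the same route as the paper: establish existence of the stationary solution from the contraction condition in (GARCH1), reduce the bivariate bound of Definition \ref{def:strstat} to a univariate indicator bound, convert that via conditioning and the density condition (GARCH2) into an $L^1$ bound on $\sigma^2_{t,T}-(\sigma^\vartheta_t)^2$, and control the latter. The only difference is that the paper delegates the final variance bound to Section 5.2 of Subba Rao (2006) and the existence step to Bougerol and Picard, whereas you derive the contracting $L^1$ recursion and its resolution (including the lag-drift bookkeeping yielding the $O(1/T)$ term) explicitly; your derivation is a correct reconstruction of the cited argument.
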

The proofs of Lemmas 5.1-5.3 can be found in Section~A.4.1 of the online appendix. 


\subsection{Asymptotic theory}\label{sec:asyth}

 Let $(\Omega, \mathscr{A}, \p)$ denote  a probability space, and let $\mathscr{B}$, and  $\mathscr{C}$ be subfields of $\mathscr{A}.$ Define $$
\beta(\mathscr{B},\mathscr{C}) := \E \sup_{C \in \mathscr{C}} |\p(C)-\p(C|\mathscr{B})|
$$
and, for an array $\{Z_{t,T}:1\leq t \in \mathbb{Z}\}$, let \vspace{-3mm}
\[
\beta(k) := \sup_T \sup_{t \in \mathbb{Z} } \beta(\sigma(Z_{s,T}, s \leq t),\sigma(Z_{s,T}, t + k \leq s )), \vspace{-2mm}
\]
where $\sigma(Z)$ is the $\sigma-$field generated by $Z.$  Recall that a process is called {\it $\beta$-mixing} or {\it absolutely regular} if $\beta(k)\rightarrow 0$ as $k \rightarrow \infty.$

Before  proceeding with 
  the asymptotic properties of~$\hat{\mathfrak{f}}_{t_0,T}(\omega,\tau_1,\tau_2)$, we are collecting here some technical assumptions needed in the~sequel. 

\begin{enumerate}
\item[(K)] The lag-window function $K$  in $(\ref{esti})$ satisfies $\|K\|_\infty \leq 1$, $K(0)=1$ and has support~$[-1,1];$ its extension to $\R$ is $d$ times continuously differentiable with $d \geq 2$. Additionally, $K$ has {\textquoteleft characteristic exponent\textquoteright}  $r>0$ [see \cite{parzen1957} or  \cite{priestley1981}], that is,~$r$ is the largest integer such that
 $
C_K(r) := \lim_{u \to 0} \big( {1 - K(u)}\big)/{|u|^r}
$ 
exists, is finite and non-zero.
\end{enumerate}

\begin{enumerate}
\item[(A1)] The triangular array $\{X_{t,T}\}$ is $\beta$-mixing with 
 mixing coefficients $\beta^{[X]}(k) = O(k^{-\delta})$ for some $\delta>1$. The same holds for $\{X_t^\unu\}$.
\item[(A2)]For any $\eps > 0$ define
$
\rho_n(\eps) := \big(\frac{\eps + n^{{1}/{(1+\delta)}}\eps^2}{n}\log(n)\big)^{{1}/{2}} \vee (n^{-{\delta}/{(1+\delta)}}\log(n)),
$
with $\delta$ as in Assumption (A1). Assume that $\rho_n(T^{-2/5}) = o((nB_n)^{-1/2})$, \mbox{$T^{-2/5}  = o(B_n^{1/2}/n^{1/2})$} and  
\[
\frac{B_n^2}{n} + \frac{n^2}{T^2B_n} + \frac{n^{1/2}B_n}{T} = o\Big(\frac{B_n^{1/2}}{n^{1/2}}\Big).
\]
\item[(A3)] 
\begin{enumerate}
\item[(i)] For some $\gamma>2$ and $T^{-2(\gamma-1)/5\gamma} = o(B_n^{1/2}n^{-1/2}),$\vspace{-2mm}
\[
\sup_t \sup_{x,y} \Big| F_{t,t+k;T}(x,y) - F_{t;T}(x)F_{t+k;T}(y)\Big| = O(|k|^{-\gamma}).
\]\vspace{-11mm}
 
\item[(ii)] $\sum_{k \in \Z} |k|^r \sup_{u,\eta_1,\eta_2} |\gamma^u_k(\eta_1,\eta_2)| < \infty\vspace{1mm}$, where the supremum is over a neighborhood of $(\unu,q^\unu(\tau_1),q^\unu(\tau_2))$.
\item[(iii)] For $2 \leq p \leq 8$, define\vspace{-4mm}
\hspace{-1cm}
\begin{align*}
\hspace{0cm}&\kappa_p(s_1,...,s_{p-1}) := \sup_{T}\sup_{t \in \Ntheta} \sup_{x_1,...,x_p} |\cum(\I{X_{t,T}\leq x_1},\I{X_{t+s_1,T}\leq x_2},...,\I{X_{t+s_{p-1},T}\leq x_p})|\\
\hspace{0cm}&\kappa_p^\unu(s_1,...,s_{p-1}) := \sup_{x_1,...,x_p} |\cum(\I{X_{0}^\unu \leq x_1},\I{X_{s_1}^\unu\leq x_2},...,\I{X_{s_{p-1}}^\unu\leq x_p})|;\vspace{-2mm}
\end{align*}\vspace{-9mm}

\noindent assume moreover that the quantities $\kappa_p(s_1,...,s_{p-1})$ and $\kappa_p^\unu(s_1,...,s_{p-1})$ are absolutely summable over $s_1,...,s_{p-1} \in \Z$.
\end{enumerate}
\item[(A4)] 
\begin{enumerate}
\item[(i)] The joint distribution functions $F_{t_1,...,t_j;T}$ of $(X_{t_1;T},...,X_{t_j;T}), j=2,...,4$ are twice continuously differentiable, and all partial derivatives of order one and two are bounded, uniformly in $t_1,...,t_j,T$ and the arguments. The distribution function~$F_{t_1;T}$ is twice continuously differentiable and its derivatives are bounded uniformly in $t_1$ and $T$. 
\item[(ii)] Let  
$
\mathfrak{d}^{(r)}_\omega\mathfrak{f}^u(\omega,x,y) := \frac{1}{2\pi} \sum_{k \in \Z} |k|^r \gamma^u_k(x,y) e^{- \mathrm{i}k \omega},
$ where $r$ is taken  from Assumption~(K). The function $(u,x,y) \mapsto \mathfrak{d}^{(r)}_\omega\mathfrak{f}^u(\omega,x,y)$ is continuous in a neighborhood of $(u,x,y) = (\unu,q^\unu(\tau_1),q^\unu(\tau_2))$.
\item[(iii)] The function $u \mapsto \mathfrak{f}^u(\omega,G^u(q^\unu(\tau_1)),G^u(q^\unu(\tau_2)))$ is twice continuously differentiable in a neighborhood of $u = \unu$.
\item[(iv)] The functions $G^u_k$ and $ G^u$ in the definition of local strict stationarity are, for some~$d \geq 2,$ $d$ times continuously differentiable with respect to $u.$ The function~$G^\unu$ has a density, which is uniformly bounded away from zero on an open set that contains  $q^\unu(\tau_1)$ and  $q^\unu(\tau_2)$.
\end{enumerate}
\end{enumerate} 

\begin{rem} 
{\rm Assumptions (K) and (A2) place mild restrictions on the lag-window generator~$K$ and  
   the bandwidth parameter, respectively. One can show that they are satisfied by the  bandwidth parameters  leading to optimal asymptotic MSE rates for the mean squared error, see the discussion in Remark \ref{rem:bw} for more details. Assumption~(A3) is  verified if $\delta$ in~(A1) is large enough: in fact,   it is sufficient to replace the $\beta$-mixing coefficients in~(A1) by $\alpha$-mixing coefficients (see Lemma~\ref{lem:mix} in the online Appendix for additional details on bounding cumulants through $\alpha$-mixing coefficients). 
 Assumption~(A4) places conditions on the smoothness properties of the underlying processes which rule out   processes  with jump-like non-stationarity.} 
\end{rem}

\begin{rem}{\rm
Observe that 
\begin{align*}
\mathfrak{f}^u\big(\omega,G^u(q^\unu(\tau_1)),G^u(q^\unu(\tau_2)\big)
= \frac{1}{2\pi}\sum_{k \in \Z} e^{-ik\omega} \big(G^u_k(q^\unu(\tau_1),q^\unu(\tau_2)) - G^u(q^\unu(\tau_1))G^u(q^\unu(\tau_2))\big).
\end{align*}

This means that the differentiability of $u \mapsto \mathfrak{f}^u(\omega,G^u(q^\unu(\tau_1)) ,G^u(q^\unu(\tau_2)))$ depends on the local smoothness with respect to time of joint distributions.}
\end{rem}

Our main result states that, after proper centering and scaling,  $\hat{\mathfrak{f}}_{t_0,T}(\omega,\tau_1,\tau_2)$ is asymptotically  (complex) normal.

\begin{thm}\label{thm:asymain}
Let Assumptions (K) and (A1)-(A4) hold. Then, for any sequence $\omega_n$ of Fourier frequencies such that $|\omega_n - \omega| = O(1/n)$ for some $\omega \in (0,\pi)$, and for $t_0 = \lfloor T\unu \rfloor$,
\begin{equation} 
\sqrt{n/B_n} \Bigg(
\begin{array}{c}
\Re \hat{\mathfrak{f}}_{t_0,T}(\omega_n, \tau_1, \tau_2) - \Re \mathfrak{f}^\unu(\omega,\tau_1,\tau_2) - \Re b(\omega, \tau_1, \tau_2) \\
\Im \tilde{\mathfrak{f}}_{t_0,T}(\omega_n, \tau_1, \tau_2) - \Im \mathfrak{f}^\unu(\omega,\tau_1,\tau_2) - \Im b(\omega, \tau_1, \tau_2)
\end{array}
 \Bigg) \weak \mathcal{N}\Big(0,\Sigma^2(\omega,\tau_1,\tau_2)\Big)
\end{equation}
where
\[
\Sigma^2(\omega,\tau_1,\tau_2) := \pi \mathfrak{f}^{\unu}(\omega, \tau_1, \tau_1)\mathfrak{f}^{\unu}(\omega, \tau_2, \tau_2) \int K^2(u) du \Big( \begin{array}{cc} 1 & 0  \\ 0  & 1  \end{array} \Big)
\] 
\begin{flalign*}
&\text{and } & b(\omega, \tau_1, \tau_2) &:= - C_K(r)B_n^{-r} \mathfrak{d}^{(r)}_\omega\mathfrak{f}^\unu(\omega,\tau_1,\tau_2)  \\ 
& & &\hspace{10mm} + \frac{n^2}{2T^2}\frac{\partial^2}{\partial u^2} \mathfrak{f}^u(\omega,G^{u}(q^\unu(\tau_1)),G^{u}(q^\unu(\tau_2)))\Big|_{u=\unu} + o(B_n^{-r} + \frac{n^2}{T^2}).
\end{flalign*}
\end{thm}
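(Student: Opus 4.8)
The plan is to prove the result in three stages. First I would replace the estimated quantiles $\hat q_{t_0,T}(\tau_j)$ by the population quantiles $q^\unu(\tau_j)$ of the stationary approximation, reducing $\hat{\mathfrak{f}}_{t_0,T}$ to an ``oracle'' statistic $\tilde{\mathfrak{f}}_{t_0,T}$ built from the centered indicators $\I{X_{t,T}\leq q^\unu(\tau_1)}-\tau_1$ and $\I{X_{t+k,T}\leq q^\unu(\tau_2)}-\tau_2$. Second I would analyse the deterministic bias $\E\tilde{\mathfrak{f}}_{t_0,T}-\mathfrak{f}^\unu$ and match it to $b(\omega,\tau_1,\tau_2)$. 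Third I would prove a central limit theorem for the centered oracle statistic $\tilde{\mathfrak{f}}_{t_0,T}-\E\tilde{\mathfrak{f}}_{t_0,T}$ with the stated covariance. Throughout I would work at the Fourier frequency $\omega_n$ and only at the very end pass to the fixed $\omega$, using $|\omega_n-\omega|=O(1/n)$ together with the continuity supplied by Assumption (A4).

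For the first stage, the crucial feature is that the smoothed distribution $\tilde F_{t_0;T}$ uses a window of width $2m_T^{4/5}$: combining the deterministic error bound of \eqref{eq2} with a Bernstein-type maximal inequality for $\beta$-mixing arrays (valid under (A1)) yields a uniform rate for $\sup_\tau|\hat q_{t_0,T}(\tau)-q^\unu(\tau)|$ on a neighborhood of $\tau_1,\tau_2$ that is exactly the quantity governed by $\rho_n(T^{-2/5})$ and $T^{-2/5}$ in (A2). I would then linearise the two products of indicators in the common threshold: writing each increment as $\I{X_{t,T}\leq\hat q_{t_0,T}(\tau_1)}-\I{X_{t,T}\leq q^\unu(\tau_1)}$, its conditional mean is controlled by the bounded densities of (A4)(i), while its centered fluctuation is handled by a stochastic-equicontinuity argument over the random threshold. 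The conditions $\rho_n(T^{-2/5})=o((nB_n)^{-1/2})$ and $T^{-2/5}=o(B_n^{1/2}/n^{1/2})$ are precisely what render the replacement error $\op(\sqrt{B_n/n})$ after multiplication by $\sqrt{n/B_n}$. I expect this to be the main obstacle: the statistic is quadratic in the indicators, so perturbing the threshold $\hat q_{t_0,T}$ affects a double sum, and the localization window for the quantile differs from the window $\Ntheta$ carrying the spectral average; controlling the associated empirical process uniformly while juggling the two distinct bandwidths is the delicate part.

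For the second stage I would split the bias into a smoothing part and a local-stationarity part. Taking expectations replaces the products of indicators by the time-varying covariances $\Cov(\I{X_{t,T}\leq q^\unu(\tau_1)},\I{X_{t+k,T}\leq q^\unu(\tau_2)})$, which by Definition~\ref{def:strstat} and (A4) equal $\gamma^{t/T}_{-k}$ evaluated at the transformed levels $G^{t/T}(q^\unu(\tau_j))$ up to an error $O(1/T)$; note that lag-independent contributions, in particular those arising from centering the indicators by the constants $\tau_j$ rather than by the time-varying means $F_{t;T}(q^\unu(\tau_j))$, are annihilated by the lag window at a frequency $\omega$ bounded away from the origin. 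Expanding $t/T=\unu+(t-t_0)/T$ and averaging the smooth map $u\mapsto\mathfrak{f}^u(\omega,G^u(q^\unu(\tau_1)),G^u(q^\unu(\tau_2)))$ over the symmetric window $\Ntheta$, the first-order term cancels by symmetry and the second-order term produces, via (A4)(iii), the local-stationarity bias $\tfrac{n^2}{2T^2}\partial_u^2\mathfrak{f}^u(\omega,G^u(q^\unu(\tau_1)),G^u(q^\unu(\tau_2)))|_{u=\unu}$. The remaining, purely stationary, part is the classical lag-window bias: writing $1-K(k/B_n)\sim C_K(r)|k/B_n|^r$ from the characteristic exponent in (K) and invoking the summability $\sum_k|k|^r\sup_{u,\eta_1,\eta_2}|\gamma^u_k(\eta_1,\eta_2)|<\infty$ of (A3)(ii), this contributes $-C_K(r)B_n^{-r}\mathfrak{d}^{(r)}_\omega\mathfrak{f}^\unu(\omega,\tau_1,\tau_2)$, the leftover being absorbed into $o(B_n^{-r}+n^2/T^2)$.

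For the third stage I would apply the Cram\'er--Wold device to the vector of real and imaginary parts and establish asymptotic normality of an arbitrary linear combination by a big-block/small-block argument. Writing the centered oracle statistic as $\tfrac{1}{2\pi n}\sum_{t\in\Ntheta}W_{t,T}$, where $W_{t,T}$ collects the weighted lag products and, because $K$ has support $[-1,1]$, depends essentially on $X_{s,T}$ for $|s-t|\lesssim B_n$, I would group the summands into big blocks of length $\gg B_n$ separated by small blocks, decouple the big blocks up to a negligible error using the $\beta$-mixing coefficients of (A1), and verify a fourth-moment Lyapunov condition for the resulting approximately independent summands. The fourth moments of the blocks, as well as the limiting variance, are exactly the quantities controlled by the summability of the indicator cumulants $\kappa_p$ and $\kappa_p^\unu$ up to order $p=8$ in (A3)(iii): the second-order (Gaussian) contribution converges to $\pi\mathfrak{f}^\unu(\omega,\tau_1,\tau_1)\mathfrak{f}^\unu(\omega,\tau_2,\tau_2)\int K^2(u)\,du$ through the standard limit $B_n^{-1}\sum_k K^2(k/B_n)\to\int K^2$, while the fourth-order cumulant contribution is $o(1)$. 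The vanishing of the off-diagonal covariance between $\Re$ and $\Im$ and the equality of their variances follow, as for classical spectral estimators, from the oscillation of $e^{-i\omega_n k}$ at a frequency bounded away from $0$ and $\pi$. Assembling the three stages yields the stated joint convergence with covariance $\Sigma^2(\omega,\tau_1,\tau_2)$ and bias $b(\omega,\tau_1,\tau_2)$.
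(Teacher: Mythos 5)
Your proposal is correct and follows essentially the same route as the paper: reduction to an oracle statistic with known quantiles (controlled via a localized empirical-process/stochastic-equicontinuity argument at rate $\rho_n(T^{-2/5})$), a bias decomposition into the kernel term $-C_K(r)B_n^{-r}\mathfrak{d}^{(r)}_\omega\mathfrak{f}^\unu$ and the second-order Taylor term $\tfrac{n^2}{2T^2}\partial_u^2\mathfrak{f}^u|_{u=\unu}$ with the deterministic recentering terms killed by the oscillating kernel (the paper's Lemma \ref{lem:OrderDFT}), and a Cram\'er--Wold big-block/small-block CLT with Lyapunov's condition verified through the cumulant summability of (A3)(iii). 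The only cosmetic difference is that the paper explicitly recenters the indicators by $F_{t;T}(q^\unu(\tau_j))$ rather than $\tau_j$ before the variance and cumulant computations, a convenience your sketch implicitly absorbs into the same Poisson-summation argument.
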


\begin{rem} \label{rem:bw}{\rm
Theorem \ref{thm:asymain} implies consistency of the estimator, which, however, holds under weaker assumptions. 
The same theorem  also can be used to obtain the local window length~$n$ and the bandwidth parameter $B_n$ that minimize the asymptotic mean squared error of $\hat{\mathfrak{f}}_{t_0,T}(\omega_n, \tau_1, \tau_2)$. To illustrate the idea, assume that we want to optimize the asymptotic mean squared error (MSE) of $\Re \hat{\mathfrak{f}}_{t_0,T}(\omega_n, \tau_1, \tau_2)$. Considering   $r = d$, let~$\sigma^2 := \Sigma^2_{11}(\omega,\tau_1,\tau_2),$\vspace{-1mm}
\[
b_u := \frac{1}{2}\frac{\partial^2}{\partial u^2} \mathfrak{f}^u(\omega,G^{u}(q^\unu(\tau_1)),G^{u}(q^\unu(\tau_2)))\Big|_{u=\unu} \quad \text{and} \quad b_\omega := - C_K(r) \mathfrak{d}^{(r)}_\omega\mathfrak{f}^\unu(\omega,\tau_1,\tau_2).\vspace{-1mm}
\]  
In this notation the asymptotic MSE of $\Re \hat{\mathfrak{f}}_{t_0,T}(\omega_n, \tau_1, \tau_2)$ is 
  $\frac{B_n}{n}\sigma^2 + b_u \frac{n^2}{T^2} + b_\omega B_n^{-r}$. Assuming that $b_u \neq 0$ and $ b_\omega\neq 0$, straightforward calculations entail that this MSE is minimized for \vspace{-2mm}
\begin{align*}
  n  = T^{\frac{2+4r}{2+5r}} \big(\sigma^2 b_\omega^{-1/r}b_u^{2+1/r}(2r+4)\big(\tfrac{r}{2}\big)^{\frac{-r}{r+1}} \big)^{\frac{r}{2+5r}}, ~
B_n  = T^{\frac{2}{2+5r}} \big(\sigma^{-2} b_u^{-1/2}b_\omega^{5/2} (2r+4)  \big)^{\frac{2}{2+5r}}\big(\tfrac{2}{r}\big)^{\frac{-3}{2+5r}}.\vspace{-2mm}
\end{align*}
As one would expect, larger values of $r,$ corresponding to smoother local spectral densities (as functions of frequency), lead to more smoothing and faster convergence rates. For $r=2$, the asymptotic MSE of $\hat{\mathfrak{f}}_{t_0,T}(\omega_n, \tau_1, \tau_2)$ turns out to be of the order $T^{-2/3}$. One can show that, if the constant  $\delta$ in Assumption (A1)  is large enough,  the above choices of the bandwidth parameters satisfy condition (A2) if $r\geq 2$. The above formulas provide  rough guidelines about the choice of smoothing parameters. However,    the expression for the bias contains unknown parameters,  such as derivatives of the local copula spectral density kernel, which are difficult to estimate in practice.  }
\end{rem}

\section{Conclusions}

In this paper, we have defined local copula spectra using a new notion of local strict stationarity; we have  constructed a lag-window type estimator and proved  its asymptotic normality. In a stationary context,  it has  been shown that copula-based spectra  provide a  description of serial dependence structures which is 
substantially  more informative and flexible  than classical covariance-based spectra. The benefits of this new spectral methodology are thus extended to slowly evolving dependence structures. Those benefits are  highlighted in a simulation study and by analyzing two datasets,   the daily log-returns of the classical S\&P500  series and a meteorological  series recorded in Hohenpeissenberg. That analysis indeed reveals a number of  interesting features 
 that cannot be detected by 
 a more  traditional $L^2$-based approach.

Several  important questions are calling for further research, though. 
 Our method   requires the choice of a smoothing parameter---an issue which is common to all methods based on local stationarity concepts.  It seems important to have some data-driven procedure  providing  general guidelines on what a `good' choice of smoothing parameters is. An interesting approach to this  problem   has been   suggested recently by \cite{craomb2002}, and an important  direction for future research is
   the  extension of this method  to the present setting. It also is important to develop methods for uniformly  (in frequency and local time) valid 
   statistical inference on quantile spectra. This is challenging, and to the best of our knowledge  such methodology for the simpler case of classical $L^2$ spectra has only recently been developed by  \cite{liuwu2010} in the stationary case. Finally, it is natural to assume that the dependence structure of a time series contains both smooth changes  and sudden jumps. In the present paper, we have dealt with smooth changes only, and an extension accommodating possible jumps would be most welcome.  For example, in an $L^2$ setting,  such smoothness assumptions could be avoided using the piecewise
  locally stationary concept of \cite{zhou2013} or by considering  the evolutionary wavelet spectra as described in \cite{van2008locally}. Extending the distributional approach described in this paper  to piecewise
    locally stationary processes or  wavelet-based spectra (even in a strictly stationary case) is an interesting and challenging direction for
    future research. 

\bigskip
\medskip

\noindent
{\bf Acknowledgements.} We   gratefully acknowledge the many suggestions and constructive comments by two  Referees, an Associate Editor and an Editor, that helped improving the original version of this  paper.

\nocite{timerev1988}
\nocite{roueff}

\bigskip

%
%

\setlength{\bibsep}{1pt}
\setlength{\bibsep}{0pt}
\begin{small} 
\bibliographystyle{apalike}
\bibliography{LocStatBib_2016-07-11}

\begin{thebibliography}{}

\bibitem[Azrak and M{\'{e}}lard, 2006]{azrmel2006}
Azrak, R. and M{\'{e}}lard, G. (2006).
\newblock Asymptotic properties of quasi-maximum likelihood estimators for
  {ARMA} models with time-dependent coefficients.
\newblock {\em Statistical Inference for Stochastic Processes}, 9:279--330.

\bibitem[Bougerol and Picard, 1992]{Bugerol1992}
Bougerol, P. and Picard, N. (1992).
\newblock Stationarity of {GARCH} processes and some nonnegative time series.
\newblock {\em Journal of Econometrics}, 52:115--127.

\bibitem[Briggs and Henson, 1995]{briggshenson1995}
Briggs, W.~L. and Henson, V.~E. (1995).
\newblock {\em The DFT. An Owner’s Manual for the Discrete Fourier
  Transform}.
\newblock SIAM. Society for Industrial \& Applied Mathematics.

\bibitem[Brillinger, 1975]{brill}
Brillinger, D.~R. (1975).
\newblock {\em Time Series. Data Analysis and Theory}.
\newblock Holt, Rinehart and Winston.

\bibitem[Brockwell and Davis, 1998]{BrockwellDavis}
Brockwell, P. and Davis, R. (1998).
\newblock {\em Time Series: Theory and Methods}.
\newblock Springer.

\bibitem[Campbell and Diebold, 2011]{campbell2011weather}
Campbell, S.~D. and Diebold, F.~X. (2011).
\newblock Weather forecasting for weather derivatives.
\newblock {\em Journal of the American Statistical Association}, 100:6--16.

\bibitem[Cranstoun et~al., 2002]{craomb2002}
Cranstoun, S.~D., Ombao, H.~C., Von~Sachs, R., Guo, W., and Litt, B. (2002).
\newblock Time-frequency spectral estimation of multichannel eeg using the
  auto-slex method.
\newblock {\em IEEE Transactions on Biomedical Engineering}, 49:988--996.

\bibitem[Dahlhaus, 1997]{dahlhaus1997}
Dahlhaus, R. (1997).
\newblock Fitting time series models to nonstationary processes.
\newblock {\em Annals of Statistics}, 25:1--37.

\bibitem[Dahlhaus, 2012]{dahlhaus2012}
Dahlhaus, R. (2012).
\newblock Locally stationary processes.
\newblock In Rao, T.~S., Rao, S.~S., and Rao, C., editors, {\em Time Series
  Analysis: Methods and Applications}, volume~30 of {\em Handbook of
  Statistics}, pages 351 -- 413. Elsevier.

\bibitem[Dahlhaus and Polonik, 2006]{dahlpol2006}
Dahlhaus, R. and Polonik, W. (2006).
\newblock {Nonparametric quasi-maximum likelihood estimation for gaussian
  locally stationary processes}.
\newblock {\em Annals of Statistics}, 34:2790--2824.

\bibitem[Dahlhaus and Subba~Rao, 2006]{dahlrao2006}
Dahlhaus, R. and Subba~Rao, S. (2006).
\newblock Statistical inference for time-varying {ARCH} processes.
\newblock {\em Annals of Statistics}, 34:1075--1114.

\bibitem[Davis et~al., 2005]{dlry2005}
Davis, R.~A., Lee, T., and Rodriguez-Yam, G. (2005).
\newblock Structural break estimation for nonstationary time series models.
\newblock {\em Journal of the American Statistical Association}, 101:223--239.

\bibitem[Davis and Mikosch, 2009]{davmik2009}
Davis, R.~A. and Mikosch, T. (2009).
\newblock The extremogram: {A} correlogram for extreme events.
\newblock {\em Bernoulli}, 15:977--1009.

\bibitem[Davis et~al., 2013]{dmz2013}
Davis, R.~A., Mikosch, T., and Zhao, Y. (2013).
\newblock Measures of serial extremal dependence and their estimation.
\newblock {\em Stochastic Processes and their Applications}, 123:2575--2602.

\bibitem[Dette et~al., 2015]{dhkv2014}
Dette, H., Hallin, M., Kley, T., and Volgushev, S. (2015).
\newblock {Of copulas, quantiles, ranks and spectra: {A}n {$L_1$} approach to
  spectral analysis}.
\newblock {\em Bernoulli}, 21:781--831.

\bibitem[Fryzlewicz et~al., 2008]{fssr2008}
Fryzlewicz, P., Sapatinas, T., and Subba~Rao, S. (2008).
\newblock Normalised least-squares estimation in time-varying {ARCH} models.
\newblock {\em Annals of Statistics}, 36:742--786.

\bibitem[Fryzlewicz and Subba~Rao, 2014]{Fryzlewicz_tvARCH}
Fryzlewicz, P. and Subba~Rao, S. (2014).
\newblock Multiple-change-point detection for auto-regressive conditional
  heteroscedastic processes.
\newblock {\em Journal of the Royal Statistical Society Ser. B}, 76:903--924.

\bibitem[Giraitis et~al., 2000]{Giraitis2000}
Giraitis, L., Kokoszka, P., and Leipus, R. (2000).
\newblock Stationary {ARCH} models: Dependence structure and central limit
  theorem.
\newblock {\em Econometric Theory}, 16:3--22.

\bibitem[Hagemann, 2013]{hagemann2011}
Hagemann, A. (2013).
\newblock Robust spectral analysis.
\newblock {\em \rm{Available at} arXiv:1111.1965v2}.

\bibitem[Hallin et~al., 1984]{timerev1988}
Hallin, M., Lef\`evre, C., and Puri, M.~L. (1984).
\newblock On time-reversibility and the uniqueness of moving average
  representations for non-gaussian stationary time series.
\newblock {\em Biometrika}, 75:170--171.

\bibitem[Han et~al., 2014]{hlow2014}
Han, H., Linton, O., Oka, T., and Whang, Y.-J. (2014).
\newblock The cross-quantilogram: Measuring quantile dependence and testing
  directional predictability between time series.
\newblock {\em \rm{Available at} arXiv:1402.1937v2}.

\bibitem[Harvey, 2010]{harvey2010}
Harvey, A.~C. (2010).
\newblock Tracking a changing copula.
\newblock {\em Journal of Empirical Finance}, 17:485--500.

\bibitem[Hong, 1999]{hong1999}
Hong, Y. (1999).
\newblock Hypothesis testing in time series via the empirical characteristic
  function: {A} generalized spectral density approach.
\newblock {\em Journal of the American Statistical Association}, 94:1201--1220.

\bibitem[Hong, 2000]{hong2000}
Hong, Y. (2000).
\newblock Generalized spectral tests for serial dependence.
\newblock {\em Journal of the Royal Statistical Society Ser. B}, 62:557--574.

\bibitem[Kahneman and Tversky, 1979]{kahneman1979}
Kahneman, D. and Tversky, A. (1979).
\newblock Prospect theory: An analysis of decision under risk.
\newblock {\em Econometrica}, 47:263--291.

\bibitem[Kley et~al., 2016]{kvdh2014}
Kley, T., Volgushev, S., Dette, H., and Hallin, M. (2016).
\newblock Quantile spectral processes: Asymptotic analysis and inference.
\newblock {\em Bernoulli}, 22:1770--1807.

\bibitem[Koenker and Xiao, 2006]{QAR}
Koenker, R. and Xiao, Z. (2006).
\newblock Quantile autoregression.
\newblock {\em Journal of the American Statistical Association}, 101:980--1006.

\bibitem[Kosorok, 2007]{Kosorok2008}
Kosorok, M.~R. (2007).
\newblock {\em Introduction to Empirical Processes and Semiparametric
  Inference}.
\newblock Springer Science \& Business Media.

\bibitem[Lee and Subba~Rao, 2012]{leerao2012}
Lee, J. and Subba~Rao, S. (2012).
\newblock The quantile spectral density and comparison based tests for
  nonlinear time series.
\newblock {\em \rm{Available at} arXiv:1112.2759v2}.

\bibitem[Li, 2008]{li2008}
Li, T.-H. (2008).
\newblock Laplace periodogram for time series analysis.
\newblock {\em Journal of the American Statistical Association}, 103:757--768.

\bibitem[Li, 2012]{li2012}
Li, T.-H. (2012).
\newblock Quantile periodograms.
\newblock {\em Journal of the American Statistical Association}, 107:765--776.

\bibitem[Li, 2014]{li2014}
Li, T.-H. (2014).
\newblock Quantile periodogram and time-dependent variance.
\newblock {\em Journal of Time Series Analysis}, 35:322--340.

\bibitem[Linton and Whang, 2007]{linwha2007}
Linton, O. and Whang, Y.-J. (2007).
\newblock The quantilogram: with an application to evaluating directional
  predictability.
\newblock {\em Journal of Econometrics}, 141:250--282.

\bibitem[Liu and Wu, 2010]{liuwu2010}
Liu, W. and Wu, W.~B. (2010).
\newblock Asymptotics of spectral density estimates.
\newblock {\em Econometric Theory}, 26:1218--1245.

\bibitem[Martin and Flandrin, 1985]{martin}
Martin, W. and Flandrin, P. (1985).
\newblock Wigner-{V}ille spectral analysis of nonstationary processes.
\newblock {\em IEEE Transactions on Acoustics, Speech, and Signal Processing},
  33:1461 -- 1470.

\bibitem[Nason, 2013]{Nason2013}
Nason, G. (2013).
\newblock A test for second-order stationarity and approximate confidence
  intervals for localized autocovariances for locally stationary time series.
\newblock {\em Journal of the Royal Statistical Society Ser. B}, 75:879--904.

\bibitem[Nason et~al., 2000]{nason2000}
Nason, G.~P., Von~Sachs, R., and Kroisandt, G. (2000).
\newblock Wavelet processes and adaptive estimation of the evolutionary wavelet
  spectrum.
\newblock {\em Journal of the Royal Statistical Society Ser. B}, 62:271--292.

\bibitem[Parzen, 1957]{parzen1957}
Parzen, E. (1957).
\newblock On consistent estimates of the spectrum of a stationary time series.
\newblock {\em The Annals of Mathematical Statistics}, 28:329--348.

\bibitem[Priestley, 1965]{priestley1965}
Priestley, M.~B. (1965).
\newblock Evolutionary spectra and non-stationary processes.
\newblock {\em Journal of the Royal Statistical Society Ser. B}, 27:204--237.

\bibitem[Priestley, 1981]{priestley1981}
Priestley, M.~B. (1981).
\newblock {\em Spectral Analysis and Time Series}.
\newblock Academic Press.

\bibitem[Rosenblatt, 1984]{rosenblatt1984}
Rosenblatt, M. (1984).
\newblock Asymptotic normality, strong mixing and spectral density estimates.
\newblock {\em Annals of Probability}, 12:1167--1180.

\bibitem[Roueff and Von~Sachs, 2011]{roueff}
Roueff, F. and Von~Sachs, R. (2011).
\newblock Locally stationary long memory estimation.
\newblock {\em Stochastic Processes and their Applications}, 121:813--844.

\bibitem[Schuster, 1898]{schuster1898}
Schuster, A. (1898).
\newblock {On the investigation of hidden periodicities with application to a
  supposed 26 day period of meteorological phenomena}.
\newblock {\em Terrestrial Magnetism}, 3:13--41.

\bibitem[Subba~Rao, 2006]{SubbaRaoGARCH}
Subba~Rao, S. (2006).
\newblock On some nonstationary, nonlinear random processes and their
  stationary approximations.
\newblock {\em Advances in Applied Probability}, 38:1155--1172.

\bibitem[Subba~Rao, 1970]{subba1970}
Subba~Rao, T. (1970).
\newblock The fitting of non-stationary time series models with time-dependent
  parameters.
\newblock {\em Journal of the Royal Statistical Society Ser. B}, 32:312--322.

\bibitem[Van~Bellegem and Von~Sachs, 2008]{van2008locally}
Van~Bellegem, S. and Von~Sachs, R. (2008).
\newblock Locally adaptive estimation of evolutionary wavelet spectra.
\newblock {\em Annals of Statistics}, 36:1879--1924.

\bibitem[Vogt, 2012]{vogt2012}
Vogt, M. (2012).
\newblock Nonparametric regression for locally stationary time series.
\newblock {\em Annals of Statistics}, 40:2601--2633.

\bibitem[Yu, 1994]{Yu}
Yu, B. (1994).
\newblock {Rates of convergence for empirical processes of stationary mixing
  sequences}.
\newblock {\em Annals of Probability}, 22:94--116.

\bibitem[Zhou, 2013]{zhou2013}
Zhou, Z. (2013).
\newblock Heteroscedasticity and autocorrelation robust structural change
  detection.
\newblock {\em Journal of the American Statistical Association}, 108:726--740.

\bibitem[Zhou and Wu, 2009a]{zhouwu2009a}
Zhou, Z. and Wu, W.~B. (2009a).
\newblock {Local linear quantile estimation for nonstationary time series}.
\newblock {\em Annals of Statistics}, 37:2696 -- 2729.

\bibitem[Zhou and Wu, 2009b]{zhouwu2009b}
Zhou, Z. and Wu, W.~B. (2009b).
\newblock {Nonparametric inference of discretely sampled stable L{\'{e}}evy
  processes}.
\newblock {\em Annals of Statistics}, 153:83--92.

\end{thebibliography}
\end{small}

\newpage

\section*{Online Appendix} \label{app:online}
\def\theequation{A.\arabic{equation}}
\def\thesection{A.\arabic{section}}
\setcounter{equation}{0}
\setcounter{section}{0}

In this online appendix, we collect (Section~A.1) some additional information on the spectral concept considered here, (Section~A.2) some additional simulation results, (Section~A.3) some further analysis of the  S\&P500 data, and (Section~A.1)  (Sections~A.4-A.6)  the proofs of the main results, along with  some technical details. 

\section{A connections with  the Wigner-Ville spectra} \label{app:addrem}

A further theoretical justification for the time-varying copula spectral density kernels considered in this paper is their relation to  the so-called Wigner-Ville spectrum. The  Wigner-Ville spectrum (in its classical L$^2$ version)    is based on the so-called Wigner distribution of a process of the form  $\{X_{t,T}\}$ and has its origins in quantum mechanics. It was   used later on in the signal processing community. Its properties for time-varying spectral analysis have been investigated in \cite{martin}.

 For the  series of  indicators we are dealing with here, the Wigner-Ville spectrum takes the form \vspace{-3mm}
\begin{equation} \label{wigvil}
\mathfrak{W}_{\tnull,T}(\omega,\tau_1,\tau_2 ) := \hspace{-5pt} \sum_{s = - \infty}^{\infty} \hspace{-5pt} \Cov\Big(\I{X_{\lfloor \tnull + s/2\rfloor , T} \leq F^{-1}_{\lfloor \tnull + s/2\rfloor ; T}(\tau_1)}, \I{X_{\lfloor \tnull - s/2\rfloor , T}\leq F^{-1}_{\lfloor \tnull - s/2\rfloor ; T}(\tau_2)}\Big)  \frac{ e^{-i \omega s}}{2\pi}
\vspace{-3mm}\end{equation}
  (see  \cite{martin}). 

The following proposition  establishes  a strong relation between our  time-varying copula spectral density ke\-rnels~$\mathfrak{f}^\unu(\omega,\tau_1,\tau_2)
$, as defined in $(\ref{def:tvf})$, and the Wigner-Ville spectrum~$\mathfrak{W}_{\tnull,T}(\omega,\tau_1,\tau_2 )$.

\begin{prop} \label{lem:unique}
	Let $\{X_{t,T}\}$ be locally strictly stationary, with approximating pro\-cesses $\{X^\unu_{t}\}$, and assume that Assumption (A1) holds.
	If moreover the $\gamma_h^\unu(\tau_1,\tau_2)$'s are absolutely summable for any $\unu$ and $( \tau_1,\tau_2 )\in (0,1)^2$, then, for any fixed $\unu$ and $(\tau_1,\tau_2) \in (0,1)^2$, along any sequence~$\tnull = \tnull(T)$ such that~$\tnull/T~\!\!\to~\!\unu$,\vspace{-3mm}
	\[
	\sup_{\omega\in(-\pi,\pi ]} \bv \mathfrak{f}^{\unu}(\omega,\tau_1,\tau_2 ) -  \mathfrak{W}_{\tnull,T}(\omega,\tau_1,\tau_2 ) \bv = o(1),
\vspace{-4mm}	\]
	where $\mathfrak{W}_{\tnull,T}$  denotes the indicator Wigner-Ville spectrum   defined in \eqref{wigvil}.
\end{prop}
\noindent\textbf{Proof.} 
From the absolute summability of $\gamma_h^\unu(\tau_1,\tau_2),$ we obtain\vspace{-3mm}
\[
\mathfrak{f}^\unu(\omega,\tau_1,\tau_2) = \frac{1}{2 \pi} \sum_{h = -T^{1/4}}^{T^{1/4}} \gamma_h^\unu(\tau_1,\tau_2)e^{-i \omega h} + o(1),\vspace{-4mm}
\]
while Assumption (A1) yields\vspace{-3mm}
\begin{flalign*}
&\mathfrak{W}_{\tnull,T}(\omega,\tau_1,\tau_2 ) \\ 
&= \frac{1}{2 \pi} \sum_{h = -T^{1/4}}^{T^{1/4}} \Big(F_{\lfloor \tnull - h/2\rfloor,\lfloor \tnull + h/2\rfloor;T}(F^{-1}_{\lfloor \tnull - h/2\rfloor;T}(\tau_1),F^{-1}_{\lfloor \tnull + h/2\rfloor;T}(\tau_2)) - \tau_1\tau_2\Big)e^{-i \omega h} + o(1). \vspace{-4mm}
\end{flalign*}
Writing the difference between the   leading terms  in $\mathfrak{f}^\unu(\omega,\tau_1,\tau_2)$ and $\mathfrak{W}_{\tnull,T}(\omega,\tau_1,\tau_2 )$  in  terms of distribution functions yields\vspace{-4mm}
\begin{align*}
&\frac{1}{2 \pi} \sum_{h = -T^{1/4}}^{T^{1/4}} |F_{\lfloor \tnull - h/2\rfloor,\lfloor \tnull + h/2\rfloor;T}(F^{-1}_{\lfloor \tnull - h/2\rfloor;T}(\tau_1),F^{-1}_{\lfloor \tnull + h/2\rfloor;T}(\tau_2)) - G^{\unu}_{h}(q^\unu(\tau_1),q^\unu(\tau_2))|\\
&   \leq \frac{1}{ \pi} \sum_{h = -T^{1/4}}^{T^{1/4}}  \frac{L}{g_{\text{min}}} \bv \frac{h}{T} + \frac{1}{T} \bv =  o(1),
\vspace{-4mm}\end{align*}
where the last inequality follows from Lemma $\ref{lem:distquant}.$ The claim follows. 
\hfill $\qed$

 For more information about the Wigner-Ville spectrum, its properties and applications, see \cite{martin}. \vspace{-2mm}

\section{Additional Simulations} \label{app:addsim}\vspace{-2mm}
\subsection{Gaussian tvAR(2) }\label{sec:p1}
In Figure~\ref{Figa}, we display,  for a classical Gaussian time-varying AR(2) process,  the same heat maps as we did in Section~\ref{SecSimu}; in particular, part (a) was obtained along the same lines as described there. 

\begin{figure}[H]
	\begin{minipage}[b]{.48\textwidth}
		\includegraphics[width = 72mm, height = 41mm]{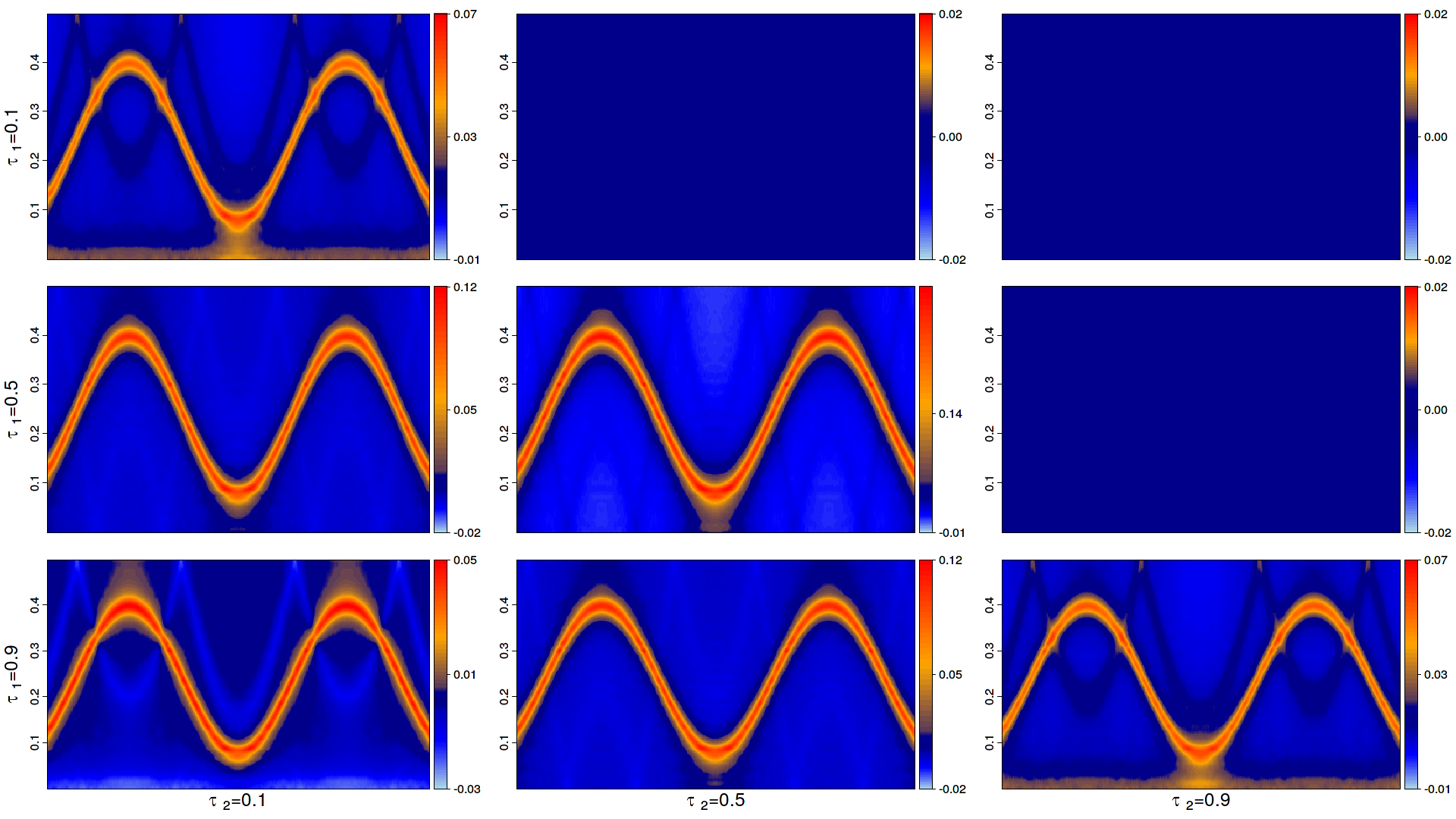}
		\subcaption{\footnotesize Actual  copula spectral densities (simulated)}
	\end{minipage}
	\begin{minipage}[b]{.48\textwidth}
		\includegraphics[width = 72mm, height = 41mm]{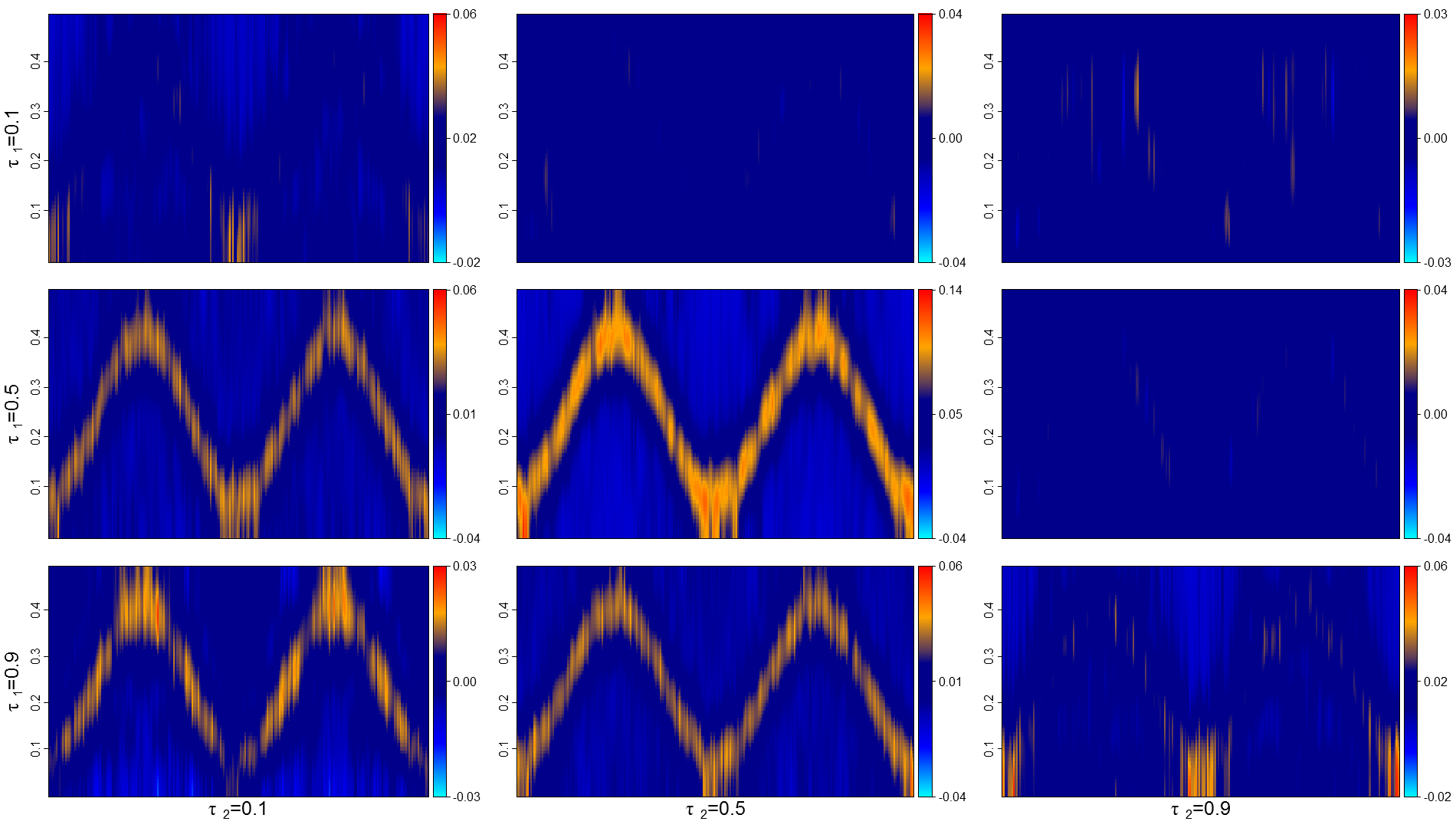}
		\subcaption{\footnotesize Estimated copula spectral densities, $n = 128$}
	\end{minipage}
	\begin{minipage}[b]{.48\textwidth}
		\includegraphics[width = 72mm, height = 41mm]{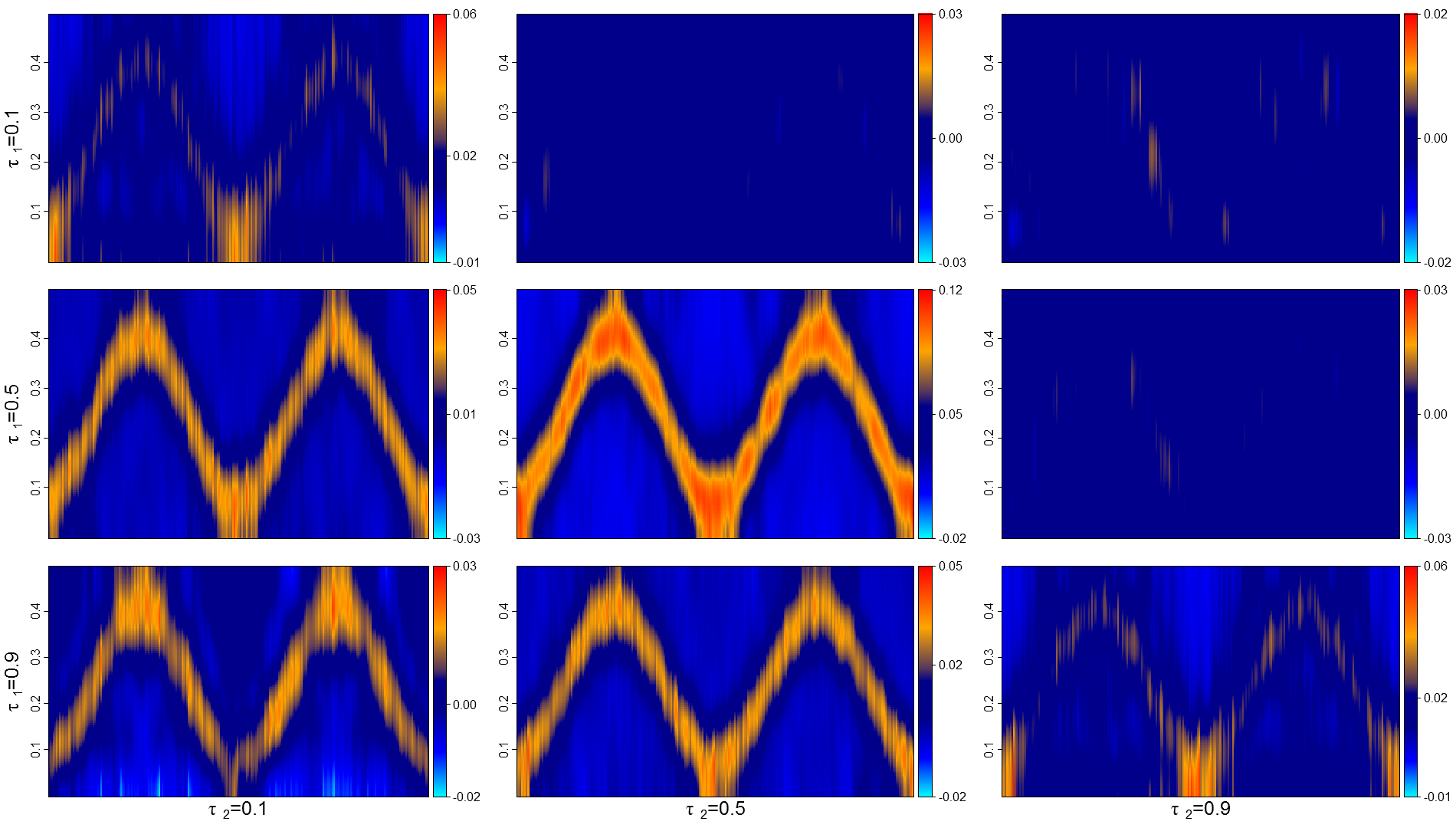}
		\subcaption{\footnotesize Estimated copula spectral densities, $n = 256$}
	\end{minipage}
	\begin{minipage}[b]{.48\textwidth}
		\includegraphics[width = 72mm, height = 41mm]{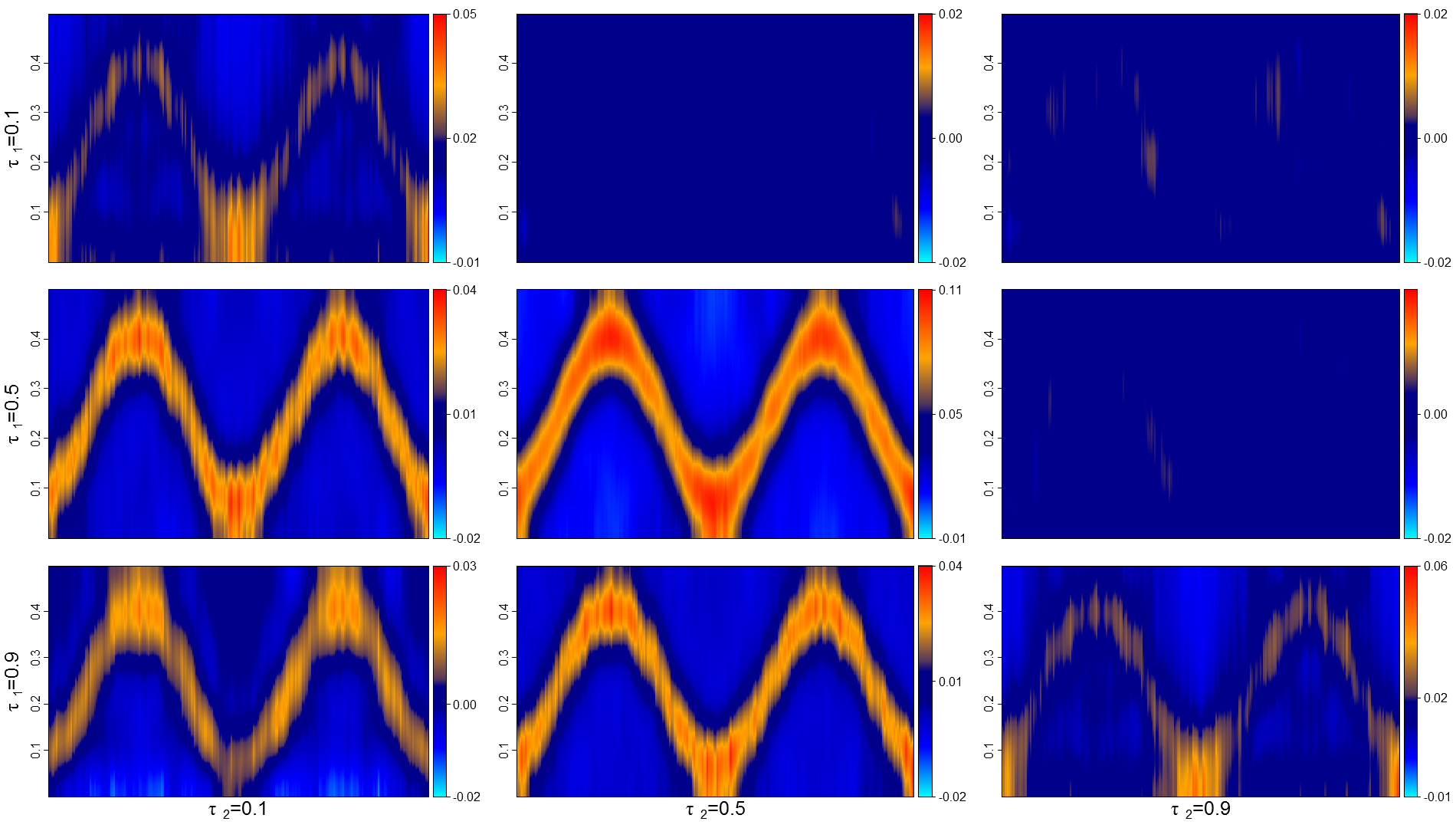}
		\subcaption{\footnotesize Estimated copula spectral densities, $n = 512$}
	\end{minipage}
	\begin{minipage}[b]{.48\textwidth}
		\includegraphics[width = 72mm, height = 41mm]{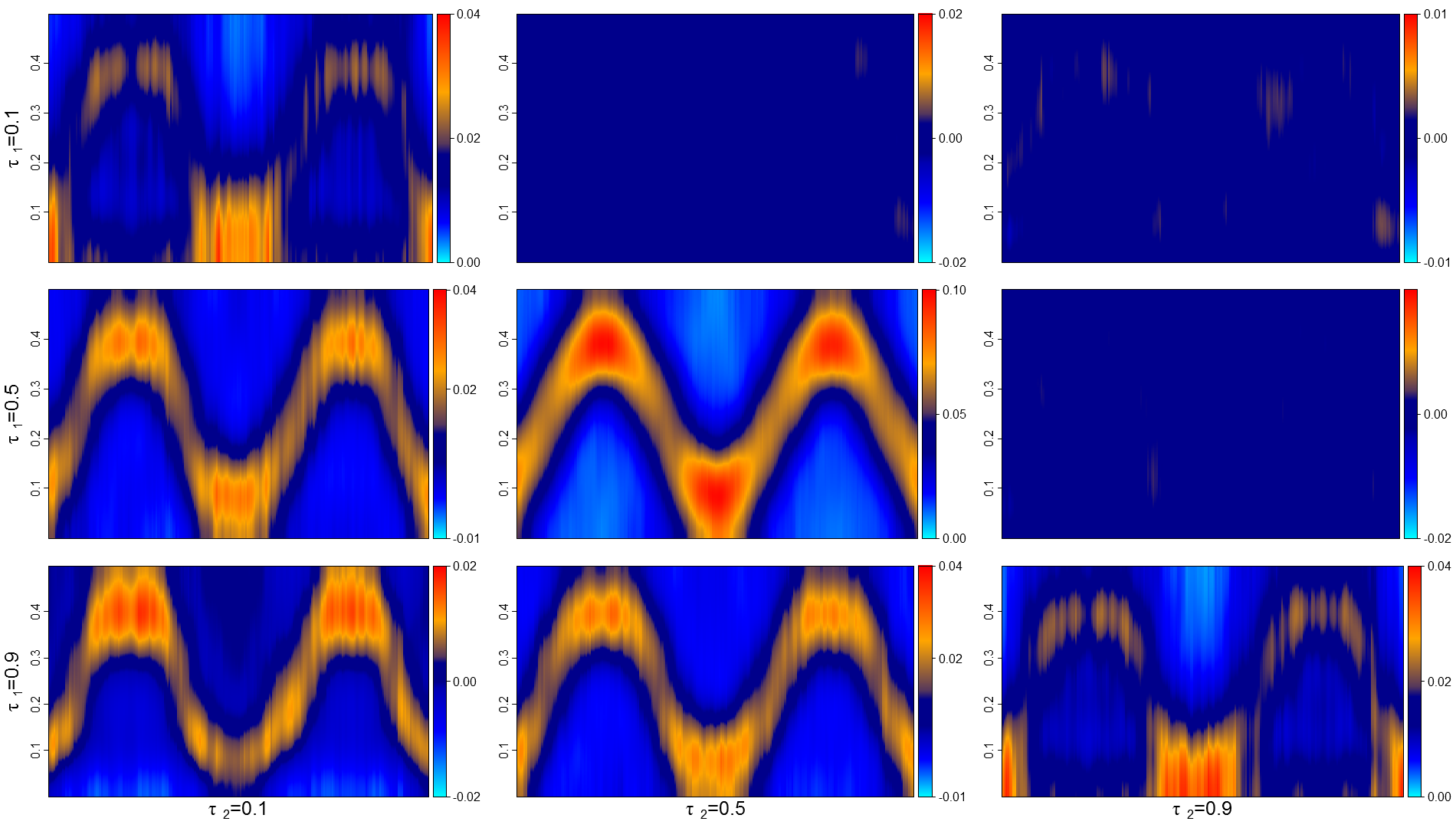}
		\subcaption{\footnotesize Estimated copula spectral densities, $n = 1024$}
	\end{minipage}\hspace{0.5cm}
	\begin{minipage}[b]{.48\textwidth}
		\includegraphics[width = 72mm, height = 41mm]{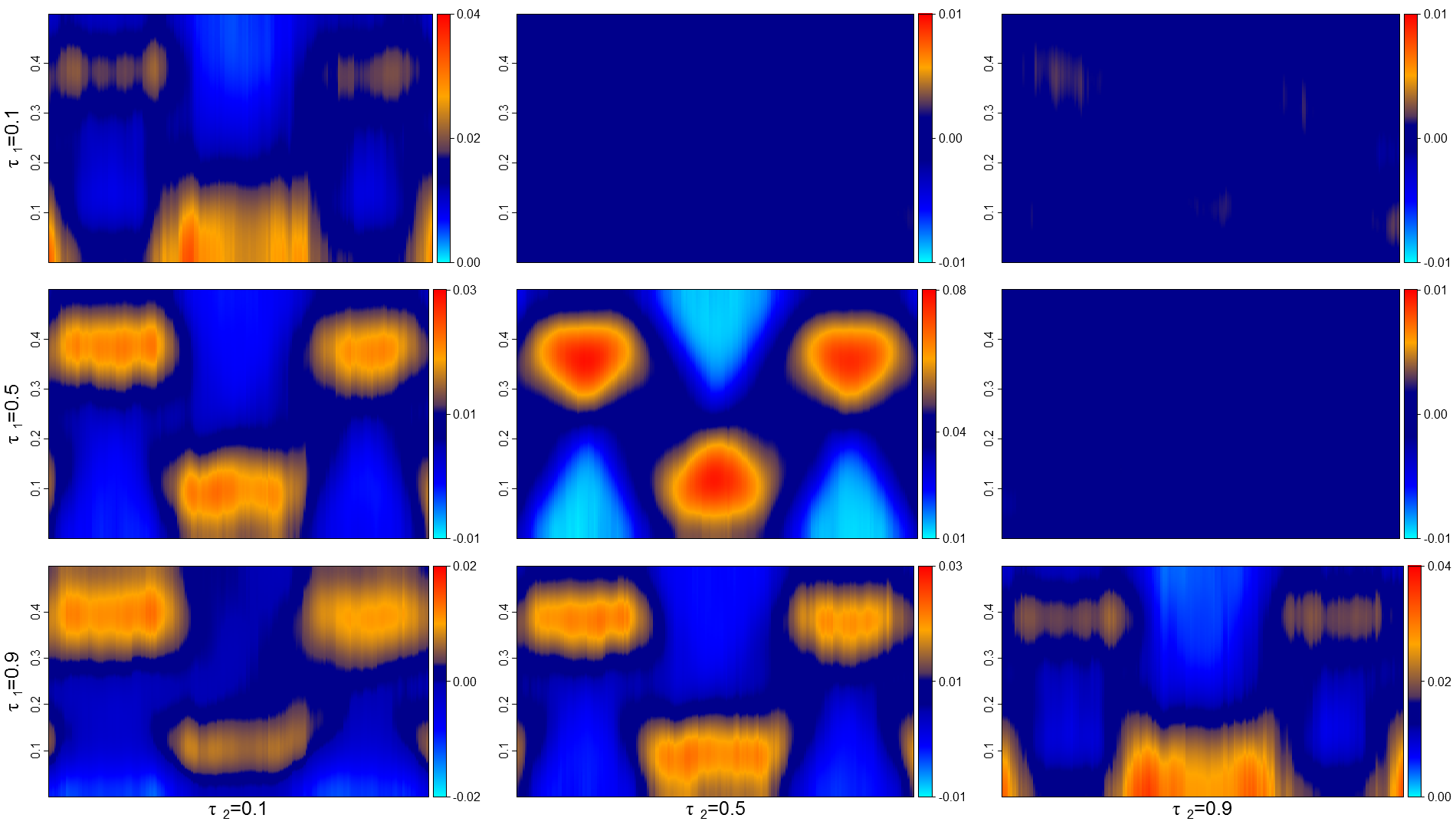}
		\subcaption{\footnotesize Estimated copula spectral densities, $n = 2048$}
	\end{minipage}
	
	\caption{\small Heatmaps of the Gaussian time-varying AR(2) process described in Section \ref{sec:p1} and the corresponding estimators, for 
		various window lenghts. 
	}\label{Figa}
\end{figure}

The model equation, taken from \cite{dahlhaus2012}, is\vspace{-1mm}
\begin{equation}
X_{t,T} = 1.8\cos(1.5-\cos(2\pi t/T))X_{t-1,T} - 0.81X_{t-2,T} + Z_t\vspace{-1mm}
\end{equation}
with i.i.d.~noise~$Z_t \sim \mathcal{N}(0,1)$. Its strictly stationary approximation, at $t_0=\vartheta T$, $0\leq \vartheta \leq 1$,~is\vspace{-2mm}
\begin{equation}
X_t^{\vartheta} = 1.8\cos(1.5-\cos(2\pi \vartheta))X_{t-1}^{\vartheta} - 0.81X_{t-2}^{\vartheta} + \zeta_t\vspace{-1mm}
\end{equation}
where $\zeta_t$ similarly is $\mathcal{N}(0,1)$ white noise. This tvAR(2) process exhibits a time-varying periodicity which is clearly visible in the heat diagrams associated with the real parts of  its time-varying copula-based spectral densities,   displayed in the lower triangular part  of   Figure~\ref{Figa}(b). The uniformly dark blue imaginary parts in the upper triangular part  are  a consequence  of the fact that those  imaginary parts actually are zero, since Gaussian processes are  time-reversible
[see Proposition 2.1 in \cite{dhkv2014}].    
As expected, no additional information can be gained from observing different quantiles (all   heatmaps in the lower-triangular parts of (a) are the same), since the (bivariate) distributions of the process are Gaussian and the change over time only affects the correlation of these conditional distributions. Because of this the (time-varying) bivariate distribution functions (and with them all quantiles) depend only on the (time-varying) correlations of the random variables, which are also fully  captured  by $L_2$ methods.

\subsection{Gaussian tvARCH(1)} \label{sec:p3} Figure~\ref{Figc}  displays the same heatmaps  for a   time-varying ARCH(1) model  of the form\vspace{-1mm}
\[
X_{t,T} = \sqrt{1/2 + (0.9t/T)X^2_{t-1,T}}Z_t\vspace{-1mm}
\]
with i.i.d.~noise~$Z_t \sim \mathcal{N}(0,1)$ and its strictly stationary approximation  at time~$t_0=\vartheta T$, $0\leq \vartheta \leq 1$ \vspace{-1mm}
\[
X_t^{\vartheta} = \sqrt{1/2 + 0.9\vartheta(X_{t-1}^\vartheta)^2}\zeta_t\vspace{-1mm}
\]
where $\zeta_t$ similarly is $\mathcal{N}(0,1)$ white noise.  In these stationary approximations,  the influence of $X^{\vartheta}_{t-1}$ on the variance of $X^{\vartheta}_t$ gradually  increases over time. This, quite understandably, gets reflected in the diagrams associated with  extreme quantiles, but  {is not visible}  in the ``median~ones''.\vspace{-1mm}

\begin{figure}[htbp]
	\begin{minipage}[b]{.48\textwidth}
		\includegraphics[width = 72mm, height = 41mm]{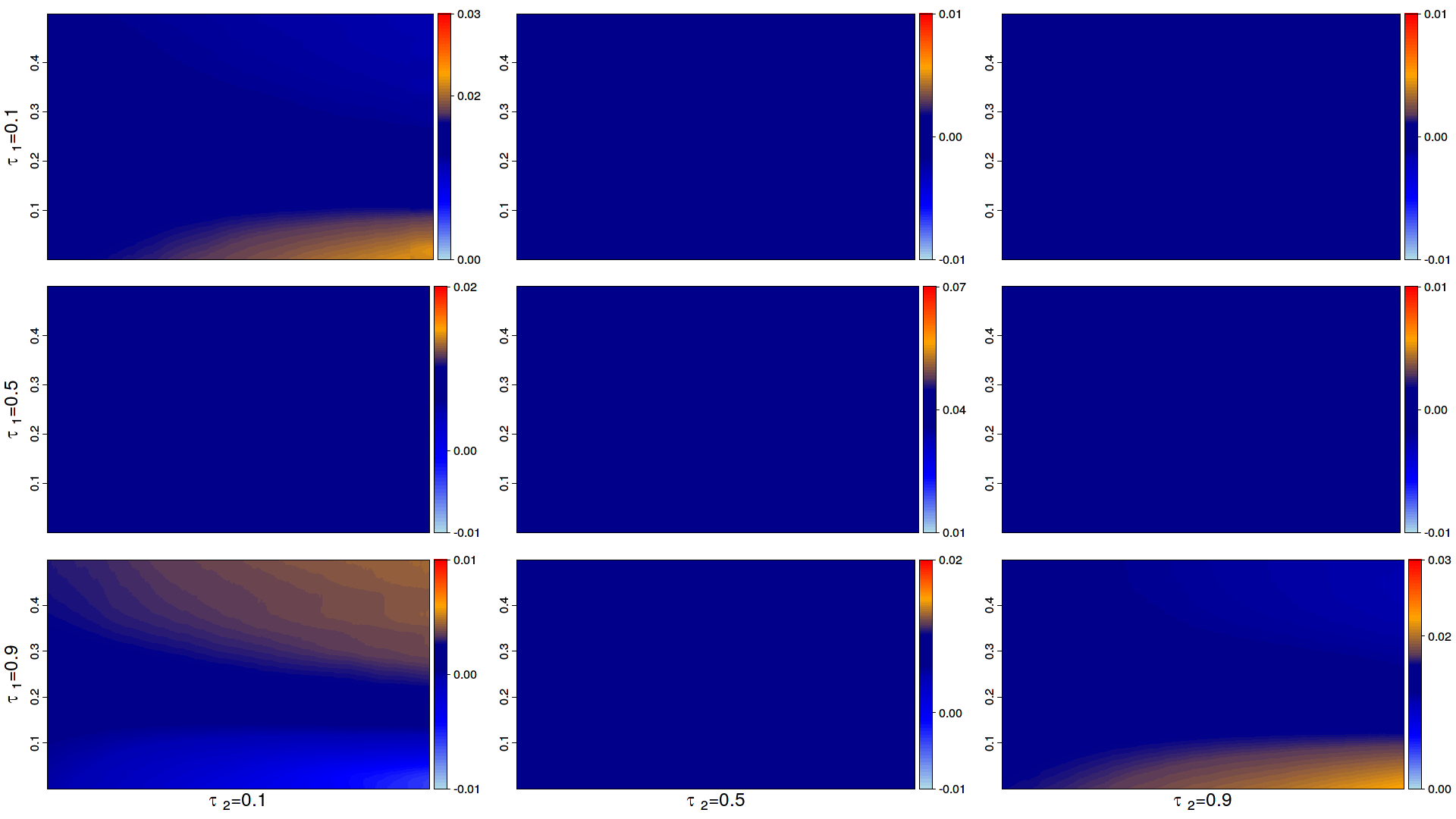}
		\subcaption{\footnotesize Actual  copula spectral densities (simulated)}
	\end{minipage}
	\begin{minipage}[b]{.48\textwidth}
		\includegraphics[width = 72mm, height = 41mm]{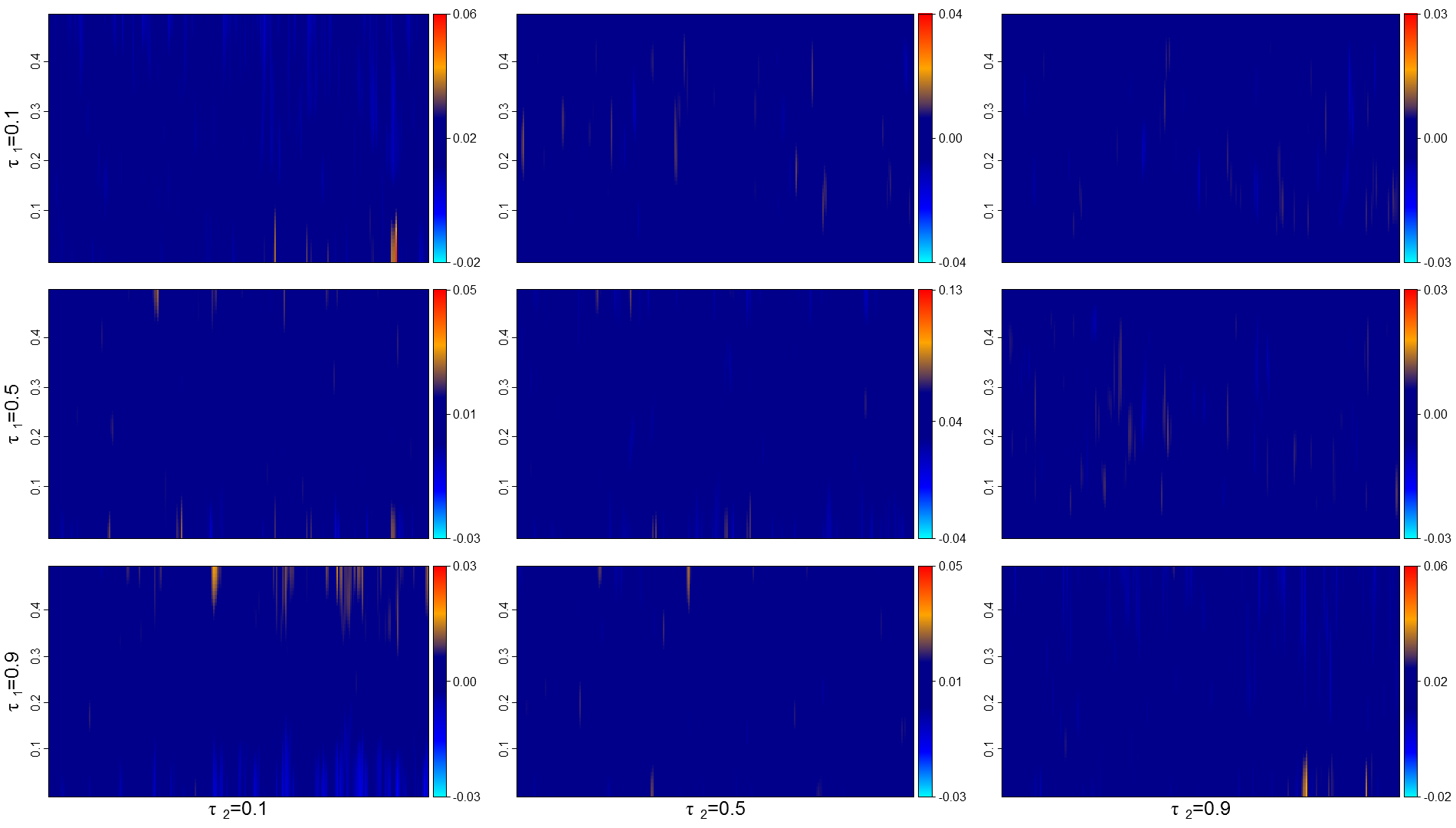}
		\subcaption{\footnotesize Estimated copula spectral densities, $n = 128$}
	\end{minipage}
	\begin{minipage}[b]{.48\textwidth}
		\includegraphics[width = 72mm, height = 41mm]{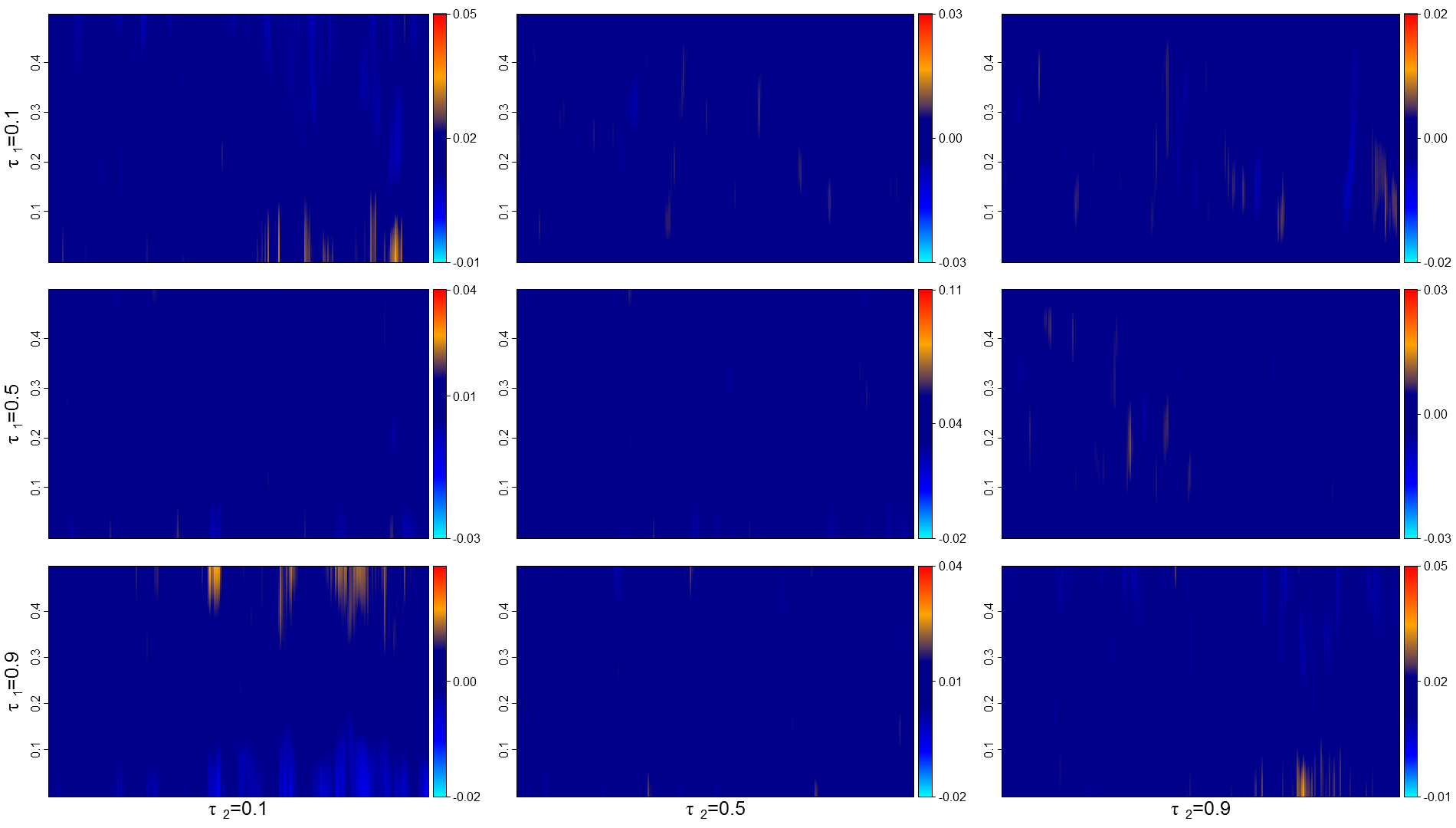}
		\subcaption{\footnotesize Estimated copula spectral densities, $n = 256$}
	\end{minipage}
	\begin{minipage}[b]{.48\textwidth}
		\includegraphics[width = 72mm, height = 41mm]{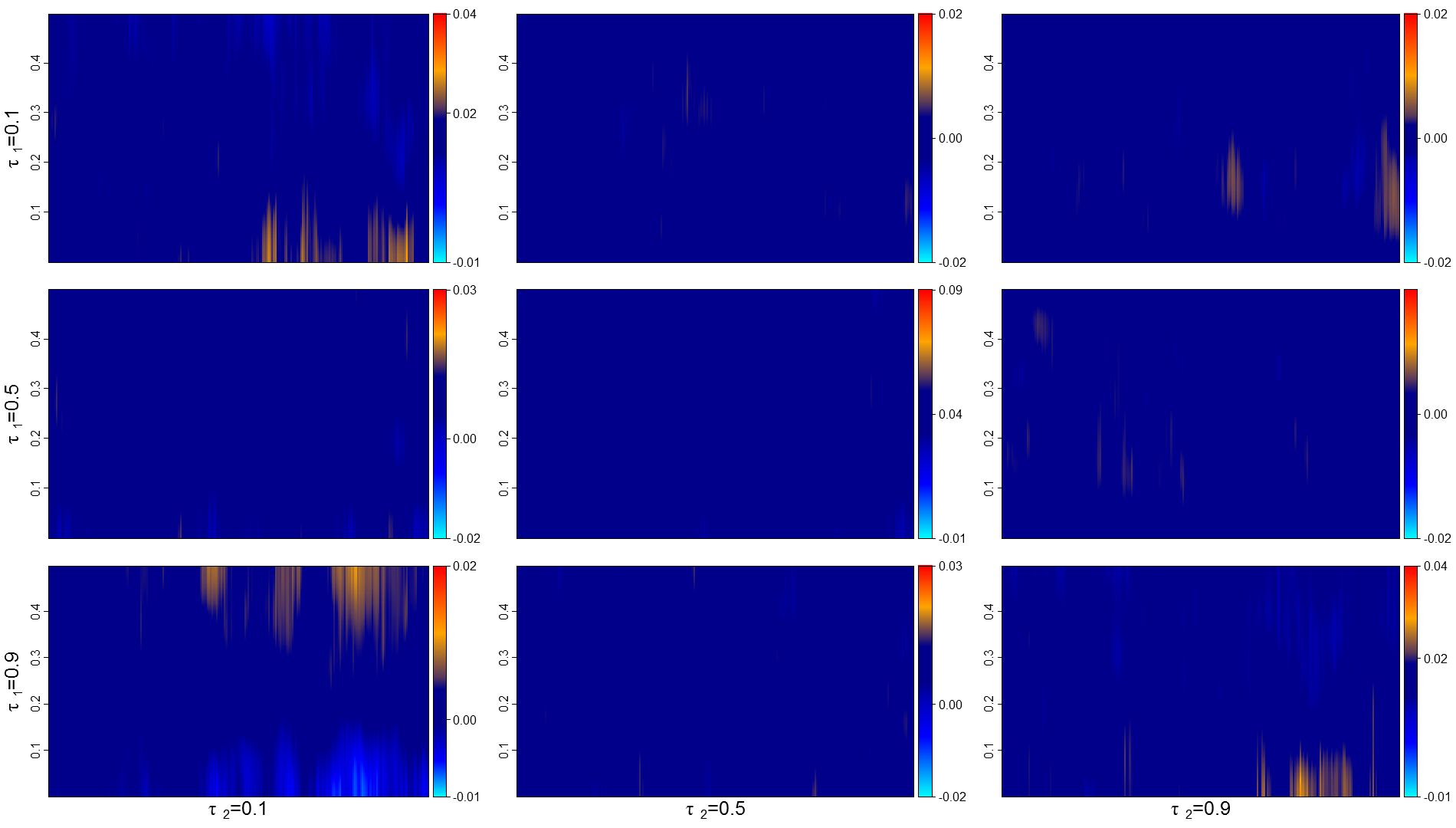}
		\subcaption{\footnotesize Estimated copula spectral densities, $n = 512$}
	\end{minipage}
	\begin{minipage}[b]{.48\textwidth}
		\includegraphics[width = 72mm, height = 41mm]{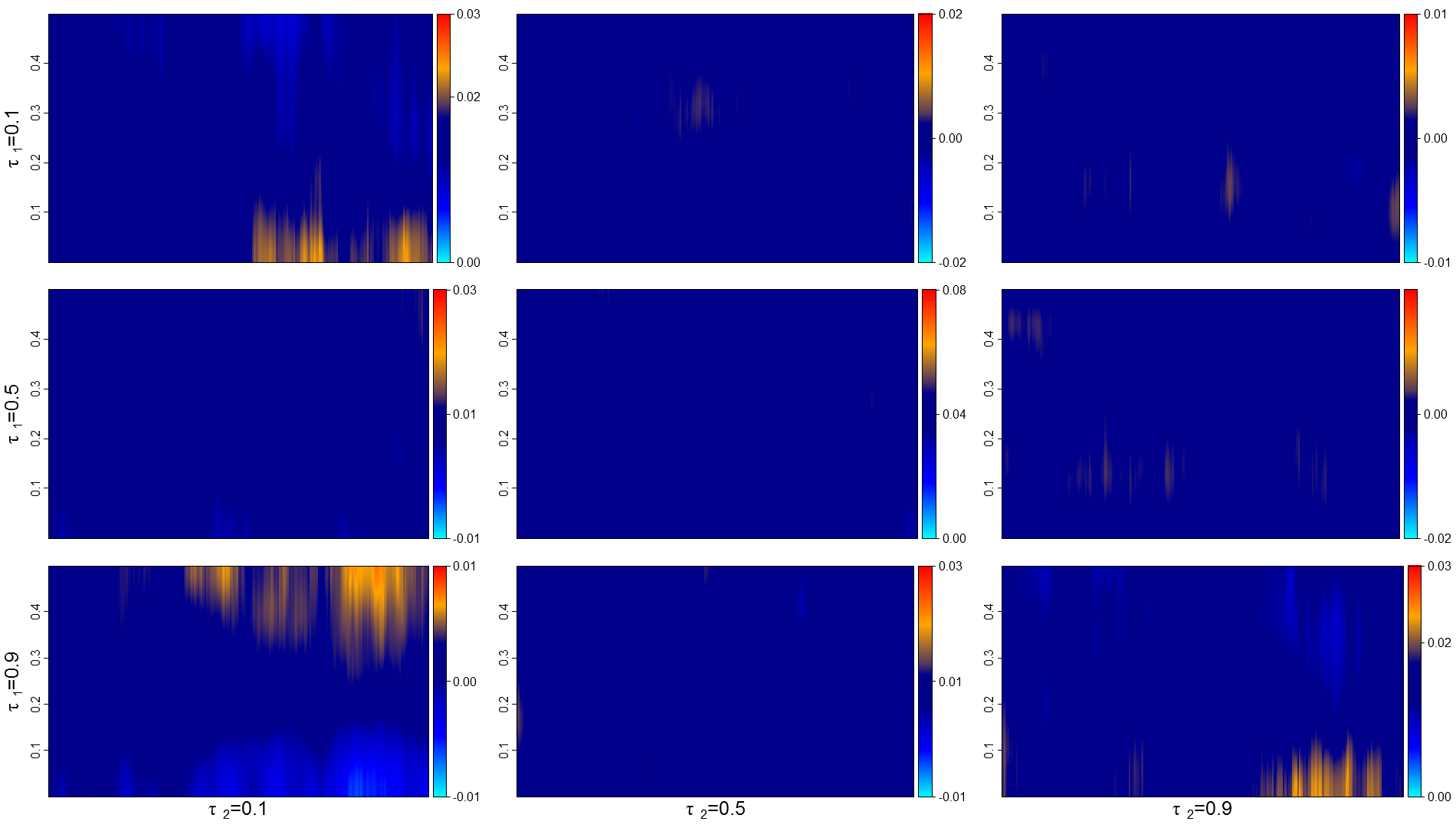}
		\subcaption{\footnotesize Estimated copula spectral densities, $n = 1024$}
	\end{minipage}
	\hspace{5mm}
	\begin{minipage}[b]{.48\textwidth}
		\includegraphics[width = 72mm, height = 41mm]{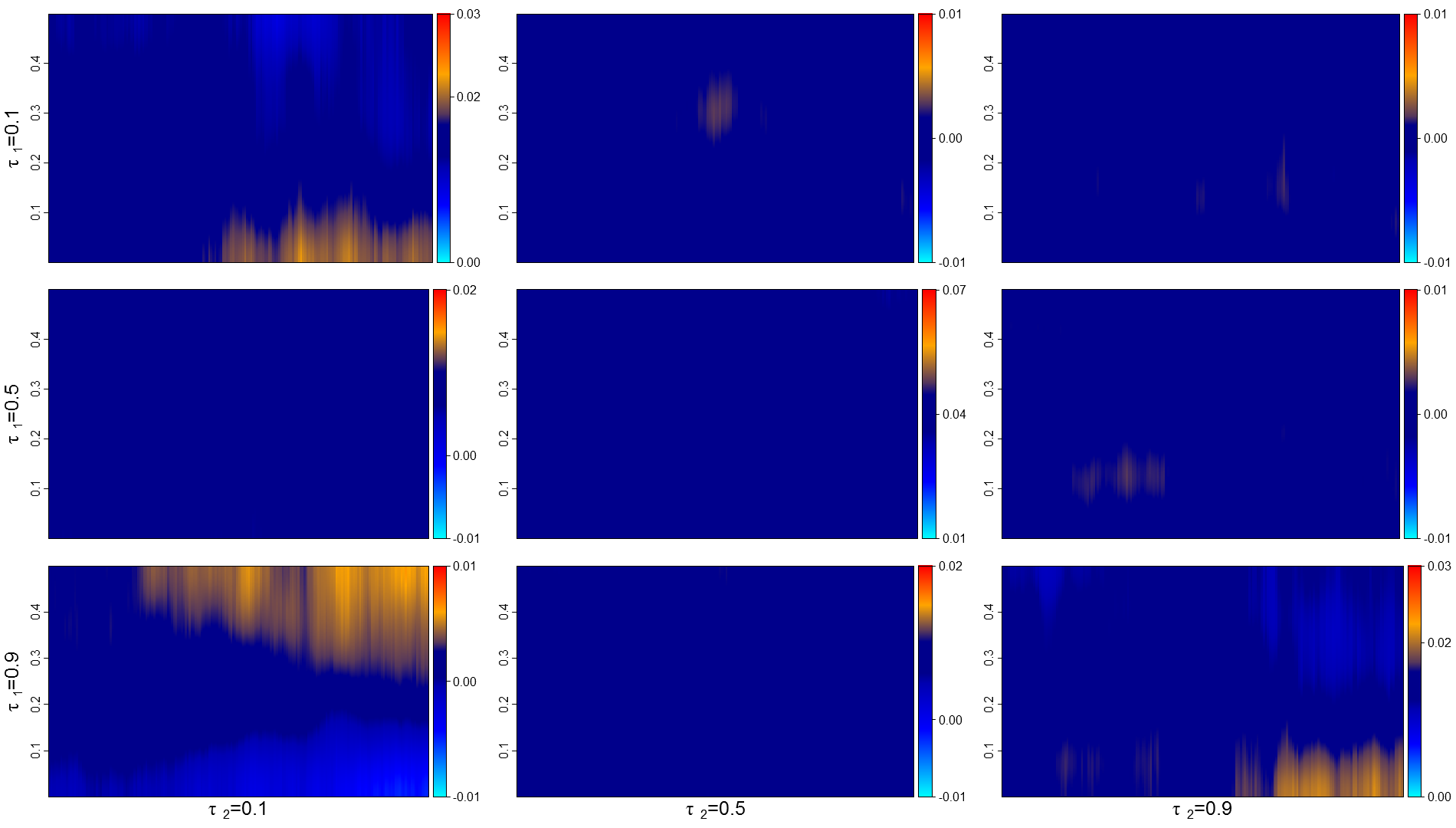}
		\subcaption{\footnotesize Estimated copula spectral densities, $n = 2048$}
	\end{minipage}
	\caption{\small Heatmaps of the Gaussian time-varying ARCH(1) process described in Section \ref{sec:p3} and the corresponding estimators, for 
		various window  lenghts.
		\vspace{-5mm}
	}\label{Figc}
\end{figure}

\subsection{Influence of the  series length} \label{app:shortts}
As suggested by a referee we want to include heatmaps of estimators calculated from shorter time series $T$. In our non-stationary setting, a smaller $T$ is essentially equivalent to a faster evolution of the features of the process under study. 
 Figure $\ref{Fig:short}$ compares estimators for the Cauchy tvAR$(2)$  and the tvQAR(1) processes (studied in Section $\ref{sec:p4}$)  for series lengths~$T=2^{12}=4096$ and $T=2^{11}=2048$ (one single realization), bandwidth $B_n = 10$ and window length $w=512$. The results indicate that estimation rapidly deteriorates with decreasing $T$. While nonstationarity nevertheless remains quite significant in the Cauchy tvAR$(2)$ case, the  signal in the real parts for the slowly varying tvQAR(1) is barely visible; time-irreversibility, on the other hand, remains well detected. 
 



\begin{figure}[H]
	\begin{minipage}[b]{.48\textwidth}
		\includegraphics[width = 72mm, height = 41mm]{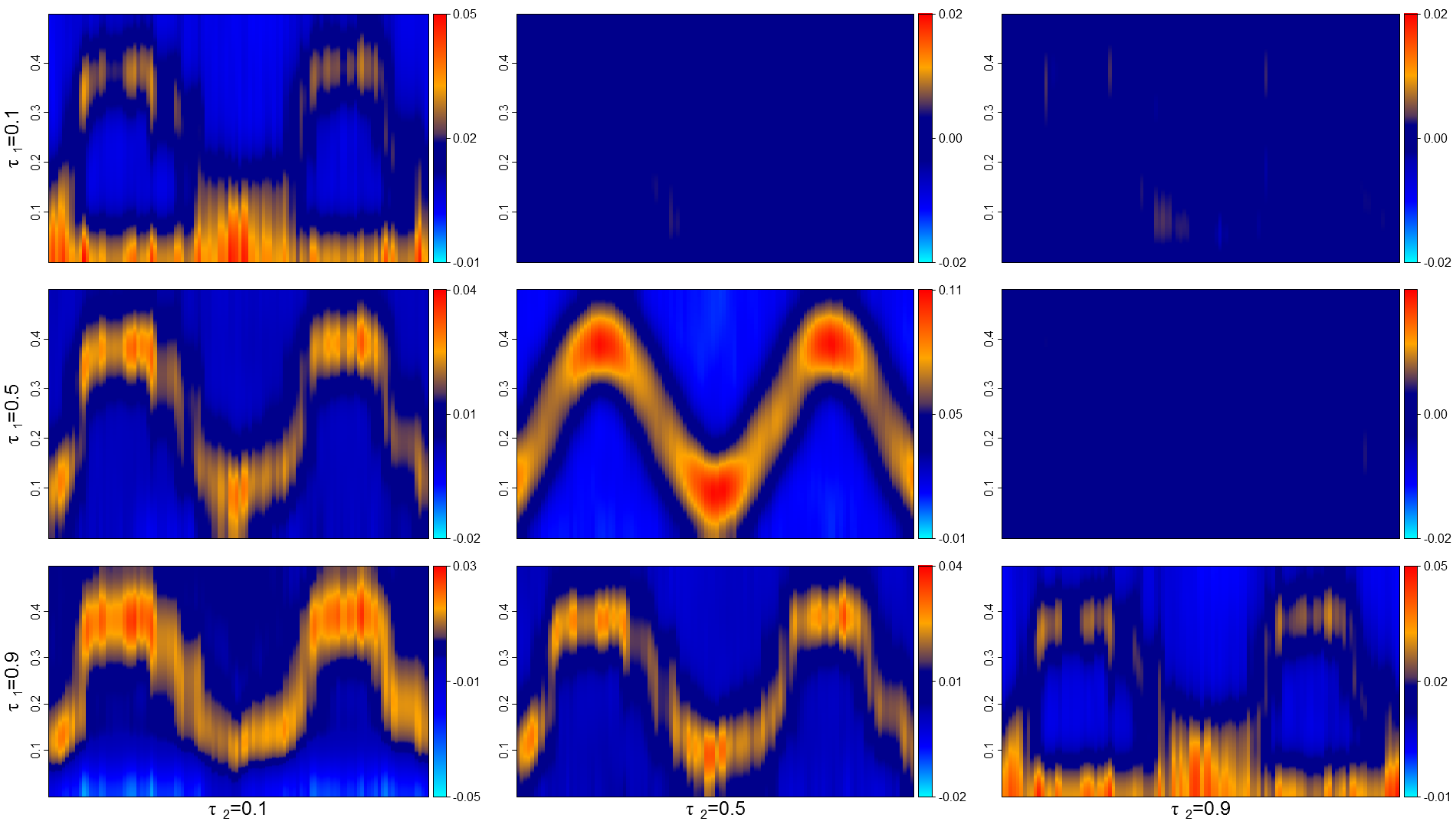}
		\subcaption{\footnotesize Time-varying Cauchy AR$(2)$ with $T=4096$}
	\end{minipage}
	\begin{minipage}[b]{.48\textwidth}
	\includegraphics[width = 72mm, height = 41mm]{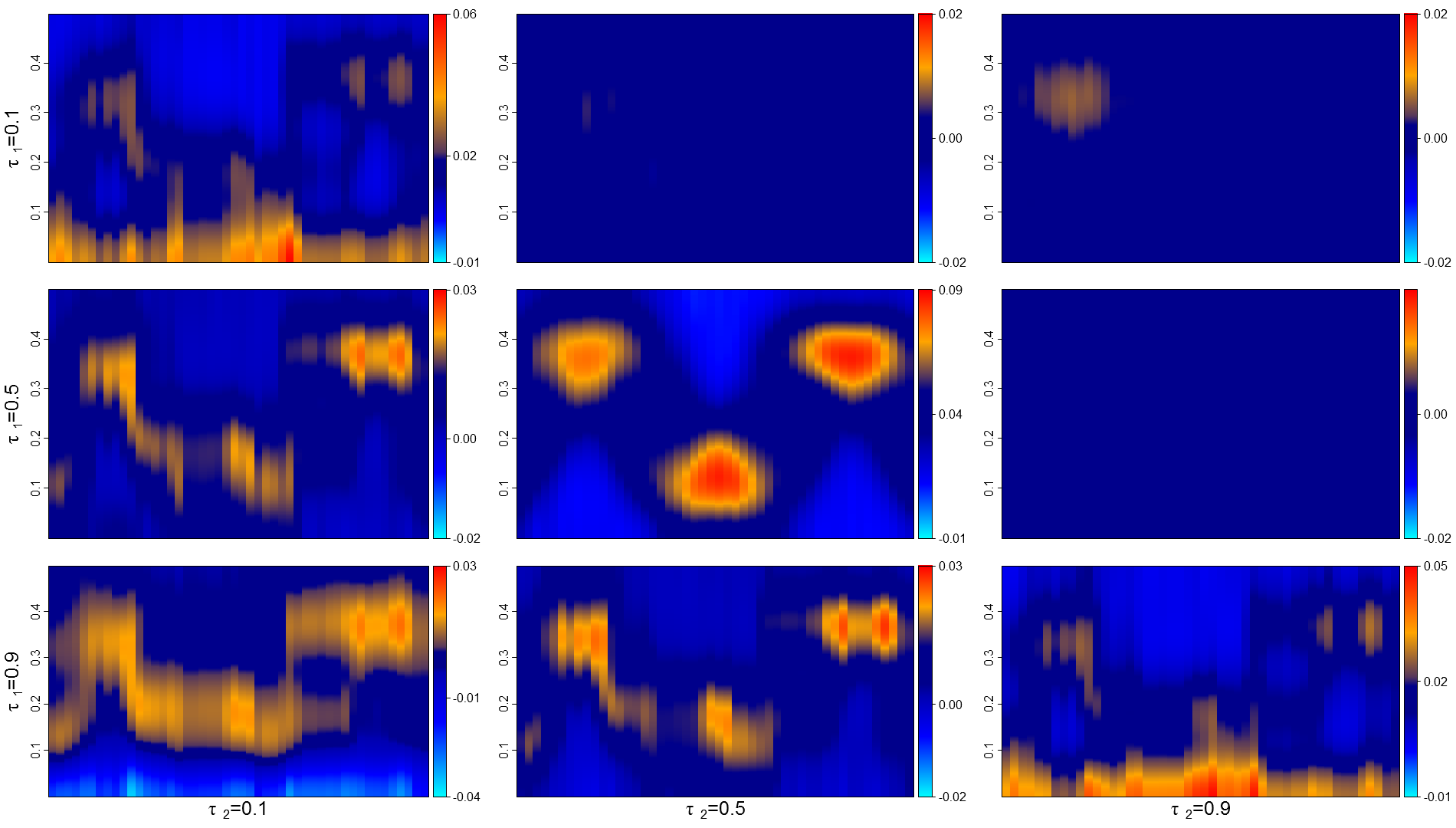}
		\subcaption{\footnotesize Time-varying Cauchy AR$(2)$ with $T=2048$}
	\end{minipage}
	\begin{minipage}[b]{.48\textwidth}
	\includegraphics[width = 72mm, height = 41mm]{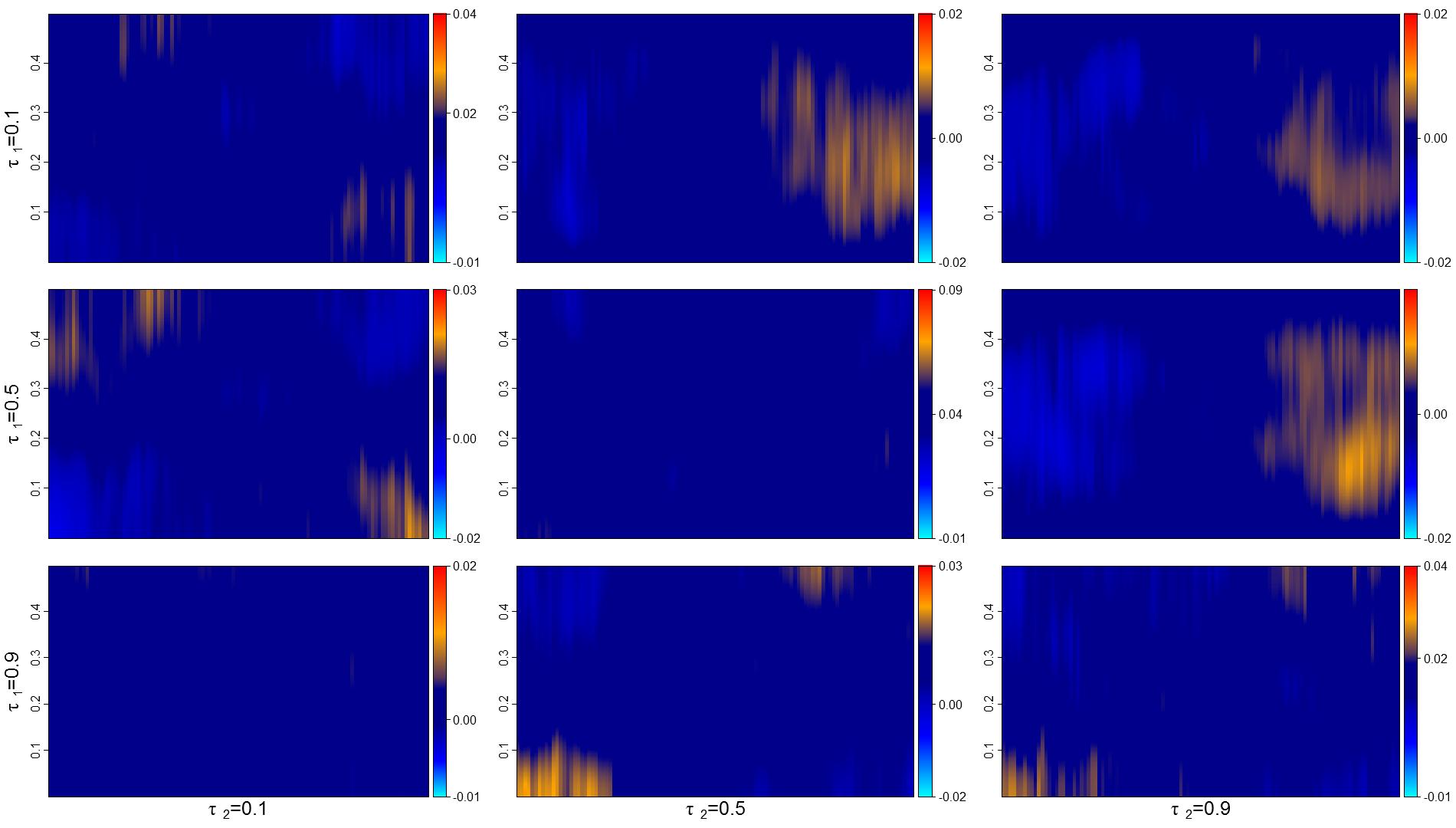}
		\subcaption{\footnotesize Time-varying QAR$(1)$ with $T=4096$}
	\end{minipage}
		\hspace{5mm}
	\begin{minipage}[b]{.48\textwidth}
	\includegraphics[width = 72mm, height = 41mm]{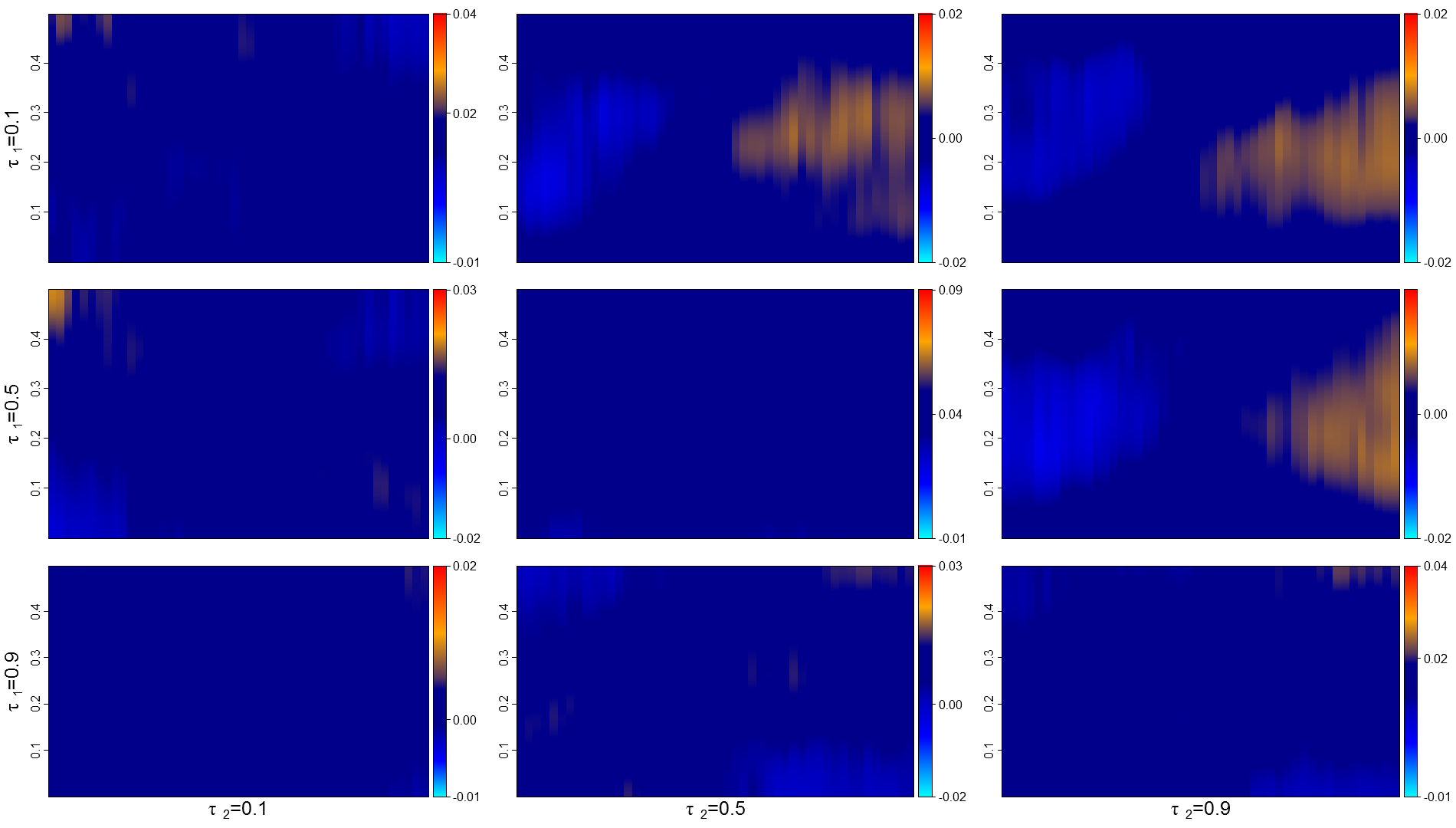}
		\subcaption{\footnotesize Time-varying QAR$(2)$ with $T=2048$}
	\end{minipage}
	\caption{\small Heatmaps of estimated spectra for the time-varying Cauchy AR$(2)$ and QAR$(1)$ processes, $T=2^{12}=4096$ and $T=2^{11}=2048$. 
		\vspace{-5mm}
	}\label{Fig:short}
\end{figure}

\section{ Time-varying variances and low frequency peaks in quantile spectra}

As mentioned in Section $\ref{SecSandP}$ and pointted out by \cite{li2014},  low frequency peaks in quantile spectral densities   also can be caused by  time-varying variances. 
By definition, copula spectra are invariant under strictly increasing transformations of the marginals. Invariance of the corresponding estimators, however, does not hold unless the pace of marginal changes   is slow   compared with the choice of   local window  lengths. Could  fastly varying marginal variances, via some  tvARCH model,  account for the type of quantile spectral plots associated with the S\&P500 data?

\subsection{A tvARCH(0) model approach for  S\&P500 log-returns} \label{sec:addsp500}

A plot of 
 the local variances of  S\&P500 returns (estimated in local windows) against time (Figure \ref{FigV})  suggests that those variances can be considered roughly stable over periods of at most  100 observations. This is quite small compared to the window length needed to estimate a quantile spectrum, a mismatch that could produce spurious deviations from white noise behavior in the heatmaps. 
 To investigate whether such fast marginal changes   can indeed produce the type of quantile spectral plots associated with the S\&P500 data, we followed a heuristic  approach inspired by \cite{li2014}. More precisely, based on  local windows of length~$101$ (details are provided in Section~\ref{tvARCHbest}), we first computed estimators $\hat\sigma^2_t$, $t = 1,...,12992$, of the time-varying variances~$ \sigma^2_t$.   With those estimated variances, we constructed an artificial series (of the tvARCH(0) form) 
\begin{equation} \label{eq:ARCH0}
X_{t,T} = \hat \sigma_t Z_t,
\end{equation} 
where $\{Z_t\}_{t=1}^{12992}$ denote i.i.d.\ draws with replacement from the S\&P500 values  $Y_t$ standardized by their estimated standard deviation, i.e.~from $\{Y_{t}/\hat \sigma_{t}\}_{t=1}^{12992}$ where $(Y_t)_{t=1}^{12992}$ denote the  observed S\&P500 returns. To see how well the time-varying copula spectrum of the S\&P500 returns can be matched by the spectrum of the process $X_{t,T}$, we simulated $J = 1000$ independent copies of~$X_{t,T}$ and, for each realization, we  computed local estimators of the quantile spectrum, $\hat f_{t_0,T}^{j}$,  say, $j=1,...,J$. Out of those $J=1000$ realizations, we selected one that produces quantile spectra matching 
those of the S\&P500 return---see \ref{tvARCHbest} for details. For the sake of brevity, we restrict our comparison to the real parts of the quantile combinations $(\tau_1,\tau_2) \in \{(0.1,0.1),(0.9,0.9)\}$ and the imaginary parts corresponding to~$(\tau_1,\tau_2) \in \{(0.1,0.9)\}$.   The corresponding time-frequency plots are shown in Figure~\ref{fig:heatsparch}. 
\begin{figure}[htbp]
	\centering
	\vspace{.2cm}
	\includegraphics[width = .8\textwidth]{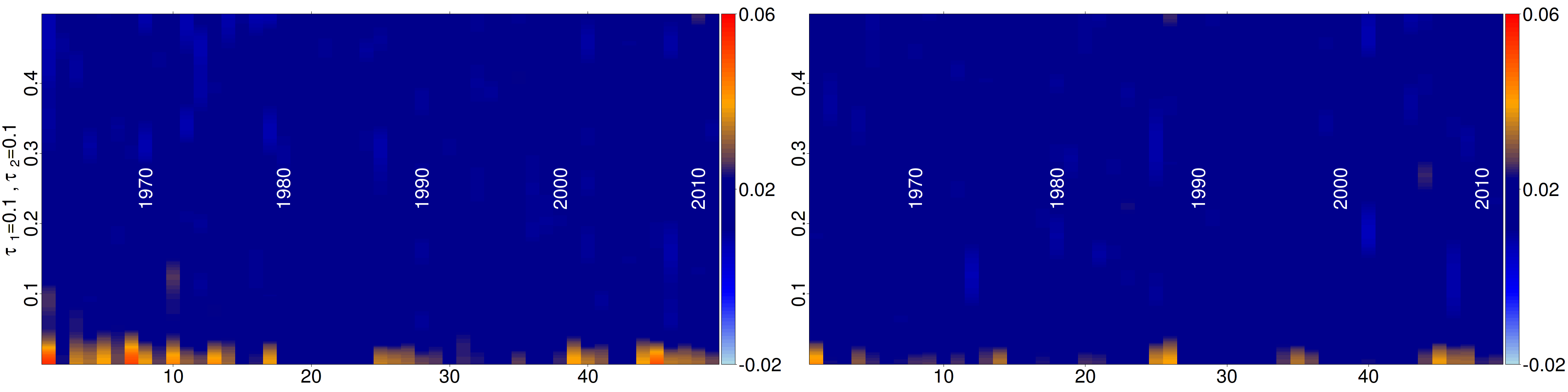}
	\vspace{.2cm}
	\includegraphics[width = .8\textwidth]{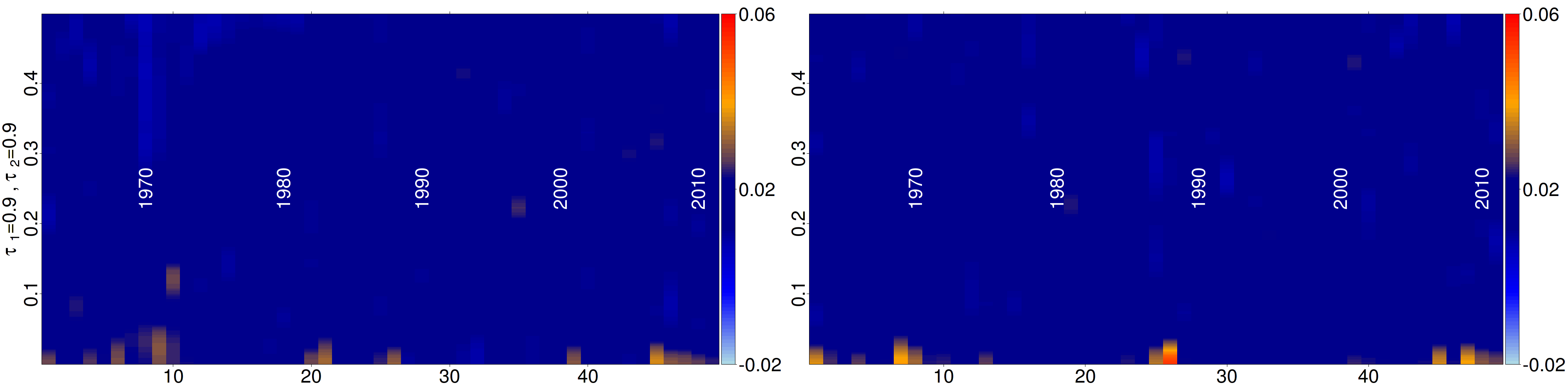}
	\vspace{.2cm}
	\includegraphics[width = .8\textwidth]{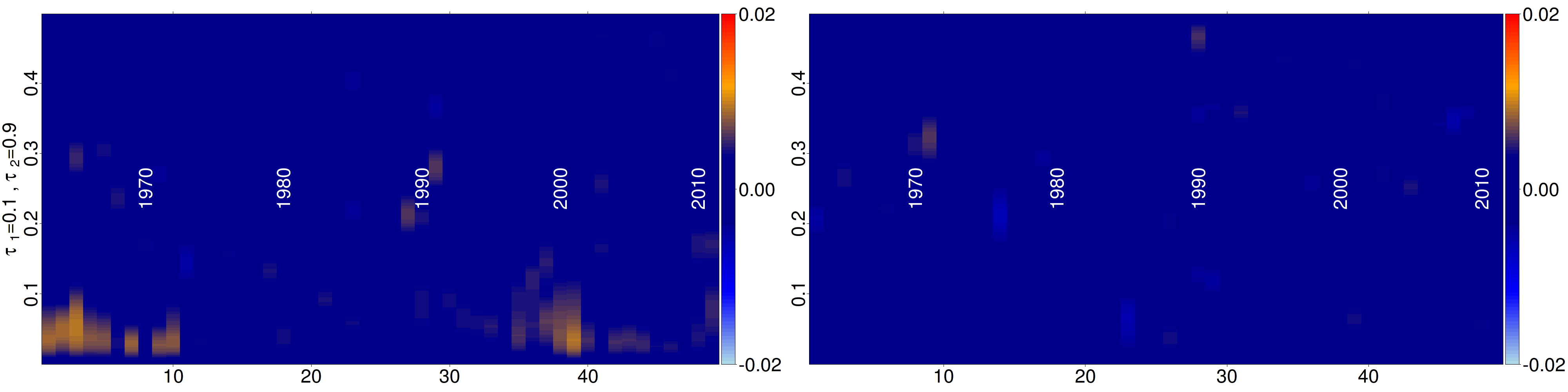}
	\caption{\small Time-frequency heatmaps for the S\&P500 log-returns (left column) and one realization of a tvARCH(0) (right column) process where the parameter is estimated as the time-varying variance of the S\&P500. First row: $\tau_1 = \tau_2 = 0.1$. Second row: $\tau_1 = \tau_2 = 0.9$. Third row: imaginary parts $(\tau_1 = 0.1, \tau_2 = 0.9)$. }
	\label{fig:heatsparch}
\end{figure}
Comparing the first two row figures corresponding to the quantile levels $(0.1,0.1)$ and $(0.9,0.9)$, we see that applying our estimators to the time-varying process $X_{t,T}$ indeed produces some peaks at low frequencies. However, those peaks, in the~$(0.1,0.1)$-spectrum, are not enough, and  not strong enough, thus failing to completely capture the strong dependence in negative  returns which is one of the typical features of financial data. The imaginary parts of the $(0.1,0.9)$ S\&P500 spectra (third row in Figure~Figure~\ref{fig:heatsparch}) shows significant deviations from white noise behavior, in particular during the periods  1965--1973 and~1995--2003. In contrast to that, the corresponding imaginary parts for the time-varying process $X_{t,T}$ are virtually indistinguishable from those of a white noise spectrum. This indicates that a tvARCH(0) model does not provide an adequate description of the joint distributions of high and low returns and additional evidence   that a tvARCH(0) model fails to capture important aspects of the S\&P500 dynamics.

\subsection{The ``best\  tvARCH$(0)$ fit"  of the S\&P500 data}\label{tvARCHbest}
Let us provide some details on the heuristic approach adopted in Section~\ref{sec:addsp500}. The time-varying variances $\sigma^2_t$ of the log-returns $Y_1,\dots,Y_T$ were estimated by
\[
\hat \sigma^2_t = \frac{1}{2n + 1} \sum_{s = 1}^T \bar{K}\big(\frac{s-t}{n}\big) \Big(Y_t - \frac{1}{2n+1} \sum_{|l-t|\leq n} Y_l\Big)^2 
\]
where $\bar{K}$ is the Parzen window multiplied with $4/3$ (so that it integrates to one), 
  $n=50$ (accomodating fast changes that are still sufficiently smooth---see Figure \ref{FigV}). Note that, to compute $\hat\sigma^2_t$ for $t = {1,\dots,12992},$ we need $13092$ observations of the S\&P500 from 1962-01-02 to 2014-01-03. Those additional observations were omitted in the rest of the paper to keep all pictures consistent. 
For our investigation, we created $J = 1000$ artificial time series as defined in $(\ref{eq:ARCH0})$ and for each of them, we calculated a collection of local lag-window estimators $\hat f_{t_0,T}^{j}(\omega,\tau_1,\tau_2),\ j=1,...,J$ by using the same windows and bandwidths as for the log-returns of the S\&P500. To select the ``best match", we concentrate on the real parts of~$\hat f_{t_0,T}^{j}(\omega,0.1,0.1)$ and~$\hat f_{t_0,T}^{j}(\omega,0.9,0.9),$ and  the imaginary part of $\hat f_{t_0,T}^{j}(\omega,0.9,0.1).$ Let $\hat f_{t_0,T}(\omega,\tau_1,\tau_2)$ stand for the local lag-window estimator of the S\&P500 log-returns, and consider  the $L^2$ distances 
\[ 
d_1(j) = \sum_{\omega \in \Omega} \sum_{t_0 \in {\cal T}_0}  [\Re\hat f_{t_0,T}^{j}(\omega,0.1,0.1))- \Re\hat f_{t_0,T}(\omega,0.1,0.1)]^2,\qquad j = 1,\dots,\ J.
\] 
Denote by $d_2(j)$ and $d_3(j)$ the same $L^2$  distances computed  for the real and  imaginary parts of~$\hat f_{t_0,T}^{j}(\omega,0.9,0.9)$ and~$\hat f_{t_0,T}^{j}(\omega,0.9,0.1)$ respectively. The ``best match"  was selected as the realization $j_{\text{min}}$   minimizing the sum $d_1(j)+d_2(j)+d_3(j).$

\begin{figure}[H]
	\centering
	\includegraphics[width = 0.6\textwidth]{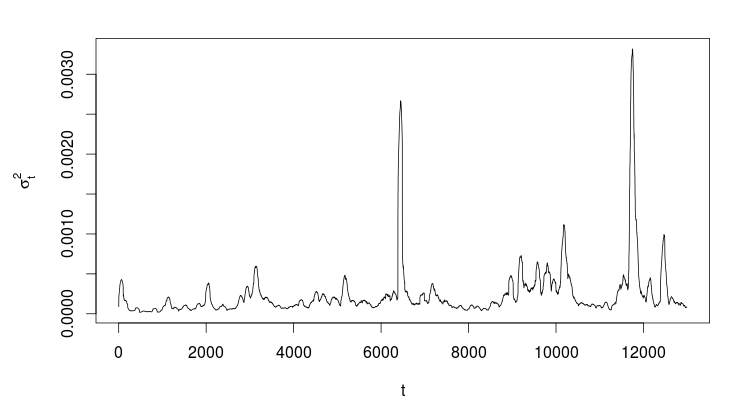}
	\caption{{\small Estimated time-varying variance of the log-returns of the S\&P500.  }}
	\label{FigV}
\end{figure}
\begin{figure}[ht]
	\centering
	\includegraphics[width = 0.48\textwidth]{Pictures/Data/SP500/SP500.png}
	\includegraphics[width = 0.48\textwidth]{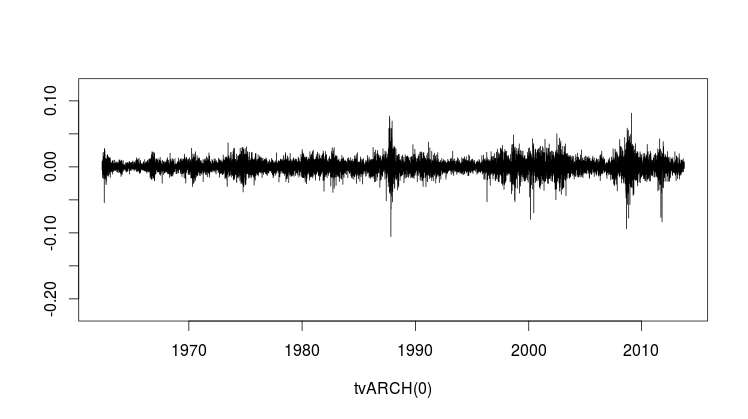}
	\caption{\small Log-returns of the S\&P500 between 1963 and 2013 and the simulated tvARCH$(0)$ process that was selected as the ``best match".}
	\label{fig:tscompared}
\end{figure}

\subsection
{Comparing  the ``global"  tvARCH(0) and  S\&P500 spectra}

If a tvARCH(0) approach to the study of the S\&P500 data is to be adopted, one may argue that, in view the invariance argument mentioned at the beginning of this section, the localized spectral analysis developed in this paper is not required, and that the  stationary  or ``global"  methods developed in  \cite{dhkv2014} are the appropriate ones.  The right check for the adequacy of a  tvARCH(0) model then  should be based on a comparison  between  (estimated) stationary quantile spectra, which avoids the trouble caused by a possible mismatch between the pace of  marginal changes and the chosen window length. 

Accordingly, in this section, we provide a comparison of the ``global"  spectra, i.e.~spectra computed  from the complete dataset,  without localization, as in   \cite{dhkv2014}, of the S\&P500 returns on one hand,  of the process $X_{t,T}$ defined in equation \eqref{eq:ARCH0} on the other hand. We simulated $J=1000$ independent replications of the process $X_{t,T}$, and for each replication we computed the ``global"  lag-window estimator based on~$
\mathcal{N}_{\tnull,T}~:=~\{1,\dots,T\}
$, the bandwidth $B_n = 25$ and the same lag-window function as in the analysis of the S\&P500 log returns. 
This yields  a collection of estimators $(\hat f^{j})_{j=1,...,J}$.  
Next, for each frequency~$\omega$ in~$ \{\frac{2 \pi j}{6496}|j = 0,\dots,6495\}$ and each couple $\tau_1,\tau_2$ in $\{0.1,0.5,0.9\},$ we computed the~$1\%$ quan\-tile~$q_{\text{min}}^\Re(\omega,\tau_1,\tau_2)$ of the $J$-tuple $(\Re\hat f^{j}(\omega,\tau_1,\tau_2))_{j=1,...,J}$ and the $99\%$ quantile $q_{\text{max}}^\Re(\omega,\tau_1,\tau_2)$  of the $J$-tuple $(\Re\hat f^{j}(\omega,\tau_1,\tau_2))_{j=1,...,J}$. The quantiles $q_{\text{max}}^\Im(\omega,\tau_1,\tau_2)$ and  $q_{\text{max}}^\Im(\omega,\tau_1,\tau_2)$ were computed similarly. 

\begin{figure}[ht]
	\centering
	\includegraphics[width = \textwidth]{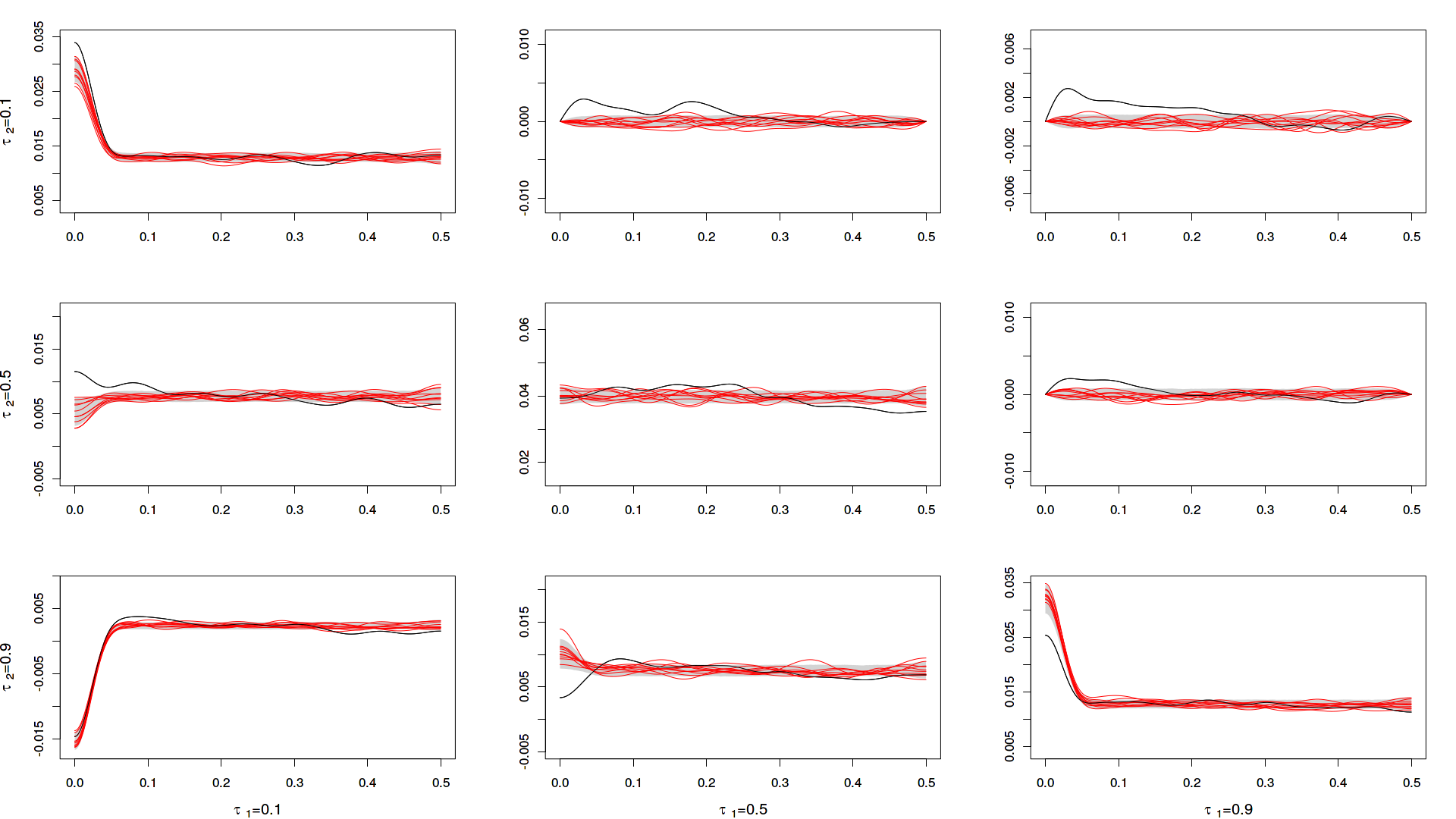}
	\caption{\small A comparison of global lag-window estimations of the quantile spectra of the S\&P500 (over the period 1962-2013; black lines) and those obtained from simulated tvARCH$(0)$ processes (red lines, with grey-shaded pointwise confidence regions).}
	\label{fig:ap500arch0_1}
\end{figure}

Quantile spectra computed from the S\&P500 dataset (in black)  are depicted in Figure \ref{fig:ap500arch0_1}, together with the estimators $\hat f^{j}(\omega,\tau_1,\tau_2),\ j=1,...,10$ (red lines) and, for each~$(\omega,\tau_1,\tau_2)$, a gray area covering the interval $[q_{\text{min}},q_{\text{max}}].$ As predicted by the analysis of \cite{li2014}, the~$\tau_1=\tau_2=0.1$ and $\tau_1=\tau_2=0.9$ quantile spectral densities of the tvARCH(0) process exhibit prominent peaks around frequency zero. Observe, however, that those peaks are typically higher than those of the S\&P500 for $\tau_1=\tau_2=0.1$ and (slightly) lower for $\tau_1=\tau_2=0.9$. Additionally, the estimators $\Re \hat{f}(\omega,\tau_1,\tau_2)$ for $(\tau_1,\tau_2) = (0.1,0.5)$, and $ (0.9,0.5)$ and $\Im \hat{f}(\omega,\tau_1,\tau_2)$ for \mbox{$(\tau_1,\tau_2) = (0.1,0.5), (0.1,0.9)$} and $(0.5,0.9)$ lie well outside the gray ``confidence" areas for a wide range of frequencies. This again indicates that the tvARCH(0) process does not provide an adequate description 
of the (global) dynamic features of the S\&P500 returns.  

\begin{figure}[ht]
	\centering
	\includegraphics[width = \textwidth]{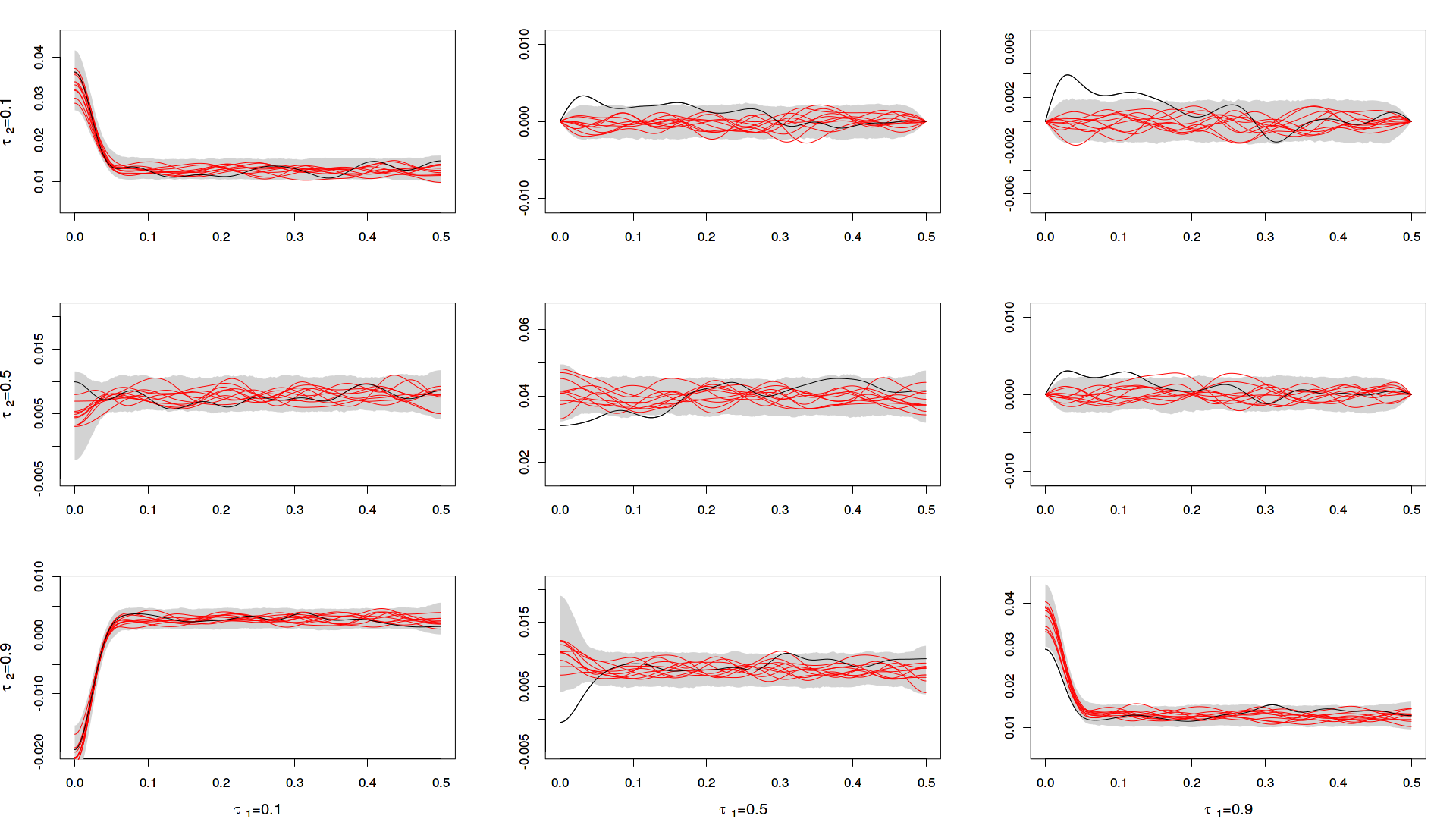}
	\caption{\small Same as Figure \ref{fig:ap500arch0_1}, with S\&P500 observations restricted to the period 2000-2013.}
	\label{fig:ap500arch0_2}
\end{figure}

It is natural to wonder whether the mismatch between this simple time-varying variance model and the S\&P500 data is due to a structural difference between the  S\&P500 dynamics back in the seventies and its more recent dynamics.  
 To address this question, we replicated the procedure described above for the S\&P500 returns in the time period 2000-2013. The corresponding results are displayed in Figure \ref{fig:ap500arch0_2}. While the estimators of the~$(0.1,0.1)$ and $(0.9,0.9)$ spectra  now (just barely) lie within the gray ``confidence" areas, we still observe highly significant deviations between the imaginary part of the ARCH(0) and S\&P500 spectra for the quantile combination $(\tau_1,\tau_2) = (0.1,0.9)$, and between their real parts for~$(\tau_1,\tau_2) = (0.5,0.9)$.

Summarizing our findings, a localized analysis of quantile spectra reveals features that cannot be explained by a model based solely on fast local changes in variance. Note that more sophisticated models such as time-varying ARCH and GARCH processes have been suggested to describe financial data, see \cite{Fryzlewicz_tvARCH} for a recent contribution. It would be very interesting to compare the quantile spectra of such time-varying stochastic volatility processes with those of the S\&P500 returns. Preliminary comparisons indicate that the structure of the imaginary parts of the S\&P500 time series cannot be explained by  time-varying GARCH models.


\section{Proofs for the main results}\label{app:main}

\subsection{Proofs of the results in Section \ref{sec:strictstatmodels}} \label{sec:proofstrictstatmodels}

We begin by some preliminary comments. Assume that $X_{t,T}$ and $X^\vartheta_t$ are defined on the same probability space.  Expressing   distributions function in terms of expectations and indicators leads to
\begin{align*}
&|F_{t_1,t_2;T}(x_1,x_2)-G_{t_2-t_1}^\vartheta((x_1,x_2))| \\
& \quad = |\E(\prod_{1\leq j \leq 2} \Ind{X_{t_j,T} \leq x_j} - \prod_{1\leq j \leq p} \Ind{(X^\vartheta_{t_j}) \leq x_j})|\\
&  \quad \leq \sum_{k=1}^2 | \E( \prod_{j=1}^{k-1} \Ind{X_{t_j,T} \leq x_j} [\Ind{X_{t_k,T} \leq x_k} - \Ind{(X^\vartheta_{t_k})\leq x_k}] \prod_{j=k+1}^{p}\Ind{(X^\vartheta_{t_j})\leq x_j})|\\
& \quad  \leq \sum_{k=1}^2 \E(|\Ind{X_{t_k,T} \leq x_j} - \Ind{(X^\vartheta_{t_k})\leq x_j}|).
\end{align*}
Therefore, in order to prove Lemma \ref{lem:ma}-\ref{lem:garch}, it is sufficent to show that 
\[
\sup_{x \in \mathbb{R}} \E(|\Ind{X_{t,T} \leq x} - \Ind{(X^\vartheta_{t})\leq x}|) \leq L\big(\big|\frac{t}{T} - \vartheta\big| + \frac{1}{T}\big).
\]

\subsubsection{Proof of Lemma $\ref{lem:ma}$}
By Assumption $(MA1)$, we have $\sup_{\vartheta \in (0,1)} \sum_{j=-\infty}^{\infty} |a(\vartheta,j)| < \infty$, which by standard arguments implies strict stationarity of the process $X^\vartheta_{t}$ (see for example  Proposition $3.1.1$ in \cite{BrockwellDavis}). Without loss of generality, we can assume that~$\mu(\vartheta) = 0$. In order to establish distributional properties, we always can specialize the noise $\zeta_t$ driving $X^\vartheta_{t}$--an arbitrary copy of the noise $Z_t$ driving $X_{t,T}$---as being $Z_t$ itself. Denoting by~$\mathcal{A}$ the~$\sigma-$field generated by~$\{\xi_s|s \neq 0\},$
\begin{align*}
&\sup_{x \in \mathbb{R}} \E(|\Ind{X_{t,T} \leq x} - \Ind{(X^\vartheta_{t})\leq x}|) \\
&\qquad \leq \sup_{x \in \mathbb{R}} \E \big[\E(| \Ind{X_{t,T} \leq x } - \Ind{X^\vartheta_{t} \leq x }| \big| \mathcal{A} ) \big]\\
&\qquad \leq \sup_{x \in \mathbb{R}} \E \big[\E(| \Ind{\xi_t \leq \frac{1}{a_{t,T}(0)}\{x - \sum_{j\neq 0} a_{t,T}(j) \xi_{t -j}\}} - \Ind{\xi_t \leq \frac{1}{a(\vartheta,0)}\{x - \sum_{j\neq 0} a(\vartheta,j) \xi_{t -j}\}}| \big| \mathcal{A} )\big] \\
& \qquad =  \sup_{x \in \mathbb{R}} \E[ | F_{\xi}(\frac{1}{a_{t,T}(0)}\{x - \sum_{j\neq 0} a_{t,T}(j) \xi_{t -j}\}) - F_{\xi}(\frac{1}{a(\vartheta,0)}\{x - \sum_{j\neq 0} a(\vartheta,j) \xi_{t -j}\}) | ]\\
& \qquad \leq \E[C_1|{a_{t,T}(0)}-{a(\vartheta,0)}|+C_2||S_{t,T} - S_t^\vartheta|],
\end{align*}
where
\[
S_{t,T} := \sum_{j \neq 0 } a_{t,T}(j) \xi_{t -j}, \quad
S_t^\vartheta := \sum_{j \neq 0} a(\vartheta,j) \xi_{t -j},
\]
and the last inequality follows from the two dimensional mean-value theorem. To be more precise, 
\begin{multline*}
\Big|F_\xi\left(\frac{x-v}{u}\right) - F_\xi\left(\frac{x-v'}{u'}\right)\Big| \\*
\leq \Big|\int_0^1 f_\xi\left(\frac{x-v_t}{u_t}\right)\frac{x-v_t}{u^2_t}dt\Big| |u-u'| + \Big|\int_0^1 f_\xi\left(\frac{x-v_t}{u_t}\right)\frac{1}{u_t}dt\Big| |v-v'|,
\end{multline*}
with $u_t = u + t(u'-u)$ and $v_t = v + t(v'-v).$ From Assumption (MA2) the integrals are bounded by constants $C_1$ and $C_2$ which are independent of $x.$
Straightforward calculations, under the assumptions made, lead to
\[
\E[ |S_{t,T} - S_t^\vartheta| ] = \text{O}(|t - \vartheta T^{-1}| +T^{-1}),
\]
which completes the proof.
\hfill $\qed$

\subsubsection{Proof of Lemma $\ref{lem:arch}$}
Observe that Assumptions (ARCH1) imply that
\begin{equation} \label{eqn:strstatARCH}
\sup_{\vartheta \in (0,1)} \E(\sum_{j=1}^\infty a_j(\vartheta)Z^2_j) < 1,
\end{equation}
which is sufficient for the existence and uniqueness of a strictly stationary solution $(X^\vartheta_{t})^2$ with finite first moment(see \cite{Giraitis2000} Theorem $2.1$). Now $(ARCH1)$ yields $\sum_j |a_j(\vartheta)| < \infty,$ which implies that $(\sigma_t^\vartheta)^2$ is locally strictly stationary (Proposition $3.1.1$ of \cite{BrockwellDavis} again). Let $\sigma_t^\vartheta = \sqrt{(\sigma_t^\vartheta)^2}$ and set $X^\vartheta_{t} = \sigma^\vartheta_{t}Z_t.$
To prove local strict stationarity, it suffices to bound
$\sup_{x \in \mathbb{R} } \E(|\Ind{X_{t,T} \leq x} - \Ind{X^\vartheta_{t}\leq x}|).$
Denoting by $\mathcal{A}_t$ the $\sigma-$algebra generated by $(Z_t,Z_{t-1},\dots),$ observe that
\begin{flalign*}
\E\big(\big|\Ind{X_{t,T} \leq x} - \Ind{X^\vartheta_{t}\leq x}\big|\big) &= \E\big(\E\big[\big|\Ind{X_{t,T} \leq x} - \Ind{X^\vartheta_{t}\leq x}\big| \Big\vert \mathcal{A}_{t-1}\big]\big) \\
= \E\big(\E\big[\big|\Ind{Z_t \leq x/\sigma_{t,T}} - \Ind{Z_t \leq x/\sigma^\vartheta_{t}}\big| \Big\vert \mathcal{A}_{t-1}\big]\big)&\leq \E\big(\big|F({x}/{\sigma_{t,T}}) - F({x}/{\sigma^\vartheta})\big|\big)\\
= \E\big(\big|\int_{\sigma_{t,T}}^{\sigma^\vartheta_{t}} {x}{y}^{-1}f({x}{y}^{-1})y^{-1}dy\big|\big) &\leq \E\big(C\big|{\sigma_{t,T}}-{\sigma^\vartheta_{t}}\big|\big),
\end{flalign*}
where the last inequality follows from $(ARCH2)$ and the fact that $\min({\sigma_{t,T}},{\sigma^\vartheta_{t}}) > \rho.$  Now, as $Z_t$ is independent of $(\sigma_{t,T},\sigma^\vartheta_{t}),$ we have
\begin{align*}
&\E(|\sigma_{t,T}^2 - (\sigma_t^\vartheta)^2|) = \E[Z_t^2 (|\sigma_{t,T}^2 - (\sigma_t^\vartheta)^2|)] = \E(|X_{t,T}^2 - (X_t^\vartheta)^2|) \leq \leq C\Big(\big|\frac{t}{T} - \vartheta\big| + \frac{1}{T}\Big)
\end{align*}
where the last inequality follows from Theorem 1 in \cite{dahlrao2006}. Noting again that $\min({\sigma_{t,T}},{\sigma^\vartheta_{t}})$ is bounded away from zero, we have, for some appropriate constant~$C$, 
$
\E(|{\sigma_{t,T}}-{\sigma^\vartheta_{t}}|) \leq C\E(|{\sigma^2_{t,T}}-{(\sigma^\vartheta_{t})^2}|) , 
$
which concludes the proof.\hfill$ \qed$

\subsubsection{Proof of Lemma $\ref{lem:garch}$}

Assumption $(GARCH1)$ implies that 
$\sup_{u \in [0,1]} \E(Z^2_t)\Big[\sum_{j=1}^{p}a_j(u) + \sum_{i=1}^{q} b_j(u)\Big] < 1$, 
which is sufficent for the existence of a strictly stationary solution $X^\vartheta_t$ (see the Remark after Corollary 2.2 in \cite{Bugerol1992}).
Similar calculations as in the tvARCH case yield
$\E(|\Ind{X_{t,T} \leq x} - \Ind{X^\vartheta_{t}\leq x}|) \leq \E(C|{\sigma^2_{t,T}}-{(\sigma^\vartheta_{t}})^2)$, 
and a bound on $\E(|{\sigma^2_{t,T}}-{(\sigma^\vartheta_{t})^2}|)$ follows as in Section $5.2.$ (top of page $1168$) of \cite{SubbaRaoGARCH}.
\hfill $\qed$

\subsection{Proof of Theorem \ref{thm:asymain}}

The proof proceeds in several steps, which we briefly outline here; details are provided in Section~\ref{sec:proofsmainsteps}. 
  First, we establish that the estimator $\hat q_{t_0,T}(\tau)$ can be replaced with the true quantile levels $\tau$, that is,
\begin{flalign}  \label{eq:main1}
&\nonumber \hat{\mathfrak{f}}_{t_0,T}(\omega_n,\tau_1,\tau_2) = \frac{1}{2\pi}\sum_{|k|\leq n-1} K_n(k) e^{-i \omega_n k} \\ 
&\qquad \times\frac{1}{n} \sum_{t \in T(k)} \Big(\I{X_{t,T} \leq  q^\unu(\tau_1)} - \tau_1 \Big)\Big( \I{X_{t+k,T} \leq q^\unu(\tau_2)} - \tau_2 \Big) + o_P((B_n/n)^{1/2}),
\end{flalign}
uniformly in $\omega_n \in \tilde{\mathcal{F}}_n(\eps),$ where $\tilde{\mathcal{F}}_n(\eps)$ denotes the set of all Fourier frequencies in the interval~$(\eps,\pi-\eps)$. Second, we prove that, uniformly again in $\omega_n \in \tilde{\mathcal{F}}_n(\eps)$,
\begin{equation} \label{eq:main2}
\hat{\mathfrak{f}}_{t_0,T}(\omega_n,\tau_1,\tau_2) = \tilde{\mathfrak{f}}_{t_0,T}(\omega_n, \tau_1, \tau_2) + O_P\Big(\frac{B_n^2}{n} + \frac{n^2}{T^2B_n} + \frac{n^{1/2}B_n}{T} \Big)
\end{equation}
where
\[
\tilde{\mathfrak{f}}_{t_0,T}(\omega_n, \tau_1, \tau_2) := \frac{1}{2\pi}\frac{1}{n}\sum_{|k|\leq n-1} K_n(k) e^{-i \omega_n k} \sum_{|t-t_0| \leq m_T-B_n} Y_{t,\tau_1} Y_{t+k,\tau_2}
\]
and $Y_{t,\tau} := \I{X_{t,T} \leq q^\unu(\tau)} - F_{t;T}(q^\unu(\tau))$. The advantage of this representation lies in the fact that the random variables $\I{X_{t,T} \leq q^\unu(\tau_1)} - F_{t;T}(q^\unu(\tau_1))$ are centered, which considerably simplifies some of the computations that follow. Next, observe that
\[
\tilde{\mathfrak{f}}_{t_0,T}(\omega_n, \tau_1, \tau_2) = \tilde{\mathfrak{f}}_{t_0,T}(\omega, \tau_1, \tau_2) + O_P(B_n/n)
\]
since $|\omega_n-\omega| = O(1/n)$. Finally, we  prove that 
\begin{equation} \label{eq:asynorm}
\sqrt{B_n/n} \left(
\begin{array}{c}
\Re \tilde{\mathfrak{f}}_{t_0,T}(\omega, \tau_1, \tau_2) - \Re \E \tilde{\mathfrak{f}}_{t_0,T}(\omega, \tau_1, \tau_2) \\
\Im \tilde{\mathfrak{f}}_{t_0,T}(\omega, \tau_1, \tau_2) - \Im \E \tilde{\mathfrak{f}}_{t_0,T}(\omega, \tau_1, \tau_2)
\end{array}
\right) \weak \mathcal{N}(0,\Sigma^2(\omega,\tau_1,\tau_2))
\end{equation}
and
\begin{align} \label{eq:bias}
\nonumber \E \tilde{\mathfrak{f}}_{t_0,T}(\omega, \tau_1, \tau_2) = & \mathfrak{f}^\unu(\omega,\tau_1,\tau_2) - C_K(r)B_n^{-r} \mathfrak{d}^{(r)}_\omega\mathfrak{f}^\unu(\omega,\tau_1,\tau_2)\\
& \qquad + \frac{n^2}{2T^2}\frac{\partial^2}{\partial u^2} \mathfrak{f}^u(\omega,G^{u}(q^\unu(\tau_1)),G^{u}(q^\unu(\tau_2)))\Big|_{u=\unu}
\\ \nonumber
& \qquad + o(B_n^{-r} + n^2/T^2) + O(1/n),
\end{align}
which completes the proof of the theorem. \hfill $\qed$


\section{Some probabilistic details} \label{app:prode}

\subsection{A Lemma on cumulants}

\begin{lemma}\label{lem:mix}
	For an arbitrary stochastic process $(X_t)_{t \in \mathbb{Z}}$, let
	\[
	\alpha(n) := \sup_{t \in \mathbb{Z}} \sup_{A \in \sigma(..X_{t-1},X_{t}), B\in\sigma(X_{t+n},X_{t+n+1},...)} |\p(A \cap B) - \p(A)\p(B)|.
	\]
	Then, for any $t_1,...,t_p \in \mathbb{Z}$ and any p-tuple Borel sets $A_1,...,A_p$ there exists a constant $K_p$ depending on $p$ only such that
	\[
	\Big|\cum(I\{X_{t_1} \in A_1\}, \ldots, I\{X_{t_p} \in A_p\})\Big| \leq K_p \alpha\Big( \big\lfloor p^{-1} \max_{i,j} |t_i - t_j| \big\rfloor \Big).
	\]
\end{lemma}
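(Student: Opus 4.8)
The plan is to exploit the two defining features of joint cumulants: their invariance under permutation of the arguments, and the fact that a cumulant of order $\ge 2$ vanishes as soon as its arguments split into two mutually independent groups. Writing $Y_i := \Ind{X_{t_i}\in A_i}$, so that $|Y_i|\le 1$, I would first use the symmetry of $\cum$ to relabel the indices so that $t_1\le\cdots\le t_p$, and set $\Delta := \max_{i,j}|t_i-t_j| = t_p-t_1$. Among the $p-1$ consecutive gaps, at least one, say $d := t_{m+1}-t_m$, satisfies $(p-1)d\ge\sum_i(t_{i+1}-t_i)=\Delta$, hence $d\ge\Delta/(p-1)\ge\lfloor\Delta/p\rfloor$; since $d$ is an integer and $\alpha(\cdot)$ is non-increasing (its future $\sigma$-field shrinks as the index grows), it suffices to prove $|\cum(Y_1,\dots,Y_p)|\le K_p\,\alpha(d)$, because then $\alpha(d)\le\alpha(\lfloor p^{-1}\Delta\rfloor)$.

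Next I split the index set at the largest gap into $I:=\{1,\dots,m\}$ and $J:=\{m+1,\dots,p\}$. Every $Y_i$ with $i\in I$ is measurable with respect to $\mathscr B:=\sigma(X_s:s\le t_m)$, and every $Y_j$ with $j\in J$ with respect to $\mathscr C:=\sigma(X_s:s\ge t_m+d)$. Hence, by the very definition of $\alpha$, for all $E\in\mathscr B$, $F\in\mathscr C$ one has $|\p(E\cap F)-\p(E)\p(F)|\le\alpha(d)$; since a product of indicators is again the indicator of an intersection, this covariance bound applies verbatim to any $U=\prod_{i\in S}Y_i$, $V=\prod_{j\in S'}Y_j$ with $S\subset I$, $S'\subset J$. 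This is the single analytic input that the combinatorial step will consume.

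The main step is a decoupling argument through the Leonov--Shiryaev (moment--cumulant) formula
$$\cum(Y_1,\dots,Y_p)=\sum_{\pi}(-1)^{|\pi|-1}(|\pi|-1)!\prod_{B\in\pi}\E\Big[\prod_{i\in B}Y_i\Big],$$
the sum running over all partitions $\pi$ of $\{1,\dots,p\}$. Let $(\tilde Y_i)$ be a version in which the blocks $\{Y_i\}_{i\in I}$ and $\{Y_j\}_{j\in J}$ are made independent while each group's joint law is preserved; then the two groups are independent, so $\cum(\tilde Y_1,\dots,\tilde Y_p)=0$ and $\cum(Y_1,\dots,Y_p)$ equals the difference of the two Leonov--Shiryaev expansions. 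For a partition $\pi$ whose blocks each lie entirely in $I$ or entirely in $J$, every factor is unchanged, so such $\pi$ contribute nothing. For any remaining $\pi$, all factors lie in $[-1,1]$, and the two products differ only in the factors attached to blocks $B$ straddling both $I$ and $J$; for such a block $\E\prod_{i\in B}\tilde Y_i=\E[\prod_{i\in B\cap I}Y_i]\,\E[\prod_{i\in B\cap J}Y_i]$, so $|\E\prod_{i\in B}Y_i-\E\prod_{i\in B}\tilde Y_i|$ is exactly a cross-covariance of the kind bounded by $\alpha(d)$. The telescoping estimate $|\prod_B a_B-\prod_B b_B|\le\sum_B|a_B-b_B|$ then bounds the $\pi$-term by $(|\pi|-1)!$ times the number of straddling blocks times $\alpha(d)$, and summing over the finitely many partitions yields the claim with $K_p:=\sum_\pi(|\pi|-1)!\,\#\{\text{straddling blocks of }\pi\}$, a constant depending on $p$ alone.

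I expect the only delicate point to be the combinatorial bookkeeping in this last paragraph: verifying that partitions refining the $I/J$ split cancel exactly, and that the residual difference of products collapses, via the marginal-preserving construction of $\tilde Y$ and the telescoping bound, to a sum of genuine cross-covariances each controlled directly by $\alpha$. Everything else---the ordering reduction, the largest-gap inequality, and the monotonicity of $\alpha$---is routine.
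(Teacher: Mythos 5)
Your proposal is correct and follows essentially the same route as the paper's proof: sort the indices, split at the largest consecutive gap (which is at least $\lfloor p^{-1}\max_{i,j}|t_i-t_j|\rfloor$), introduce a decoupled copy whose cumulant vanishes by the independence of the two groups, and bound the difference of the two Leonov--Shiryaev expansions partition by partition via the telescoping inequality $|\prod_r P_{\nu_r}-\prod_r Q_{\nu_r}|\le\sum_r|P_{\nu_r}-Q_{\nu_r}|$, each term being a cross-covariance of indicator products controlled by $\alpha$. Your observation that partitions refining the $I/J$ split contribute exactly zero is a harmless refinement of the paper's cruder uniform bound over all partitions.
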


\noindent\textbf{Proof.} Recall that, by the definition of cumulants, 
\begin{align}
& |\cum(I\{X_{t_1} \in A_1\}, \ldots, I\{X_{t_p} \in A_p\})| \nonumber \\
& = \Big| \sum_{\{\nu_1, \ldots, \nu_R\}} (-1)^{R-1} (R-1)!
\p\Big( \bigcap_{i \in \nu_1} \{X_{t_i} \in  A_i\} \Big) \cdots \p\Big(\bigcap_{i \in \nu_R} \{X_{t_i} \in  A_i\} \Big) \Big|, \label{lem:OrderCumYinA:eqn:ProbsCase1}
\end{align}
where the summation runs over all partitions $\{\nu_1, \ldots, \nu_R\}$ of the set $\{1,\ldots,p\}$.
In the case~$t_1=...=t_p,$ the Lemma is obviously true. If at least two indices are distinct, choose $j$ with
$
\max_{i=1,\ldots,p-1} (t_{i+1} - t_i) = t_{j+1} - t_j > 0
$
and let~$(Y_{t_{j+1}}, \ldots, Y_{t_p})$ be a random vector that is independent of $(X_{t_1}, \ldots, X_{t_j} )$ and possesses the same joint distribution as $(X_{t_{j+1}}, \ldots, X_{t_p})$. By an elementary property of the cumulants (cf. Theorem~2.3.1~(iii) in \cite{brill}), we have
\[\cum\big(I\{X_{t_1} \in A_1\}, \ldots, I\{X_{t_j} \in A_j\}, I\{Y_{t_{j+1}} \in A_{j+1}\}, \ldots, I\{Y_{t_p} \in A_p\}\big) = 0.\]
Therefore, we can write, for the cumulant of interest,
\begin{equation*}
\begin{split}
& \Big|\cum(I\{X_{t_1} \in A_1\}, \ldots, I\{X_{t_p} \in A_p\}) \\
& \quad
- \cum(I\{X_{t_1} \in A_1\}, \ldots, I\{X_{t_j} \in A_j\}, I\{Y_{t_{j+1}} \in A_{j+1}\}, \ldots, I\{Y_{t_p} \in A_p\})\Big| \\
& = \Big| \!\!\sum_{\{\nu_1, \ldots, \nu_R\}} (-1)^{R-1} (R-1)! [P_{\nu_1} \cdots P_{\nu_R} - Q_{\nu_1} \cdots Q_{\nu_R}] \Big|,
\end{split}
\end{equation*}
where the sum again runs over all partitions $\{\nu_1, \ldots, \nu_R\}$ of $\{1,\ldots,p\}$,
\begin{equation*}
P_{\nu_r}\! := \p\Big( \bigcap_{i \in \nu_r} \{X_{t_i} \in A_i\} \Big)  \ \text{ and }\
Q_{\nu_r}\! := \p\Big( \bigcap_{\substack{i \in \nu_r\\i \leq j}} \{X_{t_i} \in A_i\} \Big)
\p\Big( \bigcap_{\substack{i \in \nu_r\\i > j}} \{X_{t_i} \in A_i\} \Big),
\end{equation*}
$r=1,\ldots,R$, with $\p(\bigcap_{i \in \emptyset} \{ X_{t_i} \in A_i\}) := 1$ by convention. By the definition of $\alpha(n)$, it follows that~$| P_{\nu_r} - Q_{\nu_r} | \leq  \alpha(t_{j+1} - t_j)$ for any partition $\nu_1,...,\nu_R$ and any $r=1,...,R$. Thus, for every partition $\nu_1,...,\nu_R$,
\[
| P_{\nu_1} \cdots P_{\nu_R} - Q_{\nu_1} \cdots Q_{\nu_R}| \leq \sum_{r=1}^R| P_{\nu_r} - Q_{\nu_r} | \leq R \alpha(t_{j+1} - t_j).
\]
All together, this yields
\[
|\cum(I\{X_{t_1} \in A_1\}, \ldots, I\{X_{t_p} \in A_p\})|
\leq \alpha(t_{j+1} - t_j)\sum_{\{\nu_1, \ldots, \nu_R\}}\hspace{-3mm}R! \ .
\]
Noting that $p (t_{j+1} - t_j) \geq \max_{i_1,i_2} |t_{i_1} - t_{i_2}|$ and observing that $\alpha$ is a monotone function, we obtain 
\[|\cum(I\{X_{t_1} \in A_1\}, \ldots, I\{X_{t_p} \in A_p\})|
\leq K_p \alpha(\max |t_i-t_j|).\vspace{-10mm}
\]
\hfill\qed

\begin{lemma} \label{lem:distquant}
	Let $F$ and $G$ denote functions on the real line, with $|G(x)-G(y)| > c |x-y|$ for $x,y \in [a,b]$ where $c$ is some positive constant. For all $p,q \in (a,b),$ with $F(p) = G(q)$ and any $\epsilon > 0,$  $||F(\cdot)-G(\cdot)||_\infty \leq \epsilon$ implies $|p-q| \leq {\epsilon}/{c} .$
\end{lemma}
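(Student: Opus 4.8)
The plan is to convert the desired bound on $|p-q|$ into a bound involving the sup-distance $\|F-G\|_\infty$, by exploiting the coupling $F(p)=G(q)$ together with the fact that $G$ is ``steep'' on $[a,b]$ (its increments are bounded below by $c$ times the distance). The key observation is that $G(q)$ may be replaced by $F(p)$, so that a difference of $G$-values at the two distinct points $p$ and $q$ collapses into a difference between $G$ and $F$ at the single point $p$, which is exactly the kind of quantity controlled by the supremum norm.

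First I would dispose of the trivial case $p=q$, in which the asserted inequality $|p-q|\le\epsilon/c$ holds automatically. For $p\neq q$, I would note that $p,q\in(a,b)\subset[a,b]$, so the steepness hypothesis applies with the choice $x=p$, $y=q$ and yields $c|p-q| < |G(p)-G(q)|$. Next I would invoke the identity $G(q)=F(p)$ to rewrite $G(p)-G(q)=G(p)-F(p)$, and bound its modulus by $|G(p)-F(p)|\le\|F-G\|_\infty\le\epsilon$. Chaining the two estimates gives $c|p-q| < \epsilon$, and dividing by $c>0$ produces $|p-q|\le\epsilon/c$, which is the claim.

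There is essentially no genuine obstacle here; the argument is a two-line estimate once the right substitution is spotted. The only point requiring a moment's care is the bookkeeping of which arguments to feed into the steepness inequality: both slots should receive the two quantile-type points $p$ and $q$, so that the hypothesis $F(p)=G(q)$ can be used to pass from a $G$--$G$ difference to a $G$--$F$ difference that is controlled by $\|F-G\|_\infty$. One should also verify that $p$ and $q$ indeed lie in $[a,b]$, which is guaranteed by the assumption $p,q\in(a,b)$, since otherwise the lower bound on the increments of $G$ would be unavailable.
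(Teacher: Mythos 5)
Your argument is exactly the paper's proof: the chain $c|p-q| < |G(p)-G(q)| = |G(p)-F(p)| \leq \|F-G\|_\infty \leq \epsilon$, using $F(p)=G(q)$ to replace the $G$--$G$ difference by a $G$--$F$ difference at the single point $p$. The separate treatment of the trivial case $p=q$ is a harmless extra precaution; the proposal is correct and identical in substance to the paper's one-line proof.
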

\noindent\textbf{Proof.}
The claim follows from the fact that
\[ c |p-q| < |G(p) - G(q)| = |G(p) - F(p)| \leq \epsilon.\vspace{-12mm}
\]
\hfill\qed

\subsection{A blocking technique for nonstationary $\beta$-mixing processes}
\label{sec:blocking}


In her paper, \cite{Yu} constructed an independent block (IB) technique to transfer the classical tools from the i.i.d. case to the case of $\beta$-mixing stationary time series. We are using the same technique here to derive results for sums of $\beta$-mixing local stationary time series, which will be used on multiple occasions. For this purpose, let~$X_{t,\T}$ be a triangular array of $\beta$-mixing processes with mixing coefficient $\beta_\T$. For each fixed~$\T$ we will divide the process~$X_{t,\T}$ into~$2\mu_n$ alternating blocks with lengths $p_n$ and $q_n,$ respectively, and a remainder block of length $\T-2\mu_\T(p_n+q_n).$ More precisely, we divide the index set into $(2\mu_n + 1)$ parts
\begin{align*}
&\Gamma_j = \{t: t_{\text{min}} + B_n + (j-1)(p_n+q_n) + 1 \leq t \leq t_{\text{min}} + B_n + (j-1)(p_n+q_n)+ p_n \}\\
& \Delta_j = \{t: t_{\text{min}} + B_n + (j-1)(p_n+q_n) + p_n + 1 \leq t \leq t_{\text{min}} + B_n + j(p_n+q_n) \}\\
& R = \{t: t_{\text{min}} + B_n + \mu_n(p_n+q_n) + 1 \leq t \leq t_{\text{min}} + n - B_n \},
\end{align*}
and introduce the notation
\[ X(\Gamma_j) = \{X_{i,\T}, i \in \Gamma_j \}, \quad X(\Delta_j) = \{X_{i,\T}, i \in \Delta_j \}\quad \text{and} \quad X(R) = \{X_{i,\T}, i \in R \}, \]
where the dependence on $\T$ is omitted for the sake of brevity. We now have a sequence of alternating $X(\Gamma_j)$ and $X(\Delta_j)$ blocks, and a remainder block $X(R):$
\[ X  = X(\Gamma_1),X(\Delta_1),X(\Gamma_2),\dots,X(\Gamma_{\mu_\T}),X(\Delta_{\mu_\T}),X(R).\]
To exploit the concept of coupling, we consider a one-dependent block sequence
\[ Y = Y(\Gamma_1),Y(\Delta_1),Y(\Gamma_2),\dots,Y(\Gamma_{\mu_\T}),Y(\Delta_{\mu_\T}),\]
where $Y(\Gamma_j) = \{\xi_i:i \in \Gamma_j\}$ and $Y(\Delta_j) = \{Y_i:i \in \Delta_j\}$ such that the sequence is independent of~$X$ and each block of $Y$ has the same distribution as the corresponding block in $X$, that~is,
\[Y(\Gamma_i) \overset{d}{=} X(\Gamma_i) \quad \text{and} \quad Y(\Delta_i) \overset{d}{=} X(\Delta_i),\]
where $\overset{d}{=}$ stands for equality in distribution. 

The existence of such a sequence and the measurability issues that arise are addressed in \cite{Yu}. The $\Gamma$- and $\Delta$-block subsequences are denoted by $X_{\Gamma}, Y_{\Gamma}, X_{\Delta}$ and $Y_{\Delta}$, respectively: for instance,
\[X_{\Gamma} := X(\Gamma_1),X(\Gamma_2),\dots,X(\Gamma_{\mu_\T}).\]
We obtain $X_{\Gamma}$ by leaving out every other block in the original sequence, which is $\beta$-mixing, so that the dependence between the blocks in $X_{\Gamma}$ becomes weaker as the size $p_n$ of the $\Gamma$-blocks increases. The following lemma from \cite{Yu} establishes an upper bound for the difference between the $\Gamma$-block sequences from the original process and the independent block sequence.

\begin{lemma} \label{lem:yu_alt}
	For any measurable function $h$ on $\mathbb{R}^{\mu_n q_n}$ with $\Vert h\Vert _\infty \leq M,$ we have, for the blocking structure just described,
	\begin{align*}
	\Big|\E_Q[h(X(\Delta))] - \E_{\tilde Q}[h(Y(\Delta))]\Big| &\leq M(\mu_n - 1)\beta_{p_n} \quad \text{and} \\
	\Big|\E_Q[h(X(\Gamma))] - \E_{\tilde Q}[h(Y(\Gamma))]\Big| &\leq M(\mu_n - 1)\beta_{q_n}.
	\end{align*}
\end{lemma}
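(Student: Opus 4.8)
The plan is to prove the bound by a telescoping (hybrid) argument, replacing the $\Delta$-blocks of the original dependent process one at a time by independent copies, and charging each replacement to a single $\beta$-mixing coefficient. The geometric input is that consecutive $\Delta$-blocks are separated by a $\Gamma$-block of length $p_n$, so the dependence between the $j$-th $\Delta$-block and all the earlier ones is governed by the temporal gap $p_n$; symmetrically, consecutive $\Gamma$-blocks are separated by a $\Delta$-block of length $q_n$, which produces $\beta_{q_n}$ in the second inequality.

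First I would record the standard reformulation of the coefficient: for sub-$\sigma$-fields $\mathscr{B},\mathscr{C}$, the quantity $\beta(\mathscr{B},\mathscr{C}) = \E\sup_{C\in\mathscr{C}}|\p(C)-\p(C\mid\mathscr{B})|$ is exactly the $\mathscr{B}$-averaged total-variation distance between the conditional law of the $\mathscr{C}$-part given $\mathscr{B}$ and its unconditional law. As a consequence, for any bounded measurable $g$ of a pair $(U,V)$ with $U$ being $\mathscr{B}$-measurable, $V$ being $\mathscr{C}$-measurable, and $\|g\|_\infty\leq M$,
\[
\big|\E[g(U,V)] - \E[g(U,V')]\big| \leq M\,\beta(\mathscr{B},\mathscr{C}),
\]
where $V'$ is an independent copy of $V$ (same marginal, independent of $U$). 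This is obtained by conditioning on $U$ and bounding $\int g(U,\cdot)\,(\p_{V\mid U}-\p_V)$ by $M$ times the total-variation distance.

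Next I would set up the hybrids. For the $\Delta$-case write $Z_j:=X(\Delta_j)$ and $\nu_j:=\mathcal{L}(Z_j)$, and let $Q^{(k)}$, $1\leq k\leq\mu_n$, be the law under which $(Z_1,\dots,Z_k)$ carries the true joint distribution while $Z_{k+1},\dots,Z_{\mu_n}$ are mutually independent, independent of $(Z_1,\dots,Z_k)$, with marginals $\nu_{k+1},\dots,\nu_{\mu_n}$. Then $Q^{(\mu_n)}=Q$ and $Q^{(1)}=\tilde Q$, so telescoping gives
\[
\big|\E_Q[h]-\E_{\tilde Q}[h]\big| \leq \sum_{k=2}^{\mu_n}\big|\E_{Q^{(k)}}[h]-\E_{Q^{(k-1)}}[h]\big|.
\]
In the $k$-th summand I would integrate out the independent tail blocks $Z_{k+1},\dots,Z_{\mu_n}$ (identically distributed under both measures) to reduce $h$ to a function $\bar h$ of $(Z_1,\dots,Z_k)$ with $\|\bar h\|_\infty\leq M$. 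The only difference between $Q^{(k)}$ and $Q^{(k-1)}$ is whether $Z_k$ is dependent on, or an independent copy relative to, $(Z_1,\dots,Z_{k-1})$. Taking $\mathscr{B}=\sigma(Z_1,\dots,Z_{k-1})\subseteq\sigma(X_s:s\leq\max\Delta_{k-1})$ and $\mathscr{C}=\sigma(Z_k)\subseteq\sigma(X_s:s\geq\min\Delta_k)$, the separating gap is precisely the block $\Gamma_k$ of length $p_n$, so by monotonicity of $\beta$ both in the lag and under restriction to sub-$\sigma$-fields, $\beta(\mathscr{B},\mathscr{C})\leq\beta_{p_n}$. The reformulation above then bounds the $k$-th summand by $M\beta_{p_n}$, and summing the $\mu_n-1$ terms yields the first claim. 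The $\Gamma$-case is verbatim, the role of $p_n$ being played by the $\Delta$-blocks of length $q_n$, giving $\beta_{q_n}$.

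The main obstacle, as usual with this device, is not the bookkeeping but making the coupling rigorous: justifying the existence and joint measurability of the independent-block sequence $Y$ on a common probability space, and passing from $\beta(\mathscr{B},\mathscr{C})$ to a genuine coupling controlling $\p(V\neq V')$ (Berbee's lemma). Here I would either invoke the construction of \cite{Yu} directly, or—as above—argue entirely at the level of laws through the total-variation reformulation, which avoids an explicit coupling at the cost of relying on the identity between the averaged conditional/unconditional total-variation distance and $\beta(\mathscr{B},\mathscr{C})$. A secondary, purely technical point is that the nonstationary triangular-array nature of $X_{t,T}$ is harmless, since $\beta_{p_n}$ already incorporates the suprema over $T$ and $t$ in its definition, so the same gap bound applies uniformly to every block boundary.
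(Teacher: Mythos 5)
Your proof is correct and is, in substance, the same argument as the paper's: the paper disposes of this lemma by a one-line appeal to Corollary~2.7 of Yu (1994) (together with the observation that the alternating block structure makes the relevant temporal gap $p_n$, resp.\ $q_n$), and your telescoping/hybrid construction combined with the averaged total-variation reformulation of $\beta$ is precisely the proof of that cited result, so you have simply unpacked the black box. The only caveat is a normalization one that is already present in the statement as given (and in Yu's original): for signed $h$ with $\Vert h\Vert_\infty\leq M$ the per-step covariance inequality honestly yields $2M\beta$ rather than $M\beta$ (the constant $M\beta$ requires $0\leq h\leq M$), but since adding a constant to $h$ changes nothing and the paper only applies the lemma to indicator functions, this is immaterial.
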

\begin{proof}
	We only prove the first claim, which follows as an application of Corollary~2.7 in \cite{Yu} with $Q$ being the probability distribution of the $\Delta_j$ block sequence. However, note that the $\beta$-mixing rate of~$Q$ here is less than~$\beta_{p_n},$ due to the alternating block length. 
\end{proof}


Next, we consider a special case of the same blocking technique with $a_n := q_n = p_n,$ now applied to a sum of $\beta$-mixing random variables, namely $\sum_{t=1}^\T f(X_{t,\T}),$ and link its probabilistic behavior to that of the sum of the independent blocks
$\sum_{j=1}^{\mu_\T} \sum_{i \in \Gamma_j} f(Y_{i,\T}).$ 
To avoid measurability issues the function $f$ is assumed to belong to a permissable class $F_\T$ of functions (for a definition see the appendix in \cite{Yu}). Furthermore, for the sake of simplicity, assume that $\E(f(X_{i,\T})) = 0$ for all~$f \in F_\T.$
The following Lemma is a slight adjustment of Lemma $4.2$ from \cite{Yu}.
\begin{lemma}\label{lem:6.7}
	Let $F_\T$ be a sequence of permissible function classes, which are bounded by a constant $M_n$. If a sequence $(r_\T)_{\T \in \mathbb{N}}$ is such that, for $\T$ large enough, $2r_\T \mu_\T \geq \T M_\T,$ we have
	\begin{align} \label{eqn:lem6.7}
	\p\Big(\sup_{f \in F_\T} \big|\sum_{t=1}^\T f(X_{t,\T})\big| > 4 r_\T \Big) & \leq \p\Big(\sup_{f \in F_\T} \big|\sum_{j=1}^{\mu_\T} \sum_{i \in \Gamma_j} f(Y_{i,\T})\big| > r_\T \Big)\\ \nonumber
	& \quad + \p\Big(\sup_{f \in F_\T} \big|\sum_{j=1}^{\mu_\T} \sum_{i \in \Delta_j} f(Y_{i,\T})\big| > r_\T\Big) + 2 \mu_\T \beta_{a_\T}.
	\end{align}
\end{lemma}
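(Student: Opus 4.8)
The plan is to follow the independent-blocking scheme of \cite{Yu}, specialised to the equal block lengths $p_\T = q_\T = a_\T$. First I would split the sum along the blocks introduced above,
\[
\sum_{t=1}^\T f(X_{t,\T}) = \sum_{j=1}^{\mu_\T}\sum_{i \in \Gamma_j} f(X_{i,\T}) + \sum_{j=1}^{\mu_\T}\sum_{i \in \Delta_j} f(X_{i,\T}) + \sum_{i \in R} f(X_{i,\T}),
\]
and then, taking the supremum over $f \in F_\T$ termwise, bound $\sup_{f}\big|\sum_{t=1}^\T f(X_{t,\T})\big|$ by the sum of the three blockwise suprema. The remainder is handled deterministically: by the choice of $\mu_\T$ one has $|R| \leq 2a_\T$ and $2\mu_\T a_\T \leq \T$, so that $2a_\T \leq \T/\mu_\T$; combining this with $\|f\|_\infty \leq M_\T$ and the hypothesis $\T M_\T \leq 2 r_\T \mu_\T$ yields $\sup_{f \in F_\T}\big|\sum_{i\in R} f(X_{i,\T})\big| \leq |R| M_\T \leq 2a_\T M_\T \leq \T M_\T/\mu_\T \leq 2 r_\T$.

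Since the remainder contributes at most $2r_\T$, the event $\{\sup_{f}|\sum_{t=1}^\T f(X_{t,\T})| > 4r_\T\}$ forces $\sup_f|\sum_{j}\sum_{i\in\Gamma_j} f(X_{i,\T})| + \sup_f|\sum_{j}\sum_{i\in\Delta_j} f(X_{i,\T})| > 2 r_\T$, and hence at least one of these two suprema to exceed $r_\T$. A union bound then gives
\[
\p\Big(\sup_{f}\big|\textstyle\sum_{t=1}^\T f(X_{t,\T})\big| > 4r_\T\Big) \leq \p\Big(\sup_{f}\big|\textstyle\sum_{j}\sum_{i\in\Gamma_j} f(X_{i,\T})\big| > r_\T\Big) + \p\Big(\sup_{f}\big|\textstyle\sum_{j}\sum_{i\in\Delta_j} f(X_{i,\T})\big| > r_\T\Big),
\]
which reduces the problem to the $\Gamma$- and $\Delta$-subsequences of the \emph{original} process.

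The last step replaces the dependent $X$-blocks by the independent $Y$-blocks through the coupling bound of Lemma \ref{lem:yu_alt}. I would apply it with $h$ the indicator of the event that the $\Gamma$-block sum exceeds the threshold, namely $h(X(\Gamma)) := \Ind{\sup_{f \in F_\T}|\sum_{j}\sum_{i\in\Gamma_j} f(X_{i,\T})| > r_\T}$, which is bounded by $M=1$; Lemma \ref{lem:yu_alt} then gives $\p(\sup_f|\sum_{j}\sum_{i\in\Gamma_j} f(X_{i,\T})| > r_\T) \leq \p(\sup_f|\sum_{j}\sum_{i\in\Gamma_j} f(Y_{i,\T})| > r_\T) + (\mu_\T - 1)\beta_{a_\T}$, and symmetrically for the $\Delta$-blocks (both penalties being $\beta_{a_\T}$ as $p_\T = q_\T = a_\T$). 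Adding the two inequalities and using $2(\mu_\T - 1) \leq 2\mu_\T$ produces exactly \eqref{eqn:lem6.7}. The genuinely delicate point is measurability: the functional $\sup_{f \in F_\T}|\cdot|$ must be a measurable (and bounded) function of the block values for Lemma \ref{lem:yu_alt} to apply, which is precisely the role of the assumption that $F_\T$ is a permissible class, and the reason the coupling is invoked for the indicator $h$ rather than for the supremum directly. The remaining bookkeeping — the factor $2$ in the remainder bound and the passage from the single threshold $4r_\T$ to the two thresholds $r_\T$ — is routine and is exactly calibrated by the hypothesis $2r_\T\mu_\T \geq \T M_\T$.
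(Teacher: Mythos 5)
Your proposal is correct and follows essentially the same route as the paper: split the sum over the $\Gamma$-blocks, $\Delta$-blocks and remainder $R$, kill the remainder deterministically via $|R|M_\T \leq 2a_\T M_\T \leq \T M_\T/\mu_\T \leq 2r_\T$, and transfer the two remaining probabilities to the independent block sequence by applying Lemma \ref{lem:yu_alt} to the indicator of each exceedance event. The only difference from the paper's write-up is cosmetic (you eliminate the remainder term before the union bound rather than after), and your remark on measurability and permissibility is consistent with the paper's use of the coupling.
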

\noindent\textbf{Proof.} 
The probability in the left-hand side of $(\ref{eqn:lem6.7})$ splits into three parts: namely,
\begin{align*}
\p\Big(\sup_{f \in F_\T} \big|\sum_{t=1}^\T f(X_{t,\T})\big| > 4 r_\T \Big) & \leq \p\Big(\sup_{f \in F_\T} \big| \sum_{j=1}^{\mu_\T} \sum_{i \in \Gamma_j} f(X_{i,\T})\big| > r_\T\Big) \\ 
& \quad +\p\Big(\sup_{f \in F_\T} \big| \sum_{j=1}^{\mu_\T} \sum_{i \in \Delta_j} f(X_{i,\T})\big| >  r_\T\Big)  \\
& \quad +\p\Big(\sup_{f \in F_\T} \big| \sum_{i \in R} f(X_{i,\T})\big| > 2 r_\T\Big). 
\end{align*}
The sum appearing in the third part, which deals with the ramainder block, is bounded by~$M_n(2a_n) \leq M_n n/\mu_n.$ As $2 r_n \mu_n \geq n M_n,$ that probability is zero.

Turning to the first part, Lemma $\ref{lem:yu_alt}$ with $h$ the indicator function of the event
\begin{align*}
\Big\{ \sup_{f \in F_\T} \big| \sum_{j=1}^{\mu_\T} \sum_{i \in \Gamma_j} f(X_{i,\T})\big| > r_\T \Big\}
\end{align*}
yields
\[\p\Big(\sup_{f \in F_\T} \big|\sum_{j=1}^{\mu_\T} \sum_{i \in \Gamma_j} f(X_{i,\T})\big| >  r_\T\Big) \leq \p\Big(\sup_{f \in F_\T} \big| \sum_{j=1}^{\mu_\T} \sum_{i \in \Gamma_j} f(Y_{i,\T})\big| > r_\T\Big) + \mu_\T\beta_{a_\T},\]
the second term can be treated by the same arguments. The claim follows. \qed

The upper bound in Lemma \ref{lem:6.7} is based on i.i.d. blocks, which allows us to use classical techniques. In particular, we will  apply the Benett inequality to further bound the sum of $\beta$-mixing random variables. For this purpose assume that the number of functions $m_f(\T)$ contained in $F_\T$ is finite, so that
\[ \p\Big(\sup_{f \in F_\T} \big|\sum_{j=1}^{\mu_\T} \sum_{i \in \Gamma_j} f(Y_{i,\T}) \big| \Big) \leq m_f(\T) \sup_{f \in F_\T} \p\Big(\big|\sum_{j=1}^{\mu_\T} \sum_{i \in \Gamma_j} f(Y_{i,\T})\big| > r_\T \Big).  \]
Furthermore, let us assume that the variance $\Var(\sum_{j=1}^{\mu_\T} \sum_{i \in \Gamma_j} f(Y_{i,\T}))$ of the blocks is bounded by some finite $V_\T$, so that the Benett inequality yields
\begin{align}
\p\Big(\big|\sum_{j=1}^{\mu_\T} \sum_{i \in \Gamma_j} f(Y_{i,\T})\big| > r_\T \Big) \leq \exp\Big(-\frac{\mu_\T V_\T}{a_\T^2 M_\T^2} h\Big(\frac{r_\T a_\T M_\T}{2 \mu_\T V_\T}\Big) \Big),
\end{align}
where $h(x) = (1+x)log(1+x).$ Calculations similar to those in the proof of Lemma 6.7 in \cite{dhkv2014} we can bound the probability by
\[ \exp\Big(-\frac{\log 2}{2}\Big( \frac{r_\T^2}{4 \mu_\T V_\T} \wedge \frac{r_\T}{2 a_\T M_\T} \Big)\Big). \] We just have proven the following Lemma
\begin{lemma}  \label{rem_block}
	Let $X_{t,\T}$ be a triangular array of $\beta$-mixing random variables and $F_\T$ a sequence of finite function classes with cardinality $\#F_\T$ that fulfills
	\[ (i) \#F_\T \leq m_f(\T), \quad (ii) \sup_{f \in F_\T} |f(X_{t,\T})| \leq M_\T \quad \text{and} \quad (iii)~\E(f(X)) = 0 \]
	Assume a blocking structure with block length $a_n := p_n = q_n$ which divides the index set into~$2\mu_n +1$ parts, where $\T/2 - a_\T \leq \mu_\T a_\T \leq \T/2, \ a_\T \rightarrow \infty$ and $\mu_\T \rightarrow \infty$, satisfying
	\begin{enumerate}[(a)]
		\item $ \mu_\T \beta_{a_\T} \xrightarrow{\T \rightarrow \infty} 0,$ 
		\item $2r_\T \mu_\T \geq \T M_\T$ and 
		\item $\Var(\sum_{i \in \Gamma_j} f(X_{i,\T})) \vee \Var(\sum_{i \in \Delta_j} f(X_{i,\T})) \leq V_\T$ for all $1 \leq j \leq \mu_\T.$
	\end{enumerate}
	If these conditions are met ,we obtain
	\[\p\Big(\sup_{f \in F_\T} \big|\sum_{t=1}^\T f(X_{t,\T})\big| > 4 r_\T \Big) \leq  2 m_f(\T) \exp\Big(-\frac{\log 2}{2}\Big( \frac{r_\T^2}{4 \mu_\T V_\T} \wedge \frac{r_\T}{2 a_\T M_\T} \Big)\Big) + o(1).\]
\end{lemma}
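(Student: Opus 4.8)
The plan is to assemble the three ingredients prepared above: the blocking reduction of Lemma~\ref{lem:6.7}, a union bound exploiting the finiteness of $F_\T$, and a Bennett-type deviation inequality for the decoupled blocks.

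First I would apply Lemma~\ref{lem:6.7}. Condition~(b), $2r_\T\mu_\T \geq \T M_\T$, is precisely what forces the remainder-block probability in that lemma to vanish, so the left-hand side is bounded by the sum of the two independent-block probabilities together with the coupling error $2\mu_\T\beta_{a_\T}$. By condition~(a) this last term is $o(1)$ and becomes the additive error in the statement. Next, since $\#F_\T \leq m_f(\T)$, a union bound turns each block probability into $m_f(\T)\,\sup_{f\in F_\T}\p(\,\cdot\,)$, so it remains to control the deviation of a single sum $\sum_{j=1}^{\mu_\T}\sum_{i\in\Gamma_j} f(Y_{i,\T})$ and its $\Delta$-analogue.

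For that sum I would invoke Bennett's inequality for the $\mu_\T$ independent centered block sums $\sum_{i\in\Gamma_j} f(Y_{i,\T})$. By condition~(ii) each block sum is bounded by $a_\T M_\T$ in absolute value (the block length being $a_\T$), by condition~(iii) it is centered, and by condition~(c) the blocks contribute a total variance of at most $\mu_\T V_\T$. Bennett's inequality then yields a bound of the shape $\exp(-(\mu_\T V_\T/(a_\T M_\T)^2)\,h(r_\T a_\T M_\T/(\mu_\T V_\T)))$ with $h(x)=(1+x)\log(1+x)$.

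The main, and essentially the only computational, obstacle is to pass from this Bennett bound to the clean wedge form $\exp(-\tfrac{\log 2}{2}(\tfrac{r_\T^2}{4\mu_\T V_\T}\wedge\tfrac{r_\T}{2a_\T M_\T}))$. This is handled by the elementary lower bounds on $h$: for a small argument the quadratic behaviour $h(x)\gtrsim x^2$ produces the sub-Gaussian term $r_\T^2/(\mu_\T V_\T)$, while for a large argument the $x\log x$ growth of $h$ produces the sub-exponential term $r_\T/(a_\T M_\T)$; taking the minimum of the two regimes gives the wedge, exactly as in the computation following Lemma~6.7 of~\cite{dhkv2014}. Summing the identical contributions of the $\Gamma$- and $\Delta$-blocks accounts for the factor $2m_f(\T)$, and retaining the coupling error $2\mu_\T\beta_{a_\T}=o(1)$ completes the argument.
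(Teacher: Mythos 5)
Your proposal is correct and follows essentially the same route as the paper: reduction to independent blocks via Lemma~\ref{lem:6.7} (with condition (b) killing the remainder block and condition (a) absorbing the coupling error $2\mu_\T\beta_{a_\T}$ into the $o(1)$ term), a union bound over the $m_f(\T)$ functions, Bennett's inequality applied to the $\mu_\T$ independent centered block sums each bounded by $a_\T M_\T$ with total variance at most $\mu_\T V_\T$, and the standard two-regime lower bound on $h(x)=(1+x)\log(1+x)$ to obtain the wedge. No gaps.
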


\subsection{Auxiliary technical results}

\begin{lemma} \label{lem:empcdf}
	Assume that $M_T \to \infty,\ T/M_T \to 0 \text{ and } t_0/T \to \unu \in (0,1)$. Under Assumptions (A1)-(A4), for any bounded $S \subset \R,$
	\[
	\Big(\sqrt{2M_T} \Big(\frac{1}{2M_T}\sum_{|t-t_0| \leq M_T} (I\{X_{t,T}\leq x\} - F_{t,T}(x))  \Big) \Big)_{x \in \R} \weak \mathbb{B}\quad \mbox{in}\quad \ell^\infty(S)
	\]
	where $\mathbb{B}$ denotes a centered Gaussian process with covariances
	\[
	\E[\mathbb{B}(s)\mathbb{B}(t)] = \sum_{k\in \Z} \big(G^\unu_{k}(x,y) - G^\unu(x)G^\unu(y) \big).
	\]
\end{lemma}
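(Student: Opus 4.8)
The plan is to establish weak convergence in $\ell^\infty(S)$ through the two standard ingredients, verified separately: convergence of the finite-dimensional distributions, and asymptotic tightness (stochastic equicontinuity) of the sequence. Abbreviate
\[
Z_T(x) := \frac{1}{\sqrt{2M_T}}\sum_{|t-t_0|\le M_T}\big(\I{X_{t,T}\le x} - F_{t,T}(x)\big),
\]
so that each summand is bounded by one and centered; in particular $\E Z_T(x)=0$, which explains why the centering is done with the individual marginals $F_{t,T}$ rather than with a common mean. Since $M_T\to\infty$ while $M_T/T\to 0$ and $t_0/T\to\unu$, every index in the window satisfies $t/T\to\unu$, so that local strict stationarity (Definition \ref{def:strstat}) will allow us to replace the non-stationary bivariate laws $F_{s,t;T}$ by the stationary ones $G^\unu_{s-t}$ throughout.

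For the finite-dimensional distributions I would fix $x_1,\dots,x_d\in S$ and, by the Cram\'er--Wold device, reduce to a central limit theorem for $\sum_j a_j Z_T(x_j) = (2M_T)^{-1/2}\sum_{|t-t_0|\le M_T} W_{t,T}$, where $W_{t,T} := \sum_j a_j(\I{X_{t,T}\le x_j} - F_{t,T}(x_j))$ is a bounded, centered, $\beta$-mixing triangular array. The CLT is obtained via the blocking construction of Section \ref{sec:blocking}: coupling the dependent blocks to independent copies through Lemma \ref{lem:yu_alt} reduces matters to a Lindeberg CLT for independent block sums, the coupling error being controlled by $\mu_n\beta_{a_n}\to 0$ under Assumption (A1). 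The limiting variance is computed by splitting the double sum $\Var(Z_T(x)) = (2M_T)^{-1}\sum_{s,t}\Cov(\I{X_{s,T}\le x},\I{X_{t,T}\le x})$ according to the lag $k=s-t$: for lags $|k|\le L_T$ with $L_T\to\infty$ slowly enough that $L_T M_T/T\to 0$, the local stationarity bound \eqref{eq1} replaces each covariance by $G^\unu_k(x,x)-G^\unu(x)^2$ at negligible cost, while the tails $|k|>L_T$ vanish because both the true covariances and the $\gamma^\unu_k$ decay like $|k|^{-\gamma}$ by Assumption (A3)(i)--(ii); a Ces\`aro argument then gives $\Var(Z_T(x))\to\sum_{k\in\Z}(G^\unu_k(x,x)-G^\unu(x)^2)$, and the analogous computation for cross terms yields the stated kernel $\E[\mathbb{B}(x)\mathbb{B}(y)]=\sum_{k\in\Z}(G^\unu_k(x,y)-G^\unu(x)G^\unu(y))$.

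For tightness, the decisive structural fact is that $\{\I{\cdot\le x}:x\in S\}$ is a monotone, hence VC/bracketing, class with convergent uniform-entropy integral, while Assumption (A4)(i) makes the marginals Lipschitz, so the natural semimetric $\rho(x,y)^2=\E[(\mathbb{B}(x)-\mathbb{B}(y))^2]$ is dominated by $|F_{t_0,T}(x)-F_{t_0,T}(y)|\le C|x-y|$. Stochastic equicontinuity would be proved by chaining, based on a higher-moment bound for the increments $Z_T(x)-Z_T(y)$: expanding $\E|Z_T(x)-Z_T(y)|^{2p}$ into cumulants and invoking the summability of indicator cumulants up to order eight from Assumption (A3)(iii), together with Lemma \ref{lem:mix} relating cumulants to mixing coefficients, yields a bound by a suitable power of $|F_{t_0,T}(x)-F_{t_0,T}(y)|$ plus lower-order terms, which is enough to run the chaining. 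To accommodate the dependence in the required maximal inequalities, one again passes to the independent block approximation of Section \ref{sec:blocking}, controlling the supremum of block sums by the Bennett-type estimate of Lemma \ref{rem_block} and absorbing the coupling error through $\mu_n\beta_{a_n}\to 0$.

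I expect the tightness step to be the main obstacle. Finite-dimensional convergence is a fairly routine mixing CLT once the variance limit is identified, but establishing stochastic equicontinuity uniformly in $x\in S$ for a non-stationary $\beta$-mixing array requires combining three delicate ingredients---the coupling to independent blocks, the cumulant-based moment control of increments, and the entropy/chaining bound---while keeping every error term uniform in the index $x$. Once finite-dimensional convergence and asymptotic tightness are both in place, weak convergence in $\ell^\infty(S)$ follows from the standard characterization of convergence of bounded stochastic processes.
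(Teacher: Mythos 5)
Your proposal follows essentially the same route as the paper's proof: finite-dimensional convergence via the Cram\'er--Wold device and the independent-block coupling of Lemma \ref{lem:yu_alt}, and asymptotic equicontinuity via a chaining argument driven by cumulant bounds on the increments (the paper uses the fourth-moment identity $\E|H_n(x)-H_n(y)|^4 = 3(\cum_2)^2+\cum_4$ together with (A3)(i) and (A3)(iii), then the chaining Lemma 7.1 of \cite{dhkv2014}, and handles the residual sub-$M_T^{-1/2}$ scale by exactly the blocking/Bennett argument you describe). One quantitative correction: the variance of an increment is \emph{not} dominated by $C|x-y|$ --- summing $\min(|x-y|,|k|^{-\gamma})$ over lags only yields $O(|x-y|^{1-1/\gamma})$, which is why the paper chains with respect to the metric $d(x,y)=|x-y|^{(\gamma-1)/(2\gamma)}$ rather than a Lipschitz one; the entropy integral still converges with this exponent (and fourth-order cumulants suffice, so the order-eight summability you invoke is not needed), so your argument goes through after this adjustment.
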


\noindent\textbf{Proof.} In order to prove  weak convergence, we need to establish asymptotic equicontinuity and convergence of finite-dimensional distributions (see Theorem 2.1 in \cite{Kosorok2008}). Convergence of finite-dimensional distributions follows as an application of Lemma \ref{lem:yu_alt} the arguments are quite standard and omitted for the sake of brevity. To prove asymptotic equicontinuity, we apply Lemma 7.1 from \cite{kvdh2014}. More precisely, consider the process $H_n(x) :=  \frac{1}{\sqrt{2M_T}}\sum_{|t-t_0| \leq M_T} (I\{X_{t,T}\leq x\} - F_{t,T}(x)),$ where $n$ denotes the cardinality of the set $\{t \in \{1,\dots,T\}:|t-t_0| \leq M_T\}$. Then,
\[
H_n(x) - H_n(y) = \sum_{|t-t_0| \leq M_T} W_{t,T}(x,y)
\]
where
\[
W_{t,T}(x,y) := \frac{1}{\sqrt{2M_T}}\Big(I\{X_{t,T}\leq x\} - F_{t,T}(x) - (I\{X_{t,T}\leq y\} - F_{t,T}(y))) \Big).
\]
Since $\E W_{t,T}(x,y) = 0$ for all $x,y$, by the definition of cumulants, we have
\[
\E|H_n(x) - H_n(y)|^4 = 3 \Big(\cum_2\Big(\sum_{|t-t_0| \leq M_T} W_{t,T}(x,y)\Big)\Big)^2 + \cum_4\Big(\sum_{|t-t_0| \leq M_T} W_{t,T}(x,y)\Big)
\]
where $\cum_k(y) := \cum(y,...,y)$. Assumption (A3)(iii) implies that
\[
\cum_4\Big(\sum_{|t-t_0| \leq M_T} W_{t,T}(x,y)\Big) = O(1/M_T)
\]
while, under Assumption (A3)(i), there exist constants $C$ and $\tilde C$ such that
\[
\Big|\cum_2\Big(\sum_{|t-t_0| \leq M_T} W_{t,T}(x,y)\Big)\Big| \leq |x-y| + C\sum_{s \geq 1} \min(|x-y|,s|^\gamma) \leq \tilde C|x-y|^{1-\gamma^{-1}}
\]
where the last equality follows by \eqref{eq:sumk}. Thus, there exists a constant $C>0$ such that, for~$|x-y| \geq 1/M_T^{1/2}$, we have
\[
\E|H_n(x) - H_n(y)|^4 \leq C |x-y|^{2-2\gamma^{-1}}.
\]
Now, fix $\delta > 0$ and apply Lemma 7.1 from \cite{dhkv2014} with 
$$\Psi(x) = x^4,\quad  d(x,y) := |x-y|^{(\gamma-1)/(2 \gamma)}, \quad \bar \eta := (2/n)^{(\gamma-1)/(2 \gamma)},\quad \mathbb{G}_x := H_n(x),\quad\!\text{and}\quad\! T := S.$$ In particular, the packing number of the bounded set $S$ with respect to the metric $d$ can be bounded by $D(\eps,d) \leq C \eps^{-2\gamma/(\gamma-1)}$ for some constant $C$ independent of $\eps$. This yields
\begin{equation} \label{eq:help111}
\sup_{x,y \in S, d(x,y) \leq \delta} |H_n(x)-H_n(y)| \leq S_1 + 2\sup_{x,y \in S, d(x,y) \leq \bar \eta} |H_n(x)-H_n(y)| 
\end{equation}
where the quantity $S_1$ satisfies
\begin{equation} \label{eq:help112}
\|\E S_1^4\|^{1/4} \leq K \Big[\int_{\bar \eta/2}^\eta \eps^{-\gamma/2(\gamma-1)} d\eps + (\delta + 2\bar\eta )\eta^{-\gamma/(\gamma-1)}\Big].
\end{equation}
Note that $\gamma > 2$ implies $\gamma/2(\gamma-1) < 1,$ so that $\eps^{-\gamma/2(\gamma-1)}$ is integrable on $[0,1]$. In particular, setting $\eta := \delta^{(\gamma-1)/(2\gamma)}$ implies $\delta\eta^{-\gamma/(\gamma-1)} = \delta^{1/2},$ hence
\[
\lim_{\delta \downarrow 0} \limsup_{n \to \infty} \|\E S_1^4\|^{1/4} = 0. 
\] 
Finally, note that similar arguments as in the proof of \eqref{eq:bincrfk} entail
\begin{equation} \label{eq:help113}
\sup_{x,y \in S, d(x,y) \leq \bar \eta} |H_n(x)-H_n(y)| = o_P(1).
\end{equation}
Jointly, \eqref{eq:help111}-\eqref{eq:help113} imply that, for any $\alpha > 0,$
\[
\lim_{\delta \downarrow 0} \limsup_{n \to \infty} P\Big( \sup_{x,y \in S, d(x,y) \leq \delta} |H_n(x)-H_n(y)| \geq \alpha \Big) = 0.
\] 
Since the metric $d$ makes the index set $S$ totally bounded, condition (ii) in Theorem~2.1 in \cite{Kosorok2008} follows. This, together with the weak convergence of the finite-dimensional distributions, completes the proof. \hfill $\qed$

\begin{lemma}	\label{lem:OrderDFT}
	Let $\ell_n \in \Z$ be a sequence such that $\omega_{\ell_n} := 2\pi \ell_n / n \rightarrow \omega \not\equiv 0 \mod 2\pi$. Let $K$ be a function satisfying assumption (K) and define~$K_n(k) := K(k/B_n)$, for~\mbox{$k \in \Z$}, where $B_n = o(n)$. Denote by $\tilde{\mathcal{F}}_n(\eps)$ the set of Fourier frequencies which are contained in $(\eps,\pi-\eps)$. Assume that condition (A4)(iv) holds. Then
	\[
	\sup_{\omega \in \tilde{\mathcal{F}}_n(\eps)}\sup_{t \in \Ntheta} \sup_{\tau} \Big| \sum_{|k| \leq B_n} K_n(k) {\rm e}^{-{\rm i} \omega k} \big(F_{t+k;T}(q^\unu(\tau)) - \tau \big) \Big| = O\Big(\frac{n}{T B_n^{d-1}} + B_n/T\Big)
	\]
	and
	\[
	\sup_{\omega \in \tilde{\mathcal{F}}_n(\eps)} \Big| \sum_{|k| \leq B_n} K_n(k) {\rm e}^{-{\rm i} \omega k}\Big| = O\Big(\frac{1}{B_n^{d-1}}\Big).
	\]
\end{lemma}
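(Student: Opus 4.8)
The plan is to prove the two displays separately, handling the second (the pure lag-window transform) first, as it is the engine behind the first. Write $a_k := K(k/B_n)$ and exploit Assumption~(K): since $K$ is supported on $[-1,1]$ and $d$ times continuously differentiable, its first $d-1$ derivatives vanish at $\pm1$, i.e.\ $K^{(j)}(\pm1)=0$ for $0\leq j\leq d-1$. Because $\omega\in(\eps,\pi-\eps)$ stays away from $0$ and $\pi$ (hence from $2\pi\Z$), the geometric partial sums $V_k:=\sum_{l\leq k}e^{-\mathrm{i}\omega l}$ have the form $c_1 e^{-\mathrm{i}\omega k}+c_2$ with $|c_1|,|c_2|\leq 1/|\sin(\omega/2)|\leq C_\eps$ uniformly in $\omega$. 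I would then perform Abel summation $d$ times. At each stage the non-oscillating contribution $c_2\sum_k\Delta^j a_k$ telescopes to finite differences of $K$ at the endpoints of its support, which vanish thanks to $K^{(j)}(\pm1)=0$; the surviving part reproduces a trigonometric sum with amplitude $\Delta^{j+1}a_k$ and the same oscillation. After $d$ passes one is left with $(-c_1)^d\sum_k(\Delta^d a_k)e^{-\mathrm{i}\omega k}$, and since the $d$-th finite difference obeys $|\Delta^d a_k|=O(B_n^{-d})$ on its $O(B_n)$-point support, the crude bound $\sum_k|\Delta^d a_k|=O(B_n^{1-d})$ delivers the claimed $O(B_n^{-(d-1)})$, uniformly over $\tilde{\mathcal{F}}_n(\eps)$.

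For the first display I would reduce to the second in two moves. First, invoke strict local stationarity: applying the marginal bound \eqref{eq2} with approximating index chosen equal to $(t+k)/T$ gives $|F_{t+k;T}(q^\unu(\tau))-G^{(t+k)/T}(q^\unu(\tau))|\leq L/T$ uniformly in $t,k,\tau$. Summing this error against $|K_n(k)|$ over $|k|\leq B_n$ contributes $O(B_n/T)$, precisely the second term in the target rate. It then remains to bound $\sum_{|k|\leq B_n}K_n(k)e^{-\mathrm{i}\omega k}\big(G^{(t+k)/T}(q^\unu(\tau))-\tau\big)$.

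Writing $h(u):=G^u(q^\unu(\tau))-\tau$ and noting $h(\unu)=0$, I would Taylor expand $h$ about $u=t/T$ to order $d-1$, which is licit since $G^u$ is $d$ times continuously differentiable in $u$ by (A4)(iv). The $j=0$ term factors as $h(t/T)\sum_{|k|\leq B_n}K_n(k)e^{-\mathrm{i}\omega k}$, where $|h(t/T)|\leq C|t/T-\unu|=O(n/T)$ uniformly over $t\in\Ntheta$ (using $t_0=\lfloor T\unu\rfloor$, $|t-t_0|<m_T$, and the uniform boundedness of $\partial_u G^u$ from (A4)), while the lag-sum is $O(B_n^{-(d-1)})$ by the second display; their product is the leading $O(n/(TB_n^{d-1}))$. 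For $j\geq1$ the $j$-th term carries $h^{(j)}(t/T)/(j!\,T^j)=O(T^{-j})$ times $\sum_{|k|\leq B_n}K_n(k)k^j e^{-\mathrm{i}\omega k}$; rewriting $k^jK(k/B_n)=B_n^j(k/B_n)^jK(k/B_n)$ and applying the second display to the modified window $u\mapsto u^jK(u)$ (still $d$ times differentiable with support $[-1,1]$) bounds this sum by $O(B_n^{j-(d-1)})$, so each such term is $O((B_n/T)^j B_n^{-(d-1)})$ and, since $B_n/T=o(1)$ and $B_n=o(n)$, of strictly smaller order than the $j=0$ term. The Taylor remainder is $O(B_n^{d+1}/T^d)$, rendered negligible against $n/(TB_n^{d-1})$ by the bandwidth conditions in (A2). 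Uniformity in $\tau$ throughout rests on the uniform-in-argument boundedness built into Assumption~(A4).

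The main obstacle I anticipate is the second display: the $B_n^{-(d-1)}$ decay cannot be obtained from a single summation by parts (one pass yields only $O(1)$), and the iteration succeeds only because the constant part $c_2$ of the geometric partial sums telescopes against finite differences of $K$ whose boundary values vanish. Securing this cancellation and its uniformity in $\omega\in(\eps,\pi-\eps)$ is the crux; by comparison, the reduction and Taylor bookkeeping in the first display are routine.
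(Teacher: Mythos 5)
Your proof is correct in substance but follows a genuinely different route from the paper's. For the second display, the paper never argues it directly in detail; for the first, it treats the product $u\mapsto K(u n/B_n)\,[G^{t/T+un/T}(q^\unu(\tau))-\tau]$ as a single compactly supported $C^d$ function, bounds its $d$-th derivative by Leibniz's rule (the key point being that the worst term $(n/B_n)^dK^{(d)}\cdot[G-\tau]$ carries the factor $|G^{t/T+un/T}(q^\unu(\tau))-\tau|=O(n/T)$ and is integrated over an interval of length $O(B_n/n)$), and then extracts the rate from the discrete Poisson summation formula via integration by parts on the Fourier coefficients. You instead run an iterated Abel summation on the discrete sum --- an elementary and clean substitute for the Poisson/Fourier-coefficient argument, and your observation that the non-oscillating parts telescope against compactly supported finite differences is exactly the right mechanism; it also transparently yields the generalization to the windows $u\mapsto u^jK(u)$ that you need. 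For the first display you decouple the window from the distribution function by Taylor-expanding $u\mapsto G^u(q^\unu(\tau))$ about $t/T$ before summing, whereas the paper keeps them together. What the paper's coupling buys is that the entire sum is bounded by $O(n/(TB_n^{d-1}))+O(B_n/T)$ using only the lemma's stated hypotheses ((K), $B_n=o(n)$, (A4)(iv)).

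The one genuine weak point is your Taylor remainder. Expanding to order $d-1$ leaves a term of size $O(B_n^{d+1}/T^d)$ after crude summation against $|K_n(k)|$, and this is \emph{not} dominated by $n/(TB_n^{d-1})+B_n/T$ under the lemma's hypotheses alone (take, e.g., $B_n=n/\log n$ with $n\asymp T$); you must invoke (A2), which is not among the lemma's assumptions, so as written you prove a slightly weaker statement --- sufficient for every application in the paper, but not the lemma as stated. The gap is avoidable within your own framework: apply your $d$-fold Abel summation directly to the product $a_k=K_n(k)\,[G^{(t+k)/T}(q^\unu(\tau))-\tau]$ and use the discrete Leibniz rule for $\Delta^d a_k$; the term $\Delta^dK_n\cdot[G-\tau]$ contributes $O(B_n^{-d}\,n/T)$ pointwise because $|G^{(t+k)/T}(q^\unu(\tau))-\tau|=O(n/T)$ on $\Ntheta$, the cross terms are smaller, and summing over the $O(B_n)$-point support recovers $O(n/(TB_n^{d-1}))$ with no extra bandwidth condition --- this is precisely the discrete counterpart of the paper's Leibniz computation.
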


\noindent\textbf{Proof.}
We only establish the first statement since the second one can be proved by similar arguments. Let $h_{t,T}(u) := K\big( u \frac{n}{B_n} \big) [G^{\frac{t}{T} + u \frac{n}{T}}(q^\unu(\tau)) - \tau]$, $u \in [-1/2, 1/2]$ for $T$ large enough that $|u n / B_n| \leq 1$ and $\frac{k}{T} + u \frac{n}{T} \in [0,1]$. Note that, under the assumptions made, this function has support $[-B_n/n,B_n/n]$ and is $d$ times continuously differentiable as a function on $(-1/2,1/2)$. Due to local stationarity the following approximation holds:
\begin{equation*}
\begin{split}
&\sum_{|k| \leq B_n} K\Big( \frac{k}{B_n} \Big) {\rm e}^{-{\rm i} \omega k} [F_{t+k;T}(q^\unu(\tau)) - \tau] \\ 
& \qquad = \sum_{k=-n/2+1}^{n/2} K\Big( \frac{k}{n} \frac{n}{B_n} \Big)  [G^{\frac{t}{T} + \frac{k}{n} \frac{n}{T}}(q^\unu(\tau)) - \tau]{\rm e}^{-{\rm i} \omega k} + O(B_n/T) \\
& \qquad = \sum_{k=-n/2+1}^{n/2} h_{t,T}(k/n){\rm e}^{-{\rm i} \omega k} + O(B_n/T).
\end{split}
\end{equation*}
By Leibniz's rule, we have
\begin{equation*}
\begin{split}
h^{(d)}_{t,T}(u) & = \sum_{j=0}^{d-1} \binom{d}{j} \Big(\frac{n}{B_n}\Big)^j \Big(\frac{n}{T}\Big)^{d-j}  K^{(j)} \Big( u \frac{n}{B_n} \Big) \frac{\partial^{d-j}}{\partial v^{d-j}} G^{\frac{t}{T} + v}(q^\unu(\tau))\Big|_{v=u \frac{n}{T}},\\
& \quad + \Big(\frac{n}{B_n}\Big)^d  K^{(d)}\Big( u \frac{n}{B_n} \Big) \Big(G^{\frac{t}{T} + u \frac{n}{T}}(q^\unu(\tau)) - \tau\Big)
\end{split}
\end{equation*}
so that, under the assumptions made, for some constant $C_d$ depending only on $K$, $d$, and the mapping $u \mapsto G^{u}(q_{\tau}),$
\[
\sup_{t,T,u} |h^{(d)}_{t,T}(u)| \leq C_d (n / B_n)^d \frac{n}{T}.
\]

Note that, under the assumptions of the lemma, the function $u \mapsto h_{t,T}(u)$ is twice continuously differentiable on $(-1/2,1/2)$. Thus, it admits the Fourier series representation 
\[
h_{t,T}(u) = \sum_{j \in \Z} c_{j,t,T} {\rm e}^{{\rm i} 2\pi j u},
\quad \text{where }
c_{j,t,T} := \int_{-1/2}^{1/2} h_{t,T}(u) {\rm e}^{-{\rm i} 2\pi j u} {\rm d} u.
\]
Now consider a Fourier frequency $\omega_\ell = 2 \pi \ell/n \in \tilde{\mathcal{F}}_n(\eps)$. By the usual argument (see  \cite{briggshenson1995}, page 182), we have the discrete Poisson summation formula 
\begin{align*}
\sum_{k=-n/2+1}^{n/2} h_{t,T}(k/n){\rm e}^{-{\rm i} \omega_{\ell} k} &= \sum_{j \in \Z} c_{j,t,T} \sum_{k=-n/2+1}^{n/2} {\rm e}^{{\rm i} 2\pi k (j - \ell)/n}\\
&= n \Big(c_{\ell, t, T} + \sum_{k=1}^{\infty} (c_{\ell + k n, t, T} + c_{\ell - k n, t, T})\Big).
\end{align*}
For the leading term, note that $h^{(r)}_{t,T}(u) = 0$ for $|u| > B_n/n$, so that, integrating by parts yields 
\begin{equation}
\label{eqn:OrderCell}
c_{\ell,t,T} = (-1)^{d+1} \frac{1}{({\rm i} 2\pi \ell)^d} \int_{-B_n/n}^{B_n/n} h^{(d)}_{t,T}(u) {\rm e}^{-{\rm i} 2\pi \ell u} {\rm d} u.
\end{equation}
It follows that $|c_{\ell, t, T}| \leq 2 C_p  (2\pi \ell)^{-d} \frac{n}{T} (n/B_n)^{d-1} \lesssim \frac{1}{T B_n^{d-1}}$, as $\ell \asymp n$. Furthermore, by Assumption (A4)(iv) (recall that $d \geq 2$ and $\ell/n \to c \in (0,1) \text{ mod } 1$), 
\begin{equation*}
\begin{split}
\Big| \sum_{k=1}^{\infty} (c_{\ell + k n, t, T} + c_{\ell - k n, t, T}) \Big|
& \lesssim \frac{n}{T} \Big(\frac{n}{B_n}\Big)^{d-1} \sum_{k=1}^{\infty} \Big(\frac{1}{(\ell + k n)^d} + \frac{1}{(\ell - k n)^d} \Big) \\
& = \frac{1}{T B_n^{d-1}} \sum_{k=1}^{\infty} \Big(\frac{1}{(\ell/n + k)^d} + \frac{1}{(\ell/n - k)^d} \Big) \lesssim \frac{1}{T B_n^{d-1}}.
\end{split}
\end{equation*}
Note that all the bounds above hold uniformly in $\ell \asymp n$. This completes the proof of the lemma.\hfill\qed

\section{Details for the proof of Theorem \ref{thm:asymain}} \label{sec:proofsmainsteps}

\subsection{Proof of \eqref{eq:main1}} 
Define 
\[
\hat F_{t_0,t_0+k;T} (x,y) := \frac{1}{n} \sum_{t \in T(k)} \I{X_{t,T} \leq x, X_{t+k,T} \leq y},
\]
and let
\begin{eqnarray*}
	r_{n,1}(k) &:=& \hat F_{t_0,t_0+k;T}(\hat q_{t_0,T}(\tau_1), \hat q_{t_0,T}(\tau_2)) - \hat F_{t_0,t_0+k;T}(q^\unu(\tau_1),q^\unu(\tau_2)) 
	\\
	&& \quad\quad\quad- \frac{1}{n} \sum_{t \in T(k)} \Big( F_{t,t+k;T}(\hat q_{t_0,T}(\tau_1),\hat q_{t_0,T}(\tau_2)) - F_{t,t+k;T}(q^\unu(\tau_1),q^\unu(\tau_2)) \Big),
	\\
	r_{n,2}(k) &:=& \frac{1}{n} \sum_{t \in T(k)} \Big[F_{t,t+k;T}(\hat q_{t_0,T}(\tau_1),\hat q_{t_0,T}(\tau_2)) - F_{t;T}(\hat q_{t_0,T}(\tau_1))F_{t+k;T}(\hat q_{t_0,T}(\tau_2))
	\\
	&& \quad\quad\quad\quad\quad\quad\quad
	-\Big(F_{t,t+k;T}(q^\unu(\tau_1),q^\unu(\tau_2)) - F_{t;T}(q^\unu(\tau_1))F_{t+k;T}(q^\unu(\tau_2)) \Big) \Big],
	\\
	r_{n,3}(k) &:=& \frac{1}{n} \sum_{t \in T(k)} \Big( F_{t;T}(\hat q_{t_0,T}(\tau_1))F_{t+k;T}(\hat q_{t_0,T}(\tau_2)) - F_{t;T}(q^\unu(\tau_1))F_{t+k;T}(q^\unu(\tau_2)) \Big),
	\\
	r_{n,4}(k) &:=& \frac{\tau_1}{n} \sum_{t \in T(k)} \Big(\I{X_{t
			+k,T} \leq \hat q_{t_0,T}(\tau_2)} - \tau_2 \Big) + \frac{\tau_2}{n} \sum_{t \in T(k)} \Big(\I{X_{t,T} \leq \hat q_{t_0,T}(\tau_1)} - \tau_1 \Big)
	\\
	&& - \frac{\tau_1}{n} \sum_{t \in T(k)} \Big(\I{X_{t+k,T} \leq q^\unu(\tau_2)} - \tau_2 \Big) - \frac{\tau_2}{n} \sum_{t \in T(k)} \Big(\I{X_{t,T} \leq q^\unu(\tau_1)} - \tau_1 \Big).
\end{eqnarray*}
Observe that, due to the assumptions on $K_n,$
\begin{align*}
&2\pi \hat{\mathfrak{f}}_{t_0,T}(\omega,\tau_1,\tau_2) - \sum_{|k|\leq n-1} \hspace{-2mm}K_n(k) e^{-i \omega k}\frac{1}{n} \sum_{t \in T(k)} \Big(\I{X_{t,T} \leq  q^\unu(\tau_1)} - \tau_1 \Big)\Big( \I{X_{t+k,T} \leq q^\unu(\tau_2)} - \tau_2 \Big)
\\
&\qquad = \sum_{|k| \leq B_n} K_n(k) e^{-i \omega k} \Big(r_{n,1}(k) + r_{n,2}(k) + r_{n,3}(k) + r_{n,4}(k)\Big) \\
&\qquad =: R_{n,1} + R_{n,2} + R_{n,3} + R_{n,4},\text{ say.}
\end{align*}
To prove \eqref{eq:main1} it is sufficient to establish the following statements:
\begin{equation} \label{eq:boundqhat}
\max(|q^\unu(\tau_1) - \hat q_{t_0,T}(\tau_1)|,|q^\unu(\tau_2) - \hat q_{t_0,T}(\tau_2)|) = O_P(T^{-2/5}),
\end{equation}
\begin{align} \label{eq:bincrfk}
\sup_k \sup_{x \in X} \sup_{\|y\|\leq \eps_n} & \Big|\hat F_{t_0,t_0+k;T}(x)
- \hat F_{t_0,t_0+k;T}(x+y) - \frac{1}{n} \sum_{t \in T(k)}\hspace{-1mm}[F_{t,t+k;T}(x) - F_{t,t+k;T}(x+y)]\Big| \nonumber \\
& = O_\p(\rho_n(\eps_n)),
\end{align}
for any $\eps_n = o(1)$ and any bounded set $X \subset \R^2$ with $\|v\|$ denoting the maximum norm of the vector $v$, and
\begin{align} \label{eq:uincrfk}
&\sup_{x \in Z} \sup_{|y|\leq \eps_n} \hspace{-2mm} \Big|\frac{1}{n}\sum_{|t-t_0|\leq m_T} \Big( \I{X_{t,T} \leq x} - \I{X_{t,T} \leq x+y} - F_{t;T}(x) + F_{t;T}(x+y) \Big)\Big| = O_\p(\rho_n(\eps_n))
\end{align}
for any $\eps_n = o(1)$ and any bounded set $Z \subset \R$
where $\rho_n$ is defined in Assumption (A2). We first analyze the asymptotic behavior of the four remainder terms $R_{n,1},R_{n,2},R_{n,3},R_{n,4}$, then turn to the proofs for \eqref{eq:boundqhat} - \eqref{eq:uincrfk}.

\bigskip
\noindent
\textbf{\textit{Discussion of remainder term $R_{n,1}$.}}\\ From (\ref{eq:boundqhat}) and (\ref{eq:bincrfk}), we obtain $\sup_k |r_{n,1}(k)| = O_P(\rho_n(T^{-2/5}))$ hence under (A3)
\begin{equation} \label{Rn1}
|R_{n,1}| = o_P(B_n^{1/2}n^{-1/2}).
\end{equation}

\noindent
\textbf{\textit{Discussion of remainder term $R_{n,2}$.}} \\
Under (A3)(i) and (A4)(i), $|r_{n,2}(k)| = O(\min(|k|^{-\gamma}, T^{-2/5})),$ and thus
\begin{equation} \label{Rn2}
|R_{n,2}| \leq \sum_{|k| \leq B_n} O(\min(|k|^{-\gamma}, T^{-2/5})) = O(T^{-(1-\gamma^{-1})2/5}) = o(B_n^{1/2}n^{-1/2}).
\end{equation}
To see the this, note that, for $\eps \to 0,$
\begin{equation} \label{eq:sumk}
\sum_{k \geq 1} \min(k^{-\gamma},\eps) \leq \sum_{1\leq k \leq \eps^{-1/\gamma}} \eps + \sum_{k \geq \eps^{-1/\gamma}} k^{-\gamma} = O(\eps^{1-1/\gamma}) + O(\eps^{(-1/\gamma)(1-\gamma)}) = O(\eps^{1-\gamma^{-1}}).
\end{equation}
\noindent
\textbf{\textit{Discussion of remainder term $R_{n,3}.$}}\\
Start by observing that
\begin{eqnarray*} 
	\frac{1}{n}\sum_{t \in T(k)} F_{t;T}(x)F_{t+k;T}(y)  &=& \frac{1}{n} \sum_{|t-t_0|\leq m_T} \hspace{-4pt} F_{t;T}(x)F_{t+k;T}(y) + O(k/n)
	\\
	&=& \frac{1}{n} \sum_{|t-t_0|\leq m_T}\hspace{-4pt} G^{t/T}(x)G^{(t+k)/T}(y) + O(k/n) + O(1/T)
	\\
	&=& \frac{1}{n} \sum_{|t-t_0|\leq m_T} \hspace{-4pt}G^{t/T}(x)G^{t/T}(y) + O(k/T) + O(k/n) + O(1/T)
	\\
	&=& \frac{T}{2m_T}\int_{-m_T/T}^{m_T/T}\hspace{-4pt} G^{\unu + u}(x)G^{\unu + u}(y) du + O(k/n), 
\end{eqnarray*}
where we have used a first-order Taylor expansion of the function $u \mapsto G^u(x)$. This yields
\begin{align*}
r_{n,3}(k) = \frac{T}{2m_T}\int_{-m_T/T}^{m_T/T} & G^{\unu + u}(\hat q_{t_0,T}(\tau_1))G^{\unu + u}(\hat q_{t_0,T}(\tau_2))\\
& \quad - G^{\unu + u}(q^\unu(\tau_1))G^{\unu + u}(q^\unu(\tau_2)) du + O(B_n/n), 
\end{align*}
uniformly in $|k|\leq B_n$. Observe that, by Lemma \ref{lem:OrderDFT} under condition~(K), we have 
\[ 
\sup_{\omega \in \tilde{\mathcal{F}}_n(\eps)} \Big|\sum_{|k|\leq n-1} K_n(k) e^{-i k\omega}\Big| = O(1).
\]
This implies
\begin{align*}
R_{n3} =& \Big(\sum_{|k|\leq B_n} K_n(k) e^{-i \omega k} \Big) \frac{T}{2m_T}\int_{-m_T/T}^{m_T/T} G^{\unu + u}(\hat q_{t_0,T}(\tau_1))G^{\unu + u}(\hat q_{t_0,T}(\tau_2)) 
\\
& \hspace{5cm} - G^{\unu + u}(q^\unu(\tau_1))G^{\unu + u}(q^\unu(\tau_2)) du + O(B_n^2/n)
\\
=& \ O\Big(\max\big(|q^\unu(\tau_1) - \hat q_{t_0,T}(\tau_1)|,|q^\unu(\tau_2) - \hat q_{t_0,T}(\tau_2)|\big)\Big) + O(B_n^2/n)
\end{align*}
uniformly in $\omega \in \tilde{\mathcal{F}}_n(\eps),$ almost surely. Recalling
(\ref{eq:boundqhat}) and Assumption (A2) we thus obtain 
\begin{equation} \label{Rn3}
|R_{n,3}| = o_P(B_n^{1/2}n^{-1/2}).
\end{equation}
\noindent
\textbf{\textit{Discussion of remainder term $R_{n,4}.$}}\\
Observe that, uniformly in $|k| \leq B_n$ and $ y \in\R$ we have
\begin{align*}
&\frac{1}{n}\sum_{t \in T(k)} \I{X_{t,T} \leq y}  =  \frac{1}{n}\sum_{|t-t_0|\leq m_T} \I{X_{t,T} \leq y} 
+ O_P(B_n/n),&
\\
&\frac{1}{n}\sum_{t \in T(k)} \I{X_{t+k,T} \leq y}  =  \frac{1}{n}\sum_{|t-t_0|\leq m_T} \I{X_{t,T} \leq y}
+ O_P(B_n/n).&
\end{align*}
Thus, uniformly in $|k|\leq B_n$,
 $
r_{n,4}(k) = D_n + O_P(B_n/n)
$, 
where
\[
D_n := \sum_{|t-t_0|\leq m_T} \frac{\tau_1}{n}\Big(\I{X_{t,T} \leq \hat q_{t_0,T}(\tau_2)} - \I{X_{t,T} \leq q^\unu(\tau_2)} \Big) + \frac{\tau_2}{n}\Big(\I{X_{t,T} \leq \hat q_{t_0,T}(\tau_1)} - \I{X_{t,T} \leq q^\unu(\tau_1)} \Big)
\]
does not depend on $k$. In particular, by Lemma \ref{lem:OrderDFT} this implies
\[
|R_{n,4}| \leq O_P(B_n^2/n) + |D_n| \sup_{\omega \in \tilde{\mathcal{F}}_n(\eps)} \Big|\sum_{|k|\leq n-1} K_n(k) e^{-i k\omega}\Big| = O_P(B_n^2/n) + |D_n|O(B_n^{-1}).
\]
To conclude with $R_{n,4}$, note that combining (\ref{eq:boundqhat}), (\ref{eq:uincrfk}), and Assumption (A4)(i) we obtain $|D_n| = O_P(\rho_n(T^{-2/5})) + O_P(T^{-2/5})$. Together with (A2), this entails $R_{n,4} = o_P(B_n^{1/2}/n^{1/2})$ which, combined with \eqref{Rn1}-\eqref{Rn3}, yields \eqref{eq:main1}. It remains to establish \eqref{eq:boundqhat} - \eqref{eq:uincrfk}.

\bigskip
\noindent
\textbf{\textit{Proof of (\ref{eq:boundqhat})}}
Letting $M_T = T^{4/5}$ in Lemma \ref{lem:empcdf}, we obtain the weak convergence of~$\sqrt{2T^{4/5}}(\tilde F_{t_0;T}(x) - \bar F(x)),$ where
\[
\bar F(x) := \frac{1}{2T^{4/5}}\sum_{|t-t_0|\leq T^{4/5}} F_{t;T}(x) = \frac{1}{2T^{4/5}}\sum_{|t-t_0|\leq T^{4/5}} G^{t/T}(x) + O(1/T).
\]
to a centered Gaussian process with almost surely continuous sample paths. 
Next, observe that, uniformly in $x,$
\[
\bar F(x) = \frac{T}{2T^{4/5}}\int_{-T^{4/5}/T}^{T^{4/5}/T} G^{\unu+u}(x) du + O(1/T) = G^\unu(x) + O((T^{4/5}/T)^2) + O(1/T)
\]
where we have used a second-order Taylor expansion of the function $u \mapsto G^{\unu+u}(x)$. The claim (statement \eqref{eq:boundqhat}) follows by compact differentiability of the quantile mapping, see Lemma~12.8 in \cite{Kosorok2008}. \\ 

\noindent
\textbf{\textit{Proof of (\ref{eq:bincrfk}) and (\ref{eq:uincrfk})}} Statement (\ref{eq:uincrfk}) can be established by similar arguments as~(\ref{eq:bincrfk}), and its proof is omitted for the sake of brevity. 
Let~$x = (x_1,x_2)$, $y = (y_1,y_2),$ and define\vspace{-3mm}
\begin{multline*}
n W_{t,k} (x,y) := \I{X_{t,T} \leq x_1, X_{t+k,T} \leq x_2} - \I{X_{t,T} \leq x_1+y_1, X_{t+k,T} \leq x_2+y_2}
\\
- \p(X_{t,T} \leq x_1, X_{t+k,T} \leq x_2) + \p(X_{t,T} \leq x_1+y_1, X_{t+k,T} \leq x_2+y_2).  \vspace{-2mm}
\end{multline*}
With this notation, we have\vspace{-3mm}
\begin{multline*}
\sum_{t\in T(k)} W_{t,k}(x,y) = \hat F_{t_0,t_0+k;T}(x_1 + y_1, x_2 + y_2) - \hat F_{t_0,t_0+k;T}(x_1,x_2) \\
- \frac{1}{n}\sum_{t \in T(k)} F_{t,t+k;T}(x_1+y_1,x_2+y_2) - F_{t,t+k;T}(x_1,x_2).
\end{multline*}
Cover the bounded set $\{(x,y): x \in X, \|y\| \leq \eps_n \}$ with $O(n^4)$ spheres of radius $1/2n$ and centers $(v,w)$ such that $\|w\| \leq \eps_n$, and denote the set of resulting centers by $Z$. Observe that there exists a constant $C$ independent of $k$ such that
\begin{align*}
\sup_{\|(v,w)-(x,y)\| \leq 1/n} & |W_{t,k}(v,w)-W_{t,k}(x,y)| \\
& \leq n^{-1}(\I{|X_{t,T}-v_1| \leq 1/n} +\I{ |X_{t+k,T}-v_2| \leq 1/n}  \\
& \hspace{2cm} + \I{|X_{t,T}-v_1-w_1| \leq 1/n} + \I{|X_{t+k,T}-v_2-w_2| \leq 1/n} +C)\\
& :=  V_{t,k}(v,w). 
\end{align*}
Therefore, 
\begin{align*} \label{eqn:Lem1first}
\sup_{x \in X} \sup_{\|y\| < \eps_n} \big|\sum_{t \in T(k)} W_{t,k}(x,y)\big| \leq \max_{(v,w) \in Z} \big| \sum_{t \in T(k)} W_{t,k}(v,w)\big| + \max_{(v,w) \in Z} \big|\sum_{t \in T(k)} V_{t,k}(v,w)\big|.
\end{align*}
We now use blocking to show that both terms in the right-hand side are of order $\Op(\rho_n(\eps_n)),$ uniformly in $k$. Since the random variables $X_{t,T}$ are $\beta$-mixing, so are the random variables~$W_{t,k}$ and $V_{t,k},$ and the $\beta$-mixing coefficients $\beta_j^{[W]}$ of $W_{t,k}$ are bounded by $\beta^{[X]}_{0 \vee (j-B_n)}.$ The same holds for the $\beta$-mixing coefficients of $V_{t,k}$. Furthermore, with $\mathring{V}_{t,k} := V_{t,k} - \E(V_{t,k}),$ it follows that
\begin{gather*}
\#\{W_{t,k}(v,w)|(v,w) \in Z\} = \#\{\mathring{V}_{t,k}(v,w)|(v,w) \in Z\} = O(n^4),\\
\max_{(v,w) \in Z} |W_{t,k}(v,w)| \leq n^{-1}, \quad   \quad \max_{(v,w) \in Z} |V_{t,k}(v,w)| = O(n^{-1}),
\end{gather*}
and 
$$
\E(W_{t,k}(v,w)) = \E(\mathring{V}_{t,k}(v,w)) = 0,
$$
so that the classes $\{W_{t,k}(v,w)|(v,w) \in Z\}$ and $\{\mathring{V}_{t,k}(v,w)|(v,w) \in Z\}$ satisfy conditions $(i)-(iii)$ in Lemma \ref{rem_block} with $m_f(n) = O(n^5)$ and $M_n = 1/n$. Set 
\[
a_n = \lceil (n^{\frac{1}{\delta + 1}} \vee k_n)\log(n) \rceil, \quad \mu_n = \lfloor \frac{n}{2 a_n} \rfloor\ \quad  \text{and} \quad r_n = \rho_n(\eps_n), 
\] 
so that conditions (a) and (b) of that lemma are satisfied as well, for $n$ large enough, by the random variables $(W_{t,k})_{t \in T(k)}$ and $(V_{t,k})_{t \in T(k)},$ for any $k$. A Taylor expansion yields 
\[
\sup_{t,k,(v,w)\in Z} |\E W_{t,k}(v,w)W_{s,k}(v,w)| = O(\eps_n)
\]
for any $s,t,$ and
\[
\sup_{t,k,(v,w)\in Z} |\E W_{t,k}(v,w)W_{s,k}(v,w)| = O(\eps_n^2)
\]
for any $s,t$ such that $t,\ t+k,\ s$ and $s+k$ are four distinct indices. Note that, for a given~$k$, there exist $O(a_n)$ pairs $(s,t)$ with $t_1 \leq s,t \leq t_2$ such that at least two of the four indices~$(t,t+k,s,s+k)$ coincide. Thus, for sufficiently large~$n$ and all $t_2-t_1 = a_n,$
\begin{equation} \label{eq:bvarw}
\sup_{k \leq B_n}\sup_{(v,w)\in Z} \Var\Big(\sum_{t = t_1}^{t_2} W_{t,k}(x,y) \Big) \leq c_1\Big(\frac{a_n}{n^2}\big(\eps_n + a_n\eps_n^2\big)\Big).
\end{equation}
Applying Lemma \ref{rem_block} to the triangular array $\{W_{t,k}(v,w)\}$ yields
\begin{align*}
\p\Big(\sup_{k \leq B_n}\sup_{(v,w)\in Z} & \big| \sum_{t\in T(k)} W_{t,k}(v,w)\big| > D\rho_n(\eps_n)\Big)\\
&\leq  O(n^5)\exp\Big(-\frac{\log(2)}{2}\Big(\frac{D^2\rho_n(\epsilon_n)^2}{4\mu_n V_n}\wedge \frac{D\rho_n(\eps_n)}{2 a_n n^{-1}}\Big)\Big),
\end{align*}
where \mbox{$V_n := c_1(\frac{a_n}{n^2}(\eps_n + a_n\eps_n^2))$} and $D$ beeing an arbitrary constant.
Now, the definition of~$\rho_n(\eps_n)$ implies that $D$ can be chosen in such a way that the right-hand side of the above inequality tends to zero for $n \rightarrow \infty$, i.e., for $D$ sufficiently large,
\begin{align*}
\p\Big(\sup_{k \leq B_n} \sup_{(v,w) \in Z} | \sum_{t\in T(k)} W_{t,k}(v,w)| > D\rho_n(\eps_n)\Big) = o(1).
\end{align*}
The same   analysis as before yields
\[
\sup_{k \leq B_n}\sup_{(v,w) \in \mathcal{Z}} \Var(\sum_{t = t_1}^{t_2}\mathring{V}_{t,k}(v,w)) = O(\frac{a_n}{n^3}+\frac{a^2_n}{n^4}) = O(\frac{a_n}{n^3}),
\]
for all $t_2-t_1 = a_n$; yet another application of Lemma \ref{rem_block} entails, for a suitable constant~$D,$
\[
\p\Big(\sup_{k \leq B_n} \sup_{(v,w) \in Z} | \sum_{t \in T(k)} V_{t,k}(v,w)| > D\rho_n(\epsilon_n)\Big) = o(1).
\]
This completes the proof of \eqref{eq:main1}. \hfill $\qed$

\subsection{Proof of \eqref{eq:main2}}
First, note that 
\begin{multline*}
\sum_{|k|\leq n-1} K_n(k) e^{-i \omega k} \frac{1}{n} \Big[\sum_{t \in T(k)} \Big( \I{X_{t,T} \leq q^\unu(\tau_1)} - \tau_1\Big) \Big( \I{X_{t+k,T} \leq q^\unu(\tau_2)} - \tau_2\Big) \\
- \sum_{|t-t_0| \leq m_T-B_n} \Big( \I{X_{t,T} \leq q^\unu(\tau_1)} - \tau_1\Big) \Big( \I{X_{t+k,T} \leq q^\unu(\tau_2)} - \tau_2\Big) \Big]\\
= O_P(B_n^2 / n) = o_P( \sqrt{B_n/n} ).
\end{multline*}
By simple algebra, we obtain
\begin{align*}
&\frac{1}{n} \sum_{|t-t_0| \leq m_T-B_n} \Big[\Big(\I{X_{t,T} \leq q^\unu(\tau_1)} - F_{t;T}(q^\unu(\tau_1)) \Big) \Big( \I{ X_{t+k,T} \leq q^\unu(\tau_2)}  - F_{t+k;T}(q^\unu(\tau_2)) \Big) \\
& \hspace{2cm} - \Big(\I{X_{t,T} \leq q^\unu(\tau_1)} - \tau_1 \Big) \Big( \I{ X_{t+k,T} \leq q^\unu(\tau_2)} - \tau_2 \Big) \Big] \\
&= \frac{1}{n} \sum_{|t-t_0| \leq m_T-B_n} \Big[\Big(\I{X_{t,T} \leq q^\unu(\tau_1)} - \tau_1 \Big) \Big( \tau_2 - F_{t+k;T}(q^\unu(\tau_2)) \Big) \\
& \hspace{2cm}+ \Big(\I{ X_{t+k,T} \leq q^\unu(\tau_2)} - F_{t+k;T}(q^\unu(\tau_2)) \Big) \Big( \tau_1 - F_{t;T}(q^\unu(\tau_1)) \Big) \Big]
=: a_{1,n} + a_{2,n}.
\end{align*}
Let $A_{i,n} := \sum_{|k|\leq n-1} K_n(k) e^{-i \omega k} a_{i,n}$: the proof consists in showing that $\E |A_{i,n}|^2 = o(B_n/n)$, $i=1,2$. We have 
\begin{align}\label{eq:bound2}
 \E|A_{1,n}|^2  \nonumber  \nonumber
& = \E \Bigg| \sum_{|k|\leq n-1} K_n(k) e^{-i \omega k} \frac{1}{n} \sum_{|t-t_0| \leq m_T-B_n} \Big(\I{X_{t,T} \leq q^\unu(\tau_1)} - \tau_1 \Big) \Big( \tau_2 - F_{t+k;T}(q^\unu(\tau_2)) \Big) \Bigg|^2 \\ \nonumber
& =  \frac{1}{n^2} \sum_{|t_1-t_0| \leq m_T-B_n} \sum_{|t_2-t_0| \leq m_T-B_n} \E \Big[ \Big(\I{X_{t_1,T} \leq q^\unu(\tau_1)} - \tau_1 \Big) \Big(\I{X_{t_2,T} \leq q^\unu(\tau_1)} - \tau_1 \Big) \Big]\\ 
& \hspace{3cm} \times \sum_{|k_1|\leq n-1}  K_n(k_1)  e^{-i \omega k_1} \Big( \tau_2 - F_{t_1+k_1;T}(q^\unu(\tau_2)) \Big) \\ \nonumber
& \hspace{3cm} \times \sum_{|k_2|\leq n-1} K_n(k_2) e^{i \omega k_2} \Big( \tau_2 - F_{t_2+k_2;T}(q^\unu(\tau_2)) \Big);
\end{align}
in view of Lemma~\ref{lem:OrderDFT} and the fact that 
\begin{align*}
& \E [ (\I{X_{t_1,T} \leq q^\unu(\tau_1)} - \tau_1) (\I{X_{t_2,T} \leq q^\unu(\tau_1)} - \tau_1 )]
\\
& \hspace{6mm}= F_{t_1,t_2;T}(q^\unu(\tau_1),q^\unu(\tau_1))  - \tau_1 F_{t_2;T}(q^\unu(\tau_1))  - \tau_1 F_{t_1;T}(q^\unu(\tau_1)) + \tau_1^2
\\
& \hspace{6mm} =  \cum(\I{X_{t_1,T} \leq q^\unu(\tau_1)},\I{X_{t_2,T} \leq q^\unu(\tau_1)}) +F_{t_1;T}(q^\unu(\tau_1))F_{t_2;T}(q^\unu(\tau_1))
\\
& \hspace{26mm}   - \tau_1 F_{t_2;T}(q^\unu(\tau_1))  - \tau_1 F_{t_1;T}(q^\unu(\tau_1)) + \tau_1^2
\\
&\hspace{6mm} = \cum(\I{X_{t_1,T} \leq q^\unu(\tau_1)},\I{X_{t_2,T} \leq q^\unu(\tau_1)}) + O(n^2/T^2),
\end{align*}
the right-hand side of \eqref{eq:bound2} is bounded by
\begin{align*}
&\frac{1}{n^2}\hspace{-2mm} \sum_{\substack{|t_1-t_0| \leq m_T-B_n \\ |t_2-t_0| \leq m_T-B_n}} \hspace{-3mm}\Big[\cum (\I{X_{t_1,T} \leq q^\unu(\tau_1)}, \I{X_{t_2,T} \leq q^\unu(\tau_1)}) + O\Big(\Big[\frac{n}{T}\Big]^2\Big)\Big]  O\Big( \Big[\frac{n}{T B_n^{d-1}} + B_n/T\Big]^2\Big) \\
& = O\Big( n^{-1} + n^2/T^2 \Big) O\Big( \Big[\frac{n}{T B_n^{d-1}} + B_n/T\Big]^2\Big) .
\end{align*}
Turning to $A_{2,n}$, note that
\begin{align*}
& \E|A_{2,n}|^2
= \E \Big| \sum_{|k|\leq n-1} \hspace{-2mm} K_n(k) e^{-i \omega k}\\
& \hspace{2cm} \times \frac{1}{n} \sum_{|t-t_0| \leq m_T-B_n} \hspace{-2mm} \Big(\I{X_{t+k,T} \leq q^\unu(\tau_2)} -  F_{t+k;T}(q^\unu(\tau_2)) \Big) \Big( \tau_1 - F_{t;T}(q^\unu(\tau_1)) \Big) \Big|^2 \\
& =  \frac{1}{n^2} \sum_{\substack{|k_1|\leq n-1\\|k_2|\leq n-1}} K_n(k_1) K_n(k_2) e^{- i \omega (k_1-k_2)} \sum_{\substack{|t_1-t_0| \leq m_T-B_n \\ |t_2-t_0| \leq m_T-B_n}} \Bigg[ \Big( \tau_2 - F_{t_1;T}(q^\unu(\tau_2)) \Big)\\
& \hspace{2cm} \times \Big( \tau_2 - F_{t_2;T}(q^\unu(\tau_2)) \Big)\cum \Big( \I{X_{t_1+k_1,T} \leq q^\unu(\tau_2)}, \I{X_{t_2+k_2,T} \leq q^\unu(\tau_1)} \Big) \Bigg] \\
& \leq \frac{1}{n^2} \sum_{\substack{|k_1|\leq n-1\\|k_2|\leq n-1}} \quad \sum_{\substack{|t_1-t_0| \leq m_T-B_n\\|t_2-t_0| \leq m_T-B_n}}
O\Big(\frac{n^2}{T^2}\Big) \Big| \cum \Big( \I{X_{t_1+k_1,T} \leq q^\unu(\tau_2)}, \I{X_{t_2+k_2,T} \leq q^\unu(\tau_1)} \Big) \Big| \\
& \leq O(1/T^2) \sum_{|t_1-t_0| \leq m_T-B_n} \sum_{|k_1| \leq B_n}  O(B_n) \sum_{m \in\Z} \kappa_{2}(m) 
= O(nB_n^2/T^2) = o(B_n/n),
\end{align*}
where the second inequality follows from the fact that, for each fixed value of $t_1, k_1$ and each~$m \in \Z$ there are at most~$O(B_n)$ values of $k_2,t_2$ such that $t_1 + k_1 - t_2-k_2 = m$ and  Assumption (A3)(iii), which implies that the sum over $m$ is finite. \hfill $\qed$

\subsection{Proof of \eqref{eq:asynorm}}

%
%
%
%
To start with, let us state the following lemma.
\begin{lemma} \label{lem:boundvarblock}
	For any $a_n \to \infty$ such that $a_n/n=o(1), B_n/a_n = o(1)$ we have, for all~$\omega_1,\omega_2$ in~$\{\omega,-\omega\}$, 
	\begin{align*} \label{eq:boundvarblock}
	& \sup_{|s-t_0| \leq m_T} \Big| \E\Big[\sum_{t_1=s}^{s+a_n} W_{t_1,T}(\omega_1) \overline{\sum_{t_2=s}^{s+a_n} W_{t_2,T}(\omega_2)} \Big]\\
	& \hspace{2.5cm} - 2 \pi \frac{a_nB_n}{n^2} \int K^2(u)du \Big( \I{\omega_1=\omega_2} \mathfrak{f}^{\unu}(\omega_1, \tau_1, \tau_1)\mathfrak{f}^{\unu}(-\omega_2, \tau_2, \tau_2)\\
	& \hspace{7.5cm}+ \I{\omega_1= - \omega_2} \mathfrak{f}^{\unu}(\omega_1, \tau_1, \tau_2)\mathfrak{f}^{\unu}(\omega_2, \tau_2, \tau_1) \Big)\Big|\\
	& = o(B_na_n/n^2).
	\end{align*}
\end{lemma}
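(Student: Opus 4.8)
The plan is to expand the block second moment as a fourfold sum and isolate its single non-negligible term. Recall that $W_{t,T}(\omega)$ has the form $\tfrac{c}{n}\sum_{|k|\le n-1}K_n(k)e^{-i\omega k}\big(Y_{t,\tau_1}Y_{t+k,\tau_2}-\E[Y_{t,\tau_1}Y_{t+k,\tau_2}]\big)$, with $Y_{t,\tau}:=\I{X_{t,T}\le q^\unu(\tau)}-F_{t;T}(q^\unu(\tau))$ centered and $c$ the normalizing constant fixed in the proof of \eqref{eq:asynorm}. Since $K$ is supported on $[-1,1]$, only $|k|\le B_n$ contribute, and I would first write
\[
\begin{aligned}
&\E\Big[\sum_{t_1=s}^{s+a_n}W_{t_1,T}(\omega_1)\,\overline{\sum_{t_2=s}^{s+a_n}W_{t_2,T}(\omega_2)}\Big]\\
&\qquad=\frac{c^2}{n^2}\sum_{t_1,t_2=s}^{s+a_n}\ \sum_{|k_1|\le B_n}\sum_{|k_2|\le B_n}K_n(k_1)K_n(k_2)\,e^{-i\omega_1k_1+i\omega_2k_2}\,C_{t_1,t_2,k_1,k_2},
\end{aligned}
\]
where $C_{t_1,t_2,k_1,k_2}:=\Cov\big(Y_{t_1,\tau_1}Y_{t_1+k_1,\tau_2},\,Y_{t_2,\tau_1}Y_{t_2+k_2,\tau_2}\big)$. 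The four $Y$'s being centered, a covariance of products decomposes as
\[
\begin{aligned}
C_{t_1,t_2,k_1,k_2}
&=\cum\big(Y_{t_1,\tau_1},Y_{t_1+k_1,\tau_2},Y_{t_2,\tau_1},Y_{t_2+k_2,\tau_2}\big)\\
&\quad+\E[Y_{t_1,\tau_1}Y_{t_2,\tau_1}]\,\E[Y_{t_1+k_1,\tau_2}Y_{t_2+k_2,\tau_2}]\\
&\quad+\E[Y_{t_1,\tau_1}Y_{t_2+k_2,\tau_2}]\,\E[Y_{t_1+k_1,\tau_2}Y_{t_2,\tau_1}],
\end{aligned}
\]
splitting the quantity into a fourth-order cumulant part and two second-order \emph{product} parts.

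The cumulant part is negligible. For fixed $t_1$ the map $(k_1,t_2,k_2)\mapsto(k_1,\,t_2-t_1,\,t_2+k_2-t_1)$ is a bijection onto the three lag arguments of the joint cumulant, so Assumption (A3)(iii) makes the sum over $(k_1,t_2,k_2)$ bounded; the remaining sum over $t_1$ contributes the block length $a_n$, and together with $c^2/n^2$ and $\|K\|_\infty\le1$ this is $O(a_n/n^2)=o(B_na_n/n^2)$ as $B_n\to\infty$.

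For the two product parts I would replace each second-order covariance $\E[Y_{a,\tau_i}Y_{b,\tau_j}]$ by the stationary $\gamma^\unu_{a-b}(\tau_i,\tau_j)$: by Definition \ref{def:strstat} (through \eqref{eq1}) the error is $O(m_T/T+|a-b|/T)$ on the block, and since the $\gamma^\unu$'s are summable (Assumption (A3)(ii)) the accumulated error over the $O(a_nB_n)$ relevant summands stays $o(B_na_n/n^2)$ uniformly in $s$ with $|s-t_0|\le m_T$. After the substitution I would change variables from $(t_1,t_2,k_1,k_2)$ to $(t_1,u,p,q)$, where $u$ records the common shift inside $K_n(k_1)K_n(k_2)$ and $p,q$ are the residual lags of the two $\gamma$'s; the $\gamma$-decay forces $p,q=O(1)$, so $K_n(k_1)K_n(k_2)$ collapses to $K(u/B_n)^2$ up to a negligible error and the phases separate into $e^{i(\omega_1\pm\omega_2)u}$ times factors depending only on $p,q$. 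Summing over $p$ and $q$ yields $2\pi\mathfrak{f}^\unu$ at the relevant frequencies and quantile pairs via \eqref{def:tvf}, the $t_1$-sum yields $a_n$, and $\sum_uK(u/B_n)^2e^{i(\omega_1\pm\omega_2)u}$ equals $B_n\int K^2(v)\,dv+o(B_n)$ when the frequencies align and $o(B_n)$ otherwise. The second product enforces $\omega_1=\omega_2$ and the third $\omega_1=-\omega_2$, reproducing the two summands of the statement once the symmetries $\mathfrak{f}^\unu(-\omega,\tau,\tau)=\mathfrak{f}^\unu(\omega,\tau,\tau)$ and $\mathfrak{f}^\unu(\omega,\tau_a,\tau_b)=\mathfrak{f}^\unu(-\omega,\tau_b,\tau_a)$ are used.

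The main obstacle is the oscillatory sum: proving $\sum_uK(u/B_n)^2e^{i\theta u}=o(B_n)$ when $\theta=\omega_1\pm\omega_2\not\equiv0\bmod 2\pi$, uniformly for $\omega$ bounded away from $0$ and $\pi$. This is the one genuinely analytic step, and I would treat it by repeated summation by parts exploiting the $d\ge2$ smoothness and compact support of $K$ in Assumption (K), exactly as in Lemma \ref{lem:OrderDFT}. The accompanying bookkeeping---keeping each local-stationarity error of smaller order than $B_na_n/n^2$ uniformly in $s$, justifying the collapse $K_n(k_1)K_n(k_2)\to K(u/B_n)^2$, and the Riemann-sum limit $\sum_uK(u/B_n)^2\to B_n\int K^2$---is routine given (A3) and (K) but must be tracked carefully, since the sums run over index ranges growing with $n$.
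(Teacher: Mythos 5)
Your decomposition is exactly the one the paper uses: expand the block second moment as a fourfold sum, split each covariance of products into a fourth-order joint cumulant plus the two pairings of second-order cumulants, kill the cumulant term at rate $O(a_n/n^2)$ via the injectivity argument and Assumption (A3)(iii), and then replace the nonstationary second-order covariances by their stationary counterparts using local stationarity together with the summability of $\kappa_2$ (the paper makes your ``accumulated error'' step precise with a $\min\big(Cn/T,\kappa_2\kappa_2\big)$ dominated-convergence bound, which is the rigorous form of your $p,q=O(1)$ localization). Where you genuinely diverge is the final evaluation: after the stationary replacement, the paper does \emph{not} carry out the change of variables and the oscillatory-sum analysis by hand. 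Instead it recognizes $D^\vartheta_{1,n}+D^\vartheta_{2,n}$ as (up to $o(a_nB_n/n^2)$) the second moment of the classical lag-window cross-spectral estimator $h^\unu$ built from the stationary approximating process on a block of length $a_n$, controls the discrepancy via a variance bound in the spirit of Rosenblatt (1984), and then quotes the classical covariance formula $2\pi\frac{B_n}{a_n}\int K^2$ for stationary lag-window estimators. Your route is more self-contained---it makes explicit the one analytic ingredient the classical result hides, namely that $\sum_u K(u/B_n)^2e^{i\theta u}=o(B_n)$ for $\theta\not\equiv 0 \bmod 2\pi$, which indeed follows from the summation-by-parts/Fourier-decay argument of Lemma \ref{lem:OrderDFT} under Assumption (K)---at the cost of redoing a computation the paper deliberately outsources. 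Both are valid; your frequency-alignment bookkeeping (which pairing enforces $\omega_1=\omega_2$ versus $\omega_1=-\omega_2$) matches the statement.
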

\noindent \textbf{Proof.}
Observe that 
\begin{align*}
& \E\Big[\sum_{t_1=s}^{s+a_n} W_{t_1,T}(\omega_1)\sum_{t_2=s}^{s+a_n} W_{t_2,T}(\omega_2) \Big] \\
& = \!\! \ \frac{1}{4\pi^2}\!\sum_{|k_1| \leq B_n} \sum_{|k_2| \leq B_n} \! K_n(k_1) K_n(k_2) {\rm e}^{- {\rm i} (k_1 \omega_1 + k_2 \omega_2) }
	\frac{1}{n^2} \sum_{t_1 = s}^{s+a_n} \sum_{t_2 = s}^{s+a_n}
\Cov \big( Y_{t_1,\tau_1} Y_{t_1+k_1,\tau_2}, Y_{t_2, \tau_1} Y_{t_2+k_2, \tau_2} \big) \\
& = \  \frac{1}{4\pi^2}\sum_{|k_1| \leq B_n} \sum_{|k_2| \leq B_n} K_n(k_1) K_n(k_2) {\rm e}^{- {\rm i} (k_1 \omega_1 + k_2 \omega_2)}\\
& \hspace{3cm} \times \frac{1}{n^2} \sum_{t_1 = s}^{s+a_n} \sum_{t_2 = s}^{s+a_n} 
\Big[ \cum \big( Y_{t_1,\tau_1}, Y_{t_1+k_1,\tau_2}, Y_{t_2,\tau_1}, Y_{t_2+k_2,\tau_2} \big) \\
& \hspace{6cm} + \cum \big( Y_{t_1,\tau_1}, Y_{t_2,\tau_1} \big) \cum \big( Y_{t_1+k_1,\tau_2}, Y_{t_2+k_2,\tau_2} \big)\\
&\hspace{6cm}		 + \cum \big( Y_{t_1,\tau_1}, Y_{t_2+k_2,\tau_2} \big) \cum \big( Y_{t_1+k_1,\tau_2}, Y_{t_2,\tau_1} \big) \Big] \\
& =: \ C_{1,n} + D_{1,n} + D_{2,n}.
\end{align*}
For $C_{1,n}$, note that
\begin{equation*}
\begin{split}
|C_{1,n}| &\leq \frac{1}{4\pi^2}\frac{\|K_n\|_\infty^2}{n^2} \sum_{|k_1| \leq B_n} \sum_{|k_2| \leq B_n} \sum_{t_1 = s}^{s+a_n} \sum_{t_2 = s}^{s+a_n} | \cum \big( Y_{t_1,\tau_1}, Y_{t_1+k_1,\tau_2}, Y_{t_2,\tau_1}, Y_{t_2+k_2,\tau_2} \big) | \\
& \leq \frac{1}{4\pi^2}\frac{\|K_n\|_\infty^2}{n^2} \sum_{t_1 = s}^{s+a_n}  \sum_{t_2, \ldots, t_4 = 1}^T | \cum \big( Y_{t_1,\tau_1}, Y_{t_2,\tau_2}, Y_{t_3,\tau_1}, Y_{t_4,\tau_2} \big) |
= O(a_n/n^2)
\end{split}
\end{equation*}
since the inner sum is bounded by Assumption~(A3)(iii) with $p=4,$ uniformly in~$t_1$. For the second inequality, note that $(t_1, k_1, t_2, k_2) \mapsto (t_1, t_1+k_1, t_2, t_2+k_2)$ is injective, as it has~$(s_1, s_2, s_3, s_4) \mapsto (s_1, s_2-s_1, s_3, s_4-s_3)$ as an inverse. 

Next, define $Y_{t,\tau}^\unu := \I{X_{t}^{\vartheta} \leq q^\unu(\tau)} - \tau$~and
\begin{equation*}
\begin{split}
D^{\vartheta}_{1,n} & := \frac{1}{4\pi^2}\sum_{|k_1| \leq B_n} \sum_{|k_2| \leq B_n} K_n(k_1) K_n(k_2) {\rm e}^{- {\rm i} (k_1 \omega_1 + k_2 \omega_2)}
\\
& \qquad \qquad \qquad \times  \frac{1}{n^2} \sum_{t_1 = s}^{s+a_n}\sum_{t_2 = s}^{s+a_n}
\cum \big( Y^{\vartheta}_{t_1,\tau_1}, Y^{\vartheta}_{t_2,\tau_1} \big) \cum \big( Y^{\vartheta}_{t_1+k_1,\tau_2}, Y^{\vartheta}_{t_2+k_2,\tau_2} \big), \\
D^{\vartheta}_{2,n} & := \frac{1}{4\pi^2}\sum_{|k_1| \leq B_n} \sum_{|k_2| \leq B_n} K_n(k_1) K_n(k_2) {\rm e}^{- {\rm i} (k_1 \omega_1 + k_2 \omega_2)}
\\
& \qquad \qquad \qquad \times \frac{1}{n^2}\sum_{t_1 = s}^{s+a_n}\sum_{t_2 = s}^{s+a_n}
\cum \big( Y^{\vartheta}_{t_1,\tau_1}, Y^{\vartheta}_{t_2+k_2,\tau_2} \big) \cum \big( Y^{\vartheta}_{t_1+k_1,\tau_2}, Y^{\vartheta}_{t_2,\tau_1} \big). \\
\end{split}
\end{equation*} 
After some computation, in view of local stationarity, there exists a constant $C$ such that, uniformly in $\tau_1,\tau_2,t_1,t_2,k_1,k_2,$
\begin{align*}
&\Big|\cum \big( Y_{t_1,\tau_1}, Y_{t_2,\tau_1} \big) \cum \big( Y_{t_1+k_1,\tau_2}, Y_{t_2+k_2,\tau_2} \big)\\
& \hspace{4cm}	- \cum \big( Y^{\vartheta}_{t_1,\tau_1}, Y^{\vartheta}_{t_2,\tau_1} \big) \cum \big( Y^{\vartheta}_{t_1+k_1,\tau_2}, Y^{\vartheta}_{t_2+k_2,\tau_2} \big)\Big| \leq C n/T.
\end{align*}
Note that
\[\sup_{t_1, k_1} \sum_{t_2 = s}^{s+a_n}  \kappa_{2}(t_1-t_2) \sum_{|k_2| \leq B_n} \kappa_{2}(t_1+k_1-(t_2+k_2)) < \infty,\]
which implies that
\[
\sup_{t_1, k_1} \sum_{t_2 = s}^{s+a_n}  \sum_{|k_2| \leq B_n} 
\min\Big( \frac{nC}{T}, \kappa_{2}(t_1-t_2) \kappa_{2}(t_1+k_1-(t_2+k_2))\Big) = o(1).
\]
This, along with assumption (A3)(iii) yields
\begin{align*}
| D_{1,n} &- D_{1,n}^{\vartheta} | \\
& \leq \frac{1}{4\pi^2 n^2} \sum_{t_1 = s}^{s+a_n} \sum_{|k_1| \leq B_n} \sum_{t_2 = s}^{s+a_n}  \sum_{|k_2| \leq B_n} 
\min\Big( \frac{nC}{T}, \kappa_{2}(t_1-t_2) \kappa_{2}(t_1+k_1-(t_2+k_2))\Big) \\
& = o(a_nB_n / n^2).
\end{align*}
A similar argument shows that $|D_{2,n} - D_{2,n}^{\vartheta}| = o(a_nB_n / n^2)$. Summarizing, we have shown that
\[
\E\Big[\sum_{t_1=s}^{s+a_n} W_{t_1,T}(\omega_1)\sum_{t_2=s}^{s+a_n} W_{t_2,T}(\omega_2) \Big] = D_{1,n}^{\vartheta} + D_{2,n}^{\vartheta} + o(a_nB_n / n^2).
\]
Now, arguments similar to the ones used to show that $C_{1,n} = O(a_n/n^2)$ yield
\[
D_{1,n}^{\vartheta} + D_{2,n}^{\vartheta} =  \E\Big[\sum_{t_1=s}^{s+a_n} W_{t_1}^\unu(\omega_1)\sum_{t_2=s}^{s+a_n} W_{t_2}^\unu(\omega_2) \Big] + o(a_nB_n / n^2)
\] 
where $W_{t}^\unu(\omega) := n^{-1}\sum_{|k|\leq n-1} K_n(k) e^{-i \omega k}  (Y_{t,\tau_1}^\unu Y_{t+k,\tau_2}^\unu - \E[Y_{t,\tau_1}^\unu Y_{t+k,\tau_2}^\unu])$.
Let
\[
h^\unu(\omega,\tau_1,\tau_2) := \frac{1}{2\pi}\frac{1}{a_n}\sum_{|k|\leq n-1} K_n(k) e^{-i \omega k} \sum_{t \in S_k(s,a_n)} (Y_{t,\tau_1}^\unu Y_{t+k,\tau_2}^\unu - \E[Y_{t,\tau_1}^\unu Y_{t+k,\tau_2}^\unu]).
\]
Proceeding as in \cite{rosenblatt1984}, pp. 1173-1174, we have that
\begin{equation} \label{eq:var1}
\Var\Big(\sum_{t=s}^{s+a_n} W_{t}^\unu(\omega) - \frac{a_n}{n}h^\unu(\omega,\tau_1,\tau_2)\Big) = O\Big(\frac{B_n^2}{n^2}\Big)
\end{equation}
uniformly in $|s-t_0|\leq m_T$ where $S_k(s,a_n) := \{t: s \leq t\leq s+a_n, s\leq t+k \leq s+a_n\}$. Now,~$h^\unu(\omega,\tau_1,\tau_2)$ is the usual lag-window estimator (centered by its expectation) of the cross-spectrum between $(Y_{t,\tau_1})_{s\leq t\leq s+a_n}$ and $(Y_{t,\tau_2})_{s\leq t\leq s+a_n}$. Thus, classical results from spectral density estimation yield
\begin{align} \label{eq:var2} 
\E[h^\unu(\omega_1,\tau_1,\tau_2) \overline{h^\unu(\omega_2,\tau_1,\tau_2)}] =& 2\pi \frac{B_n}{a_n} \int K^2(u) {\rm d} u \Big( \I{\omega_1=\omega_2} \mathfrak{f}^{\unu}(\omega_1, \tau_1, \tau_1)\mathfrak{f}^{\unu}(\omega_1, \tau_2, \tau_2) \nonumber \\ 
& + \I{\omega_1= - \omega_2} \mathfrak{f}^{\unu}(\omega_1, \tau_1, \tau_2)\mathfrak{f}^{\unu}(-\omega_1, \tau_2, \tau_1) \Big)+ o\Big(\frac{B_n}{a_n}\Big) .
\end{align}
This, combined with \eqref{eq:var1} and the fact that $W_{t,T}^\unu(\omega)$ and $h^\unu(\omega_2,\tau_1,\tau_2)$ are centered, entails
\[
\E\Big[\sum_{t_1=s}^{s+a_n} W_{t_1}^\unu(\omega_1)\sum_{t_2=s}^{s+a_n} W_{t_2}^\unu(\omega_2) \Big] - \frac{a_n^2}{n^2}\E[h^\unu(\omega_1,\tau_1,\tau_2)h^\unu(\omega_2,\tau_1,\tau_2)] = O\Big(\frac{ B_n^{3/2} a_n^{1/2} + B_n^2}{n^2} \Big).
\]
Since $B_n = o(a_n)$  by assumption, this with \eqref{eq:var2} completes the proof of Lemma \ref{lem:boundvarblock}. \hfill $\qed$

Next. observe that
\begin{align*}
& \tilde{\mathfrak{f}}_{t_0,T}(\omega, \tau_1, \tau_2) - \E \tilde{\mathfrak{f}}_{t_0,T}(\omega, \tau_1, \tau_2) \\
& = \frac{1}{2\pi}\sum_{|t-t_0| \leq m_T-B_n} \frac{1}{n}\sum_{|k|\leq n-1} K_n(k) e^{-i \omega k}  (Y_{t,\tau_1} Y_{t+k,\tau_2} - \E[Y_{t,\tau_1} Y_{t+k,\tau_2}])
\\
& =: \sum_{|t-t_0| \leq m_T-B_n} W_{t,T}(\omega).
\end{align*}
By construction, the random variables $W := \{W_{t,T}(\omega)\}_{|t-t_0| \leq m_T-B_n}$ form a triangular array of $\beta$-mixing random variables with $\beta$-mixing coefficients \[\beta^{[W]}(j) \leq \beta^{[X]}(0 \vee j-B_n).\] To establish the central limit theorem, we will apply the blocking technique from Section~$\ref{sec:blocking}$ with different block lengths $p_n,q_n$. Choose $p_n,q_n$ such that
\begin{equation} \label{eq:qpn}
q_n/p_n \to 0, \quad B_n/q_n \to 0, \text{ and } p_n/n \to 0. 
\end{equation}
Now decompose
\begin{align*}
\tilde{\mathfrak{f}}_{t_0,T}(\omega, \tau_1, \tau_2) - \E \tilde{\mathfrak{f}}_{t_0,T}(\omega, \tau_1, \tau_2) &= \sum_{j=1}^{\mu_n} \sum_{t \in \Gamma_j} W_{t,T}(\omega) + \sum_{j=1}^{\mu_n} \sum_{t \in \Delta_j} W_{t,T}(\omega) + \sum_{t \in R} W_{t,T}(\omega)\\
&=: S^n_{\Gamma} + S^n_{\Delta} + S^n_{R}, \quad  \text{say}.
\end{align*}
By construction, $S^n_{R}$ contains at most $O(p_n+q_n)$ summands. Lemma \eqref{lem:boundvarblock} thus implies that
\[
\Var\Big(\sum_{t \in R} W_{t,T}(\omega) \Big) = O\Big(\frac{(p_n+q_n)B_n}{n^2}\Big) = o(B_n/n),
\]
and, therefore, $S^n_{R} = o_P(B_n^{1/2}/n^{1/2})$. Next, observe that, by Lemma~\ref{lem:yu_alt},
\[
\p\Big( B_n^{1/2}n^{-1/2}\big|S^n_{\Delta}\big| \geq \eps \Big) = \p\Big(B_n^{1/2}n^{-1/2} \big|\sum_{j=1}^{\mu_n} \sum_{t \in \Delta_j} \xi_{t,T}(\omega) \big| \geq \eps \Big) + (\mu_n-1)\beta^{[W]}_{p_n}.
\]
The second term on the right-hand side of the above expression converges to zero by the assumptions on $p_n$ and $\beta^{[X]}$. To show that the first term also converges to zero, observe that, by construction $\E\xi_{t,T}(\omega) = 0$. The definition of $\xi_{t,T}(\omega),$ combined with Lemma \ref{lem:boundvarblock} and~$q_n/p_n = o(1)$, yields
\[
\Var(\frac{n^{1/2}}{B_n^{1/2}}\sum_{j=1}^{\mu_n} \sum_{t \in \Delta_j} \xi_{t,T}(\omega)) 
= \frac{n}{B_n}\sum_{j=1}^{\mu_n} \Var(\sum_{t \in \Delta_j} W_{t,T}(\omega))
= \frac{n}{B_n}O\Big(\frac{\mu_nB_nq_n}{n^2}\Big) = o(1). 
\]
Thus it remains to show that $\frac{n^{1/2}}{B_n^{1/2}} S^n_{\Gamma}$ converges in distribution. To this end, observe that, for any measurable set $A,$ we have, by Lemma~\ref{lem:yu_alt} and the assumptions on $\beta^{[W]},$
\[
\p\Big(\frac{n^{1/2}}{B_n^{1/2}} S^n_{\Gamma} \in A \Big)
= \p\Big(\frac{n^{1/2}}{B_n^{1/2}}\sum_{j=1}^{\mu_n} \sum_{t \in \Gamma_j} \xi_{t,T}(\omega) \in A \Big) + o(1).
\] 
Thus, it suffices to establish the weak convergence of $\frac{n^{1/2}}{B_n^{1/2}} \sum_{j=1}^{\mu_n} \sum_{t \in \Gamma_j} \xi_{t,T}(\omega)$. To do so, consider the triangular array of independent random vectors 
\[\Big(\frac{n^{1/2}}{B_n^{1/2}} \sum_{t \in \Gamma_j} \big(\Re \xi_{t,T}(\omega), \Im \xi_{t,T}(\omega)\big)^T \Big)_{j=1,...,\mu_n}.\]
Applying the Cram\'{e}r-Wold device, let us show that for any~$\lambda_1,\lambda_2 \in \R$ such that \mbox{$|\lambda_1|+|\lambda_2|\neq 0,$} the triangular array of independent random variables
\[
\Big(\frac{n^{1/2}}{B_n^{1/2}} \sum_{t \in \Gamma_j} \lambda_1 \Re \xi_{t,T}(\omega) + \lambda_2 \Im \xi_{t,T}(\omega)\Big)_{j=1,...,\mu_n}
\]
satisfies the Lyapunov condition. By construction $(W_{t,T}(\omega))_{t \in \Gamma_j} \overset{d}{=} (\xi_{t,T}(\omega))_{t \in \Gamma_j}$, so that 
\begin{align*}
& \E\Big[\Big(\sum_{t \in \Gamma_j} \Re \xi_{t,T}(\omega)\Big)^4 \Big]  = \E\Big[\Big(\sum_{t \in \Gamma_j} \Re W_{t,T}(\omega)\Big)^4 \Big]\\
& = 3\Big(\Var(\sum_{t \in \Gamma_j} \Re W_{t,T}(\omega)) \Big)^2 + \sum_{t_1,...,t_4 \in \Gamma_j}\cum(\Re W_{t_1,T}(\omega),...,\Re W_{t_4,T}(\omega)).
\end{align*}
A similar representation holds for the imaginary parts of $W_{t,T}$. Similar arguments as   on pages~1177-1178 of \cite{rosenblatt1984} show that 
\begin{align} \label{eq:rosen}
&\nonumber \sup_j \sum_{t_1,...,t_4 \in \Gamma_j} \Big( |\cum(\Re W_{t_1,T}(\omega),...,\Re W_{t_4,T}(\omega))| + |\cum(\Im W_{t_1,T}(\omega),...,\Im W_{t_4,T}(\omega))| \Big)\\
&\hspace{8mm}= O(q_n^2 B_n^2/n^4).
\end{align}
To verify this, note that, exactly as in \cite{rosenblatt1984},  the cumulants in (\ref{eq:rosen})   can be expressed in terms of cumulants of the random variables $Y_{t,\tau_j}, j = 1,2$, $t \in \Gamma_j$ by summation over indecomposable partititions. Apply (A3)(iii) to bound those cumulants uniformly, then follow the same arguments as in \cite{rosenblatt1984} to bound the sums. Then  \eqref{eq:rosen} entails, for any $\lambda_1,\lambda_2 \in \R,$
\[
\sum_{j=1}^{\mu_n}\E\Big[\Big(\sum_{t \in \Gamma_j} \lambda_1 \Re \xi_{t,T}(\omega) + \lambda_2 \Im \xi_{t,T}(\omega) \Big)^4\Big] = O(\mu_nq_n^2 B_n^2/n^4)
\]
and, by Lemma \eqref{lem:boundvarblock}, for any $\lambda_1,\lambda_2 \in \R$ with $|\lambda_1|+|\lambda_2|\neq 0$,
\[
\Big( \sum_{j=1}^{\mu_n} \Var(\sum_{t \in \Gamma_j} \lambda_1 \Re \xi_{t,T}(\omega) + \lambda_2 \Im \xi_{t,T}(\omega)) \Big)^2 \geq c_0(\lambda_1,\lambda_2) \mu_n^2q_n^2 B_n^2/n^4
\]
for some $c_0(\lambda_1,\lambda_2) > 0$ for sufficiently large $n$. Thus the conditions of Lyapunovs central limit theorem are satisfied as $\mu_n \to \infty$. This completes the proof of \eqref{eq:asynorm}. \hfill \qed \\

\subsection{Proof of \eqref{eq:bias}}
The proof of \eqref{eq:bias} relies on the following lemma (see \cite{priestley1981}, page 459 for similar arguments).
\begin{lemma} \label{lem:bias}
	Uniformly in $|u-\unu| \leq n/T$ and $x,y$ in a neighborhood of $\tau_1,\tau_2$, we have
	\begin{equation*}\label{eq:bias1}
	\frac{1}{2\pi} \sum_{|k|\leq n-1} K_n(k) e^{-i \omega k} \gamma_k^{u}(x,y) = \mathfrak{f}^u(\omega,x,y) - C_K(r)B_n^{-r} \mathfrak{d}^{(r)}_\omega\mathfrak{f}^u(\omega,x,y) + o(B_n^{-r}). 
	\end{equation*}
\end{lemma}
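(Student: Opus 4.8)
The plan is to treat this as a standard lag-window bias expansion in the spirit of \cite{parzen1957} and \cite{priestley1981}, adapted to the copula cross-covariance kernels and made uniform over the nuisance arguments $u,x,y$. Throughout, I would first exploit Assumption~(K): since $K$ has support $[-1,1]$, the weights $K_n(k)=K(k/B_n)$ vanish for $|k|>B_n$, and as $B_n=o(n)$ we have $B_n\le n-1$ for large $n$, so the truncation at $|k|\le n-1$ on the left-hand side is immaterial and the sum may be taken over all $k\in\Z$. Using $K(0)=1$ together with the absolutely convergent representation $\mathfrak{f}^u(\omega,x,y)=\frac{1}{2\pi}\sum_{k\in\Z}\gamma^u_k(x,y)e^{-ik\omega}$ (finite by Assumption~(A3)(ii)), I would rewrite the quantity of interest as
\begin{equation*}
\frac{1}{2\pi}\sum_{|k|\le n-1}K_n(k)e^{-ik\omega}\gamma^u_k(x,y)-\mathfrak{f}^u(\omega,x,y)=-\frac{1}{2\pi}\sum_{k\in\Z}\big(1-K(k/B_n)\big)e^{-ik\omega}\gamma^u_k(x,y),
\end{equation*}
so that the assertion reduces to showing that this difference equals $-C_K(r)B_n^{-r}\mathfrak{d}^{(r)}_\omega\mathfrak{f}^u(\omega,x,y)+o(B_n^{-r})$, uniformly over $u$ and $(x,y)$ ranging in the neighborhood of Assumption~(A3)(ii).

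After multiplying through by $B_n^r$, it suffices to prove that
\begin{equation*}
\frac{1}{2\pi}\sum_{k\in\Z}g_{B_n}(k)\,e^{-ik\omega}\gamma^u_k(x,y)\longrightarrow 0,\qquad g_{B_n}(k):=B_n^r\big(1-K(k/B_n)\big)-C_K(r)|k|^r,
\end{equation*}
uniformly in $u$ with $|u-\unu|\le n/T$ and in $(x,y)$ near $(\tau_1,\tau_2)$. The key structural point is that the weights $g_{B_n}(k)$ do not depend on $u,x,y$, so it is enough to bound $\sum_{k}|g_{B_n}(k)|\sup_{u,x,y}|\gamma^u_k(x,y)|$ and show it vanishes, which I would do by a discrete dominated-convergence argument. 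Pointwise in $k$, the definition of the characteristic exponent $r$ yields $1-K(v)=C_K(r)|v|^r+o(|v|^r)$ as $v\to0$, whence $B_n^r(1-K(k/B_n))\to C_K(r)|k|^r$ and $g_{B_n}(k)\to0$ for every fixed $k$.

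For the domination I would use the finiteness of the characteristic exponent globally on the support: since the ratio $(1-K(v))/|v|^r$ converges as $v\to0$ and is continuous for $v\neq0$, it stays bounded on $[-1,1]\setminus\{0\}$ by some constant $C$, giving $1-K(v)\le C|v|^r$ there. For $|k|\le B_n$ this gives $B_n^r(1-K(k/B_n))\le C|k|^r$, while for $|k|>B_n$ one has $K(k/B_n)=0$ and $B_n^r\le|k|^r$; in both regimes $|g_{B_n}(k)|\le (C+1+|C_K(r)|)|k|^r$, a bound independent of $B_n$ and of $u,x,y$. By Assumption~(A3)(ii) the majorant $(C+1+|C_K(r)|)|k|^r\sup_{u,x,y}|\gamma^u_k(x,y)|$ is summable over $\Z$ (and, since $|u-\unu|\le n/T\to0$, the relevant $u$ eventually lie in the (A3)(ii) neighborhood), so dominated convergence applies and the displayed sum tends to $0$ uniformly in $u,x,y$, as required. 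The main obstacle is exactly this uniform domination step: one must upgrade the purely local expansion of $1-K$ near the origin to the global bound $1-K(v)\le C|v|^r$ over the whole support $[-1,1]$, and then fuse the two regimes $|k|\le B_n$ and $|k|>B_n$ into a single summable majorant, the summability being precisely the weighted-cumulant-type condition (A3)(ii). Everything else is routine bookkeeping.
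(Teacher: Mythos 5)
Your proof is correct and follows essentially the same route as the paper's: both arguments rest on the global bound $\sup_{0<|v|\le 1}|1-K(v)|/|v|^r<\infty$, the limit defining the characteristic exponent, and the summability of $|k|^r\sup_{u,x,y}|\gamma^u_k(x,y)|$ from (A3)(ii). The only difference is organizational --- the paper carries out the dominated-convergence step by hand, splitting the sum into the ranges $|k|\le L_n$, $L_n<|k|\le B_n$ and $|k|>B_n$ for an auxiliary sequence $L_n\to\infty$ with $L_n/B_n\to 0$, whereas you package the same estimates into the single $n$-independent majorant $|g_{B_n}(k)|\le C'|k|^r$ and invoke dominated convergence directly.
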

\noindent\textbf{Proof.} Choose some $L_n \to \infty$ such that $L_n/B_n \to 0$. Then,
\begin{align*}
\frac{1}{2\pi}\sum_{|k|\leq n-1}& K_n(k) e^{-i \omega k} \gamma_k^{u}(x,y) - \mathfrak{f}^{u}(\omega,x,y)\\
& = B_n^{-r} \frac{1}{2\pi}\sum_{|k|\leq L_n} \frac{K(k/B_n)-1}{|k/B_n|^r} |k|^r e^{-i \omega k} \gamma_k^{u}(x,y) \\
& \quad + B_n^{-r}\frac{1}{2\pi}\sum_{B_n \geq |k| > L_n} \frac{K(k/B_n)-1}{|k/B_n|^r} |k|^r e^{-i \omega k} \gamma_k^{u}(x,y)\\
& \qquad- B_n^{-r} \frac{1}{2\pi}\sum_{|k| > B_n}  e^{-i \omega k} \frac{B_n^r}{|k|^r}|k|^r\gamma_k^{u}(x,y).
\end{align*}
By Assumption (K) and (A3)(ii) $\sup_{v} \frac{|K(v)-1|}{|v|^r}$ and $\sum_{k \in \Z} |k|^r |\gamma_k^{u}(x,y)|$ are  bounded. Therefore, the last term in the above expression is 
\[O(B_n^{-r}) \sum_{|k| > B_n} |k|^r |\gamma_k^{u}(x,y)| = o(B_n^{-r}),\] and the second term is
\begin{align*}
\Bigg|B_n^{-r}\frac{1}{2\pi} \sum_{B_n \geq |k| > L_n} & \frac{K(k/B_n)-1}{|k/B_n|^r} |k|^r e^{-i \omega k} \gamma_k^{u}(x,y)\Bigg| \\
&\leq O(B_n^{-r})\sup_{v} \frac{|K(v)-1|}{|v|^r}\sum_{B_n \geq |k| > L_n}  |k|^r |\gamma_k^{u}(x,y)| = o(B_n^{-r}),
\end{align*}
since $L_n \to \infty.$ Finally, for the first term, observe that
\begin{align} \label{eq:firstterm}
& \frac{1}{2\pi} B_n^{-r}\sum_{|k|\leq L_n} \frac{K(k/B_n)-1}{|k/B_n|^r} |k|^r e^{-i \omega k} \gamma_k^{u}(x,y) + C_K(r)B_n^{-r} \mathfrak{d}^{(r)}_\omega\mathfrak{f}^u(\omega,x,y)
\\
& \nonumber \quad =  \frac{1}{2\pi} B_n^{-r} \sum_{|k|\leq L_n} \Big(\frac{K(k/B_n)-1}{|k/B_n|^r} + C_K(r)\Big)|k|^r e^{-i \omega k} \gamma_k^{u}(x,y)\\
& \nonumber \qquad + \frac{1}{2\pi} C_K(r)B_n^{-r} \sum_{|k| > L_n} |k|^r e^{-i \omega k} \gamma_k^{u}(x,y).
\end{align}
The first term in the right-hand side of \eqref{eq:firstterm} is of order $o(B_n^{-r})$ since, by Assumption (K), $L_n/B_n \to 0$ implies $\frac{K(k/B_n)-1}{|k/B_n|^r} \to -C_K(r)$ and $|k|^r |\gamma_k^{u}(x,y)|$ is absolutely summable, while the second term is $o(B_n^{-r})$  since $L_n \to \infty $ and $|k|^r |\gamma_k^{u}(x,y)|$ is absolutely summable. Note that, under the assumptions made, all arguments hold uniformly in $u,x,y$. This completes the proof. \hfill $\qed$

We can now prove \eqref{eq:bias}. First, note that
\begin{align*}
\E \tilde{\mathfrak{f}}_{t_0,T}(\omega, \tau_1, \tau_2) = \frac{1}{2\pi}\frac{1}{n}&\sum_{|k|\leq n-1} K_n(k) e^{-i \omega k} \\
&\times \sum_{|t-t_0|\leq m_T-B_n} \gamma_k^{t/T}(G^{t/T}(q^\unu(\tau_1)),G^{t/T}(q^\unu(\tau_2))) +  O(B_n/T).
\end{align*}
Next, observe that by \ref{lem:bias} and the continuity of $(u,x,y) \mapsto \mathfrak{d}^{(r)}_\omega\mathfrak{f}^u (\omega,x,y),$
\[
\sup_{|t-t_0|\leq m_T-B_n} \Big| \mathfrak{d}^{(r)}_\omega\mathfrak{f}^{t/T}(\omega,G^{t/T}(q^\unu(\tau_1)),G^{t/T}(q^\unu(\tau_2))) - \mathfrak{d}^{(r)}_\omega\mathfrak{f}^\unu(\omega,\tau_1,\tau_2) \Big| = o(1)
\]
since $G^{u}(q^\unu(\tau)) \to \tau$ for $u \to \unu$. Thus
\begin{align*}
\E \tilde{\mathfrak{f}}_{t_0,T}(\omega, \tau_1, \tau_2) = & - C_K(r)B_n^{-r} \mathfrak{d}^{(r)}_\omega\mathfrak{f}^\unu(\omega,\tau_1,\tau_2)
\\
& + \frac{1}{n} \sum_{|t-t_0|\leq m_T-B_n} \mathfrak{f}^{t/T}(\omega,G^{t/T}(q^\unu(\tau_1)),G^{t/T}(q^\unu(\tau_2))) + o(B_n^{-r}). 
\end{align*}
On the other hand,
\begin{align*}
& \frac{1}{n} \sum_{|t-t_0|\leq m_T-B_n} \mathfrak{f}^{t/T}(\omega,G^{t/T}(q^\unu(\tau_1)),G^{t/T}(q^\unu(\tau_2)))
\\
& = \frac{T}{2m_T} \int_{-m_T/T}^{m_T/T} \mathfrak{f}^{\unu+u}(\omega,G^{\unu+u}(q^\unu(\tau_1)),G^{\unu+u}(q^\unu(\tau_2))) + O(1/n)
\\
& = \mathfrak{f}^{\unu}(\omega,q^\unu(\tau_1),q^\unu(\tau_2))\\
& \hspace{2cm} + \frac{n^2}{2T^2}\frac{\partial^2}{\partial u^2} \mathfrak{f}^u(\omega,G^{u}(q^\unu(\tau_1)),G^{u}(q^\unu(\tau_2)))\Big|_{u=\unu} + O(1/n) + o(n^2/T^2).
\end{align*}
Statement \eqref{eq:bias} follows. \hfill \qed


\end{document}